\theoremstyle{plain}
\newtheorem{slogan}[thm]{Slogan}
\newtheorem*{slogan*}{Slogan}
\newcommand{\tmf}{tm\!f}
\newcommand{\mmf}{mm\!f}
\renewcommand{\ell}{{\textup{ell}}}
\newcommand{\HR}{{H \! R}}
\newcommand{\bbA}{\mathbb{A}}
\newcommand{\bbC}{\mathbb{C}}
\newcommand{\ttSp}{{\texttt{Sp}}}
\newcommand{\mot}{{\textup{mot}}}
\newcommand{\Derp}{{\mathscr{D}_{\geq 0}}}
\newcommand{\oO}{\mc{O}}
\newcommand{\der}{{\textup{der}}}
\newcommand{\Enplustwo}{{\textup{E}^{n+2}}}
\newcommand{\nd}{^\textup{nd}}
\begin{document}

\title{Goerss--Hopkins obstruction theory for $\infty$-categories}
\date{\today}
\author{\textsc{Aaron Mazel-Gee}}

\begin{abstract}
Goerss--Hopkins obstruction theory is a powerful tool for constructing structured ring spectra from purely algebraic data.
%; notably, it is used in the construction of the $\bbE_\infty$ structures on the Morava $E$-theory spectra, and thereafter in the construction of the spectrum $\tmf$ of topological modular forms.
Using the formalism of model $\infty$-categories, we provide a generalization that applies in an arbitrary presentably symmetric monoidal stable $\infty$-category (such as that of equivariant spectra or of motivic spectra).
\end{abstract}

\maketitle

%\vspace{-10pt}

\setcounter{tocdepth}{1}
\tableofcontents

%\vspace{-20pt}

\setcounter{section}{-1}

\section{Introduction}

\subsection{Summary}

It has long been recognized that \bit{structured ring spectra} -- in particular, $\bbA_\infty$ and $\bbE_\infty$ ring spectra -- are of central importance in stable homotopy theory.  Indeed, such operadic structure allows for a good theory of modules, and is useful for importing such fundamental algebraic constructions as Hochschild homology and algebraic K-theory to the world of ring spectra.  In a different direction, it also induces rich additional structure on the corresponding cohomology theory, namely that of power operations.

Some spectra admit enhancements to structured ring spectra for transparent reasons.  For instance, the Eilenberg--Mac Lane spectrum $\HR$ of an associative (resp.\! commutative) ring $R$ carries an $\bbA_\infty$ (resp.\! $\bbE_\infty$) structure essentially by construction, as the functor $H : \Ab \ra \Sp$ is lax symmetric monoidal.  And the real and complex K-theory spectra carry $\bbE_\infty$ structures as a result of the fact that the tensor product of vector bundles %(distributes over direct sum and)
is associative and commutative up to natural isomorphism.

\bit{Goerss--Hopkins obstruction theory} is a tool for constructing a much broader class of structured ring spectra.  This has found many crucial uses in the study of structured ring spectra: its first application \cite{GH-moduli-spaces} was to show that the Morava $E$-theory spectra admit essentially unique $\bbE_\infty$ structures and to compute their automorphisms; perhaps its most spectacular application to date is the construction of the $\bbE_\infty$ ring spectrum $\tmf$ of topological modular forms \cite{tmfbook}; it is a key ingredient in Galois theory for $\bbE_\infty$ ring spectra \cite{RogGal}; and it plays an important role in a number of other works such as \cite{Szymik-K3,Westerland-J,TopEqEinftyDGAs,LawNau-BP,GL-Brauer,LawsonCalc,Roth-HopfGal}.

It would be highly desirable to have a more general version of Goerss--Hopkins obstruction theory.  In particular, this should apply in the settings of equivariant and motivic stable homotopy theory, as well as in the setting of diagrams of spectra (e.g.\! sheaves of spectra (e.g.\! over the moduli stack of elliptic curves)).  The recent work \cite{HL-tmf-level} indicates the expected utility of an obstruction theory for logarithmic ring spectra.

The purpose of the present paper is the construction of just such a generalized obstruction theory.

\begin{slogan}\label{main-slogan}
There is a Goerss--Hopkins obstruction theory for any presentably symmetric monoidal stable $\infty$-category.
\end{slogan}

\noindent We will explain \Cref{main-slogan} in precise detail in \Cref{section.informal.overview}.

\begin{rem}
In forthcoming work \cite{MG-Enmot}, we use this obstruction theory to produce $\bbE_\infty$ structures on the \textit{motivic} Morava $E$-theory spectra and compute their automorphisms.  This is a first step in constructing a motivic spectrum $\mmf$ of \bit{motivic modular forms}, in analogy with $\tmf$.\footnote{The works \cite{Ricka-mmf,GIKR-mmf} take a different approach, producing motivic spectra over $\bbR$ and $\bbC$ whose cohomologies coincide with that expected of $\mmf$ (in analogy with $\tmf$).  These constructions are indirect, and relatively specific to the chosen base fields; in particular, the resulting motivic spectra are not manifestly related to any theory of elliptic motivic spectra.}  As the construction of $\tmf$ has been highly influential in chromatic homotopy theory, so would the construction of $\mmf$ significantly advance the chromatic approach to motivic homotopy theory, which is a highly active area of research \cite{Voe-ICM,HuKriz-MGL,Vezz-BP,Borgh-K,Horn-chrommot,LevMor-MGL,PPR-univ,NSO,NSO-nonregular,BalmerSpectra,Isaksen-wantmmf,Isaksen-htpyofCmmf,Andrews-families,Hoyois-MGL,Horn-nilp,Joachimi-thick,HO-PrimesMot,Gheorge-exotic}.
\end{rem}

\begin{rem}
There has been much recent interest in ``genuine'' operadic structures, e.g.\! genuine $G$-spectra with multiplications indexed by maps of finite $G$-sets (instead of just finite sets) \cite{BH-Ninfty,HH-Eqmult,BH-Gsm,BH-Tamb,Rubin-Ninfty,BP-eqop,GW-eqop}, as well as analogous structures in motivic homotopy theory \cite{BachHoy}.  We do not contend with such structures here, but we are optimistic that the obstruction theory we construct admits a fairly direct enhancement to one that would handle them in a formally analogous way.
\end{rem}

\subsection{Model categories and $\infty$-categories}
\label{subsection.model.cats.and.infty.cats}

Despite the demand and evident utility, Goerss--Hopkins obstruction theory has thus far resisted generalization.  This is not without cause, however.  Its construction is based in a carefully chosen model category of spectra -- let us denote it here by $\ttSp$ --, and rests on a plethora of delicate foundational assumptions surrounding that choice (see \cite[Theorems 1.2.1 and 1.2.3]{GH}).  These assumptions greatly simplify the arguments; for instance, they guarantee (among many other things) that the homotopy theory $\Alg_{\bbE_\infty}(\Spectra)$ of $\bbE_\infty$ ring spectra is presented by the model category $\Alg_{\Comm}(\ttSp)$ of strict commutative algebra objects in that model category.  Thus, a direct generalization of Goerss--Hopkins obstruction theory e.g.\! to the motivic setting would involve obtaining a model category $\ttSp^\mot$ of motivic spectra sharing these same point-set features and then proceeding from there.\footnote{Indeed, \cite{Horn-Pre} provides such a model category of motivic spectra, but this first step towards a motivic Goerss--Hopkins obstruction theory has not been carried further.}

On the other hand, further reflection reveals that such a direct approach is actually less than ideal.  After all, this would require a new argument for each distinct homotopy theory $\C$ in which one wants to obtain a version of Goerss--Hopkins obstruction theory -- or at least, it would require the establishment of a suitable model category $\ttC$ presenting the homotopy theory $\C$.  On the other hand, the obstruction theory itself is completely independent of the ambient choice of model category: it only depends on the \textit{underlying $\infty$-category}.  Thus, the more robust approach to obtaining a generalized Goerss--Hopkins obstruction theory is to dispense with such irrelevant point-set technicalities and work at the level of $\infty$-categories.  This is the approach that we pursue here.\footnote{In private communication, Goerss has explained that there were two reasons that the culminating Goerss--Hopkins paper \cite{GH} was never published.  Firstly, they envisioned a more comprehensive version of the obstruction theory that would apply not just to spectra but to diagrams of spectra (in particular sheaves of spectra over the moduli stack of elliptic curves, towards constructing $\tmf$), but they never managed to work this out.  And secondly, they came to realize that the then-nascent theory of $\infty$-categories would be able to elegantly handle the various technical problems with which they had wrestled.}

\subsection{Model $\infty$-categories}
\label{subsection.model.infty.cats}

As it turns out, however, dispensing with point-set technicalities is not the same thing as dispensing with model structures.  Indeed, the original construction of Goerss--Hopkins obstruction theory takes place in the \bit{resolution model structure} on the category $s\ttSp$ of simplicial spectra.  This presents the \bit{nonabelian derived $\infty$-category} of spectra, which we denote by $\Derp(\Spectra)$.  Correspondingly, Goerss--Hopkins obstruction theory for a more general $\infty$-category $\C$ takes place in its nonabelian derived $\infty$-category $\Derp(\C)$.

On the other hand, the arguments necessary for setting up Goerss--Hopkins obstruction theory do \textit{not} just take place in the nonabelian derived $\infty$-category $\Derp(\Spectra)$.  Rather, they make essential use of the resolution model structure itself, which is necessary for computing hom-spaces therein.  The reason for this is that the nonabelian derived $\infty$-category $\Derp(\Spectra)$ enjoys a universal property \textit{as an $\infty$-category}.  This is one category-level removed from the hom-spaces themselves, and as a result the latter are a priori quite difficult to describe in explicit terms.\footnote{Perhaps the simplest example of this phenomenon arises in the groupoid completion of a one-object category, which corresponds to the group completion of the corresponding monoid.  This groupoid is easy to characterize in terms of its universal property, but it is hopelessly difficult to describe in concrete terms: this is an intractable (in fact, computationally undecidable) task, closely related to the so-called ``word problem'' for generators and relations in abstract algebra.}

Thus, in order to make computations within the nonabelian derived $\infty$-category $\Derp(\C)$, we apply the theory of \bit{model $\infty$-categories}, which we developed in previous work \cite{MIC-sspaces,MIC-rnerves,MIC-gr,MIC-hammocks,MIC-qadjns,MIC-fundthm} for this purpose.  Namely, in this paper we construct a resolution model structure on the \textit{$\infty$-category} $s\C$ of simplicial objects in $\C$; the fundamental theorem of model $\infty$-categories then implies that we can use this model structure to compute hom-spaces in the ($\infty$-categorical) localization $\Derp(\C) \simeq \loc{s\C}{\bW_\res}$ at its subcategory of weak equivalences.

In this paper, we make free use of the theory of model $\infty$-categories.  Given a working knowledge of the classical theory of model categories, the terminology and the main theorems surrounding model $\infty$-categories are all as one would expect, though of course the proofs are substantially more involved; we refer the reader to \cite[\sec 0.2]{AMG-thesis} for a quick overview.  However, we note here that the central role of the model category $s\Set_\KQ$ of simplicial sets equipped with the Kan--Quillen model structure (e.g.\! in the definition of a simplicial model category) is played by the model $\infty$-category $s\S_\KQ$ of simplicial \textit{spaces} equipped with a likewise Kan--Quillen model structure -- both of which present the $\infty$-category $\S$ of spaces.

\subsection{Conventions}

\citelurie \ \luriecodenamesforGHOsT

\butinvt \ \seeappendixforGHOsT

\subsection{Outline}

We now provide a more detailed outline of the contents of this paper.
\begin{itemize}
\item In \Cref{section.informal.overview}, we provide an informal overview of our generalized Goerss--Hopkins obstruction theory.
\item In \Cref{section.resn.model.str}, we introduce resolution model structures on $\infty$-categories of simplicial objects, and give sufficient conditions for their existence.
\item In \Cref{section.topology}, we lay out our foundations and assumptions regarding the ambient presentably symmetric monoidal stable $\infty$-category $\C$, and we construct an auxiliary resolution model structure on the $\infty$-category $s\C$.
\item In \Cref{section.alg.top}, we add operadic structures into the story: if our main goal is to construct algebras in $\C$ over some operad $\oO \in \Op$, we obtain a simplicial resolution $T \in s\Op$ of $\oO$ and lift the above resolution model structure to one on the $\infty$-category $\Alg_T(s\C)$ of $T$-algebras in $s\C$.
\item In \Cref{section.algebra}, we turn to the algebraic part of the story, introducing a certain category $\A$ of comodules and positing monads on the categories $\A$ and $s\A$ that respectively govern the structures present on the homologies of $\oO$-algebras in $\C$ and of $T$-algebras in $s\C$.
\item In \Cref{section.homotopical.algebra}, we study Postnikov theory, Andr\'e--Quillen cohomology, and moduli spaces in the model $\infty$-category $s\A$.
\item In \Cref{section.homotopical.topology}, we study Postnikov theory, Andr\'e--Quillen cohomology, and moduli spaces in the model $\infty$-category $s\C$.
\item In \Cref{section.decomp.of.mod.spaces}, we prove our main theorems.
\end{itemize}

\subsection{Acknowledgments}
\label{subsection.ack}

% This work has provided the underlying motivation for much of the material in my thesis, and as a result, anyone who has contributed to another chapter has effectively contributed to this chapter as well.

This project was born purely by chance, on a train ride that I happened to share with Markus Spitzweck in late 2012, during which he introduced me to the world of motivic homotopy theory and first piqued my interest in the idea of producing a motivic Goerss--Hopkins obstruction theory (and, someday, motivic modular forms!).  It is a pleasure to thank him for his inspiration and collaboration.  I would also like to thank Dave Carchedi and Justin Noel for their friendship and continued mathematical support in those early days of this project back in Bonn.

The next phase of this project took place during my time at MIT.  I am grateful to many people who helped this project along during that period: Mark Behrens, for believing in me, and for generously sharing his time and expertise; Gijs Heuts, for convincing me to ditch the hunt for a suitable model category of motivic spectra and instead learn to work with $\infty$-categories; Omar Antol\'in Camarena and Gijs Heuts (again), for their abundant and cheerful enthusiasm throughout our many conversations, even as we failed over and over to really understand the spiral exact sequence and reconstruct it homotopy-invariantly; and Dustin Clausen, Jacob Lurie, Akhil Mathew, and Clark Barwick and the Bourbon Seminar for their many helpful insights.

I would like to express my deepest thanks to Paul Goerss, Mike Hopkins, and Haynes Miller for working out such a beautiful and compelling story, and to Paul Goerss in particular for his many patient and thorough explanation of its finer points.  David Gepner has also been an essential resource throughout the duration of this project.

Lastly, I would like to thank the Max Planck Institute, MIT, the NSF graduate research fellowship program (grant DGE-1106400), UC Berkeley's geometry and topology RTG (grant DMS-0838703), Philz, and 1369 for their hospitality and/or financial support during the time that this work was carried out.

\section{Informal overview}
\label{section.informal.overview}

% Perhaps the most sophisticated obstruction theory in all of algebraic topology is \bit{Goerss--Hopkins obstruction theory}.
Suppose we are given a flat homotopy commutative ring spectrum
\[ E \in \CAlg(\ho(\Sp)) \]
satisfying \textit{Adams's condition} (which we will describe in \Cref{subsection.detecing.homology.theory}); we will refer to the its corresponding homology theory $E_*$ as our ``detecting'' homology theory.  Suppose moreover that we are given a commutative algebra
\[ A \in \CAlg(\Comod_{(E_*,E_*E)}) \]
in comodules.
Then, Goerss--Hopkins obstruction theory provides a method for computing the \bit{moduli space of} (\bit{$E$-local}) \bit{realizations} of $A$ as an $\bbE_\infty$ ring spectrum -- the first question being whether it is nonempty.  In fact, it applies to algebras over \textit{any} operad $\oO$, though this changes the nature of the algebraic object in comodules in comodules that we must consider.

The purpose of the present section is to explain this story in detail, as well as the generalization from $\Sp$ to an arbitrary presentably symmetric monoidal stable $\infty$-category which is the purpose of this paper.  We begin by explaining the obstruction theory as a black box in \sec\sec \ref{subsection.moduli.space.of.realizations}-\ref{subsection.obstructions.to.realization}, focusing for simplicity only on the $\bbE_\infty$ case.  We then proceed to unpack the inner workings of the obstruction theory in \sec\sec \ref{subsection.informal.overview.nonab.der.infty.cats}-\ref{subsection.informal.overview.tower.of.moduli.spaces}.

\subsection{The moduli space of realizations}
\label{subsection.moduli.space.of.realizations}

First of all, a \bit{realization} of $A$ is an $\bbE_\infty$ ring spectrum $X$ for which there exists an isomorphism $E_*X \cong A$ (of algebras in comodules).  These are our objects of interest.  Note that we do \textit{not} require the existence of a spectrum realizing the underlying comodule of $A$: that is, we start with \textit{purely algebraic} data.

Next, an \bit{$E$-equivalence} is a map $X \ra Y$ of spectra that induces an isomorphism $E_* X \xra{\cong} E_* Y$ of $E_*E$-comodules (or equivalently of $E_*$-modules).  In a universal way, we can invert the $E$-equivalences in the $\infty$-category of spectra to form the $\infty$-category $\leftloc_E (\Sp)$ of \textit{$E$-local spectra}.  The terminology stems from the fact that this localization actually participates in a reflective localization
\[ \leftloc_E : \Sp \adjarr \leftloc_E(\Sp) : \forget_E , \]
i.e.\! an adjunction whose right adjoint is fully faithful; in particular, we can consider $\leftloc_E(\Sp) \subset \Sp$ as a full subcategory.%\footnote{This is in fact a \textit{presentably symmetric monoidal} reflective localization of the presentably symmetric monoidal stable $\infty$-category $\Sp$ of spectra, as described in \Cref{subsection of intro on DAG and CHT}.  Under the contravariant isomorphism of posets described there, this corresponds to the thick ideal subcategory of \textit{$E$-acyclic} spectra, i.e.\! those objects $X \in \Sp$ such that $E \otimes X \simeq 0$.
%changedfromthesis
\footnote{This is the underlying $\infty$-categorical content of the theory of \textit{Bousfield localization} of spectra, as introduced in the classic paper \cite{Bousloc}.}  In other words, $E$-local spectra are just particular sorts of spectra, but $E$-equivalences between them are necessarily equivalences.

Finally, the \bit{moduli space of $E$-local realizations} of $A$ is the full subgroupoid
\[ \ms{M}_A \subset \CAlg(\leftloc_E(\Sp))  \]
on the $E$-local $\bbE_\infty$ ring spectra which are realizations of $A$; its morphisms are the $E$-equivalences (which are also equivalences) between them.  As indicated above, we will generally leave the descriptor ``$E$-local'' implicit.

\begin{rem}
Of course, this necessarily only produces $E$-local spectra.  Thus, if one is interested in obtaining an $\bbE_\infty$ ring structure on a particular spectrum $X \in \Sp$, one must choose a detecting homology theory $E_*$ for which $X$ is $E$-local.  On the other hand, this locality is not so hard to satisfy in practice: crucially, any $E$-module is necessarily $E$-local.  Note that this is a relatively weak (and in particular, unstructured) hypothesis: we have only assumed that $E$ is a \textit{homotopy} commutative ring spectrum, and thus by ``module'' we can only possibly mean an object $X \in \Mod_E (\ho(\Sp))$.

In particular, it follows that $E$ is $E$-local.  This implies the nearly unbelievable conclusion that if we would like to endow a homotopy commutative ring spectrum $E \in \CAlg(\ho(\Sp))$ with an $\bbE_\infty$-structure, then $E$ can \textit{itself} serve as the detecting homology theory!
\end{rem}

% In fact, this was precisely the technique employed by Goerss--Hopkins as the very first application of their newly minted obstruction theory: they proved that the Morava $E$-theory spectra $E_{n,p}$ described in \Cref{subsection of intro on DAG and CHT} admit unique $\bbE_\infty$ structures, and that their automorphism spaces (as such) are in fact discrete.

% Even more spectacularly, Goerss--Hopkins obstruction theory is also the key ingredient of the construction of the sheaf $\oO^\der$ over $\ms{M}_\ell$ whose global sections are $\tmf$.  This actually uses a \textit{relative} version of the obstruction theory (which in our description we have left implicit): it is necessary to construct not just the local sections along with their actions of the automorphisms groups of elliptic curves, but also the more general restriction maps between them.

%Let us now explain what we mean when we say that Goerss--Hopkins obstruction theory allows us to ``compute'' the moduli space $\ms{M}_A$ of realizations of our chosen object $A$.
%changedfromthesis
% First of all,

\subsection{Andr\'e--Quillen cohomology}

Given our commutative algebra $A$ in comodules, one can speak of \textit{modules} over $A$ (in comodules); we mention now that for any $n \geq 1$ one can define a canonical $A$-module $\Omega^n A$, which will play a role in our story shortly.  For any $A$-module $M$ and any augmented commutative algebra
\[ X \in \CAlg(\Comod_{(E_*,E_*E)})_{/A} \]
in comodules, we can define the corresponding \bit{Andr\'e--Quillen cohomology groups} $H^*(X;M)$.  In fact, these are given by the homotopy groups of a certain spectrum
\[ \ms{H}(X;M) = \{ \ms{H}^n(X;M) \}_{n \geq 0} , \]
in the sense that
\[ H^n(X;M) = \pi_{-n} \ms{H}(X;M) \cong \pi_0 \ms{H}^n(X;M) \cong \pi_1 \ms{H}^{n+1}(X;M) \cong \cdots \]
for any $n \geq 0$ (or really for any $n \in \bbZ$: this spectrum has vanishing positive-dimensional homotopy groups, not unlike $\enrhom_\Sp(\Sigma^\infty_+ X , E)$ for any $X \in \S$ and any $E \in \Sp$).  The group
\[ \Aut(A,M) \]
of automorphisms of the pair $(A,M)$ (whose elements are pairs of an isomorphism $\varphi : A \xra{\cong} A$ and an isomorphism $M \ra \varphi^* M$) naturally acts on this spectrum.  In particular, it acts on each constituent space $\ms{H}^n(X;M)$, and we write
\[ \what{\ms{H}}^n(X;M) = \left( \ms{H}^n(X;M) \right)_{\Aut(A,M)} \]
for the (homotopy) quotient.  This action fixes the basepoint of $\ms{H}^n(X;M)$ (whose path component corresponds to the zero element $0 \in H^n(X;M)$), and so the inclusion of the basepoint is $\Aut(A,M)$-equivariant and hence determines a map
\[ B \Aut(A,M) \ra \what{\ms{H}}^n(X;M) \]
on quotients.  We note for future reference that this map, whose source is connected, lands entirely in the path component selected by the composite
\[ \pt \xra{0} \ms{H}^n(X;M) \ra \what{\ms{H}}^n(X;M) . \]

\subsection{Obstructions to realization}
\label{subsection.obstructions.to.realization}

% Now, as
%changedfromthesis

As we will describe in more depth in \Cref{subsection.informal.overview.tower.of.moduli.spaces}, our understanding of the moduli space $\ms{M}_A$ actually comes from a sequence of moduli spaces $\ms{M}_n(A)$ of ``$n$-stage approximations'' to a realization of $A$.  These moduli spaces are related by pullback squares
\[ \begin{tikzcd}
\ms{M}_n(A) \arrow{r} \arrow{d} & B\Aut(A,\Omega^n A) \arrow{d} \\
\ms{M}_{n-1}(A) \arrow{r} & \what{\ms{H}}^{n+2}(A;\Omega^n A)
\end{tikzcd} \]
(for all $n \geq 1$), in which the left vertical map is induced by an ``$(n-1)\st$ Postnikov trunction'' functor and the lower map is induced by an ``$n\th$ k-invariant'' functor
\[ \ms{M}_{n-1}(A) \xra{\chi_n} \ms{H}^{n+2}(A;\Omega^n A) . \]
Moreover, we have a canonical identification
\[ \ms{M}_A \xra{\sim} \lim \left( \cdots \ra \ms{M}_2(A) \ra \ms{M}_1(A) \ra \ms{M}_0(A) \right) \]
of our moduli space of realizations as the limit of the resulting tower.  Finally, as the base for our inductive understanding, we have an equivalence
\[ \ms{M}_0(A) \simeq B \Aut(A) . \]

We can now describe the sense in which we can ``compute'' the moduli space $\ms{M}_A$.  Observe that the above pullback square implies that an $(n-1)$-stage $X$ can be lifted to an $n$-stage if and only if the k-invariant
\[ [ \chi_n (X) ] \in H^{n+2}(A;\Omega^n A) \]
vanishes: this is the only case in which there exists a nonempty fiber in the diagram
\[ \begin{tikzcd}
& & & B \Aut(A,\Omega^n A) \arrow{d} \\
\pt \arrow{r}[swap]{X} & \ms{M}_{n-1}(A) \arrow{r}[swap]{\chi_n} & \ms{H}^{n+2}(A;\Omega^n A) \arrow{r} & \what{\ms{H}}^{n+2}(A;\Omega^n A) ,
\end{tikzcd} \]
which is necessary and sufficient for there to exist a nonempty fiber in the diagram
\[ \begin{tikzcd}
& \ms{M}_n(A) \arrow{d} \\
\pt \arrow{r}[swap]{X} & \ms{M}_{n-1}(A) .
\end{tikzcd} \]

\begin{rem}
Of course, this is most useful in the %``\'etale''
\'etale case, i.e.\! when the relevant Andr\'e--Quillen cohomology groups all vanish.  Under this assumption, the entire tower collapses to an equivalence
\[ \ms{M}_A \xra{\sim} \ms{M}_0(A) \simeq B \Aut(A) . \]
This is visibly the case with Goerss--Hopkins's original application to the Morava $E$-theories.  In fact, after enough algebraic manipulation, it also becomes the case in the construction of the sheaf $\oO^\der$
of $\bbE_\infty$ ring spectra over the moduli stack of elliptic curves, whose global sections are $\tmf$ %changedfromthesis
(but these manipulations are themselves not completely trivial).

In fact, this is also the case in another prominent application of Goerss--Hopkins obstruction theory as well.  In his inspiring monograph \cite{RogGal}, Rognes develops the \textit{Galois theory} of $\bbE_\infty$ ring spectra.  This may be seen as the study of \textit{covering spaces} among affine spectral schemes, and provides a remarkably effective framework for the organization of chromatic homotopy theory from the viewpoint of spectral algebraic geometry.  Just as classical Galois theory, this is governed by a Galois correspondence, i.e.\! a contravariant equivalence of posets.  In order to prove this fundamental theorem, Rognes uses Goerss--Hopkins obstruction theory to obtain the desired intermediate Galois extension from a subgroup of the Galois group.
\end{rem}

% In addition to the applications we have mentioned, Goerss--Hopkins obstruction theory finds frequent and crucial use throughout the literature on structured ring spectra.
%changedfromthesis

\subsection{Nonabelian derived $\infty$-categories and resolution model structures}
\label{subsection.informal.overview.nonab.der.infty.cats}

% In order to explain the inner workings of Goerss--Hopkins obstruction theory and our generalization thereof, {\color{red} [remove this first clause?]} we must first explain what exactly we %mean

We now explain what exactly we
meant
by the notation ``$\Derp(\C)$'' used in \Cref{subsection.model.infty.cats}.  In fact, this notation is slightly misleading: this construction does not depend on the $\infty$-category $\C$ alone.  Rather, we must first choose a full subcategory $\G \subset \C$ which is closed under finite coproducts, which should be thought of as a subcategory of ``projective generators''.  Out of this, we define the (\bit{nonnegatively-graded}) \bit{nonabelian derived $\infty$-category} of $\C$ to be
\[ \ms{D}_{\geq 0}(\C) = \ms{D}_{\geq 0}(\C,\G) = \PS(\G) = \Fun^\times(\G^{op},\S), \]
the $\infty$-category of \textit{product-preserving presheaves of spaces} on $\G$, i.e.\! the full subcategory of $\Fun(\G^{op},\S)$ on those contravariant functors that take finite coproducts in $\G$ to finite products in $\S$.  (We will use the various notations interchangeably, depending on our desired emphasis.)

Observe that there is a canonical functor
\[ s\C \xra{ X_\bullet \mapsto \left( Y \mapsto |\hom^\lw_\C(Y,X_\bullet)| \right) } \PS(\G) \]
from the $\infty$-category of simplicial objects in $\C$, the levelwise restricted Yoneda functor followed by geometric realization. % \footnote{The resulting composite $\C \xra{\const} s\C \ra \PS(\G)$, which is just the restricted Yoneda functor, is (by definition) fully faithful as soon as $\G$ contains a set of generators.}  NO, THINK ABOUT R-MODULES!
In the case that $\C$ is an ordinary category and $\G \subset \C$ is a full subcategory of small projective generators, in 
\cite[\sec II.4]{QuillenHA} Quillen defined a model structure on $s\C$ which (in hindsight) is precisely a presentation of the $\infty$-category $\PS(\G)$.  For example, if we take $\C = \Set$ to be the category of sets and $\G = \Fin$ to be the full subcategory of finite sets, this recovers the standard Kan--Quillen model structure $s\Set_\KQ$, which presents the $\infty$-category
\[ \Derp(\Set) = \Derp(\Set,\Fin) = \Fun^\times(\Fin^{op},\S ) \simeq \Fun(\pt^{op},\S) \simeq \S \]
of spaces as the nonabelian derived $\infty$-category of the category of sets.  On the other hand, if $\C$ is an abelian category, then $\Derp(\C)$ recovers the usual (nonnegatively-graded) derived $\infty$-category of $\C$, which through the Dold--Kan correspondence is equivalent to the usual definition in terms of nonnegatively-graded chain complexes in $\C$.
% (which, finally, explains the example given in \Cref{subsection nonab der cats}).
In general, cofibrant replacements in these model structures may thus be thought of as \bit{nonabelian projective resolutions}.

In fact, this same idea has been carried further in homotopy theory.  In \cite{DKS-E2}, Dwyer--Kan--Stover defined a \bit{resolution model structure} on the category $s\Top_*$ of simplicial pointed topological spaces based on the set of generators
\[ \{S^n \in \Top_* \}_{n \geq 1} , \]
and in \cite{BousCosimp} Bousfield generalized this to a general (pointed, right proper) model category equipped with a set of h-cogroup objects satisfying certain conditions.  In both cases, the restriction to h-cogroup objects is motivated by the desire for spectral sequences converging to the ``homotopy groups'' (with respect to the generators and their finite coproducts) of the geometric realization of an object (in the model-categorical sense).  The levelwise weak equivalences are weak equivalences in these model structures, but there are strictly more of the latter.

From the perspective of model $\infty$-categories, it is clear that these model 1-categories are fairly inefficient: it is wholly unnecessary to distinguish between objects which are levelwise weakly equivalent.  On the other hand, the resolutions that these model structures afford \textit{are} necessary -- indeed, they are the entire point.  Thus, one might expect to freely invert the levelwise weak equivalences while keeping track of the remaining resolution weak equivalences.  To this end, we have the following theorem.

\begin{thm}[\ref{many-object sNres} \and \ref{identify localization of sNres}]\label{thm resn model str of intro}
Let $\C$ be a presentable $\infty$-category, let $\{ Z_\alpha \in \C \}$ be a set of compact objects, and write $\G \subset \C$ for the full subcategory generated by the objects $Z_\alpha$ and their finite coproducts.  Then there exists a \bit{resolution model structure} on the $\infty$-category $s\C$, denoted $s\C_\res$.  This model structure is simplicial (i.e., it is compatibly enriched over $s\S_\KQ$).  Moreover, it participates in a Quillen adjunction
\[ \Fun(\G,s\S_\KQ)_\projective \adjarr s\C_\res , \]
whose derived adjunction % (as guaranteed by \Cref{thm intro quillen adjns of MICs} {\color{blue} \bf [ref]})
is precisely the canonical adjunction
\[ \P(\G) \adjarr \PS(\G) . \]
\end{thm}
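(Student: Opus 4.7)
\emph{Proof plan.} I would construct $s\C_\res$ as the right transfer of the projective model structure $\Fun(\G,s\S_\KQ)_\projective$ along the adjunction whose right adjoint sends $X_\bullet \in s\C$ to the functor $Z \mapsto \hom^\lw_\C(Z, X_\bullet)$, with left adjoint determined by the tensoring of $s\C$ over $s\S$: a representable at $Z \in \G$ tensored with a simplicial cell $K$ is sent to $K \otimes Z$. By definition, then, the weak equivalences and fibrations of $s\C_\res$ are the maps detected levelwise by mapping in from each $Z \in \G$, which is the notion of ``resolution'' weak equivalence that we want.

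To verify existence of the transferred structure, I would check that the small object argument applies --- this uses presentability of $s\C$ together with compactness of each $Z_\alpha$ --- and that every pushout of a generating trivial cofibration is a resolution weak equivalence. For the latter I would construct an explicit path object in $s\C_\res$ via the cotensor $X_\bullet^{\Delta^1}$ coming from the $s\S$-enrichment, and then run the standard path-object argument, carefully adapted to the model $\infty$-category setting. The Quillen adjunction with $\Fun(\G,s\S_\KQ)_\projective$ is then automatic from the construction of the transfer. The simplicial enrichment follows by reducing the pushout-product axiom of $s\C_\res$ against $s\S_\KQ$ to the pushout-product axiom of $s\S_\KQ$ against itself, using that the generating (acyclic) cofibrations of $s\C_\res$ are built from those of $s\S_\KQ$ by tensoring with objects of $\G$.

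For the identification of the derived adjunction with $\P(\G) \adjarr \PS(\G)$, the underlying $\infty$-category of $\Fun(\G, s\S_\KQ)_\projective$ is $\P(\G)$. To identify the localization of $s\C_\res$ with $\PS(\G)$, I would exploit the universal property of $\PS(\G) = \Fun^\times(\G^{op},\S)$ as the nonabelian derived $\infty$-category of $\G$: the composite $s\C \to \Fun(\G^{op}, s\S) \to \Fun(\G^{op}, \S)$ obtained by levelwise geometric realization factors through $\PS(\G)$ because $\G$ is closed under finite coproducts, and the induced functor $s\C \to \PS(\G)$ inverts precisely the resolution weak equivalences (by construction) and is essentially surjective because every object of $\PS(\G)$ is realized by a simplicial bar resolution in $s\C$ by objects of $\G$.

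The principal obstacle is this last identification. Essential surjectivity requires a concrete understanding of cofibrant objects in $s\C_\res$, which should be simplicial objects cellularly built from coproducts of the $Z_\alpha$; and matching the localization with $\PS(\G)$ on the nose requires computing hom-spaces in $s\C_\res$ via the fundamental theorem of model $\infty$-categories and comparing them with mapping spaces in $\PS(\G)$. Both come down to the statement that simplicial bar constructions in $s\C_\res$ compute sifted colimits in $\PS(\G)$, and the proof needs to make this comparison precise at the level of $\infty$-categorical localizations rather than within any strict point-set model.
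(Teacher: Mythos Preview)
Your overall architecture---right-transfer along the restricted-Yoneda adjunction, then identify the localization via hom-space computations on the generating cells---matches the paper's, and your treatment of the identification step is close to what the paper does (the paper computes $\hom_{\loc{s\C}{\bW_\res}}(K \tensoring \const(Z_\alpha), X)$ directly from the simplicial enrichment and then extends to all cofibrant objects via the generating cofibrations).

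The gap is in the existence step. The path-object argument needs the target object to be fibrant: the cotensor factorization $X \to X^{\Delta^1} \to X \times X$ has its second leg a fibration only when each $\hom^\lw_\C(Z_\alpha, X)$ is fibrant in $s\S_\KQ$, i.e.\ only when $X$ is already fibrant in the would-be structure. For general presentable $\C$ this fails---take $\C = \S$ and $Z_\alpha = \pt$, so that $s\C_\res$ recovers $s\S_\KQ$ itself, where certainly not every object is fibrant. (In the \emph{stable} case one does get global fibrancy via the simplicial-$\infty$-group argument, but the theorem as stated is for arbitrary presentable $\C$.) So your ``standard path-object argument, carefully adapted'' cannot run as written; you need a functorial fibrant replacement first, and producing one is precisely the nontrivial step.

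The paper's route is to isolate an abstract lifting criterion (a condition $(*)$ to the effect that would-be cofibrations with the left lifting property against all would-be fibrations are would-be weak equivalences) and then verify $(*)$ by constructing a fibrant replacement on $s\C$ from an $\infty$-categorical $\Ex^\infty$ endofunctor on $s\S_\KQ$, developed in the author's earlier model-$\infty$-category papers, pulled back along the levelwise-hom. The paper explicitly remarks that this technique is specific to categories of simplicial objects: $\Ex$ is a right adjoint but not an $s\S$-\emph{enriched} right adjoint, so the construction does not transport along an arbitrary simplicial right Quillen functor. Your approach would go through once you supply such a replacement, but that is the crux you have skipped---and it, rather than the identification of the localization, is where the real work lies.
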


\begin{rem}
The resolution model $\infty$-category structure of
\Cref{thm resn model str of intro}
is indeed much more efficient than its 1-categorical analogs.  For example, every object in $s\C_\res$ is fibrant; by contrast, in the resolution model structures of Dwyer--Kan--Stover and Bousfield, the fibrant objects are precisely the Reedy fibrant objects.  (This is by no means a decisive advantage, but it seems worth pointing out nonetheless.)
\end{rem}

% Let us now turn to the inner workings of Goerss--Hopkins obstruction theory; of course, what we have just discussed will become relevant shortly.  Given everything that we have explained so far, we can in fact proceed to directly explain our generalization, which is one of the central goals of this thesis.

\subsection{The detecting homology theory and resolutions}
\label{subsection.detecing.homology.theory}

% Thus, let
Let us fix
% begin with
a presentably symmetric monoidal stable $\infty$-category $\C$.  This replaces a model 1-category of spectra, which in the original construction
of Goerss--Hopkins obstruction theory
must be assumed to satisfy a long list of technical assumptions.  We assume that $\C$ is equipped with a full subcategory $\G \subset \C$ of generators, which we assume to be sufficiently nice (e.g.\! its objects must all have inverses with respect to the symmetric monoidal structure -- thereafter, our assumptions will imply that its objects are compact).  This generalizes the set of sphere spectra.  These generators define a ``homotopy groups'' functor $\pi_\star$.

We now discuss our detecting homology theory, which we assume to be given by a flat homotopy commutative algebra $E \in \CAlg(\ho(\C))$.  We can now explain the all-important \bit{Adams's condition}. % (which was mentioned in \Cref{subsection of intro about ghost}).
This is the requirement that $E$ be obtainable as a filtered colimit
\[ \colim_\J E_\alpha \xra{\sim} E \]
of \textit{dualizable} objects $E_\alpha$, such that their duals $\Dual E_\alpha$ have projective $E$-homology.  This condition allows us to treat $E$-homology as being given by ``homotopy groups with respect to these duals''.  More precisely, our assumptions guarantee that for any generator $S^\beta \in \G$ we have a string of isomorphisms
\begin{align*}
\colim_{\alpha \in \J} [ \Sigma^\beta \Dual E_\alpha , X]_\C & \cong  \colim_{\alpha \in \J} [S^\beta , E_\alpha \otimes X]_\C
\cong [S^\beta , \colim_{\alpha \in \J}(E_\alpha \otimes X) ]_\C \\
& \cong [S^\beta , \colim_{\alpha \in \J}(E_\alpha) \otimes X ]_\C
\cong [S^\beta , E \otimes X ]_\C
= E_\beta X
\end{align*}
(where we suggestively write $\Sigma^\beta$ for the functor $S^\beta \otimes -$).  Therefore, if a map $X \ra Y$ induces ``$\Dual E_\alpha$-homotopy'' isomorphisms
\[ [ \Sigma^\beta \Dual E_\alpha , X ]_\C \xra{\cong} [ \Sigma^\beta \Dual E_\alpha,Y ]_\C \]
for all $S^\beta \in \G$ and all $\alpha \in \J$, then it induces an isomorphism on $E$-homology.  On the other hand, the converse will not generally hold.  This subtlety can be handled with a little bit of care (or with a lot of care, in the original model 1-categorical case), and we will return to it in due time.

Let us write $\GE \subset \C$ for the full subcategory generated by the subcategory $\G$ and the objects $\Dual E_\alpha$ under finite coproducts.  Then, our resolutions will be based on the nonabelian derived $\infty$-category
\[ \ms{D}_{\geq 0} ( \C , \GE ) . \]
However, we will need to make computations using actual simplicial resolutions (i.e.\! objects of $s\C$) instead of their images under the functor
\[ s\C \ra \PS(\GE) = \ms{D}_{\geq 0}(\C,\GE) , \]
and for this we will use the resolution model structure provided by \Cref{thm resn model str of intro}.

As we will explain 
%shortly {\color{blue} \bf in [ref]},
in \Cref{subsection.operadic.structures},
we will not actually be using this model $\infty$-category directly, but rather a generalization of it.  However, even in this special case we can point out an essential feature of the story.  Let us write $\tA$ for the category of $(E_\star,E_\star E)$-comodules, and let us write $\GA \subset \tA$ for the full subcategory on objects of the form $E_\star S^\eps$ for some $S^\eps \in \GE$; by our assumptions, these will be projective as $E_\star$-modules.  As we have assumed that $\C$ is presentably symmetric monoidal, it follows that the induced functor
\[ E_\star : \GE \ra \GA \]
preserves finite coproducts.  It follows formally that the induced functor
\[ \PS(E_\star) : \PS(\GE) \ra \PS(\GA) \]
preserves all colimits.  Ultimately, this fact will be (a shadow of) the reason that our topological obstructions can be computed purely algebraically.  At the level of model $\infty$-categories, this can be seen as resulting from the fact that the functor
\[ E_\star^\lw : s\C_\res \ra s\tA_\res \]
preserves cofibrations between cofibrant objects relative to an analogous resolution model structure on $s\tA_\res$.

\subsection{Operadic structures and resolutions}
\label{subsection.operadic.structures}

% We now proceed to discuss multiplicative structures.  We henceforth
We use the term ``operad'' to refer to a (single-colored) $\infty$-operad; the $\infty$-category $\Op$ of operads is presented by the relative category $\Op(s\Set_\KQ)$
of operads in simplicial sets, % sets (as well as by the relative category $\Op(\Top_\QS)$),   HAVEN'T MENTIONED QS IN THIS PAPER
whose weak equivalences are determined levelwise on underlying objects (i.e.\! ignoring the symmetric group actions).  This relative category structure enhances to a \textit{Boardman--Vogt} model structure, which (using a generalization of \Cref{thm resn model str of intro}) we incidentally generalize to the $\infty$-category $\Op(s\V)$ of internal operads (for a suitable symmetric monoidal $\infty$-category $\V$) as \Cref{model structure on sV-operads}.

Now, our obstruction theory can be used to construct ($E$-local) $\oO$-algebras in $\C$, for any operad $\oO \in \Op$.  Given a choice of $\oO$, however, we must choose a monad $\Phi$ on $\tA$ which will parametrize our ``algebraic structures'': in other words, we must have a lift
\[ \begin{tikzcd}
\Alg_\oO(\C) \arrow[dashed]{r}{E_\star} \arrow{d}[swap]{\forget_\oO} & \Alg_\Phi(\tA) \arrow{d}{\forget_\Phi} \\
\C \arrow{r}[swap]{E_\star} & \tA
\end{tikzcd} \]
of our $E$-homology functor.  For instance, in the special case where $\oO = \Comm = \bbE_\infty$ that we described in \sec\sec \ref{subsection.moduli.space.of.realizations}-\ref{subsection.obstructions.to.realization}, we also took $\Phi = \Comm$.  However, even in the case that we take $\oO = \Comm$, it can be useful -- essential, even -- to have this added generality.\footnote{The construction of $\tmf$ (as the global sections of a sheaf of $\bbE_\infty$ ring spectra over the moduli stack of elliptic curves),
%via the sheaf $\oO^\der$ on $\ms{M}_\ell$),
which was spelled out in full detail by Behrens in \cite{tmfbook}, makes essential use of such generality.  In order to construct the height-1 component of the sheaf (which is necessary in order to ``interpolate'' between the supersingular loci at distinct primes, and which is by far the most technical aspect of the construction), one must take the $p$-adic complex $K$-theory spectrum $KU^\sm_p$ as the detecting homology theory, and one must enhance the nature of the algebraic input from a commutative algebra in comodules to what is called a \textit{$\theta$-algebra} (which structure is canonically present on the $p$-adic $K$-theory of an $\bbE_\infty$ ring spectrum).}

So of course, we will not be interested in resolving objects of $\C$, but rather objects of $\Alg_\oO(\C)$.  However, it will not suffice to simply resolve them by \textit{simplicial} objects of $\Alg_\oO(\C)$: at no point will this allow us to gain control over their levelwise $E$-homology (in the model category $s\tA_\res$).

On the other hand, there is a special case in which this does hold, namely when the operad $\oO$ is \textit{\PSG-free}: by definition, this means that for every $n \geq 0$, the symmetric group $\SG_n$ acts freely on the set $\pi_0 (\oO(n))$ of path components of the $n\th$ constituent space of $\oO$.  When this is the case, the ``free $\oO$-algebra'' functor
\[ X \mapsto \coprod_{n \geq 0} ( \oO(n) \tensoring X^{\otimes n})_{\SG_n} \]
simplifies dramatically.  Even better, if we assume that $E_\star X$ is projective -- such as when $X = \Dual E_\alpha$ --, then the K\"unneth spectral sequence for the $E$-homology of this free $\oO$-algebra (which is guaranteed by Adams's condition) immediately collapses!

Thus, a key insight of Goerss--Hopkins obstruction theory (over its predecessors) was, for a general operad $\oO$, to take a simplicial resolution $T_\bullet \in s\Op$ by \PSG-free operads.  Amusingly, this can be achieved by choosing a cofibrant representative of $\oO$ in the model category $\Op(s\Set_\KQ)_\BV$ via the embedding
\[ \Op(s\Set) \simeq s \left( \Op(\Set) \right) \hookra s\Op . \]
A simplicial operad can be made to act on simplicial objects in $\C$, and from here we obtain (as \Cref{thm AlgTsCres}) a lifted resolution model structure through the adjunction
\[ \free_T : s\C_\res \adjarr \Alg_T(s\C)_\res : \forget_T \]
This is the model $\infty$-category we have been seeking.  On the one hand, its objects are resolutions of $\oO$-algebras in $\C$: we have a canonical lift
\[ \begin{tikzcd}
\Alg_T(s\C) \arrow[dashed]{r}{|{-}|} \arrow{d}[swap]{\forget_T} & \Alg_\oO(\C) \arrow{d}{\forget_\oO} \\
s\C \arrow{r}[swap]{|{-}|} & \C
\end{tikzcd} \]
of the geometric realization functor.  On the other hand, we will assume enough so that there is a monad $\tT_E$ on $s\tA$ admitting a lift
\[ \begin{tikzcd}
\Alg_T(s\C) \arrow[dashed]{r}{E_\star^\lw} \arrow{d}[swap]{\forget_T} & \Alg_{\tT_E}(s\tA) \arrow{d}{\forget_{\tT_E}} \\
s\C \arrow{r}[swap]{E_\star^\lw} & s\tA .
\end{tikzcd} \]
Just as our unstructured functor
\[ E_\star^\lw : s\C_\res \ra s\tA_\res \]
preserves cofibrations between cofibrant objects, so will this lifted functor $E_\star^\lw$ (with respect to an analogously lifted resolution model structure $\Alg_{\tT_E}(s\tA)_\res$), which crucially implies that its localization
\[ E_\star^\lw : \locresAlgT \ra \locresAlgtTE \]
preserves colimits.  Although there will be one more small wrinkle that must be smoothed out, this fact is very nearly the true reason that our topological obstructions can be computed purely algebraically.

\subsection{$E_\star$-localization}

Given our algebraic object $A \in \Alg_\Phi(\tA)$, we can now explain that our ``$n$-stage approximations'' to $A$ will be objects of the $\infty$-category $\locresAlgT$, and our Andr\'e--Quillen cohomology spaces will be certain mapping spaces extracted from the $\infty$-category $\locresAlgtTE$.  However, these facts are technically true but slightly misleading.

To clarify both at once, let us recall for the sake of analogy that in the $\infty$-category $\C$, a map becoming an isomorphism under all of functors $[\Sigma^\beta \Dual E_\alpha , - ]_\C$ implies that it also becomes an isomorphism under the functor $E_\star$, but that the converse is generally false.  Then, in the algebraic case, note that there exists a forgetful functor
\[ \Alg_{\tT_E}(s\tA) \xra{\forget_{\tT_E}} s\tA \xra{s(\forget_{\tA})} s\Set_* , \]
which takes the subcategory $\bW_\res \subset \Alg_{\tT_E}(s\tA)$ into the subcategory $\bW_\KQ \subset s\Set_*$, but not only this subcategory; defining
\[ \bW_{\pi_*} \subset \Alg_{\tT_E}(s\tA) \]
to be the pullback of $\bW_\KQ \subset s\Set_*$, we obtain a reflective localization
\[ \locresAlgtTE \adjarr \locpiAlgtTE . \]
Similarly, in the topological case, the functor
\[ E_\star^\lw : \Alg_T(s\C) \ra \Alg_{\tT_E}(s\tA) \]
takes the subcategory $\bW_\res \subset \Alg_T(s\C)$ into the subcategory $\bW_{\pi_*} \subset \Alg_{\tT_E}(s\tA)$, but not only this subcategory; defining
\[ \bW_{E_\star^\lw} \subset \Alg_T(s\C) \]
to be the pullback of $\bW_{\pi_*} \subset \Alg_{\tT_E}(s\tA)$, we obtain a reflective localization
\[ \locresAlgT \adjarr \locEAlgT . \]

Now, we can clarify that in that the moduli spaces of $n$-stages for $A$ are naturally subgroupoids
\[ \ms{M}_n(A) \subset \locEAlgT \subset \locresAlgT \]
of the reflective localization, while the relevant Andr\'e--Quillen cohomology spaces are computed by mapping in $\locresAlgtTE$ to an object of the reflective subcategory $\locpiAlgtTE \subset \locresAlgtTE$.  Moreover, these two reflective localization functors participate as the downwards arrows in a commutative square
\[ \begin{tikzcd}
\locresAlgT \arrow{r}{E_\star^\lw} \arrow{d} & \locresAlgtTE \arrow{d} \\
\locEAlgT \arrow[dashed]{r}[swap]{E_\star^\lw} & \locpiAlgtTE ,
\end{tikzcd} \]
in which the dotted arrow exists by the universal property of localization and preserves colimits by an easy diagram chase.  This, finally, is the \textit{true} reason that our topological obstructions can be computed purely algebraically.  However, in order to explain this, we must introduce the \textit{spiral exact sequence}.

\subsection{Bigraded $E$-homology groups and the spiral exact sequence}

Given a simplicial object $X \in s\C$, there are two sorts of $E$-homology groups that one might extract: the \textit{classical} $E$-homology groups
\[ \pi_n E_\beta^\lw X = \pi_n [ S^\beta , E \otimes X]^\lw_\C \]
and the \textit{natural} $E$-homology groups
\[ E_{n,\beta}^\natural X = \pi_n \left( \hom_{\ms{D}_{\geq 0}(\C,\GE)}(S^\beta , (E \otimes X)^\lw) \right) . \]
These serve dual purposes.

On the one hand, the classical $E$-homology groups assemble into the $\Etwo$ page of a spectral sequence
\[ \Etwo = \pi_n E_\beta^\lw X \Rightarrow \Einfty = E_{\beta+n}|X| , \]
where we write $S^{\beta+n} = S^\beta \otimes S^n = \Sigma^n S^\beta$.  Of course, this spectral sequence allows us to obtain control over the $E$-homology of the geometric realization $|X|$.

On the other hand, the natural $E$-homology groups are by their very definition much more directly related to the $\infty$-category
\[ \ms{D}_{\geq 0}(\C,\GE) \simeq \locressC . \]
Thus, they participate in a ``cells and disks'' obstruction theory within this $\infty$-category.  In order to explain this, we introduce the notation
\[ D^n_\bD = \Delta^n / \Lambda^n_0 \in (s\Set_*)_\KQ \]
and
\[ S^n_\bD = \Delta^n / \partial \Delta^n \in (s\Set_*)_\KQ . \]
There are evident cofibrations
\[ S^n_\bD \cofibn D^{n+1}_\bD \]
in $(s\Set_*)_\KQ$, which present the maps
\[ S^n \ra D^{n+1} \simeq \pt \]
in $\S_*$.  Moreover, for any $K \in s\S_*$ and any $X \in s\C$, there exists a ``based tensor'' object $K \redtensoring X \in s\C$, which is compatible with the canonical enrichment of $s\C$ over $s\S_*$ (where the basepoint is given by the zero morphism).  Writing $S^\eps \in \GE$ for an arbitrary object, the fact that the model $\infty$-category $s\C_\res$ is \textit{simplicial} implies that the ``cells'' given by
\[ S^n_\bD \redtensoring \const(S^\eps) \in s\C_\res \]
and the ``disks'' given by
\[ D^n_\bD \redtensoring \const(S^\eps) \in s\C_\res \]
together control the theory of \textit{Postnikov towers} in $\locressC$.\footnote{Examining the structure maps of the simplicial sets $D^n_\bD$ and $S^n_\bD$, one sees that they may be seen as corepresenting the \textit{nonabelian $n$-cycles} and \textit{nonabelian normalized $n$-chains} objects of an object $X \in s\C$ (via a ``based cotensor'' bifunctor $- \redcotensoring - : s\S_* \times s\C \ra \C$ which we will not make precise here).}

Now, the (``\bit{localized}'') \bit{spiral exact sequence} relates these two types of $E$-homology, running
\[ \begin{tikzcd}[row sep=0.1cm]
\cdots \arrow{r} & \pi_{i+1}E_\beta X \arrow{r}{\delta} & E_{i-1,\beta+1}^\natural X \arrow{r} & E_{i,\beta}^\natural X \arrow{r} & \pi_iE_\beta X \arrow{r}{\delta} & \cdots \\
& \cdots \arrow{r}{\delta} & E_{0,\beta+1}^\natural X \arrow{r} & E_{1,\beta}^\natural X \arrow{r} & \pi_1 E_\beta X \arrow{r} & 0 .
\end{tikzcd} \]
Note that it is two-thirds natural $E$-homology, and one-third classical $E$-homology.\footnote{In fact, these long exact sequences are what organize into the exact couple defining the above spectral sequence.}  Thus, via the spiral exact sequence, \textit{by controlling the natural $E$-homology groups} (via ``cells and disks'') \textit{we can also control the classical $E$-homology groups} (which assemble into the $\Etwo$ page of the spectral sequence).

\subsection{The tower of moduli spaces}
\label{subsection.informal.overview.tower.of.moduli.spaces}

We can now explain the connection with ``$n$-stages'' for our chosen object $A \in \Alg_\Phi(\tA)$ of which we are interested in realizations.  First of all, an \textit{$\infty$-stage} for $A$ is an object of $\locEAlgT$ whose $\Etwo$ page is simply given by $A$, concentrated in the bottom row; these assemble into a moduli space
\[ \ms{M}_\infty(A) \subset \locEAlgT . \]
We then have the following result, which cements the relationship between realizations of $A$ and their (approximate) resolutions.

\begin{thm}[\ref{infty-stages give realizations}]\label{intro thm realizing infty-stages}
The geometric realization functor
\[ |{-}| : \locEAlgT \ra \Alg_\oO(\leftloc_E(\C)) \]
induces an equivalence
\[ \ms{M}_\infty(A) \xra{\sim} \ms{M}_A . \]
\end{thm}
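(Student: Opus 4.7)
The plan is to realize $|{-}|$ as one half of an adjoint equivalence of moduli spaces, with pseudo-inverse the constant simplicial object functor $\mathrm{const}: \Alg_\oO(\C) \to \Alg_T(s\C)$ (which first pulls back along the operadic resolution $T \to \oO$ and then embeds constantly). The pair $|{-}| \dashv \mathrm{const}$ is an adjunction at the underlying level; it descends to the relevant localizations because $E_\star^\lw \circ \mathrm{const} = \mathrm{const} \circ E_\star$ and because $|{-}|$ tautologically carries $\bW_{E_\star^\lw}$-equivalences to $E$-equivalences. Since $|\mathrm{const}(Y)| \simeq Y$, the argument reduces to three checks: (1) $|X| \in \ms{M}_A$ whenever $X \in \ms{M}_\infty(A)$; (2) $\mathrm{const}(Y) \in \ms{M}_\infty(A)$ whenever $Y \in \ms{M}_A$; and (3) the unit $\eta_X : X \to \mathrm{const}|X|$ is a weak equivalence in $\locEAlgT$ for every $\infty$-stage $X$.

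For (1), I would invoke the spectral sequence $E_2^{n,\beta} = \pi_n E_\beta^\lw X \Rightarrow E_{\beta+n}|X|$ from the preceding subsection. Since $X$ is an $\infty$-stage, its $E_2$ page is concentrated in the row $n=0$ where it equals $A_\beta$, so the spectral sequence collapses and produces an isomorphism $E_\star|X| \cong A$. Using that $T \to \oO$ is an operadic resolution together with the naturality of the edge map, this lifts to an isomorphism of $\Phi$-algebras. For (2), the object $E_\star^\lw \mathrm{const}(Y) = \mathrm{const}(E_\star Y) = \mathrm{const}(A)$ has $\pi_0 = A$ and vanishing higher homotopy, so $\mathrm{const}(Y)$ is an $\infty$-stage; combined with $|\mathrm{const}(Y)| \simeq Y$ this already furnishes essential surjectivity of $|{-}| : \ms{M}_\infty(A) \to \ms{M}_A$. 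For (3), since $\bW_{E_\star^\lw}$ is defined as the preimage of $\bW_{\pi_*} \subset \Alg_{\tT_E}(s\tA)$ under $E_\star^\lw$, it suffices to check that $E_\star^\lw(\eta_X): E_\star^\lw X \to \mathrm{const}(A)$ is a $\pi_*$-equivalence; both sides vanish in positive degree and equal $A$ in degree $0$, and the map on $\pi_0$ is the identity arising from the collapse established in (1). Adjointness then converts this into the full faithfulness of $|{-}|$ on $\ms{M}_\infty(A)$.

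The main obstacle is genuinely embedded in (1): while the collapse of the spectral sequence yields an isomorphism of $(E_\star, E_\star E)$-comodules, showing that it respects the $\Phi$-algebra structure is subtler. Concretely, one has two a priori different $\Phi$-structures on $E_\star|X|$ to compare — one coming from the $\oO$-algebra structure on $|X|$ via the lift $\Alg_\oO(\C) \to \Alg_\Phi(\tA)$, and one transported from $A = \pi_0 E_\star^\lw X$ via the edge isomorphism — and these must be identified. Naturality of the edge map with respect to $T \to \oO$, together with the compatibility of the lift $\Alg_T(s\C) \to \Alg_{\tT_E}(s\tA)$ with geometric realization and with $\pi_0$, should make this a formal consequence of the setup; but it is the one input beyond pure diagram chasing, and once in hand, the rest of the theorem assembles from the adjunction and the characterization of weak equivalences in $\locEAlgT$.
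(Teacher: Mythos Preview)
Your proposal is correct and follows essentially the same approach as the paper: both use the adjunction $|{-}| \dashv \const$, observe that it descends to the $E_\star$-localizations, and invoke the spiral spectral sequence to see that it restricts to an equivalence on the indicated moduli spaces. The paper's proof is in fact much terser than yours (two sentences), simply asserting that the adjunction descends and that the spectral sequence handles the rest; your breakdown into the three checks (1)--(3) unpacks exactly what that second sentence means, and the $\Phi$-algebra compatibility you flag as the main subtlety is indeed resolved by the augmentation $\tT_E \da \Phi$ together with the lift $E_\star^\lw : \Alg_T(s\C) \to \Alg_{\tT_E}(s\tA)$, as you indicate.
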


\noindent We emphasize that the moduli space $\ms{M}_\infty(A) \subset \locEAlgT$ will \textit{not} generally contain all of the objects whose geometric realizations are realizations of $A$: rather, it only contains those whose geometric realizations are realizations of $A$ ``for obvious reasons'' (namely that their spectral sequences collapse immediately).

Let us now move to the bottom of the tower.  A 0-stage for $A$ is an object $X \in \locEAlgT$ whose natural $E$-homology is given by
\[ E_{i,\star} X \cong \left\{ \begin{array}{ll} A , & i = 0 \\ 0 , & i > 0 . \end{array} \right. \]
As the natural $E$-homology groups govern cellular approximations in $\locEAlgT$, the following result should be plausible.

\begin{thm}[\ref{thm moduli space of 0-stages is algebraic}]\label{intro thm moduli space of 0-stages}
The moduli space of 0-stages for $A$ admits a canonical equivalence
\[ \ms{M}_0(A) \simeq B\Aut(A) . \]
\end{thm}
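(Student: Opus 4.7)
The plan is to exhibit the desired equivalence by identifying a canonical 0-stage whose full automorphism space recovers $\Aut(A)$. First, I would construct a distinguished 0-stage $X_0 \in \locEAlgT$. Concretely, pick a monadic bar resolution $B_\bullet^\Phi A$ of $A$ in $\Alg_\Phi(\tA)$, build a simplicial object $Y_\bullet \in s\C$ out of the generators $\Dual E_\alpha$ whose levelwise $E_\star$-homology realizes the underlying free resolution, and set $X_0 = \free_T(Y_\bullet)$. Because $T$ is $\PSG$-free and each $\Dual E_\alpha$ has projective $E$-homology, the K\"unneth collapse described in \Cref{subsection.operadic.structures} guarantees that $E_\star^\lw X_0$ identifies with the free $\tT_E$-algebra resolution $B_\bullet^{\tT_E} A$ of $A$. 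Combined with the spiral exact sequence, this confirms that the natural $E$-homology of $X_0$ is concentrated in simplicial degree $0$ with value $A$, so $X_0$ is indeed a 0-stage. The evident $\Aut(A)$-action on $B_\bullet^\Phi A$ lifts through this construction, producing the canonical map $B\Aut(A) \to \ms{M}_0(A)$.

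Next, I would verify essential surjectivity. Given a 0-stage $Z \in \locEAlgT$, the vanishing $E_{i,\star}^\natural Z = 0$ for $i > 0$ together with the spiral exact sequence forces $\pi_i E_\star^\lw Z = 0$ for $i > 0$ and $\pi_0 E_\star^\lw Z \cong A$; hence $E_\star^\lw Z$ becomes equivalent to the constant simplicial object on $A$ in $\locpiAlgtTE$. A cells-and-disks argument in the simplicial model $\infty$-category $\locresAlgT$, built from the cofibrations $S^n_\bD \redtensoring \const(S^\eps) \cofibn D^{n+1}_\bD \redtensoring \const(S^\eps)$, then allows one to construct a map $X_0 \to Z$ inductively cell by cell and to verify that it is an $E_\star^\lw$-equivalence, since all higher obstructions are governed by the higher natural $E$-homology of $Z$, which vanishes by hypothesis.

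Finally, I would identify the automorphism space of $X_0$. The mapping space $\Map_{\locEAlgT}(X_0,X_0)$ is analyzed by the same cells-and-disks technology: for each $n \geq 1$, its $\pi_n$ is controlled by natural $E$-cohomology classes of $X_0$ valued in shifts of $A$, all of which vanish by the 0-stage hypothesis. Thus $\Map_{\locEAlgT}(X_0,X_0)$ is a discrete groupoid with $\pi_0 = \Aut(A)$, and assembling these computations into a moduli-space statement yields the desired equivalence $B\Aut(A) \xra{\sim} \ms{M}_0(A)$.

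The main obstacle is making the cells-and-disks obstruction theory genuinely effective after passing from $\locresAlgT$ to the further localization $\locEAlgT$ at $\bW_{E_\star^\lw}$: one must ensure that the obstruction classes detected inside the algebraic category $\locpiAlgtTE$ actually obstruct cell attachments at the level of $E_\star^\lw$-equivalences rather than merely $\bW_\res$-equivalences. This in turn rests on the commutative square at the end of \Cref{subsection.operadic.structures} and on the colimit-preservation of the induced functor $E_\star^\lw : \locEAlgT \to \locpiAlgtTE$, which together transport the topological obstruction-theoretic calculation into a purely algebraic one that can be discharged by the vanishing of natural $E$-homology above degree zero.
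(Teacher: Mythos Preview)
Your approach contains a genuine error in step 2. You claim that for a 0-stage $Z$, the vanishing $E_{i,\star}^\natural Z = 0$ for $i>0$ together with the spiral exact sequence forces $\pi_i E_\star^\lw Z = 0$ for all $i>0$, so that $E_\star^\lw Z$ is equivalent to the constant object $A$ in $\locpiAlgtTE$. This is false. Running the localized spiral exact sequence
\[
\cdots \ra \pi_{i+1} E_\star Z \ra \Omega(E^\natural_{i-1,\star} Z) \ra E^\natural_{i,\star} Z \ra \pi_i E_\star Z \ra \cdots
\]
with $E^\natural_{0,\star} Z \cong A$ and $E^\natural_{>0,\star} Z = 0$ gives $\pi_1 E_\star^\lw Z = 0$ but $\pi_2 E_\star^\lw Z \cong \Omega A$, exactly as recorded in the paper's table of homology groups for an $n$-stage. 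So $E_\star^\lw Z$ is \emph{not} concentrated in degree zero, and your cells-and-disks argument for essential surjectivity, which relies on matching $Z$ to an object whose $E_\star^\lw$ is constant at $A$, cannot proceed as written. The same miscalculation undermines your step 3, since the mapping-space analysis would have to account for this nontrivial $\pi_2$.

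The paper's argument avoids all of this by a representability trick you have missed entirely. It observes directly from the definitions that a 0-stage is precisely a topological Eilenberg--Mac~Lane object of type $B_A$, i.e.\ an object representing the functor $Z \mapsto \hom_{\locpiAlgtTE}(E_\star^\lw Z, K_A)$; such an object exists because $E_\star^\lw$ preserves colimits and the source is presentable. Yoneda then identifies the moduli space $\ms{M}_Y(A)$ of such representing objects with the moduli space $\ms{M}_k(A)$ of algebraic Eilenberg--Mac~Lane objects $K_A$ on the algebraic side, and the latter is $B\Aut(A)$ since $\const : \Alg_\Phi(\tA) \hookra \locpiAlgtTE$ is fully faithful. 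No explicit bar construction, no cells-and-disks obstruction argument, and no delicate passage between $\locresAlgT$ and $\locEAlgT$ is needed.
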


Now, if $X \in \locEAlgT$ is a 0-stage for $A$, then its natural $E$-homology is extremely simple.  On the other hand, as dictated by the spiral exact sequence, its classical $E$-homology -- and hence its $\Etwo$ page -- is not quite correct for it to be an $\infty$-stage: instead, we will have
\[ \pi_i E_\star^\lw X \cong \left\{ \begin{array}{ll} A , & i = 0 \\ \Omega A , & i = 2 \\ 0 , & i \notin \{ 0,2 \} . \end{array} \right. \]
In fact, more generally, if $X$ is an $n$-stage for $A$, then we will have
\[ \pi_i E_\star^\lw X \cong \left\{ \begin{array}{ll} A , & i = 0 \\ \Omega^{n+1} A , & i = n+2 \\ 0 , & i \notin \{ 0, n + 2 \} . \end{array} \right. \]
Thus, to move upwards through the tower of moduli spaces is to push the failure of $X$ to be an $\infty$-stage ``further and further away''.\footnote{In fact, the spectral sequence for an $n$-stage will collapse after the $\Enplustwo$ page, directly after cancelling out the entire $(n+2)\nd$ row with the corresponding entries of the $0\th$ row.}  However, we emphasize that the above identification of natural homotopy groups does not alone imply that $X$ is an $n$-stage: it must also have the correct k-invariants (or equivalently, it must also have the correct natural $E$-homology).

We now explain why this iterative \textit{topological} procedure is indeed governed by \textit{algebraic} computations.  (In fact, a somewhat simpler argument will also justify \Cref{intro thm moduli space of 0-stages}.)  This is where we will use the cocontinuity of the functor
\[ E_\star^\lw : \locEAlgT \ra \locpiAlgtTE \]
between presentable $\infty$-categories.\footnote{The adjoint functor theorem implies that this functor admits a right adjoint.  However, it appears extremely unlikely that this lifts to the level of model $\infty$-categories.  And even if it does, the functor $E_\star^\lw$ will not generally be a left Quillen functor, since it generally only preserves weak equivalences between cofibrant objects (instead of all acyclic cofibrations between arbitrary objects).}

Suppose that $X \in \locEAlgT$ is an $(n-1)$-stage for $A$.  As we have just seen, its image
\[ Y = E_\star^\lw X \in \locpiAlgtTE \]
will have its homotopy concentrated in degrees 0 and $n+1$: for brevity, we simply write
\[ \pi_* Y \cong A \times (\Omega^n A)[n+1] . \]
We are interested in modifying $X$ to obtain an $n$-stage for $A$: this entails simultaneously peeling off this copy of $(\Omega^n A)[n+1]$ and replacing it with a copy of $(\Omega^{n+1} A)[n+2]$, all in a way that behaves correctly with respect to the natural $E$-homology groups.

In order to address this question, we first examine the levelwise $E$-homology object $Y = E_\star^\lw X$.  Now, in the $\infty$-category $\locpiAlgtTE$, homotopy groups alone do not characterize equivalence classes: just as with (based) spaces, one must also keep track of the k-invariants.  In this case, since $Y$ only has potentially nonvanishing homotopy in dimensions 0 and $(n+1)$, it participates in a uniquely determined pullback square
\[ \begin{tikzcd}
Y \arrow{r} \arrow{d} & K_A \arrow{d} \\
A \arrow{r}[swap]{\chi_n(Y)} & K_A(\Omega^n A,n+2)
\end{tikzcd} \]
in $\locpiAlgtTE$, in which the objects on the right are \bit{algebraic Eilenberg--Mac Lane objects} with $\pi_* K_A \cong A$ and $\pi_*K_A(\Omega^n A, n+2) \cong A \times (\Omega^n A)[n+2]$, the right vertical map between them is an isomorphism on $\pi_0$, and the map $\chi_n(Y)$ is the unique potentially nontrivial k-invariant of $Y$.  This defines a class
\[ [\chi_n(Y)] \in H^{n+2}(A;\Omega^n A) \]
in the indicated Andr\'e--Quillen cohomology group, and taken over all $(n-1)$-stages $X \in \ms{M}_{n-1}(A)$ this defines a map
\[ \ms{M}_{n-1}(A) \xra{\chi_n} \ms{H}^{n+2}(A;\Omega^n A) \]
to the indicated Andr\'e--Quillen cohomology space.

Returning to topology, we now come to the crucial point: for any object $Z \in \locpiAlgtTE$, the composite functor
\[ \locEAlgT^{op} \xra{E_\star^\lw} \locpiAlgtTE^{op} \xra{ \hom_{\locpiAlgtTE}(-,Z)} \S \]
preserves limits (i.e.\! takes colimits in $\locEAlgT$ to limits in $\S$) and so must be representable (by presentability).  When $Z = K_A$ or $Z = K_A(\Omega^n A,n+2)$, we obtain \bit{topological Eilenberg--Mac Lane objects}, which we respectively denote by $B_A$ and $B_A(\Omega^n A,n+2)$.

Now, if there exists an $n$-stage $\widetilde{X}$ lifting $X$, then Postnikov theory in $\locEAlgT$ implies that it must fit into a pullback square
\[ \begin{tikzcd}
\widetilde{X} \arrow{r} \arrow{d} & B_A \arrow{d} \\
X \arrow{r} & B_A(\Omega^n A,n+2) ,
\end{tikzcd} \]
in which the right vertical map classifies the standard map $K_A \ra K_A(\Omega^n A,n+2)$.  Conversely, if we define $\widetilde{X}$ to be such a pullback, then it will be an $n$-stage if and only if the lower map corresponds to an equivalence
\[ E_\star^\lw X = Y \xra{\sim} K_A(\Omega^n A,n+2) . \]
As we have just seen, the equivalence class of $Y$ is entirely classified by a k-invariant
\[ [\chi_n(Y) ] \in H^{n+2}(A;\Omega^n A) , \]
and it is not hard to show that such an equivalence $Y \xra{\sim} K_A(\Omega^n A ,n+2)$ exists if and only this k-invariant vanishes.

All in all, an expansion of this argument can be used to prove the following.

\begin{thm}[\ref{pullback square to climb tower}]\label{thm intro pullback square to climb tower}
For any $n \geq 1$, there is a natural pullback square
\[ \begin{tikzcd}
\ms{M}_n(A) \arrow{r} \arrow{d} & B\Aut(A,\Omega^n A) \arrow{d} \\
\ms{M}_{n-1}(A) \arrow{r} & \what{\ms{H}}^{n+2}(A;\Omega^n A).
\end{tikzcd} \]
\end{thm}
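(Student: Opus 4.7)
The plan is to realize the claimed pullback square as an instance of topological Postnikov theory in $\locEAlgT$, with the lower horizontal map encoding the obstruction to extending an $(n-1)$-stage to an $n$-stage. Conceptually, an $n$-stage for $A$ should amount to an $(n-1)$-stage $X$ together with a trivialization of its $(n+2)$nd k-invariant, the latter recorded up to the action of $\Aut(A,\Omega^n A)$.

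First, I would construct the lower map $\chi_n : \ms{M}_{n-1}(A) \to \what{\ms{H}}^{n+2}(A;\Omega^n A)$. For $X \in \ms{M}_{n-1}(A)$, the object $Y = E_\star^\lw X \in \locpiAlgtTE$ has $\pi_* Y$ concentrated in degrees $0$ and $n+1$, as described in \Cref{subsection.informal.overview.tower.of.moduli.spaces}. By the algebraic Postnikov theory of \Cref{section.homotopical.algebra}, $Y$ is classified up to equivalence by a canonical k-invariant $\chi_n(Y) : A \to K_A(\Omega^n A, n+2)$, representing a class in $H^{n+2}(A;\Omega^n A)$. The assignment $X \mapsto \chi_n(Y)$ is $\Aut(A,\Omega^n A)$-equivariant in the target (via reparametrization of $(A,\Omega^n A)$), so it descends to a map to the homotopy quotient $\what{\ms{H}}^{n+2}(A;\Omega^n A)$. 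The right vertical map $B\Aut(A,\Omega^n A) \to \what{\ms{H}}^{n+2}(A;\Omega^n A)$ is then the one described in \Cref{subsection.obstructions.to.realization}: the basepoint of $\ms{H}^{n+2}(A;\Omega^n A)$ (representing the zero k-invariant) is $\Aut(A,\Omega^n A)$-fixed, so its inclusion induces the required map on homotopy quotients. With these two maps in hand, the pullback consists of pairs $(X,\gamma)$ where $X \in \ms{M}_{n-1}(A)$ and $\gamma$ is a path in $\what{\ms{H}}^{n+2}$ from $\chi_n(Y)$ to the basepoint, together with the automorphism data encoded by $B\Aut(A,\Omega^n A)$.

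Next, I would identify this pullback with $\ms{M}_n(A)$ via the topological Postnikov theory of \Cref{section.homotopical.topology}. The topological Eilenberg--Mac Lane objects $B_A, B_A(\Omega^n A,n+2) \in \locEAlgT$ exist by the representability argument of \Cref{subsection.informal.overview.tower.of.moduli.spaces}, applied to the cocontinuous functor $E_\star^\lw$, and by their defining universal properties satisfy $E_\star^\lw B_A \simeq K_A$ and $E_\star^\lw B_A(\Omega^n A,n+2) \simeq K_A(\Omega^n A, n+2)$. Topological Postnikov theory then says that an $n$-stage $\widetilde{X}$ lifting $X$ is precisely a pullback square
\[
\begin{tikzcd}
\widetilde{X} \arrow{r} \arrow{d} & B_A \arrow{d} \\
X \arrow{r} & B_A(\Omega^n A, n+2)
\end{tikzcd}
\]
in $\locEAlgT$ whose image under $E_\star^\lw$ exhibits the vanishing of $[\chi_n(Y)]$. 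Matching this data against the pullback description of the previous paragraph, under the identifications $E_\star^\lw B_A \simeq K_A$ etc., yields the required natural equivalence.

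The main obstacle I anticipate is promoting this equivalence from the level of points and paths to a coherent equivalence of $\infty$-groupoids naturally in $A$, while respecting $\Aut(A,\Omega^n A)$-equivariance at both the algebraic and topological levels. This requires that the algebraic k-invariant construction, the topological Eilenberg--Mac Lane assignments, and the natural identifications $E_\star^\lw B_A \simeq K_A$ all assemble into a single coherent square of moduli spaces; crucially, one must leverage the cocontinuity of $E_\star^\lw$ to match topological pullbacks with algebraic ones, and then invoke the spiral exact sequence to confirm that the natural $E$-homology of the constructed $\widetilde{X}$ is $A$ concentrated in degree $0$ while its classical $E$-homology acquires an $\Omega^{n+1}A$ in degree $n+2$, precisely certifying it as an $n$-stage rather than a mere lift at the level of algebraic Postnikov sections.
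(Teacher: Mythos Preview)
Your overall strategy matches the paper's: an $n$-stage is an $(n-1)$-stage together with a trivialization of its algebraic k-invariant, packaged equivariantly for $\Aut(A,\Omega^n A)$. Two points need attention.

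First, the topological pullback producing an $n$-stage from an $(n-1)$-stage $X$ uses $B_A(\Omega^n A,n+1)$, not $B_A(\Omega^n A,n+2)$. Since $E_\star^\lw X$ has homotopy $A \times (\Omega^n A)[n+1]$, it is of type $K_A(\Omega^n A,n+1)$; the condition for the pullback $\widetilde{X}$ to be an $n$-stage is that the classifying map $E_\star^\lw X \to K_A(\Omega^n A,n+1)$ be an \emph{equivalence}, verified via the long exact sequence in classical $E$-homology for the pullback square (see \Cref{try to build n-stage from (n-1)-stage}). The index $n+2$ appears only on the algebraic side, as the degree of the k-invariant $A \to K_A(\Omega^n A,n+2)$ classifying $E_\star^\lw X$. (The informal overview in \Cref{subsection.informal.overview.tower.of.moduli.spaces} conflates these, so you are in good company.)

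Second, and more substantively: the coherence obstacle you flag is the actual content of the proof, and the paper does not resolve it by constructing an equivariant map and passing to homotopy quotients. Instead it works throughout with \emph{moduli spaces of diagrams}. The corners $B\Aut(A,\Omega^n A)$ and $\what{\ms{H}}^{n+2}(A;\Omega^n A)$ are identified (via \Cref{moduli of alg EM maps} and \Cref{identify moduli of two alg EM maps as htpy quotient of coh space}) with $\ms{M}_k(K_A(\Omega^n A,n+2) \loopla K_A)$ and $\ms{M}_k(K_A \loopra K_A(\Omega^n A,n+2) \loopla K_A)$ respectively, so the equivariance is built in from the start rather than imposed afterward. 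The heart of the argument is \Cref{pullback square extending a given (n-1)-stage to an n-stage}: for each fixed $(n-1)$-stage $Z$, the difference construction together with the universal property of $B_A(\Omega^n A,n+1)$ yields a pullback square relating the moduli of $n$-stages lifting $Z$ to a moduli space of algebraic diagrams, fibered over $\ms{M}_Y(Z) \to \ms{M}_k(E_\star^\lw Z)$. Assembling over all $Z$ (with a case split on whether $E_\star^\lw Z \simeq K_A(\Omega^n A,n+1)$) produces the theorem. Your proposal names the right ingredients but does not supply this mechanism; without it, the passage from pointwise obstruction theory to a coherent pullback of moduli spaces remains unjustified.
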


\noindent This is the final ingredient in our generalized Goerss--Hopkins obstruction theory, which allows us to compute the purely algebraic obstructions to the inductive passage up the tower of moduli spaces
\[ \begin{tikzcd}[column sep=-0.5cm]
\ms{M}_A & \ms{M}_\infty(A) \arrow{l}[swap]{\sim} \arrow{d}{\lim} \\
& \vdots \arrow{d} \\
& \ms{M}_n(A) \arrow{r} \arrow{d} & B\Aut(A,\Omega^n A) \arrow{d} \\
& \ms{M}_{n-1}(A) \arrow{r} \arrow{d} & \what{\ms{H}}^{n+2}(A;\Omega^n A) \\
& \vdots \arrow{d} \\
& B\Aut(A) \simeq \ms{M}_0(A). \ \ \ \ \ \ \ \ \ \ \ \ \ \ \ \
\end{tikzcd} \]

\section{The resolution model structure}
\label{section.resn.model.str}

We lift results from \cite[Chapter II]{GJnew} in order to provide sufficient conditions for the existence of certain simplicial model $\infty$-category structures.

\begin{rem}
In this section, we will be constructing certain \textit{resolution} model structures.  These are closely related to the model structures of \cite{DKS-E2} and \cite{BousCosimp}; indeed, it is straightforward (but tedious) to verify that the proof of \cite[Theorem 3.3]{BousCosimp} immediately generalizes to an arbitrary right proper model $\infty$-category $\M$ equipped with a set of h-cogroup objects (in the model $\infty$-categorical sense).  However, those model structures are in a sense more difficult: they're built by modifying $(s\M)_\Reedy$, and in the end the fibrant objects are exactly the Reedy fibrant objects.  By contrast, using model $\infty$-categories effectively allows us to obtain a model structure presenting the desired $\infty$-category by starting with a \textit{trivial} model $\infty$-category (so that the Reedy model structure on simplicial objects therein will also be trivial).
\end{rem}

\subsection{Enrichments and bitensorings in the presence of presentability}

We begin by providing sufficient conditions for constructing enrichments and bitensorings among presentable $\infty$-categories, and for lifting adjunctions between $\infty$-categories equipped with these to enriched adjunctions.

\begin{prop}\label{action in PrL gives enr and bitens}
Let $\V \in \Alg(\PrL)$ be a presentably monoidal $\infty$-category, and let $\D \in \Mod_\V(\PrL)$ be a presentable $\infty$-category equipped with a left action of $\V$.  Then this action $- \tensoring - : \V \times \D \ra \D$ extends to an enrichment and bitensoring of $\D$ over $\V$, encoded by a two-variable adjunction
\[ \left( \V \times \D \xra{- \tensoring -} \D \ , \ \V^{op} \times \D \xra{ - \cotensoring - } \D \ , \D^{op} \times \D \xra{\enrhom_\D(-,-)} \V \right) . \]
\end{prop}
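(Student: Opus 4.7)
The overall strategy is to produce the cotensor $-\cotensoring-$ and the enriched hom $\enrhom_\D(-,-)$ by applying the adjoint functor theorem in each variable of the action bifunctor $-\tensoring-$, and then to check that the three resulting bifunctors participate in a two-variable adjunction.

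The hypothesis $\D \in \Mod_\V(\PrL)$ unpacks to say that the action $-\tensoring- : \V \times \D \ra \D$ is cocontinuous separately in each variable, since morphisms in $\PrL$ are precisely the cocontinuous functors between presentable $\infty$-categories. The first step is then to apply the (presentable) adjoint functor theorem pointwise: for each fixed $v \in \V$, the endofunctor $v \tensoring - : \D \ra \D$ is cocontinuous between presentable $\infty$-categories, so it admits a right adjoint which I define to be $v \cotensoring -$; symmetrically, for each fixed $d \in \D$, the functor $- \tensoring d : \V \ra \D$ admits a right adjoint which I define to be $\enrhom_\D(d,-)$.

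The second step is to promote these pointwise right adjoints to honest bifunctors $- \cotensoring - : \V^{op} \times \D \ra \D$ and $\enrhom_\D(-,-) : \D^{op} \times \D \ra \V$. This uses the parametrized adjoint functor theorem: if $F : \mathcal{E} \ra \Fun(\mathcal{X},\mathcal{Y})$ is a functor such that each value $F(e)$ admits a right adjoint, then the collection of pointwise right adjoints canonically assembles into a functor $\mathcal{E}^{op} \ra \Fun(\mathcal{Y},\mathcal{X})$. Applying this to each of the two currying adjunctions of the bifunctor $-\tensoring-$ yields the desired bifunctors. The two-variable adjunction
\[ \hom_\D(v \tensoring d, d') \simeq \hom_\V(v, \enrhom_\D(d,d')) \simeq \hom_\D(d, v \cotensoring d') \]
is then automatic from the construction, being a natural amalgam of the pointwise equivalences of mapping spaces.

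The principal obstacle is the functoriality step: turning pointwise right adjoints into bifunctors in the $\infty$-categorical setting is not formal and requires the parametrized adjoint functor theorem, which is available as standard machinery in the theory of presentable $\infty$-categories (in particular as developed in Lurie's work). Once that input is accepted, the remaining verifications reduce to unwinding the defining universal properties of the constructed adjoints and invoking presentability to handle the variance.
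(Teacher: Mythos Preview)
Your proposal is correct and takes essentially the same approach as the paper: the paper's proof simply observes that the action being in $\PrL$ forces separate cocontinuity in each variable, and then invokes presentability to obtain the co/representability defining a two-variable adjunction. You have spelled out in more detail how that second step works (pointwise adjoint functor theorem, then assembling into bifunctors via the parametrized version), but the underlying argument is the same.
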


\begin{proof}
The fact that the action takes place in the symmetric monoidal $\infty$-category $\PrL$ guarantees that it commutes with colimits separately in each variable.  From here, presentability guarantees the co/representability required by the definition of a two-variable adjunction.
\end{proof}

\begin{lem}\label{lw tensoring is bicocontinuous}
Let $\D$ be a bicomplete $\infty$-category, and let $\I \in \Cati$ be a diagram $\infty$-category.  Then the levelwise tensoring of $\Fun(\I,\D)$ over $\Fun(\I,\S)$ commutes with colimits separately in each variable and extends to an action $\Fun(\I,\D) \in \LMod_{\Fun(\I,\S)}(\PrL)$.
\end{lem}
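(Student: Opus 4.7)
The plan is to prove the two assertions separately. For bicocontinuity of the levelwise tensoring, I would reduce to two standard facts. First, colimits in $\Fun(\I, \C)$ for any cocomplete $\C$ are computed pointwise. Second, the canonical tensoring $- \tensoring - : \S \times \D \to \D$ on a bicomplete $\D$ is bicocontinuous in each variable (this being essentially the content of $\D$ being a module over $\S$, the unit of the tensor product on $\PrL$). Combining these, for any colimit diagram $\{K_\alpha\}$ in $\Fun(\I, \S)$ and any $X \in \Fun(\I, \D)$, one has pointwise
\[
\bigl((\colim_\alpha K_\alpha) \tensoring X\bigr)(i) \simeq (\colim_\alpha K_\alpha(i)) \tensoring X(i) \simeq \colim_\alpha \bigl(K_\alpha(i) \tensoring X(i)\bigr),
\]
which is exactly $\bigl(\colim_\alpha (K_\alpha \tensoring X)\bigr)(i)$; the same argument applies in the second variable.

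For the module structure, I plan to invoke a general principle: the functor $\Fun(\I, -) : \PrL \to \PrL$ is lax symmetric monoidal with respect to Lurie's tensor product on $\PrL$. This may be seen either by direct construction (using that $\Fun(\I, -)$ preserves products and that Lurie's tensor product is characterized by bicocontinuous functors out of the product, which $\Fun(\I, -)$ respects) or by appeal to \cite{HA}. Lax symmetric monoidality implies that $\Fun(\I, -)$ sends algebra objects to algebra objects and module objects to module objects. Applied to the canonical $\S$-action on $\D$, this produces an action $\Fun(\I, \D) \in \LMod_{\Fun(\I, \S)}(\PrL)$, and a routine unwinding of definitions identifies it with the levelwise tensoring (e.g.\ by checking what the action does on representable diagrams $\Delta^0 \to \I$).

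The main obstacle is the module structure itself: while the pointwise nature of the action is transparent and its bicocontinuity follows from pointwise colimits, promoting the pointwise action to a coherent $\LMod$-structure in $\PrL$ requires the $\infty$-categorical bookkeeping sketched above. By contrast, the first assertion of the lemma is essentially immediate.
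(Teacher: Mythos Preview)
Your proposal is correct and follows essentially the same approach as the paper: both obtain the module structure by applying $\Fun(\I,-)$ to the $\S$-action on $\D$, and both verify bicocontinuity via the pointwise computation of colimits in functor $\infty$-categories together with the bicocontinuity of $-\tensoring-:\S\times\D\to\D$.

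The one difference worth noting is that the paper works with the cartesian monoidal structure on $\Cati$ rather than the Lurie tensor product on $\PrL$: since $\Fun(\I,-)$ preserves products, it is automatically lax monoidal on $(\Cati,\times)$, which immediately yields $\Fun(\I,\D)\in\LMod_{\Fun(\I,\S)}(\Cati)$; the separately verified bicocontinuity then upgrades this to $\PrL$. This avoids invoking lax monoidality of $\Fun(\I,-)$ with respect to the Lurie tensor product, which is true but requires more to justify.
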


\begin{proof}
The levelwise tensoring is given by the composite
\[ \Fun(\I,\S) \times \Fun(\I,\D) \simeq \Fun(\I,\S \times \D) \xra{\Fun(\I, - \tensoring - )} \Fun(\I,\D) ; \]
indeed, we obtain $\Fun(\I,\D) \in \LMod_{\Fun(\I,\S)}(\Cati)$ by applying $\Fun(\I,-)$ to the data of $\D \in \LMod_\S(\Cati)$.  Moreover, by definition the tensoring $- \tensoring - : \S \times \D \ra \D$ commutes with colimits separately in each variable; as colimits in a functor $\infty$-category are computed pointwise, the above composite commutes with colimits separately in each variable as well.
\end{proof}

\begin{cor}\label{simp objs in presentable are enr and bitens over sspaces}
For any $\D \in \PrL$, the levelwise tensoring of $s\D$ over $s\S$ extends to an enrichment and bitensoring.
\end{cor}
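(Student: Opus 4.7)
The plan is to combine the two preceding results, applied to the diagram $\infty$-category $\I = \Delta^{op}$.

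First, I would specialize \Cref{lw tensoring is bicocontinuous} to the case $\I = \Delta^{op}$. Since $\D \in \PrL$ is in particular bicomplete, the lemma yields a levelwise action
\[ - \tensoring - : s\S \times s\D \lra s\D \]
that preserves colimits separately in each variable, and this action refines to a module structure $s\D \in \LMod_{s\S}(\PrL)$.

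Second, I would observe that taking $\D = \S$ in the same lemma (or, equivalently, noting that $s\S = \Fun(\Delta^{op}, \S)$ is a presentably symmetric monoidal $\infty$-category under the pointwise cartesian structure) shows that $s\S \in \Alg(\PrL)$. Hence the data produced in the previous step fits the hypothesis of \Cref{action in PrL gives enr and bitens} with $\V = s\S$.

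Finally, invoking \Cref{action in PrL gives enr and bitens} upgrades the action $s\S \times s\D \to s\D$ to a two-variable adjunction
\[ \left( s\S \times s\D \xra{- \tensoring -} s\D \ , \ s\S^{op} \times s\D \xra{ - \cotensoring - } s\D \ , \ s\D^{op} \times s\D \xra{\enrhom_{s\D}(-,-)} s\S \right), \]
which is precisely the enrichment and bitensoring required. No step looks to be a genuine obstacle here; the only thing one might worry about is that the monoidal structure on $s\S$ used to view it as an object of $\Alg(\PrL)$ agrees with the one implicit in the lemma, but both are just the pointwise product inherited from $\S$, so this is automatic.
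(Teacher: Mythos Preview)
Your proof is correct and follows essentially the same approach as the paper: apply \Cref{lw tensoring is bicocontinuous} with $\I = \bD^{op}$ to obtain $s\D \in \LMod_{s\S}(\PrL)$, then invoke \Cref{action in PrL gives enr and bitens}. The paper's proof is a one-line version of exactly this, and your additional remarks about $s\S \in \Alg(\PrL)$ and the compatibility of monoidal structures are harmless elaborations.
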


\begin{proof}
By \Cref{lw tensoring is bicocontinuous}, the levelwise tensoring defines an action $s\D \in \Mod_{s\S}(\PrL)$, and so the claim follows from \Cref{action in PrL gives enr and bitens}.
\end{proof}

\begin{obs}
Given two $\infty$-categories $\D$ and $\E$, one can define an adjunction $\D \adjarr \E$ to be a functor $A : \D^{op} \times \E \ra \S$ satisfying certain co/representability conditions (see \cite[item (25) of \sec A]{MIC-sspaces}).  If for some closed monoidal $\infty$-category $\V$ these $\infty$-categories are equipped with lifts $\ul{\D}$ and $\ul{\E}$ to $\V$-enriched $\infty$-categories, then an enriched adjunction $\ul{\D} \adjarr \ul{\E}$ can be defined as a functor $\ul{A} : \D^{op} \times \E \ra \V$ satisfying analogous co/representability conditions.  (This recovers an ordinary adjunction between the underlying unenriched $\infty$-categories by postcomposition with the functor $\enrhom_\V(\unit_\V , -) : \V \ra \S$.)
\end{obs}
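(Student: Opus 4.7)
The plan is to unpack the two definitional claims in the Observation and then verify the compatibility assertion in its parenthetical remark. The substantive content is that the $\V$-valued ``adjunction-as-bifunctor'' definition reduces to the standard $\S$-valued one after postcomposition with the underlying-space functor $U := \enrhom_\V(\unit_\V, -)$, and so it suffices to show that this postcomposition transports the enriched co/representability conditions on $\ul{A}$ to the ordinary ones on the resulting $A$.

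First, I would recall (from the cited \cite[item (25) of \sec A]{MIC-sspaces}) the characterization of adjunctions via hom-bifunctors. An adjunction $L : \D \adjarr \E : R$ is equivalent data to the bifunctor $A : \D^{op} \times \E \ra \S$ given by $A(d,e) = \hom_\E(Ld, e) \simeq \hom_\D(d, Re)$; conversely, given any such $A$ with $A(d, -)$ co-representable in $e$ and $A(-, e)$ representable in $d$, the (co)representing objects assemble into functors $L$ and $R$ exhibiting an adjunction, and the (co)Yoneda lemma guarantees that this process is mutually inverse to the previous. The $\V$-enriched version of this statement is then set up identically, with $\S$ replaced by $\V$, mapping spaces replaced by the enriched hom-objects $\enrhom_{\ul{\D}}(-,-)$, and (co)representability interpreted in the $\V$-enriched sense.

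Next, for the parenthetical assertion, the key input is that $U = \enrhom_\V(\unit_\V, -): \V \ra \S$ is precisely the underlying-space functor of the self-enrichment of $\V$, and that for any $\V$-enriched $\infty$-category $\ul{\D}$ the underlying $\infty$-category $\D$ satisfies
\[ \hom_\D(d, d') \simeq U(\enrhom_{\ul{\D}}(d, d')) \]
naturally in $d, d'$. Given an enriched representation $\ul{A}(-, e) \simeq \enrhom_{\ul{\D}}(-, Re)$ natural in $d$, postcomposing with $U$ then yields the unenriched representation $A(-, e) \simeq \hom_\D(-, Re)$; the co-representability condition establishing $L$ is formally dual, producing the same pair $(L,R)$ that would arise directly from the underlying bifunctor.

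The main obstacle is a purely bookkeeping one: to ensure that the equivalence $\hom_\D(-, -) \simeq U \circ \enrhom_{\ul{\D}}(-, -)$ is built into whatever formalism of $\V$-enriched $\infty$-categories one works with, and to confirm that postcomposition with $U$ preserves (co)representability together with the accompanying naturality data. In the setup of \cite[\sec A]{MIC-sspaces}, this compatibility is essentially the defining property of the passage to the underlying $\infty$-category, so once one tracks the relevant natural equivalences through postcomposition, the assertion reduces to a short formal chase.
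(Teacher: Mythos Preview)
Your elaboration is reasonable, but note that in the paper this statement is an \emph{Observation} and carries no proof at all: it is purely declarative, recording a definition of enriched adjunction via a $\V$-valued bifunctor and pointing out (without argument) that postcomposing with $\enrhom_\V(\unit_\V,-)$ recovers the unenriched notion. So there is nothing to compare your approach against; the paper simply takes these facts as evident from the cited reference and from the standard meaning of ``underlying $\infty$-category'' in the enriched setting. Your write-up is a correct unpacking of why the parenthetical holds, but it goes beyond what the paper does, which is nothing.
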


\begin{lem}\label{lift unenr adjn betw enr and bitensd cats lifts to enr adjn}
Let $\V \in \Alg(\Cati)$ be a presentable monoidal $\infty$-category, suppose that two $\infty$-categories $\D$ and $\E$ are enriched and bitensored over $\V$, and suppose we are given an adjunction $F : \D \adjarr \E : G$ between their underlying $\infty$-categories.  Suppose further that we have a natural equivalence $F(- \tensoring_\D -) \simeq (-) \tensoring_\E F(-)$ in $\Fun(\V \times \D , \E)$.  Then the adjunction $F \adj G$ lifts to a $\V$-enriched adjunction $\ul{F} : \ul{\D} : \adjarr \ul{\E} : \ul{G}$, and moreover we have a natural equivalence $G( - \cotensoring_\E - ) \simeq (-) \cotensoring_\D G(-)$ in $\Fun(\V^{op} \times \E,\D)$.
\end{lem}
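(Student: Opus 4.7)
The plan is to produce the $\V$-enriched adjunction by exhibiting a single functor $\ul{A} : \D^{op} \times \E \ra \V$ that is corepresented by $F$ in the first variable and represented by $G$ in the second, and then to deduce the cotensor compatibility from the resulting enriched hom identification.

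First I would set $\ul{A}(X, Y) := \enrhom_\E(F(X), Y)$, which is manifestly functorial in $(X, Y) \in \D^{op} \times \E$ via $F^{op} \times \mathrm{id}_\E$ postcomposed with the $\V$-valued hom of $\ul{\E}$; by construction, together with the two-variable adjunction for $\ul{\E}$ supplied by \Cref{action in PrL gives enr and bitens}, it is corepresented by $\ul{F}$ in the first variable. The main step is then to construct a natural equivalence $\enrhom_\E(F(X), Y) \simeq \enrhom_\D(X, G(Y))$ in $\Fun(\D^{op} \times \E, \V)$, which will simultaneously identify $\ul{A}$ as being represented by $\ul{G}$ in the second variable. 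For this I invoke Yoneda in $\V$: for any $V \in \V$, I assemble the chain
\begin{align*}
\hom_\V(V, \enrhom_\E(F(X), Y)) &\simeq \hom_\E(V \tensoring_\E F(X), Y) \\
&\simeq \hom_\E(F(V \tensoring_\D X), Y) \\
&\simeq \hom_\D(V \tensoring_\D X, G(Y)) \\
&\simeq \hom_\V(V, \enrhom_\D(X, G(Y))),
\end{align*}
in which the second step uses the hypothesized equivalence $F(- \tensoring_\D -) \simeq (-) \tensoring_\E F(-)$ in $\Fun(\V \times \D, \E)$, the third uses the unenriched adjunction $F \dashv G$, and the outermost steps are the defining two-variable adjunctions of the enrichments.

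For the cotensor claim a formally dual computation applies: mapping in from an arbitrary test object $X \in \D$, I compute
\begin{align*}
\hom_\D(X, G(V \cotensoring_\E Y)) &\simeq \hom_\E(F(X), V \cotensoring_\E Y) \\
&\simeq \hom_\V(V, \enrhom_\E(F(X), Y)) \\
&\simeq \hom_\V(V, \enrhom_\D(X, G(Y))) \\
&\simeq \hom_\D(X, V \cotensoring_\D G(Y)),
\end{align*}
and Yoneda in $\D$ promotes this to the required natural equivalence $G(- \cotensoring_\E -) \simeq (-) \cotensoring_\D G(-)$ in $\Fun(\V^{op} \times \E, \D)$.

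The main obstacle is upgrading these pointwise mapping-space computations to honest equivalences of functors of $\infty$-categories. In a $1$-categorical setting this would be a formality, but $\infty$-categorically one must realize each step of the chain as a natural transformation of functors $\D^{op} \times \E \times \V^{op} \ra \S$ (resp.\ $\D^{op} \times \V^{op} \times \E \ra \S$) before invoking representability. I would handle this by expressing each link as a composite of units and counits of the two-variable adjunctions of \Cref{action in PrL gives enr and bitens}, of the adjunction $F \dashv G$, and of the hypothesized equivalence in $\Fun(\V \times \D, \E)$; since these are all given at the functorial level, naturality is built into the construction rather than verified pointwise, and Yoneda applied in the appropriate functor $\infty$-category extracts the desired equivalences of $\V$-valued (resp.\ $\D$-valued) functors.
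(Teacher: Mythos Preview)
Your proposal is correct and follows essentially the same approach as the paper: both arguments run the identical chain of tensor/cotensor adjunction equivalences and invoke Yoneda. The only cosmetic differences are that the paper proves the cotensor compatibility first and then constructs the enriched bifunctor by building a functor $\D^{op} \times \E \to \P(\V)$ and using presentability of $\V$ to factor it through the Yoneda embedding, whereas you define $\ul{A}$ directly as $\enrhom_\E(F(-),-)$ and identify it with $\enrhom_\D(-,G(-))$; these are two packagings of the same computation.
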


\begin{proof}
First of all, the final claim follows from our assumption (and the Yoneda lemma) by the string of natural equivalences
\begin{align*}
\hom_\D(d,G(v \cotensoring_\E e)) & \simeq \hom_\E ( F(d) , v \cotensoring_\E e) \simeq \hom_\E(v \tensoring_\E F(d) , e) \\
& \simeq \hom_\E(F(v \tensoring_\D d) , e) \simeq \hom_\D(v \tensoring_\D d,G(e)) \\
& \simeq \hom_\D(d,v \cotensoring_\D G(e)) .
\end{align*}
Now, consider the functor $\D^{op} \times \E \ra \P(\V)$ which takes a pair of objects $(d^\circ,e) \in \D^{op} \times \E$ to the presheaf taking $v^\circ \in \V^{op}$ to the space
\[ \hom_\D(v \tensoring_\D d , Ge) \simeq \hom_\E(F(v \tensoring_\D d),e ) \simeq \hom_\E(v \tensoring_\E F(d) , e) \simeq \hom_\E(F(d),v \cotensoring_\E e) . \]
Since $\V$ is presentable, this factors through the Yoneda embedding $\V \xhookra{\Yo_\V} \P(\V)$.  By construction, this defines an enriched adjunction $\ul{F} : \ul{\D} \adjarr \ul{\E} : \ul{G}$ lifting the original adjunction $F \adj G$.
\end{proof}

\begin{cor}\label{enr and bitens of algs over a monad on sD}
For any $\D \in \PrL$ and any monad $t \in \Alg(\End(s\D))$, we obtain a canonical enrichment and bitensoring of $\Alg_t(s\D)$ over $s\S$, and moreover the adjunction $\free_t : s\D \adjarr \Alg_t(s\D) : \forget_t$ is canonically enriched over $s\S$.
\end{cor}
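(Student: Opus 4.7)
The plan is to lift the enrichment and bitensoring of $s\D$ over $s\S$ to $\Alg_t(s\D)$ by realizing the latter as an object of $\LMod_{s\S}(\PrL)$, then invoke \Cref{action in PrL gives enr and bitens} and \Cref{lift unenr adjn betw enr and bitensd cats lifts to enr adjn} in turn.

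First I would apply \Cref{lw tensoring is bicocontinuous} with $\I = \Delta^{op}$ to record that the levelwise tensoring promotes $s\D$ to an object of $\LMod_{s\S}(\PrL)$. Second, interpreting the monad $t \in \Alg(\End(s\D))$ as an algebra in the endomorphism object of $s\D$ computed internally to $\PrL$, so that $t$ is an $s\S$-linear colimit-preserving monad, standard module theory in $\PrL$ produces $\Alg_t(s\D) \in \LMod_{s\S}(\PrL)$. Concretely, the $s\S$-action on $\Alg_t(s\D)$ is characterized by the requirement that $\forget_t$ be a morphism in $\LMod_{s\S}(\PrL)$, i.e.\ by $\forget_t(K \tensoring A) \simeq K \tensoring \forget_t(A)$; the $t$-algebra structure on the right-hand side is supplied by the commutation $t(K \tensoring X) \simeq K \tensoring t(X)$ that is part of the datum of $t$ as an $s\S$-linear monad.

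Third, applying \Cref{action in PrL gives enr and bitens} to $\Alg_t(s\D) \in \LMod_{s\S}(\PrL)$ yields the desired enrichment and bitensoring of $\Alg_t(s\D)$ over $s\S$. Fourth, since $\free_t$ is the left adjoint in $\PrL$ of the $s\S$-linear functor $\forget_t$, it is itself $s\S$-linear; in particular it satisfies the natural equivalence $\free_t(K \tensoring_{s\D} X) \simeq K \tensoring_{\Alg_t(s\D)} \free_t(X)$ required as input to \Cref{lift unenr adjn betw enr and bitensd cats lifts to enr adjn}. That lemma then promotes $\free_t \dashv \forget_t$ to an $s\S$-enriched adjunction, simultaneously yielding the symmetric commutation $\forget_t(K \cotensoring B) \simeq K \cotensoring \forget_t(B)$ for cotensorings.

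The main obstacle is the second step: producing the $s\S$-module structure on $\Alg_t(s\D)$. This rests on correctly reading $\End(s\D)$ as the monoidal endomorphism $\infty$-category internal to $\LMod_{s\S}(\PrL)$ (rather than merely to $\Cati$), so that a monad $t$ in it comes pre-equipped with the $s\S$-linearity data needed for $t$-algebras to inherit an $s\S$-action in a colimit-preserving manner. Everything else is a formal invocation of the two previously established statements.
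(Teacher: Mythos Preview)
Your second step contains a genuine gap. The hypothesis is only that $t \in \Alg(\End(s\D))$, i.e.\ that $t$ is an arbitrary monad on $s\D$; nothing in the statement lets you read $\End(s\D)$ as the endomorphism object computed internally to $\LMod_{s\S}(\PrL)$. Even if $t$ is colimit-preserving, it need not be $s\S$-linear: for instance $\Fun^L(s\S,s\S)\simeq\Fun(\bD,s\S)$ is strictly larger than $\Fun^L_{s\S}(s\S,s\S)\simeq s\S$. More to the point, the monad that actually matters later in the paper---the one underlying $\free_T \dashv \forget_T$ for a simplicial operad $T$---is \emph{not} $s\S$-linear: in simplicial degree $m$ one has $\forget_T\free_T(K\tensoring X)_m \simeq \coprod_n (T_m(n)\tensoring (K_m)^{\times n}\tensoring X_m^{\otimes n})_{\SG_n}$, which differs from $K_m\tensoring(\forget_T\free_T X)_m$ whenever $K_m$ is nontrivial and $T$ has operations of arity $\neq 1$. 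So the commutation $t(K\tensoring X)\simeq K\tensoring t(X)$ that you invoke simply does not hold in the intended application, and your construction of the $s\S$-action on $\Alg_t(s\D)$ (with $\forget_t$ strict $s\S$-linear) collapses.

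The paper sidesteps this by building the tensoring from the other side of the adjunction: rather than asking $\forget_t$ to commute with tensoring (which forces $t$ to be $s\S$-linear), it \emph{defines} the tensoring on $\Alg_t(s\D)$ so that $\free_t$ commutes with it, setting $K\tensoring\free_t(X):=\free_t(K\tensoring X)$ and extending to all algebras using that every $t$-algebra is a colimit of free ones. This imposes no linearity condition on $t$ itself. Once the action is in place, the paper finishes exactly as you do in steps three and four, via \Cref{action in PrL gives enr and bitens} and \Cref{lift unenr adjn betw enr and bitensd cats lifts to enr adjn}; note that the hypothesis of the latter lemma is precisely that the \emph{left} adjoint intertwine the tensorings, which the paper's construction guarantees by design.
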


\begin{proof}
As any object of $\Alg_t(s\D)$ is a colimit of free objects, for any $K \in s\S$ and any $Y \in \Alg_t(s\D)$ we define
\[ K \tensoring Y = \colim_{(X \ra \forget_t(Y)) \in s\N_{/\forget_t(Y)}} \free_t(K \tensoring X) \]
(using the action $s\D \in \LMod_{s\S}(\PrL)$ of \Cref{simp objs in presentable are enr and bitens over sspaces}).  This defines a bifunctor $- \tensoring - : s\S \times \Alg_t(s\D) \ra \Alg_t(s\D)$ which by construction commutes with colimits separately in each variable.  Thus it defines an action $\Alg_t(s\D) \in \LMod_{s\S}(\PrL)$, and so by \Cref{action in PrL gives enr and bitens} extends to an enrichment and bitensoring of $\Alg_t(s\D)$ over $s\S$.  Then, the enrichment of the adjunction $\free_t \adj \forget_t$ follows from \Cref{lift unenr adjn betw enr and bitensd cats lifts to enr adjn}.
\end{proof}

\subsection{Simplicial model structures}

We now provide a lifting theorem for constructing simplicial model $\infty$-category structures.  This requires two auxiliary pieces of terminology.

\begin{defn}
Given a set $I$ of homotopy classes of maps in $\C$, the subcategory $I\dashproj$ of \bit{$I$-projectives} is the subcategory of maps with $\llp(I)$.
\end{defn}

\begin{defn}
Let $\V$ be a monoidal model $\infty$-category, and suppose that $\M$ and $\N$ are $\V$-enriched model $\infty$-categories.  Then a \bit{$\V$-enriched Quillen adjunction} between $\M$ and $\N$ is a $\V$-enriched adjunction $\ul{F} : \ul{\M} \adjarr \ul{\N} : \ul{G}$ such that the underlying adjunction $F : \M \adjarr \N : G$ is a Quillen adjunction.
\end{defn}

\begin{thm}\label{thm Mres}
Let $\M$ be a bicomplete $\infty$-category, and let $F : s\S \adjarr \M : G$ be an adjunction such that $G$ commutes with filtered colimits.  Write $\bW^\M = G^{-1}(\bW^{s\S}_\KQ)$, $\bF^\M = G^{-1}(\bF^{s\S}_\KQ)$, and $\bC^\M = (\bW \cap \bF)^\M \dashproj$.  Suppose that the following condition holds:
\begin{equation}
\left( \bC^\M \cap (\bF^\M \dashproj \right)) \subset \bW^\M . \tag{$*$}
\end{equation}
Then $\M$ admits a \bit{resolution model structure}, denoted $\M_\res$, with $\bW_\res^\M = \bW^\M$, $\bC^\M_\res = \bC^\M$, and $\bF_\res^\M = \bF^\M$, and the above adjunction becomes a Quillen adjunction $F : s\S_\KQ \adjarr \M_\res : G$.
\end{thm}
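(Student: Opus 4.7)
My plan is to run the standard Kan-style transfer argument, adapted to the model $\infty$-category setting of \cite{AMG-thesis}. The key inputs are that $s\S_\KQ$ is cofibrantly generated (by sets $I^{s\S}$ and $J^{s\S}$ of generating cofibrations and acyclic cofibrations, respectively) and that $G$ preserves filtered colimits (which will give smallness of the domains of $F(I^{s\S})$ and $F(J^{s\S})$). I propose to define $I^\M = F(I^{s\S})$ and $J^\M = F(J^{s\S})$ and build the model structure from these via the small object argument.

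The adjunction $F \adj G$ together with the definitions of $\bW^\M$ and $\bF^\M$ immediately gives
\[ I^\M\dashinj = \bW^\M \cap \bF^\M \qquad \text{and} \qquad J^\M\dashinj = \bF^\M, \]
and hence $I^\M\dashcof = \bC^\M$ by construction. Invoking the small object argument then produces factorizations of any morphism as (i) an $I^\M$-cofibration followed by a map in $\bW^\M \cap \bF^\M$, and (ii) a $J^\M$-cofibration followed by a map in $\bF^\M$. The classes $\bW^\M$, $\bC^\M$, $\bF^\M$ are manifestly closed under retracts, and 2-out-of-3 for $\bW^\M$ follows at once from its counterpart in $s\S_\KQ$ via $\bW^\M = G^{-1}(\bW^{s\S}_\KQ)$.

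The step that actually uses hypothesis $(\ast)$ is identifying factorization (ii) as an acyclic cofibration followed by a fibration. First, since $J^{s\S} \subset I^{s\S}\dashcof$ in $s\S_\KQ$, one checks (by adjunction and closure of the cofibration class under the small object argument's constructions) that $J^\M\dashcof \subset \bC^\M$; on the other hand, by definition $J^\M\dashcof \subset J^\M\dashinj\dashproj = \bF^\M\dashproj$. Thus $J^\M\dashcof \subset \bC^\M \cap \bF^\M\dashproj$, and $(\ast)$ then yields $J^\M\dashcof \subset \bW^\M$. For the final lifting axiom (fibrations versus acyclic cofibrations), I would run the usual retract argument: given $f \in \bW^\M \cap \bC^\M$, factor it as $q \circ j$ with $j \in J^\M\dashcof$ and $q \in \bF^\M$; by 2-out-of-3, $q \in \bW^\M \cap \bF^\M$, so $f$ has LLP against $q$, exhibiting $f$ as a retract of $j$ and hence giving $f$ the LLP against all of $\bF^\M$.

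The Quillen adjunction assertion is then immediate: $G$ preserves (acyclic) fibrations by the very definitions of $\bW^\M$ and $\bF^\M$. The main obstacle I anticipate is not the pattern of the argument, which is classical, but the verification that the small object argument functions correctly in the model $\infty$-category setting --- in particular, that the transfinite sequential colimits behave well with respect to the higher coherence data and that the smallness hypothesis on $G$ (which is \emph{filtered}-colimit preservation) is strong enough to absorb the $\infty$-categorical subtleties. This is precisely the kind of question handled by the framework of \cite{MIC-fundthm} and \cite[\sec 0.2]{AMG-thesis}, which I would invoke rather than reprove.
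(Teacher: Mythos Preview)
Your proposal is correct and follows essentially the same route as the paper, which simply defers to \cite[Theorem II.4.1]{GJnew} (the classical Kan transfer argument you outline). The paper is slightly more specific about the one $\infty$-categorical modification needed: in the factorization lemmas \cite[Lemmas II.4.2 and II.4.3]{GJnew}, the coproducts must be taken over \emph{homotopy classes} of commutative squares rather than over an actual set of squares---this is exactly the subtlety you flag at the end when you worry about ``higher coherence data'' in the small object argument, so you have identified the right issue even if you did not name the fix.
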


\begin{proof}
The proof is almost identical to that of \cite[Theorem II.4.1]{GJnew} (despite the fact that there they only work in the special case of a category of simplicial objects); the only modification which must be made is that in the proofs of \cite[Lemmas II.4.2 and II.4.3]{GJnew} (which construct required factorizations) one must take a coproduct over \textit{homotopy classes} of commutative squares.
\end{proof}

\begin{rem}
In practice, there seems to more-or-less always be (at least) one thing that's difficult to check in constructing a model structure.  In this case, condition $(*)$ of \Cref{thm Mres} effectively requires that those would-be cofibrations that moreover have the left lifting property for all would-be fibrations are also would-be weak equivalences.  We will give sufficient conditions for this condition to hold in \Cref{subsection sufficient criteria for condition star to get resn model str}.
\end{rem}

\begin{rem}
It follows from the proof of \Cref{thm Mres} that one can replace the condition $(*)$ with the following pair of conditions:
\begin{itemizesmall}
\item[$(*')$] for every map $\Lambda^n_i \ra \Delta^n$ in $J^{s\S}_\KQ$, the induced map $F(\Lambda^n_i) \ra F(\Delta^n)$ lies in $\bW^\M \subset \M$;
\item[$(*'')$] the maps in $(\bW \cap \bC)^\M$ are closed under coproducts, pushouts, and sequential colimits.
\end{itemizesmall}
This is explained in \cite[Remark II.4.5]{GJnew}.
\end{rem}

\begin{thm}\label{thm Mres simp}
In the setting of \Cref{thm Mres}, suppose that we have an action $\M \in \LMod_{s\S}(\Cati)$, denoted $- \tensoring - : s\S \times \M \ra \M$, such that this bifunctor commutes with colimits separately in each variable, and suppose that we have a natural equivalence $F(- \times - ) \simeq (-) \tensoring F(-)$ in $\Fun(s\S \times s\S , \M)$.  Then the resolution model structure canonically enhances to a simplicial model $\infty$-category $\ul{\M}_\res$, and the Quillen adjunction canonically enhances to an $s\S_\KQ$-enriched Quillen adjunction $\ul{F} : \ul{s\S}_\KQ \adjarr \ul{\M}_\res : \ul{G}$.
\end{thm}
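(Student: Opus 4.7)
The strategy decomposes into three steps: (i) extending the $s\S$-action to a full enrichment and bitensoring of $\M$; (ii) promoting $F \adj G$ to an $s\S$-enriched adjunction; and (iii) verifying the pushout-product compatibility between the enrichment and the model structure on $\M_\res$. Step (i) proceeds as in \Cref{action in PrL gives enr and bitens}: bicompleteness of $\M$ together with the hypothesized bicocontinuity of $- \tensoring -$ supplies the co/representability required to produce the cotensoring $K \cotensoring X$ and enriched mapping object $\enrhom_\M(X,Y) \in s\S$ as right adjoints (in each variable) of the tensoring, and these assemble into a two-variable adjunction. (Presentability was only invoked in \Cref{action in PrL gives enr and bitens} to secure representability, which we now have for free.) Step (ii) is then immediate from \Cref{lift unenr adjn betw enr and bitensd cats lifts to enr adjn}: recognizing the cartesian product as the canonical $s\S$-action on itself, the hypothesized equivalence $F(- \times -) \simeq (-) \tensoring F(-)$ is precisely its input, producing an enriched adjunction $\ul{F} \adj \ul{G}$ along with the mate compatibility $G(K \cotensoring X) \simeq K \cotensoring G(X)$.

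The substantive content is step (iii). I would first identify generating sets of cofibrations and acyclic cofibrations for $\M_\res$ as $I^\M = F(I^{s\S}_\KQ)$ and $J^\M = F(J^{s\S}_\KQ)$ respectively; this follows from the adjunction identity $F(S)\dashproj = G^{-1}(S\dashproj)$ together with the definitions of $\bW_\res$, $\bC_\res$, and $\bF_\res$ in $\M_\res$ from \Cref{thm Mres}. The pushout-product axiom asserts that for any cofibration $i : K \ra L$ in $s\S_\KQ$ and any cofibration $j : A \ra B$ in $\M_\res$, the map
\[ i \square j : (L \tensoring A) \cup_{K \tensoring A} (K \tensoring B) \ra L \tensoring B \]
is a cofibration in $\M_\res$, which is acyclic if either $i$ or $j$ is. By the usual saturation argument---the class of $j$ satisfying the condition against a fixed generating $i$ is closed under pushouts, transfinite composition, and retracts by cocontinuity of the tensoring, and symmetrically in $i$---it suffices to check this when $i$ and $j = F(j')$ lie in the respective generating sets. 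For such $i$ and $j$, the compatibility $K \tensoring F(L) \simeq F(K \times L)$ together with the cocontinuity of $F$ identifies
\[ i \square F(j') \simeq F(i \square j'). \]
Since $s\S_\KQ$ is itself a monoidal model $\infty$-category, $i \square j'$ is a (trivial) cofibration there; since $F$ is left Quillen by \Cref{thm Mres}, its image is a (trivial) cofibration in $\M_\res$, as required.

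The main obstacle I anticipate is the coherence bookkeeping in step (i): the cotensoring and enriched hom, originally defined only up to adjunction in a single variable, must be promoted to components of a single coherently two-variable adjunction, respecting all higher associativity constraints inherited from the monoidal structure on $s\S$. This is formally analogous to the argument of \cite[Lemma II.2.4]{GJnew} but requires the care appropriate to the $\infty$-categorical setting. Steps (ii) and (iii) are then essentially formal given (i), the adjointness hypothesis, and the fact that $F$ is left Quillen.
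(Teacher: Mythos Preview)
Your proposal is correct, and steps (i) and (ii) match the paper's approach exactly (the paper's one-line proof invokes \Cref{lift unenr adjn betw enr and bitensd cats lifts to enr adjn} and then defers to \cite[Theorem II.4.4]{GJnew}). Step (iii), however, takes a different route from the one in \cite{GJnew}. You verify the pushout-product axiom on the \emph{cofibration} side: identify $F(I^{s\S}_\KQ)$ and $F(J^{s\S}_\KQ)$ as generating sets, reduce by saturation, and then use $i \,\square\, F(j') \simeq F(i \,\square\, j')$ together with the fact that $F$ is left Quillen. The argument in \cite[Theorem II.4.4]{GJnew} instead works on the dual \emph{fibration} side: since $\bF^\M$ and $\bW^\M$ are created by $G$, and $G$ commutes with both cotensoring and limits (the latter automatic, the former from \Cref{lift unenr adjn betw enr and bitensd cats lifts to enr adjn}), the pullback-power map $\langle i, p \rangle$ in $\M$ satisfies $G(\langle i, p \rangle) \simeq \langle i, G(p) \rangle$, which is a (trivial) fibration in $s\S_\KQ$ by SM7 there. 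This avoids any need to identify generating sets or invoke the small object argument, and in particular sidesteps the smallness verification you would otherwise need to justify the saturation reduction (though that verification does go through, using that $G$ preserves filtered colimits).

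One minor slip: the adjunction identity you want is $\rlp(F(S)) = G^{-1}(\rlp(S))$, not $F(S)\dashproj = G^{-1}(S\dashproj)$ (recall $\dashproj$ denotes $\llp$ in the paper's conventions). Your conclusion that $F(I^{s\S}_\KQ)$ and $F(J^{s\S}_\KQ)$ detect $(\bW \cap \bF)^\M$ and $\bF^\M$ respectively is correct; only the stated justification has the lifting direction backwards.
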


\begin{proof}
Using \Cref{lift unenr adjn betw enr and bitensd cats lifts to enr adjn}, the proof is identical to that of \cite[Theorem II.4.4]{GJnew}.
\end{proof}

\subsection{Sufficient criteria for the satisfaction of condition $(*)$ of \Cref{thm Mres}}
\label{subsection sufficient criteria for condition star to get resn model str}

We now provide various conditions guaranteeing that condition $(*)$ of \Cref{thm Mres} is satisfied.

The key result is the following.

\begin{prop}\label{suff to have fibt repl functor}
In the setting of \Cref{thm Mres}, suppose that there exists an endofunctor $\bbR : \M \ra \M$ which factors through the subcategory $\bF^\M \subset \M$ and which admits a map $\id_\M \ra \bbR$ whose components lie in $\bW^\M$.  Then condition $(*)$ holds.
\end{prop}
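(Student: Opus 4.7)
The plan is to deduce $f \in \bW^\M$ for any $f \in \bC^\M \cap (\bF^\M\dashproj)$ by manufacturing a composable triple whose two length-two composites are weak equivalences, and then invoking the 2-out-of-6 property of weak equivalences.

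First I would form the naturality square associated to $f$ under the endofunctor $\bbR$:
\[
\begin{tikzcd}
X \arrow{r}{\eta_X} \arrow{d}[swap]{f} & \bbR X \arrow{d}{\bbR f} \\
Y \arrow{r}[swap]{\eta_Y} & \bbR Y.
\end{tikzcd}
\]
Since $\bbR$ factors through the subcategory $\bF^\M$, the map $\bbR f$ is a fibration; since $f \in \bF^\M\dashproj$, the $\infty$-categorical lifting property produces an edge $g : Y \to \bbR X$ together with fillers witnessing $g \circ f \simeq \eta_X$ and $(\bbR f) \circ g \simeq \eta_Y$.

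Next I would apply 2-out-of-6 to the composable triple $(f, g, \bbR f)$ in $\M$. By hypothesis on $\bbR$, both $\eta_X$ and $\eta_Y$ lie in $\bW^\M$, so both two-fold composites of this triple, namely $g \circ f$ and $(\bbR f) \circ g$, are weak equivalences. Because $\bW^\M = G^{-1}(\bW^{s\S}_\KQ)$ and the class of weak equivalences in $s\S_\KQ$ satisfies 2-out-of-6 (as it does in every model $\infty$-category), the class $\bW^\M$ inherits this property. It follows that each of $f$, $g$, and $\bbR f$ lies in $\bW^\M$; in particular $f \in \bW^\M$, which is condition $(*)$.

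The argument is essentially the classical ``fibrant-replacement plus 2-out-of-6'' trick, and I do not expect any significant obstacle. The one point requiring care in the $\infty$-categorical setting is that the diagonal filler in the first step is an edge together with the two triangle-filling 2-cells rather than a strictly commuting diagram -- but this is precisely the data the $\infty$-categorical lifting property supplies, and the subsequent 2-out-of-6 deduction is insensitive to the distinction. The hypothesis that $\bbR$ factors through all of $\bF^\M$ (and not merely through its objects) is used essentially: it guarantees that $\bbR f$ itself is a fibration against which $f$ may be lifted.
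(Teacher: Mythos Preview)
Your proof is correct and follows the same underlying strategy as the Goerss--Jardine argument the paper cites: produce a diagonal in the naturality square for $\eta$ and then invoke 2-out-of-6. One small difference in execution is worth noting. You exploit the hypothesis at the \emph{morphism} level---that $\bbR$ factors through the wide subcategory $\bF^\M$, so that $\bbR f$ is itself a fibration---and lift $f$ directly against $\bbR f$, obtaining both commuting triangles simultaneously. The classical Goerss--Jardine Lemma~II.5.1 is phrased under the weaker object-level hypothesis (each $\bbR X$ is fibrant), and accordingly lifts $f$ only against $\bbR X \to \pt_\M$; the second relation needed for 2-out-of-6 is then extracted by a short auxiliary argument. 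Under the hypothesis as literally stated here your streamlined route is entirely legitimate, and your remark that the morphism-level factorization ``is used essentially'' is exactly on point. The justification that $\bW^\M$ inherits 2-out-of-6 as the $G$-preimage of $\bW^{s\S}_\KQ$ is also correct.
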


\begin{proof}
The proof is identical to that of \cite[Lemma II.5.1]{GJnew}.
\end{proof}

\begin{cor}\label{suff to have all objects fibt}
In the setting of \Cref{thm Mres}, suppose that for every object $X \in \M$ the terminal map $X \ra \pt_\M$ lies in $\bF^\M$.  Then condition $(*)$ holds.
\end{cor}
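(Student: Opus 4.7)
The plan is to apply \Cref{suff to have fibt repl functor} with the identity endofunctor $\bbR = \id_\M$ serving as the ``fibrant replacement''.

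First, observe that the hypothesis of the corollary is precisely the statement that every object of $\M$ is fibrant. Consequently $\id_\M$ tautologically takes values in fibrant objects, which provides the factorization through $\bF^\M$ required by \Cref{suff to have fibt repl functor}.

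For the natural transformation $\id_\M \ra \bbR$, we take the identity transformation; its components are identity morphisms in $\M$, and identities are carried by $G$ to identity morphisms in $s\S$, which are trivially Kan equivalences. Thus these components lie in $\bW^\M = G^{-1}(\bW^{s\S}_\KQ)$, verifying the second hypothesis of \Cref{suff to have fibt repl functor} and yielding condition $(*)$.

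The corollary presents no real obstacle; its content is simply the observation that fibrant replacement is unnecessary when every object is already fibrant, so that the previous proposition applies with the most trivial choice of $\bbR$.
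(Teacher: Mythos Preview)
Your proof is correct and matches the paper's approach exactly: the paper also applies \Cref{suff to have fibt repl functor} with $\bbR = \id_\M$ equipped with the identity coaugmentation. You simply spell out in slightly more detail why the identity components lie in $\bW^\M$.
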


\begin{proof}
This follows from \Cref{suff to have fibt repl functor}, taking $\bbR = \id_\M$ (equipped with the identity coaugmentation).
\end{proof}

\begin{cor}\label{lw hom out of small obj}
Let $\N$ be a bicomplete $\infty$-category, and for any object $Z \in \N$ consider the adjunction
\[ - \tensoring \const(Z) : s\S \adjarr s\N : \hom^\lw_\N(Z,-) . \]
If the object $Z \in \N$ is small, then this adjunction satisfies condition $(*)$ of \Cref{thm Mres}.
\end{cor}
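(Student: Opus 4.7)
The plan is to invoke \Cref{suff to have all objects fibt}, which reduces the corollary to showing that every object of $s\N$ is fibrant with respect to the nascent resolution model structure determined by the given adjunction.

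First, I would verify the standing hypothesis of \Cref{thm Mres}: namely, that the right adjoint $G = \hom^\lw_\N(Z,-)$ commutes with filtered colimits. Since filtered colimits in both $s\N$ and $s\S$ are computed pointwise in the simplicial direction (as colimits in functor $\infty$-categories are computed pointwise), and since smallness of $Z \in \N$ is by definition the statement that $\hom_\N(Z,-) : \N \to \S$ preserves filtered colimits, the levelwise functor $\hom^\lw_\N(Z,-)$ inherits this property.

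Next, I would fix an arbitrary $X \in s\N$ and examine the terminal map $X \to \pt_{s\N}$. To show it lies in $\bF^{s\N} = G^{-1}(\bF^{s\S}_\KQ)$, apply $G$. Since $G$ is a right adjoint it preserves the terminal object, so $G(\pt_{s\N}) \simeq \pt_{s\S}$, and $G$ carries the terminal map to the terminal map $\hom^\lw_\N(Z,X) \to \pt_{s\S}$ in $s\S$. The essential input is that every object of the model $\infty$-category $s\S_\KQ$ is fibrant -- this is a characteristic feature of the $\infty$-categorical Kan--Quillen structure (in contrast with the classical situation, where Kan conditions must be imposed by hand), and is part of the foundational package of model $\infty$-categories referenced in \S\ref{subsection.model.infty.cats}. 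Granting this, the terminal map in $s\S$ is automatically in $\bF^{s\S}_\KQ$, and hence the original terminal map in $s\N$ lies in $\bF^{s\N}$, as required.

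Having shown that every object of $s\N$ is fibrant, \Cref{suff to have all objects fibt} immediately delivers condition $(*)$. The only step with any genuine content is the appeal to the fibrancy of every object of $s\S_\KQ$; the rest is a formal diagram chase using just that $G$ preserves terminal objects and filtered colimits. Since the fibrancy property of $s\S_\KQ$ is established externally in the author's prior work on model $\infty$-categories, I do not anticipate any real obstacle beyond correctly citing it.
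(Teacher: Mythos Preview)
Your proposal contains a genuine error: the claim that every object of $s\S_\KQ$ is fibrant is false. For instance, the discrete simplicial space $\partial\Delta^2$ admits a horn $\Lambda^2_1 \hookrightarrow \partial\Delta^2$ that cannot be filled, exactly as in $s\Set_\KQ$. More to the point, the paper itself furnishes internal evidence against your claim: \Cref{simplicial infty-groups are fibt} is a nontrivial lemma proving that simplicial $\infty$-groups land in $s\S_\KQ^f$, which would be vacuous were every object already fibrant; and the fibrancy of all objects in $s\C_\res$ (asserted in \Cref{thm sCres}) is deduced precisely from that lemma via the h-cogroup structure on the generators, not from any blanket fibrancy in $s\S_\KQ$.

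The paper's actual proof does not go through \Cref{suff to have all objects fibt} but rather through \Cref{suff to have fibt repl functor}: one constructs a fibrant replacement endofunctor on $s\N$ using the $\Ex^\infty$ machinery for $s\S_\KQ$ developed in \cite[\S 6]{MIC-sspaces}, following the classical argument of \cite[Proposition II.5.5]{GJnew}. The smallness of $Z$ is what allows $\hom^\lw_\N(Z,-)$ to intertwine with this construction. The remark immediately following \Cref{lw hom out of small obj} makes explicit that this technique genuinely depends on $\M$ being of the form $s\N$ (because $\Ex$ is a right adjoint but not an \emph{enriched} right adjoint), which would be a strange caveat if your shortcut were available.
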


\begin{proof}
With the theory of the $\Ex^\infty$ functor for $s\S_\KQ$ of \cite[\sec 6]{MIC-sspaces} in hand (specifically \cite[Proposition 6.22 and Remark 6.23]{MIC-sspaces}), this follows from \Cref{suff to have fibt repl functor} by an identical argument to that of \cite[Proposition II.5.5]{GJnew}.
\end{proof}

\begin{rem}
The technique of \Cref{lw hom out of small obj} cannot work for a general (bicomplete) $\infty$-category equipped with a right adjoint functor to $s\S$: it must be an $\infty$-category of simplicial objects.  In effect, this is because the endofunctor $\Ex$ is a right adjoint, but it is not an \textit{enriched} right adjoint.  Indeed, the functor $\enrhom_{s\S}(\Delta^1,-) : s\S \ra s\S$ is an example of an enriched limit and so commutes with any enriched right adjoint, but the canonical map $\Ex(\enrhom_{s\S}(\Delta^1,-)) \ra \enrhom_{s\S}(\Delta^1,\Ex(-))$ is not an equivalence; this can be seen by evaluating on $\Delta^1$, since the source has three 0-simplices but the target has five.
\end{rem}

\begin{cor}\label{sNres}
Let $\N \in \PrL$, and let $Z \in \N$ be a small object.  Then with the enrichment and bitensoring of $s\N$ over $s\S$ of \Cref{simp objs in presentable are enr and bitens over sspaces}, there exists a simplicial model structure on $s\N$ created by the $s\S$-enriched Quillen adjunction
\[ - \tensoring \const(Z) : \ul{s\S}_\KQ \adjarr \ul{s\N}_\res : \hom^\lw_\N(Z,-) . \]
\end{cor}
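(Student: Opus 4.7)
The plan is to derive this as a direct application of \Cref{thm Mres} and \Cref{thm Mres simp}, with the key condition $(*)$ supplied by \Cref{lw hom out of small obj}.

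First, I would verify the prerequisites of \Cref{thm Mres}. The $\infty$-category $s\N$ is bicomplete since $\N \in \PrL$ is, and the adjunction $-\tensoring \const(Z) : s\S \adjarr s\N : \hom^\lw_\N(Z,-)$ is the levelwise lift of the underlying adjunction $-\tensoring Z : \S \adjarr \N : \hom_\N(Z,-)$ (using that $\N$ is canonically tensored and cotensored over $\S$ as an object of $\PrL$). Smallness of $Z \in \N$ says exactly that $\hom_\N(Z,-)$ preserves filtered colimits; since filtered colimits in the functor $\infty$-category $s\N = \Fun(\bD^{op},\N)$ are computed pointwise, the levelwise lift $\hom^\lw_\N(Z,-)$ likewise preserves filtered colimits. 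Condition $(*)$ is then immediate from \Cref{lw hom out of small obj}. Invoking \Cref{thm Mres} produces the resolution model structure $s\N_\res$ and the (unenriched) Quillen adjunction $-\tensoring \const(Z) : s\S_\KQ \adjarr s\N_\res : \hom^\lw_\N(Z,-)$.

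Next, I would verify the additional hypotheses of \Cref{thm Mres simp}. \Cref{simp objs in presentable are enr and bitens over sspaces} supplies the enrichment and bitensoring of $s\N$ over $s\S$ arising from the action $s\N \in \LMod_{s\S}(\PrL)$, which is in particular bicocontinuous as an action in $\PrL$. It remains to produce a natural equivalence $F(K \times L) \simeq K \tensoring F(L)$ in $\Fun(s\S \times s\S, s\N)$, where $F = -\tensoring \const(Z)$. Since the $s\S$-action on $s\N$ is the levelwise lift of the $\S$-action on $\N$, it suffices to exhibit this equivalence levelwise: at simplicial degree $n$ it becomes $(K_n \times L_n) \tensoring Z \simeq K_n \tensoring (L_n \tensoring Z)$, which is precisely the associativity constraint for the action of $\S$ on $\N \in \LMod_\S(\PrL)$. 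Assembling these levelwise equivalences naturally in $(K,L) \in s\S \times s\S$ yields the required equivalence in $\Fun(s\S \times s\S, s\N)$.

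With both sets of hypotheses in hand, \Cref{thm Mres simp} upgrades the resolution model structure to a simplicial model $\infty$-category $\ul{s\N}_\res$ and promotes the Quillen adjunction to an $s\S_\KQ$-enriched Quillen adjunction $\ul{F} : \ul{s\S}_\KQ \adjarr \ul{s\N}_\res : \ul{G}$, as desired. The only real technical obstacle in this proof has already been absorbed into \Cref{lw hom out of small obj}, whose proof rests on the delicate use of the $\Ex^\infty$ functor for $s\S_\KQ$; the remaining $\infty$-categorical content is the careful tracking of higher associativity data for the natural equivalence $F(K \times L) \simeq K \tensoring F(L)$, which proceeds formally as in the 1-categorical analog.
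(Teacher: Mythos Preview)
Your proposal is correct and follows essentially the same approach as the paper: invoke \Cref{lw hom out of small obj} to obtain condition~$(*)$ of \Cref{thm Mres}, and then verify the hypotheses of \Cref{thm Mres simp} (the paper cites \Cref{lw tensoring is bicocontinuous} for the bicocontinuity of the action, which you unpack directly). Your write-up is simply more explicit about the routine hypothesis checks (bicompleteness, filtered-colimit preservation by the right adjoint, and the natural equivalence $F(K \times L) \simeq K \tensoring F(L)$) that the paper leaves implicit.
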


\begin{proof}
By \Cref{lw hom out of small obj}, this adjunction satisfies condition $(*)$ of \Cref{thm Mres} and hence creates a model structure on $s\N$.  By \Cref{lw tensoring is bicocontinuous}, this adjunction furthermore satisfies the hypotheses of \Cref{thm Mres simp}, so that $s\N_\res$ and the Quillen adjunction becomes compatibly $s\S_\KQ$-enriched.
\end{proof}

We will also be interested in the following ``many-object'' version of \Cref{sNres}.

\begin{thm}\label{many-object sNres}
Let $\N \in \PrL$, and suppose we are given a set of small objects $Z_\alpha \in \N$.  Then with the enrichment and bitensoring of $s\N$ over $s\S$ of \Cref{simp objs in presentable are enr and bitens over sspaces}, there exists a simplicial model structure on $s\N$ created by the $s\S$-enriched Quillen adjunction
\[ \coprod_\alpha \pr_\alpha(-) \tensoring \const(Z_\alpha) : \prod_\alpha \ul{s\S}_\KQ \adjarr \ul{s\N}_\res : \left( \hom^\lw_\N(Z_\alpha,-) \right) . \]
\end{thm}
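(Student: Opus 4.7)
The plan is to model the argument on the single-generator proof of Corollary \ref{sNres}, upgrading each auxiliary statement to the many-object setting. The steps are: (i) establish a many-object version of Theorem \ref{thm Mres}; (ii) verify condition $(*)$ via an enhanced fibrant replacement functor; (iii) apply Theorem \ref{thm Mres simp} for the simplicial enhancement.

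For (i), for a bicomplete $\infty$-category $\M$ equipped with an adjunction $F : \prod_\alpha s\S \adjarr \M : G$ whose right adjoint $G = (G_\alpha)_\alpha$ has components each preserving filtered colimits, define $\bW^\M = G^{-1}(\bW_\KQ)$ and $\bF^\M = G^{-1}(\bF_\KQ)$ componentwise and $\bC^\M = (\bW \cap \bF)^\M \dashproj$. The proof of Theorem \ref{thm Mres} adapts verbatim: factorizations are produced by the small object argument using pushouts along $F \circ \iota_\alpha$ of the Kan--Quillen generating (acyclic) cofibrations, indexed over homotopy classes of squares and over $\alpha$. Applied to our setting, $G = (\hom^\lw_\N(Z_\alpha,-))_\alpha$ preserves filtered colimits since each $Z_\alpha$ is small and filtered colimits in a product are computed componentwise; the left adjoint is $\coprod_\alpha F_\alpha \pr_\alpha$ with $F_\alpha(K) = K \tensoring \const(Z_\alpha)$, well defined by Corollary \ref{simp objs in presentable are enr and bitens over sspaces}.

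For (ii), the goal is to produce a fibrant replacement endofunctor $\bbR : s\N \to s\N$ factoring through $\bF^\M$ together with a natural weak equivalence $\id \to \bbR$; Proposition \ref{suff to have fibt repl functor} then yields condition $(*)$. Fix a regular cardinal $\kappa$ such that every $Z_\alpha$ is $\kappa$-compact. Define $\bbR X$ as a $\kappa$-indexed transfinite composition: at each successor stage, for every $\alpha$ and every square from a generating acyclic Kan--Quillen cofibration $\Lambda^n_i \hookrightarrow \Delta^n$ into $\hom^\lw_\N(Z_\alpha, X_\lambda) \to \pt_{s\S}$, attach a copy of $F_\alpha(\Delta^n)$ along $F_\alpha(\Lambda^n_i)$; at limit ordinals take the colimit. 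Each attachment lies in $(\bW \cap \bC)^\M$ by the direct many-object analogue of Corollary \ref{lw hom out of small obj}, using the $\Ex^\infty$ technology of \cite[\sec 6]{MIC-sspaces}. Smallness of the $Z_\alpha$ and regularity of $\kappa$ guarantee that, at stage $\kappa$, every $\hom^\lw_\N(Z_\alpha, \bbR X)$ satisfies the right lifting property against all horn inclusions and so is a Kan complex.

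For (iii), the action $- \tensoring - : s\S \times s\N \to s\N$ of Corollary \ref{simp objs in presentable are enr and bitens over sspaces} is cocontinuous in each variable, and the diagonal compatibility $F_\alpha(K \times L) \simeq K \tensoring F_\alpha(L)$ extends across $\prod_\alpha s\S_\KQ$, so Theorem \ref{thm Mres simp} provides the simplicial enhancement. I expect the main obstacle to be step (ii): one must coordinate horn-filling for each generator so that later attachments targeting a fixed $\alpha$ do not disturb the partial Kanness already achieved at some coordinate $\beta$. This is where smallness and the choice of $\kappa$ must be deployed with care to ensure simultaneous convergence.
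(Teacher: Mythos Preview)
Your overall strategy is right, and step (i) and step (iii) are fine. The issue is that step (ii) is tangled, and the concern you flag at the end is a symptom of that tangle rather than the real difficulty.

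You propose to build $\bbR$ by a small object argument on the maps $F_\alpha(\Lambda^n_i)\to F_\alpha(\Delta^n)$, and then to justify that each attachment lies in $(\bW\cap\bC)^\M$ by appealing to ``the direct many-object analogue of Corollary~\ref{lw hom out of small obj}''. But that corollary (in either its single- or many-object form) is precisely the statement that condition~$(*)$ holds, and its proof already works by exhibiting a fibrant replacement functor and invoking Proposition~\ref{suff to have fibt repl functor}. So you are either citing the very thing you are trying to prove, or you are using it to re-derive itself through an unnecessary detour. Moreover, there is no straightforward way to check directly that a pushout along $F_\alpha(\Lambda^n_i)\to F_\alpha(\Delta^n)$ becomes a weak equivalence after applying $\hom^\lw_\N(Z_\beta,-)$ for $\beta\neq\alpha$: that functor does not preserve pushouts, and the single-object condition~$(*)$ for $Z_\beta$ does not obviously apply to maps built from $Z_\alpha$.

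The paper's route (following \cite[Proposition II.5.9]{GJnew}) avoids all of this: take $\bbR=\Ex^\infty$ acting on $s\N$ itself. The functor $\Ex$ on $s\N$ is defined via generalized matching objects, so it commutes with $\hom^\lw_\N(Z,-)$ for \emph{every} $Z\in\N$; hence $\hom^\lw_\N(Z_\alpha,\Ex^\infty X)\simeq \Ex^\infty\hom^\lw_\N(Z_\alpha,X)$ for each $\alpha$ (using smallness to commute with the filtered colimit). Thus a single $\Ex^\infty$ is simultaneously a fibrant replacement for all generators, and Proposition~\ref{suff to have fibt repl functor} gives condition~$(*)$ immediately. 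There is nothing to coordinate, and the ``main obstacle'' you anticipate simply does not arise.
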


\begin{proof}
Given the above results, the proof is essentially identical to that of \cite[Proposition II.5.9]{GJnew}.
\end{proof}

\begin{rem}
In \Cref{many-object sNres}, if the objects $Z_\alpha$ form a set of compact projective generators (in the sense of Definition T.5.5.8.23) and the $\infty$-category $\N$ has enough projectives, then weak equivalences and fibrations in $s\N_\res$ will be detected by \textit{all} projective objects (see \cite[Example II.5.10]{GJnew}).
\end{rem}

We now identify the underlying $\infty$-category of the resolution model structure of \Cref{many-object sNres}.

% just moved this definition from elsewhere; hopefully this is an appropriate place.....

\begin{defn}
For an $\infty$-category $\D$ admitting finite coproducts, we write $\PS(\D) = \Fun^\times(\D^{op},\S)$ for its \bit{nonabelian derived $\infty$-category} of product-preserving presheaves (i.e.\! of functors taking finite coproducts in $\D$ to finite products in $\S$).  We write $\PSd(\D) \subset \PS(\D)$ for its subcategory of discrete objects; thus $\PSd(\D) \simeq \Fun^\times(\D^{op},\Set) \simeq \Fun^\times(\ho(\D)^{op},\Set)$.
\end{defn}

\begin{thm}\label{identify localization of sNres}
In the situation of \Cref{many-object sNres}, writing $\G \subset \N$ for the full subcategory generated by the objects $Z_\alpha$ under finite coproducts, we have a canonical Quillen adjunction
\[ \Fun(\G^{op},s\S_\KQ)_\projective \adjarr s\N_\res \]
with derived adjunction given by the canonical adjunction
\[ \P(\G) \adjarr \PS(\G) \]
whose right adjoint is the defining inclusion.
\end{thm}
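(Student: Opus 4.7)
The plan is to define the right adjoint as $X \mapsto \hom^{\lw}_\N(Z,X)$ on representables, verify the Quillen property by inspecting generating cofibrations, and then show the derived right adjoint factors through $\PS(\G)$ as the defining inclusion.

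\textbf{Constructing the adjunction.} The right adjoint is $R : s\N \to \Fun(\G^{op}, s\S)$, $X \mapsto (Z \mapsto \hom^{\lw}_\N(Z, X))$, which extends the product of levelwise-hom functors appearing in the defining adjunction of \Cref{many-object sNres} from the chosen generators $\{Z_\alpha\}$ to all of $\G \subset \N$. Since $R$ preserves all limits and filtered colimits (the latter because $\hom^{\lw}_\N(Z_\alpha,-)$ already does, as required in \Cref{thm Mres}, and $\G$ is generated by $\{Z_\alpha\}$ under finite coproducts), and both sides are presentable, the adjoint functor theorem supplies a left adjoint $L$. By direct computation, $L$ is determined on free generators by $L(\Yo(Z) \otimes K) \simeq K \tensoring \const(Z)$ and extended by colimits (a coend formula).

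\textbf{Quillen property.} The generating (acyclic) cofibrations of $\Fun(\G^{op}, s\S_\KQ)_\projective$ are of the form $\Yo(Z) \otimes j$ for $Z \in \G$ and $j$ a generating (acyclic) cofibration of $s\S_\KQ$. Writing $Z \simeq Z_{\alpha_1} \sqcup \cdots \sqcup Z_{\alpha_k}$ and using that $\const : \N \to s\N$ preserves coproducts while the tensoring $- \tensoring -$ is cocontinuous in each variable (by \Cref{simp objs in presentable are enr and bitens over sspaces}), we compute
\[ L(\Yo(Z) \otimes j) \;\simeq\; \coprod_{i=1}^{k} \bigl( j \tensoring \const(Z_{\alpha_i}) \bigr), \]
a finite coproduct of the generating (acyclic) cofibrations of $s\N_\res$ exhibited in \Cref{many-object sNres}. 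Since coproducts of (acyclic) cofibrations are (acyclic) cofibrations, $L$ preserves generating (acyclic) cofibrations and is therefore a left Quillen functor.

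\textbf{Identifying the derived adjunction.} The underlying $\infty$-category of $\Fun(\G^{op}, s\S_\KQ)_\projective$ is $\P(\G)$. All objects of $s\N_\res$ are fibrant (by the Remark following \Cref{thm resn model str of intro}) and the model structure is simplicial by \Cref{many-object sNres}, so for any cofibrant replacement $\widetilde{X}$ of $X$, the derived mapping space $\Map(\const(Z),X)$ in the underlying $\infty$-category is computed by $\hom^{\lw}_\N(Z, \widetilde{X})$. Hence
\[ (\mathbb{R}R)(X) \;\simeq\; \bigl(Z \mapsto \hom^{\lw}_\N(Z, \widetilde{X})\bigr). \]
Since $Z \mapsto \const(Z)$ preserves finite coproducts and mapping spaces convert coproducts to products, this functor is product-preserving and so lands in $\PS(\G) \subset \P(\G)$. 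Thus the derived right adjoint factors through the defining inclusion.

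\textbf{The main obstacle.} What remains---and is the most delicate step---is to show that the factored functor $\mathbb{R}R$ into $\PS(\G)$ is an equivalence. I would invoke the universal property of $\PS(\G) = \Fun^{\times}(\G^{op},\S)$ as the free presentable $\infty$-category generated by $\G$ with finite coproducts preserved (equivalently, its sifted cocompletion). The functor $Z \mapsto \const(Z)$ from $\G$ into the underlying $\infty$-category of $s\N_\res$ preserves finite coproducts, so it extends to a cocontinuous functor out of $\PS(\G)$ which I would verify to be inverse to $\mathbb{R}R$. For this I would show that each $\const(Z_\alpha)$ is compact projective in the underlying $\infty$-category of $s\N_\res$ (compactness from the filtered-colimit preservation of $R$ via \Cref{thm Mres}; projectivity from the derived-enrichment computation above), that $\mathbb{R}R$ is fully faithful on the generators (an immediate calculation, since $R(\const(Z')) = \Yo(Z')$ up to homotopy), and finally that the generators exhaust the underlying $\infty$-category under sifted colimits. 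The last point is the real crux: it amounts to showing that every cofibrant object of $s\N_\res$ is built cellularly from the generators $\const(Z_{\alpha_i}) \tensoring (\partial\Delta^n \hookrightarrow \Delta^n)$ via the small object argument, and repackaging this cellular decomposition as a sifted-colimit presentation in the localized $\infty$-category---a step that requires careful bookkeeping but is analogous to the classical identification of Quillen's simplicial resolution homotopy category with the nonabelian derived category.
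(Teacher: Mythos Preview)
Your strategy is correct and parallels the paper's closely, with the main difference being how you package the final equivalence. The paper establishes the Quillen adjunction more structurally---observing that both $\Fun(\G^{op},s\S_\KQ)_\projective$ and $s\N_\res$ are lifted from the same product $\prod_\alpha s\S_\KQ$, so the comparison is automatic---whereas you check generating (acyclic) cofibrations by hand; both are fine. For the equivalence, the paper shows the derived right adjoint is essentially surjective onto $\PS(\G)$ (using the adjunction $\PS(\G) \adjarr \N$ coming from presentability of $\N$) and fully faithful (by directly computing $\hom_{\loc{s\N}{\bW_\res}}(K \tensoring \const(Z_\alpha), X)$ from the simplicial enrichment, then extending over all cofibrant sources via the generating cofibrations $I^{s\N}_\res$). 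You instead invoke the universal property of $\PS(\G)$ as the sifted cocompletion of $\G$ and build an inverse. Both routes rest on the same cellular input---the small-object presentation afforded by the generating cofibrations---which you correctly flag as the crux; the paper's direct hom computation has the mild advantage that compact-projectivity of the $\const(Z_\alpha)$ in the localization drops out of the calculation rather than requiring a separate verification.

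Two small corrections. Your claim that all objects of $s\N_\res$ are fibrant is not established in the generality of \Cref{many-object sNres}: the Remark you cite concerns the specific stable $\infty$-category $\C$ of the body of the paper, where fibrancy follows from \Cref{simplicial infty-groups are fibt} because the generators are h-cogroups; for a general presentable $\N$ the model structure is obtained via the $\Ex^\infty$ argument of \Cref{lw hom out of small obj}, which does not give fibrancy of all objects. Correspondingly, your phrase ``for any cofibrant replacement $\widetilde{X}$ of $X$'' should read ``fibrant replacement'': the derived right adjoint $\mathbb{R}R$ is computed by applying $R$ to a fibrant target (while $\const(Z)$ is already cofibrant). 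These are easily repaired and do not affect the substance of your argument.
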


\begin{proof}
The projective model structure can also be seen as lifted via \Cref{many-object sNres} from the same product of copies of the model $\infty$-category $s\S_\KQ$, which implies that this is indeed a Quillen adjunction.  As the functor $|{-}|: s\S \ra \S$ commutes with finite products, it follows that the derived right adjoint factors through the subcategory $\PS(\G) \subset \P(\G)$.  Moreover, as $\N$ is presentable, the restricted Yoneda embedding participates in an adjunction $\PS(\G) \adjarr \N$, from which it follows that this derived right adjoint surjects onto $\PS(\G)$ (by taking the constant simplicial object on a given object of $\N$, seen as a product-preserving presheaf on $\G$).  So, it will suffice to show that the functor $\loc{s\N}{\bW_\res} \ra \P(\G)$ is fully faithful.  First of all, taking any $X \in s\N_\res^f$, since $s\N_\res$ is simplicial, for any $K \in s\Set = s\S_\KQ^c$ we have that
\begin{align*}
\hom_{\loc{s\N}{\bW_\res}}(K \tensoring \const(Z_\alpha) , X) & \simeq \enrhom_{s\N}(K \tensoring \const(Z_\alpha),X) \\
& \simeq \enrhom_{s\S}(K , \enrhom_{s\N}(\const(Z_\alpha),X)) \\
& \simeq \enrhom_{s\S}(K,\hom^\lw_\N(Z_\alpha,X)) \\
& \simeq \hom_\S(|K| , | \hom^\lw_\N(Z_\alpha,X)|)
\end{align*}
(where the last equivalence uses the fact that $s\S_\KQ$ is a simplicial model $\infty$-category).  The claim now follows from the fact that $I^{s\N}_\res = \{ I^{s\S}_\KQ \tensoring \const(Z_\alpha) \}$ forms a set of generating cofibrations of $s\N_\res$, so that we can construct a cofibrant replacement of any object as a transfinite composition of pushouts of these maps.
\end{proof}

We end this subsection with the following result, which gives a convenient class of examples for which the condition of \Cref{suff to have all objects fibt} holds (i.e.\! that all objects are (``would-be'') fibrant).  It is an $\infty$-categorical analog of the classical fact that every simplicial group is in particular a Kan complex.

\begin{lem}\label{simplicial infty-groups are fibt}
In the adjunction $\free_{s\Grp(\S)} : s\S \adjarr s\Grp(\S) : \forget_{s \Grp(\S)}$, the right adjoint factors through the subcategory $s\S^f_\KQ \subset s\S$ of fibrant objects with respect to the Kan--Quillen model structure.
\end{lem}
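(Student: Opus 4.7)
The plan is to adapt the classical Moore theorem -- every simplicial group is a Kan complex -- to the $\infty$-categorical setting of simplicial group objects in $\S$.

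First, I would recall from \cite{MIC-sspaces} the characterization of fibrant objects in $s\S_\KQ$: an object $X \in s\S$ is fibrant precisely when, for every $n \geq 1$ and every horn index $0 \leq i \leq n$, the matching map $X_n \to M_{\Lambda^n_i}(X)$ admits a section up to homotopy (equivalently, $X$ has the appropriate right lifting property against horn inclusions of the form $\Lambda^n_i \tensoring \const(\pt) \to \Delta^n \tensoring \const(\pt)$). So the task reduces to verifying this horn-filling property for the underlying simplicial space of any $G \in s\Grp(\S)$.

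Next, given $G \in s\Grp(\S)$, I would construct the required sections using the group structure. Each $G_k$ is a group object in $\S$, carrying canonical multiplication and inversion maps in $\S$; moreover, each face and degeneracy map of $G$ is by hypothesis a morphism in $\Grp(\S)$, hence preserves these operations. The classical Moore formula -- which expresses an $n$-simplex extending a given horn as a finite word in the horn's constituent simplices, their faces and degeneracies, and the group's multiplication and inversion -- thus transcribes to yield a natural map
\[
M_{\Lambda^n_i}(\forget(G)) \longrightarrow G_n
\]
in $\S$ whose composition with the matching map is the identity up to coherent homotopy. In particular it surjects on $\pi_0$, which is what is needed.

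The main obstacle is making the Moore formula precise in the coherent setting, since classically it involves nested compositions of strictly-defined operations whereas here one has only coherent group structure. One way to sidestep this is by strictification: the equivalence $\Grp(\S) \simeq \S_*^{\geq 1}$ (via $B$ and $\Omega$) together with the existence of topological-group models for connected pointed spaces lets one replace $G$ by a levelwise equivalent simplicial object in the category of topological groups; the classical Moore theorem applied pointwise then yields the Kan condition, and since the Kan--Quillen fibrancy condition on $s\S$ is invariant under levelwise equivalence, the conclusion follows.
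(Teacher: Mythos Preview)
Your approach is workable but different from the paper's, and your strictification step is more delicate than you indicate. The paper works on the \emph{left}-adjoint side rather than strictifying $G$: since the generating acyclic cofibrations $\Lambda^n_i \to \Delta^n$ of $s\S_\KQ$ lie in $s\Set \subset s\S$, it suffices to show that $\free_{s\Grp(\S)}$ carries them to maps admitting retractions. The paper does this by proving that $\free_{\Grp(\S)} : \S \to \Grp(\S)$ restricted to discrete spaces agrees with the classical $\free_\Grp : \Set \to \Grp$ --- the nontrivial point being that $\Omega B$ of a free discrete monoid is again the free discrete group, which is extracted from the compatibility of groupoid completion with classifying spaces (citing Dwyer--Kan). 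Applying $\Fun(\bD^{op},-)$, one finds that $\free_{s\Grp(\S)}(\Lambda^n_i \to \Delta^n)$ lies in the image of $s\Grp \hookrightarrow s\Grp(\S)$, where the classical result supplies a retraction; adjunction finishes. By contrast, your route requires rectifying a $\bD^{op}$-shaped diagram in $\Grp(\S)$ (not just a single object) to one in topological or simplicial groups, compatibly with the forgetful functors, and then reconciling the strict matching objects --- where Moore's continuous section actually lives --- with the homotopy matching objects that govern fibrancy in $s\S_\KQ$. Both of these are doable with standard technology (diagram rectification over a combinatorial model category, plus a Reedy-fibrant replacement), but neither is as immediate as your sketch suggests. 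The paper's argument trades these rectification issues for a single concrete computation about free groups, and never has to model an arbitrary $G$ at all.
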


\begin{proof}
Observe that the adjunction $\free_{\Grp(\S)} : \S \adjarr \Grp(\S) : \forget_{\Grp(\S)}$ factors as the composite adjunction
\[ \begin{tikzcd}[column sep=2cm]
\S \arrow[transform canvas={yshift=0.8ex}]{r}{\free_{\Mon(\S)}} \arrow[leftarrow, transform canvas={yshift=-0.8ex}]{r}[swap]{\forget_{\Mon(\S)}} &
\Mon(\S) \arrow[transform canvas={yshift=0.8ex}]{r}{(-)^\gp} \arrow[hookleftarrow, transform canvas={yshift=-0.8ex}]{r} &
\Grp(\S) .
\end{tikzcd} \]
We claim that the diagram
\[ \begin{tikzcd}[column sep=1.5cm]
\Set \arrow[hook]{d} \arrow{r}{\free_\Mon} & \Mon \arrow{r}{(-)^\gp}  & \Grp \arrow[hook]{d} \\
\S \arrow{r}[swap]{\free_{\Mon(\S)}} & \Mon(\S) \arrow{r}[swap]{(-)^\gp} & \Grp(\S)
\end{tikzcd} \]
commutes.\footnote{If we were to add in the middle vertical inclusion $\Mon \hookra \Mon(\S)$, the left square would commute (simply by inspection of the functor $\free_{\Mon(\S)}$), but the right square would not: its extreme failure to do so is encoded by \cite[Theorem 1]{McDuffMonoids}.}  Indeed, recall the factorization
\[ \begin{tikzcd}
\Mon(\S) \arrow{rr}{(-)^\gp} \arrow{rd}[swap]{B} & & \Grp(\S) , \\
& \S_* \arrow{ru}[swap]{\Omega}
\end{tikzcd} \]
and recall that the functor $\Mon(\S) \xra{B} \S_*$ can itself be obtained as the composite
\[ \Mon(\S) \xra{\mf{B}} (\Cati)_* \xra{(-)^\gpd} (\Gpd_\infty)_* \simeq \S_* \]
(where $\mf{B}$ denotes the ``categorical delooping'' functor).  The claim now follows from the commutativity of the diagram
\[ \begin{tikzcd}[column sep=1.5cm]
\Set \arrow{r}{\free_\Mon} \arrow[hook]{d} & \Mon \arrow{r}{\mf{B}} & \strcat_* \arrow{r}{(-)^\gpd} & \strgpd_* \arrow{d} \\
\S \arrow{r}[swap]{\free_{\Mon(\S)}} & \Mon(\S) \arrow{r}[swap]{\mf{B}} & (\Cati)_* \arrow{r}[swap]{(-)^\gpd} & (\Gpd_\infty)_* ,
\end{tikzcd} \]
which itself follows from \cite[5.4]{DKSimpLoc}.

Now, applying $\Fun(\bD^{op},-)$ to the original commutative rectangle, we obtain a commutative square
\[ \begin{tikzcd}[column sep=1.5cm]
s\Set \arrow{r}{\free_{s\Grp}} \arrow[hook]{d} & s\Grp \arrow[hook]{d} \\
s\S \arrow{r}[swap]{\free_{s\Grp(\S)}} & s\Grp(\S) .
\end{tikzcd} \]
In particular, the image of any element $\Lambda^n_i \ra \Delta^n$ of $J^{s\Set}_\KQ = J^{s\S}_\KQ$ under the composite
\[ s\Set \hookra s\S \xra{\free_{s\Grp(\S)}} s\Grp(\S) \]
admits a retraction (see e.g.\! \cite[Lemma I.3.4]{GJnew}).  This proves the claim.
\end{proof}

% {\color{cyan} split this off into a new paper.  prove it for a general subcategory $\D \subset \C$ (or even an arbitrary map $\D \ra \C$?).  relate it to koszul duality!  and plenty of other Speculations from the sspaces paper.  maybe \textit{Nonabelian projective resolutions and Koszul duality via model $\infty$-categories}.  Or get ``formal moduli problems'' in there somewhere -- maybe \textit{Formal moduli problems and the derived functor of Koszul duality}.  as qiaochu points out, this operation might also be thought of as ``sheafification'' (for smooth hypercovers).}

\section{Topology}
\label{section.topology}

In this section, we lay out the basic topological framework (absent any operadic structure).

\subsection{Foundations of topology}

\begin{ass}
We begin with a presentably symmetric monoidal stable $\infty$-category $\C = (\C,\otimes,\unit)$.  By presentability, this will automatically be closed (i.e.\! admit an internal hom bifunctor).
\end{ass}

\begin{rem}
When it is convenient, we will consider $\C$ as being enriched over the symmetric monoidal $\infty$-category $(\S_*,\sm,S^0)$ of pointed spaces equipped with the smash product: the basepoint $0 \in \hom_\C(X,Y)$ is given by the unique ``zero map'' $X \ra 0_\C \ra Y$, and the fact that the composition maps factor through the smash products amounts to the observation that any sequence of composable maps in which at least one of the maps is a zero map composes canonically to another zero map.  Moreover, $\C$ admits a canonical bitensoring over $\S_*$ which is compatible with this enrichment.  (It is not hard to make these assertions precise using the formalism of \cite{GepHaug}.)
\end{rem}

\begin{notn}
We write $\Dual = \enrhom_\C(-,\unit) : \C^{op} \ra \C$ for the ``linear dual'' functor, and we write $\C^\inv \subset \C^\dual \subset \C$ for the full subcategories of invertible objects and of dualizable objects.
\end{notn}

\begin{ass}\label{ass compact unit}
We assume that the unit object $\unit \in \C$ is compact, i.e.\! that the functor $\hom_\C(\unit,-) : \C \ra \S$ commutes with filtered colimits.
\end{ass}

\begin{obs}\label{obs inv and dzble also cpct}
It follows immediately from \Cref{ass compact unit} that any invertible object of $\C$ is necessarily compact.  In fact, because of the assumption that the symmetric monoidal structure commutes with colimits separately in each variable, it follows that any dualizable object is compact as well: this is a consequence of the natural equivalence $\hom_\C(X,-) \simeq \hom_\C(\unit,\Dual X \otimes - )$ in $\Fun(\C,\S)$.
\end{obs}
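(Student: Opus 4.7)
The plan is to reduce compactness of dualizable objects to compactness of the unit via the stated natural equivalence, and then note that invertibility is a special case of dualizability.

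First I would establish the natural equivalence $\hom_\C(X,-) \simeq \hom_\C(\unit, \Dual X \otimes -)$ for any dualizable $X \in \C^\dual$. Since $\C$ is presentably symmetric monoidal, it is closed, so there is an internal hom bifunctor $\enrhom_\C(-,-)$ satisfying $\hom_\C(Y,Z) \simeq \hom_\C(\unit, \enrhom_\C(Y,Z))$ for all $Y, Z$. Dualizability of $X$ means precisely that the canonical map $\Dual X \otimes Y \to \enrhom_\C(X, Y)$ is an equivalence for every $Y$ (this is the standard characterization of dualizability in a closed symmetric monoidal $\infty$-category, following from the triangle identities for the evaluation and coevaluation). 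Splicing these gives the asserted equivalence in $\Fun(\C, \S)$.

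Next I would observe that the functor $\Dual X \otimes - : \C \to \C$ preserves all colimits, since by the presentably symmetric monoidal hypothesis the tensor product commutes with colimits separately in each variable. Meanwhile, the functor $\hom_\C(\unit, -) : \C \to \S$ preserves filtered colimits by \Cref{ass compact unit}. The composite $\hom_\C(\unit, \Dual X \otimes -)$ therefore preserves filtered colimits, and via the equivalence above this shows $\hom_\C(X, -)$ does too, i.e.\ $X$ is compact. This handles the dualizable case.

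Finally, the invertible case is simply a special instance: any invertible $X \in \C^\inv$ (with inverse $X^{-1}$) is dualizable with dual $\Dual X \simeq X^{-1}$, realized by the unit and counit coming from the chosen equivalence $X \otimes X^{-1} \simeq \unit$. So invertible objects fall into the previous case and are in particular compact. There is no serious obstacle here: the entire argument is a formal chain of natural equivalences, with the only substantive inputs being the closed-symmetric-monoidal identification of internal mapping objects for dualizable objects and the assumed compactness of $\unit$.
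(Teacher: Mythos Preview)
Your proposal is correct and follows exactly the approach indicated in the paper: the observation itself already contains its proof, namely the natural equivalence $\hom_\C(X,-) \simeq \hom_\C(\unit,\Dual X \otimes -)$ combined with cocontinuity of $\Dual X \otimes -$ and compactness of $\unit$. You have simply unpacked the details (deriving the equivalence via closedness and the standard dualizability characterization, and noting invertible $\Rightarrow$ dualizable), which is entirely in line with what the paper intends.
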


\begin{ass}\label{ass generators}
We assume the existence of a small subcategory $\G \subset \C$ of (strong) generators, which we generally denote by $S^\beta \in \G$ (with the ``$S$'' and ``$\beta$'' chosen to evoke the notion of a ``bigraded sphere'' (from motivic stable homotopy theory)); that is, we assume that the functors
\[ \hom_\C(S^\beta,-) : \C \ra \S \]
are jointly conservative.  We moreover assume that the subcategory $\G \subset \C$
\begin{itemizesmall}
\item contains the unit object $\unit \in \C$,
\item is closed under de/suspensions,
\item consists of invertible objects, and
\item is closed under the monoidal product of $\C$.
\end{itemizesmall}
We write $S^{n+\beta} = \Sigma^n S^\beta$ for any $n \in \bbZ$.% {\color{magenta} [how does this interact with ``compactly generated $\infty$-categories''?  (see \sec 5.5.7 of HTT).]}
\end{ass}

\begin{notn}
We write $\G^\delta = \pi_0(\G^\simeq) \in \Ab\Grp$ for the abelian group of equivalence classes of objects of $\G$, with addition given by the monoidal product of $\C$.  We denote the element corresponding to $S^\beta \in \G$ simply by $\beta \in \G^\delta$.
\end{notn}

\begin{defn}
For any $\beta \in \G^\delta$, we refer to the equivalence $S^\beta \otimes - : \C \xra{\sim} \C$ as the \bit{$\beta$-fold suspension}.  The ordinary notion of suspension is recovered as $(\Sigma^n \unit) \otimes - : \C \xra{\sim} \C$.  We will henceforth refer to \textit{any} $\beta$-fold suspension as a ``suspension'', and refer to this latter more restrictive notion as a \bit{categorical suspension}.  We denote $\beta$-fold suspension by $\Sigma^\beta$, and categorical suspension simply by $\Sigma^n$.  (Note that these conventions jibe with those of \Cref{ass generators}.)  While through this definition the term ``desuspension'' technically becomes superfluous, we will nevertheless continue to employ it for aesthetic reasons.
\end{defn}

\begin{notn}
We write $\uA = \Fun(\G^\delta,\Ab)$ for the category of $\G^\delta$-graded abelian groups, equipped with the Day convolution monoidal structure relative to $(\G^\delta,+) = (\G^\delta,\otimes_\C)$ and $(\Ab,\otimes_\bbZ)$.  This receives a ``homotopy'' functor $\pi_\star : \C \ra \uA$, given by $\pi_\beta X = (\pi_\star X)(S^\beta) = [S^\beta,X]_\C$.\footnote{This is the composite of the canonical projection $\C \ra \ho(\C)$ followed by the restricted Yoneda embedding along the functor $\G^\delta \ra \ho(\C)$; note that we have a canonical equivalence $\G^\delta \simeq (\G^\delta)^{op}$ since this category has no nonidentity morphisms.}  This functor is is itself lax monoidal, and in fact descends along the monoidal functor $\C \ra \ho(\C)$ to another lax monoidal functor $\pi_\star : \ho(\C) \ra \uA$.
\end{notn}

\begin{rem}\label{htpy creates equivces in C}
As a result of \Cref{ass generators}, to say that $\G \subset \C$ is a subcategory of strong generators is precisely to say that the functor $\pi_\star : \C \ra \uA$ creates the equivalences in $\C$.
\end{rem}

\begin{rem}
One could alternatively consider the ``homotopy'' functor as taking values in $\PSd(\G^\vee) = \Fun((\G^\vee)^{op},\Set)$, the category of product-preserving presheaves of sets on the closure of $\G \subset \C$ under finite coproducts (which remain coproducts in $\ho(\C)$ since $\pi_0 : \S \ra \Set$ preserves products).  This is analogous to the ``$\Pi$-algebra'' perspective taken by Dwyer--Kan--Stover in \cite{DKS-bigraded} and by Blanc--Dwyer--Goerss in \cite{BDG-moduli-problem}.  However, in order to obtain a computable obstruction theory, Goerss--Hopkins take an alternative route, considering the homotopy groups of a spectrum simply as a $\bbZ$-graded abelian group (rather than as a module over the stable homotopy groups of spheres).\footnote{Nevertheless, product-preserving presheaves pervade this story.  We will mostly suppress them, but we will need to discuss them explicitly in \Cref{module structure on unlocalized spiral}.}
\end{rem}

We conclude this subsection with a few remarks concerning the choice of ambient $\infty$-category.

\begin{rem}
If we remove the requirement that $\C$ be stable, it becomes necessary to assume that the generators admit desuspensions in order for \Cref{normalized chains commutes with homotopy} to hold. % (Indeed, see {\color{cyan} [remark]} for a counterexample without this assumption.)
It also becomes necessary to assume that the generators are h-cogroup objects (with respect to the wedge sum) in order to construct the relevant spectral sequence, but of course this is a strictly weaker assumption.  More broadly, a great many of the arguments would become substantially more delicate.
\end{rem}

\begin{rem}
If we only require $\C$ to be monoidal (instead of symmetric monoidal), then by the so-called ``microcosm principle'' it will only make sense to discuss associative algebras in $\C$, instead of commutative algebras.  In the setting of ordinary spectra, associative algebras can be constructed via Hopkins--Miller obstruction theory (see \cite{RezkHopMil}), which is far simpler than Goerss--Hopkins obstruction theory since it is not necessary to resolve the associative operad (see \Cref{subsection resolutions of operads}).  On the other hand, if we set our sights lower and remove the operad from the picture entirely, we simply recover an abstract version of Blanc--Dwyer--Goerss obstruction theory (see \cite{BDG-moduli-problem}).  In any case, we expect that practical examples of interest will carry symmetric monoidal structures anyways.
\end{rem}

\subsection{The resolution model structure}

\begin{notn}
Let $E \in \CAlg(\ho(\C))$ be a homotopy commutative algebra object in $\C$. %\footnote{\color{cyan} currently seeing if i can get away with $E$ not being homotopy commutative as well (and referring to \textit{left} modules, etc.).  but this might only work (or only be computable) if $E$ is actually homotopy commutative, too.}  UPDATE 2/20/2018: fuck this, not worth my time.
This induces $E_\star = \pi_\star E \in \CAlg(\uA)$, and we write $\A = \Mod_{E_\star}(\uA)$ for its category of modules.  Then we obtain a ``homology'' functor $E_\star : \C \ra \A$ by $E_\star X = \pi_\star (E \otimes X)$.
\end{notn}

\begin{defn}
An \bit{$E_\star$-equivalence} in $\C$ is a morphism which becomes an isomorphism under the functor $E_\star : \C \ra \A$.
\end{defn}

\begin{notn}
By definition, the $E_\star$-equivalences are created by the composite $\C \xra{E \otimes -} \C \xra{\pi_*} \uA$ (as isomorphisms in $\A$ are created in $\uA$).  However, \Cref{htpy creates equivces in C} implies that they are also created by the functor $\C \xra{E \otimes -} \C$.  Our assumption that $\C$ is presentably symmetric monoidal immediately implies that the $E_\star$-equivalences are strongly saturated (in the sense of Definition T.5.5.4.5), and so by Proposition T.5.5.4.15 there exists a left localization adjunction $\leftloc_{E_\star} : \C \adjarr \leftloc_E(\C) : \forget_{E_\star}$.
\end{notn}

\begin{defn}
We define the subcategory $\A_\proj \subset \A$ of \bit{projective} objects just as in classical algebra.
\end{defn}

\begin{ass}\label{adams's condition}
We assume henceforth that $E$ satisfies \bit{Adams's condition}, and fix a witnessing datum: this consists of a filtered diagram % \footnote{\color{cyan} factoring through cogroup objects, if $\C$ isn't stable}
$E_\bullet : \J \ra \C^\dual_{/E} = \C^\dual \times_\C \C_{/E}$ with $\colim(\J \xra{E_\bullet} \C) \xra{\sim} E$, such that for every $\alpha \in \J$,
\begin{itemize}
\item $E_\star \Dual E_\alpha \in \A_\proj$, and
\item for every $M \in \Mod_E(\ho(\C))$, the canonical map
\begin{align*}
[ \Dual E_\alpha, M ]_\C & \ra \hom_\A ( E_\star \Dual E_\alpha, \pi_\star M ) \\
\left( \Dual E_\alpha \xra{f} M \right) & \mapsto \left( E_\star \Dual E_\alpha \xra{E_\star(f)} E_\star M = \pi_\star(E \otimes M) \ra \pi_\star M \right)
\end{align*}
is an isomorphism. % {\color{magenta} \small [does this ever actually get used?]}
\end{itemize}
\end{ass}

\begin{rem}
The canonical map of \Cref{adams's condition} can be equivalently seen as the composite
\[ [ \Dual E_\alpha , M ]_\C \cong [ E \otimes \Dual E_\alpha , M ]_{\Mod_E(\ho(\C))} \xra{\pi_\star} \hom_\A ( E_\star \Dual E_\alpha , \pi_\star M ) . \]
\end{rem}

\begin{obs}\label{DEalphas detect E-homology}
For any $X \in \C$ and any $\beta \in \G^\delta$, we have the string of isomorphisms
\begin{align*}
\colim_{\alpha \in \J} [ \Sigma^\beta \Dual E_\alpha , X]_\C & \cong  \colim_{\alpha \in \J} [S^\beta , E_\alpha \otimes X]_\C
\cong [S^\beta , \colim_{\alpha \in \J}(E_\alpha \otimes X) ]_\C \\
& \cong [S^\beta , \colim_{\alpha \in \J}(E_\alpha) \otimes X ]_\C
\cong [S^\beta , E \otimes X ]_\C
= E_\beta X
\end{align*}
in $\Ab$.
\end{obs}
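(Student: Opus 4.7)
The plan is to verify each link in the displayed chain separately; each follows immediately from one of the structural hypotheses already in place on $\C$ and $E$, so my proposal is really just to annotate the steps rather than to prove anything genuinely new.

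For the first isomorphism, I would invoke dualizability of $E_\alpha$: since $\Sigma^\beta \Dual E_\alpha = S^\beta \otimes \Dual E_\alpha$ and $\Dual E_\alpha$ is the dual of the dualizable object $E_\alpha \in \C^\dual$, there is a natural equivalence of mapping spaces $\hom_\C(S^\beta \otimes \Dual E_\alpha, X) \simeq \hom_\C(S^\beta, E_\alpha \otimes X)$. Taking $\pi_0$ yields the claimed identification of $[-,-]_\C$-groups, and this identification is natural in $\alpha \in \J$, so it passes through the filtered colimit. The second isomorphism is a compactness statement: by \Cref{ass generators} the object $S^\beta$ is invertible, hence compact by \Cref{obs inv and dzble also cpct} (building on \Cref{ass compact unit}); since $\hom_\C(S^\beta,-)$ therefore commutes with the filtered colimit over $\J$, and since $\pi_0$ commutes with filtered colimits in $\Ab$, the same holds for $[S^\beta,-]_\C$.

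The third isomorphism is where the presentable symmetric monoidal structure on $\C$ enters: the functor $-\otimes X : \C \to \C$ preserves colimits separately, so in particular it commutes with the filtered colimit $\colim_\alpha E_\alpha$, giving $\colim_\alpha(E_\alpha \otimes X) \simeq (\colim_\alpha E_\alpha) \otimes X$. The fourth isomorphism is the hypothesis $\colim_\J E_\bullet \xra{\sim} E$ from \Cref{adams's condition}, smashed with $X$ (again using cocontinuity of $-\otimes X$). The final equality is simply the definition $E_\beta X = \pi_\beta(E\otimes X) = [S^\beta, E\otimes X]_\C$.

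There is no real obstacle here: the statement is essentially bookkeeping, combining dualizability, compactness of generators, the cocontinuity built into presentable symmetric monoidality, and the filtered-colimit presentation of $E$ packaged into Adams's condition. The only minor care required is to verify that each equivalence of mapping \emph{spaces} is natural in $\alpha$, so that the chain of $\pi_0$'s composes to a well-defined isomorphism of $\Ab$-valued filtered colimits; this naturality is automatic from the functoriality of the dualizability adjunction and the monoidal structure.
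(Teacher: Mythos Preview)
Your annotation is correct and matches the paper's intent: the observation is presented in the paper without a separate proof, the displayed chain being regarded as self-explanatory given the standing hypotheses (dualizability of the $E_\alpha$, compactness of invertible generators, cocontinuity of the tensor product, and the filtered-colimit presentation of $E$ from Adams's condition). Indeed, the paper immediately coins the phrase ``colimit argument'' for exactly this style of reasoning, so your step-by-step unpacking is precisely what the authors expect the reader to supply.
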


\begin{notn}
Strings of adjunction isomorphisms having the same flavor as that of \Cref{DEalphas detect E-homology} will frequently be useful to us.  Rather than spell out the isomorphisms each time, we simply refer to this line of reasoning as a \bit{colimit argument}.
\end{notn}

\begin{notn}
We write $\GE \subset \C$ for the smallest full subcategory containing $\G$ and $\{ \Dual E_\alpha \}_{\alpha \in \J}$ that is closed under de/suspension and finite coproducts.  We generally write $S^\eps \in \GE$ for an arbitrary object (the letter ``$\eps$'' being suggestive of the letter ``$E$''), although we continue to write $S^\beta \in \G \subset \GE$ for an arbitrary object of $\G$ when considered as an object of $\GE$.  We write $\GE^\delta = \pi_0 ( (\GE)^\simeq)$, and so (just as we write $\beta \in \G$) we also simply write $\eps \in \GE^\delta$ to denote an arbitrary element.
\end{notn}

\begin{obs}
For any $S^\eps \in \GE$ and any $M \in \Mod_E(\ho(\C))$, we have an isomorphism
\[ [S^\eps , M ] \xra{\cong} \hom_\A(E_\star S^\eps, \pi_\star M) . \]
This can be seen as follows.
\begin{itemize}
\item For $S^\eps = \Dual E_\alpha$, this follows from \Cref{adams's condition}.
\item For $S^\eps = S^\beta \in \G$, note that $E_\star S^\beta \cong E_\star \otimes_{\unit_\star} \pi_\star S^\beta$, and so we are interested in the composite
\[ [S^\beta ,M]_\C \cong [E \otimes S^\beta,M]_{\Mod_E(\ho(\C))} \xra{\pi_\star} \hom_\A(E_\star S^\beta,\pi_\star M) \cong \hom_{\Mod_{\unit_\star}(\uA)} (\pi_\star S^\beta,\pi_\star M) , \]
which is an isomorphism with inverse given by evaluation at the universal element of $\pi_\beta S^\beta$.
\item In general, this property is preserved both by de/suspension and by the formation of finite coproducts.
\end{itemize}
\end{obs}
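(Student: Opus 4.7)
The plan is to verify the three bullets in turn; cases 1 and 3 are relatively formal given the setup, with the real content concentrated in case 2.

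Case 1 ($S^\eps = \Dual E_\alpha$) is immediate from \Cref{adams's condition}, as it is literally what is being invoked there.

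For case 2, $S^\eps = S^\beta \in \G$, the first step is to establish the free $E_\star$-module identification $E_\star S^\beta \cong E_\star \otimes_{\unit_\star} \pi_\star S^\beta$. This uses invertibility of $S^\beta$ crucially: $\pi_\star S^\beta$ is a rank-one free $\unit_\star$-module generated by the tautological class $\iota_\beta \in \pi_\beta S^\beta$ corresponding to $\id_{S^\beta}$, and because $S^\beta$ is invertible we have $E \otimes S^\beta \simeq \Sigma^\beta E$, so that the canonical map $E_\star \otimes_{\unit_\star} \pi_\star S^\beta \to \pi_\star(E \otimes S^\beta) = E_\star S^\beta$ is an isomorphism. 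Granted this, the composite
\[ [S^\beta, M]_\C \cong [E \otimes S^\beta, M]_{\Mod_E(\ho(\C))} \xra{\pi_\star} \hom_\A(E_\star S^\beta, \pi_\star M) \cong \hom_{\Mod_{\unit_\star}(\uA)}(\pi_\star S^\beta, \pi_\star M) \]
is to be inverted by evaluation at $\iota_\beta$: a $\unit_\star$-linear map $\pi_\star S^\beta \to \pi_\star M$ is determined by its image of $\iota_\beta$, and such an image is precisely an element of $[S^\beta, M]_\C$. The first isomorphism above is the free--forgetful adjunction for $E$-modules, the last is restriction of scalars along $\unit_\star \to E_\star$, and the middle map is $\pi_\star$ itself.

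For case 3, I would argue that the class of objects $S^\eps \in \GE$ for which the statement holds is closed under the two operations generating $\GE$ out of $\G \cup \{\Dual E_\alpha\}$. Closure under de/suspension is immediate because $S^\gamma \otimes -$ (for $\gamma \in \G^\delta$ invertible) acts as a shift on both $[-, M]_\C$ and on $E_\star(-)$, and these shifts are intertwined by the candidate isomorphism. For finite coproducts, $[-, M]_\C$ sends finite coproducts in its source to products; meanwhile, since $E \otimes -$ preserves coproducts and $\pi_\star$ sends finite coproducts (which in stable $\C$ are finite biproducts) to direct sums, $E_\star$ carries finite coproducts to direct sums in $\A$, and $\hom_\A(-, \pi_\star M)$ turns these into products.

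The main obstacle is the K\"unneth-style identification $E_\star S^\beta \cong E_\star \otimes_{\unit_\star} \pi_\star S^\beta$ in case 2; once this is in hand, the rest reduces to formal adjunction manipulations together with stability of $\C$ and invertibility of the generators $S^\beta \in \G$.
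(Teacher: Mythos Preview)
Your proposal is correct and follows exactly the same three-case decomposition as the paper's own argument; you have simply filled in the details that the paper leaves implicit (the rank-one free module structure on $\pi_\star S^\beta$, the explicit naturality under shifts, and the behavior of both sides under finite coproducts).
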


\begin{notn}\label{sC enr and bitensored over sS}
Recall that $s\C$ is canonically enriched and bitensored over $s\S$ (see \Cref{simp objs in presentable are enr and bitens over sspaces}); these data assemble into a two-variable adjunction, which we denote by
\[ \left( s\S \times s\C \xra{- \tensoring -} s\C \ , \ s\S^{op} \times s\C \xra{ - \cotensoring - } s\C \ , \ s\C^{op} \times s\C \xra{\enrhom_\C(-,-)} s\S \right) .\]
\end{notn}

\begin{defn}
We fix the following terminology.
\begin{enumerate}
\item A morphism in $\ho(\C)$ is called a \bit{$\GE$-epimorphism} if the restricted Yoneda functor $\ho(\C) \ra \PSd(\GE)$ takes it to a componentwise surjection.
\item An object of $\ho(\C)$ is called \bit{$\GE$-projective} if it has the extension property for all $\GE$-epimorphisms.
\item A morphism in $\ho(\C)$ is called a \bit{$\GE$-projective cofibration} if it has the left lifting property for all $\GE$-epimorphisms.
\end{enumerate}
\end{defn}

\begin{thm}\label{thm sCres}
There is a \bit{resolution model structure} on $s\C$, denoted $s\C_\res$, which enjoys the following properties.
\begin{enumerate}
\item Its weak equivalences and fibrations are created by the functor
\[ s\C \xra{ X \mapsto ( S^\eps \mapsto \hom^\lw_\C(S^\eps,X))} \prod_{\GE^\delta} s\S_\KQ . \]
\item It is simplicial.
\item Its cofibrations are precisely those morphisms whose relative latching maps are $\GE$-projective cofibrations.
\item All objects are fibrant in it.
\item It is cofibrantly generated by the sets
\[ I^{s\C}_\res = \{ I^{s\S}_\KQ \tensoring \const(S^\eps) \}_{S^\eps \in \GE} = \{ \partial \Delta^n \tensoring \const(S^\eps) \ra \Delta^n \tensoring \const(S^\eps) \}_{n \geq 0 , S^\eps \in \GE} \]
and
\[ J^{s\C}_\res = \{ J^{s\S}_\KQ \tensoring \const(S^\eps) \}_{S^\eps \in \GE} = \{ \Lambda^n_i \tensoring \const(S^\eps) \ra \Delta^n \tensoring \const(S^\eps) \}_{0 \leq i \leq n \geq 1 , S^\eps \in \GE} . \]
\end{enumerate}

\begin{comment}

\begin{enumerate}
\item\label{sCres created by sAbKQ} Its weak equivalences and fibrations are created by pullback along the functor
\[ s\C \xra{ [=,-]_\C^\lw } s \Fun(\GE,\Ab) \simeq \Fun(\GE , s\Ab_\KQ)_\projective . \]
\item\label{sCres is sspatial} It is simplicial.
\item\label{cofibns of sCres} Its cofibrations are precisely those morphisms whose relative latching maps are $\GE$-projective cofibrations.
\item\label{fibts of sCres} All objects are fibrant in it.
\item\label{sCres is cofgen} It is cofibrantly generated by the sets
\[ I^{s\C}_\res = \{ I^{s\S}_\KQ \tensoring \const(S^\eps) \}_{S^\eps \in \GE} = \{ \partial \Delta^n \tensoring \const(S^\eps) \ra \Delta^n \tensoring \const(S^\eps) \}_{n \geq 0 , S^\eps \in \GE} \]
and
\[ J^{s\C}_\res = \{ J^{s\S}_\KQ \tensoring \const(S^\eps) \}_{S^\eps \in \GE} = \{ \Lambda^n_i \tensoring \const(S^\eps) \ra \Delta^n \tensoring \const(S^\eps) \}_{0 \leq i \leq n \geq 1 , S^\eps \in \GE} . \]
\end{enumerate}

\end{comment}

\end{thm}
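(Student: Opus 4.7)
The plan is to derive Theorem \ref{thm sCres} by applying Theorem \ref{many-object sNres} to the presentable $\infty$-category $\C$ with the set of objects $\{ S^\eps \}_{S^\eps \in \GE}$, and then to verify the two additional properties (fibrancy of all objects, and the characterization of cofibrations via relative latching maps) by hand. First, I would check the standing hypotheses: by \Cref{obs inv and dzble also cpct}, every invertible or dualizable object of $\C$ is compact, and compactness is preserved by finite coproducts and by de/suspension; since $\GE$ is by definition generated under these operations from $\G \cup \{ \Dual E_\alpha \}$, every $S^\eps \in \GE$ is compact. Applying \Cref{many-object sNres} then immediately delivers properties (1), (2), and (5), with the generating (trivial) cofibrations as listed.

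For property (4), I would leverage the stability of $\C$. For any $S^\eps \in \GE$ and any $X \in \C$, the mapping space $\hom_\C(S^\eps,X) \in \S$ canonically lifts to an infinite loop space, hence in particular to an object of $\Grp(\S)$. Levelwise, this produces a lift of $\hom^\lw_\C(S^\eps,-) : s\C \to s\S$ to a functor landing in $s\Grp(\S)$. Then \Cref{simplicial infty-groups are fibt} identifies the composite to $s\S$ as factoring through the subcategory of $s\S_\KQ$-fibrant objects (the argument there, which uses the free-forgetful adjunction, restricts to arbitrary simplicial $\infty$-groups since those are canonically colimits of free ones). By construction of the resolution model structure in \Cref{many-object sNres}, this shows that every $X \in s\C$ is fibrant.

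For property (3), I would argue in two complementary directions. On one hand, the generating cofibrations $\partial \Delta^n \tensoring \const(S^\eps) \to \Delta^n \tensoring \const(S^\eps)$ have relative latching maps which, unwinding the tensoring and the Reedy description, are identified with coproducts of copies of the map $0_\C \to S^\eps$ (the nondegenerate $n$-simplex contributing one such summand); since $S^\eps \in \GE$ is by definition a free $\GE$-projective object, such maps are $\GE$-projective cofibrations. On the other hand, the class of morphisms whose relative latching maps are $\GE$-projective cofibrations is closed under retracts, transfinite compositions, coproducts, and pushouts, essentially because relative latching objects are themselves built by iterated pushouts from the data one level down, and because $\GE$-projective cofibrations (being defined by a left lifting property) satisfy all these closure properties. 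Combining these two facts with the cofibrant generation from \Cref{many-object sNres} gives both inclusions: every cofibration is a retract of a transfinite composite of pushouts of generating cofibrations, hence has $\GE$-projective relative latching maps; conversely, a map with $\GE$-projective relative latching maps can be built cell-by-cell out of the generating cofibrations via its own skeletal filtration.

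The main obstacle is property (3): the delicate part is verifying the closure of the ``$\GE$-projective relative latching'' property under pushouts and transfinite composition, and then running the skeletal induction in the reverse direction. This is the point where the interaction between the Reedy latching apparatus, the tensoring $- \tensoring -: s\S \times s\C \to s\C$ of \Cref{sC enr and bitensored over sS}, and the lifting-property characterization of $\GE$-projective cofibrations must be spelled out carefully; all other steps are essentially formal consequences of the general framework of \Cref{section.resn.model.str} once compactness and stability are invoked.
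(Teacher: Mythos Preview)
Your proposal is correct and follows essentially the same route as the paper: the paper's proof is the single sentence ``This follows from \Cref{many-object sNres} and \Cref{simplicial infty-groups are fibt},'' which is exactly your invocation of \Cref{many-object sNres} for parts (1), (2), (5) and of \Cref{simplicial infty-groups are fibt} (via the grouplike structure on mapping spaces in a stable $\infty$-category) for part (4). The paper leaves part (3) entirely implicit in the reference to \cite[Proposition II.5.9]{GJnew} underlying \Cref{many-object sNres}, whereas you spell out the skeletal/Reedy argument; your treatment there is more detailed than necessary but not wrong. One small remark: your parenthetical about \Cref{simplicial infty-groups are fibt} ``restricting to arbitrary simplicial $\infty$-groups since those are canonically colimits of free ones'' is superfluous---the lemma as stated already asserts that the forgetful functor $s\Grp(\S) \to s\S$ lands in fibrant objects for \emph{all} simplicial $\infty$-groups, because the retraction of $\free(\Lambda^n_i \to \Delta^n)$ solves every horn-filling problem by adjunction.
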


\begin{proof}
This follows from \Cref{many-object sNres} and \Cref{simplicial infty-groups are fibt}.
\end{proof}

\begin{rem}\label{weak equivalences of sCres also pulled back from sAb}
It will follow from the localized spiral exact sequence of \Cref{construct localized spiral} that the weak equivalences of $s\C_\res$ are created by the functor
\[ s\C \xra{ [=,-]_\C^\lw } s \Fun(\GE,\Ab) \simeq \Fun(\GE , s\Ab_\KQ)_\projective . \]
(In fact, the fibrations are as well.)
\end{rem}

\begin{defn}\label{E-equivces in sC}
We define the subcategory of \bit{$E_\star$-equivalences}, denoted $\bW_{E_\star^\lw} = \bW^{s\C}_{E_\star^\lw} \subset s\C$, to be created by pulling back the subcategory $\bW^{s\A}_\KQ \subset s\A_\KQ$ under the functor $E_\star^\lw : s\C \ra s\A_\KQ$.
\end{defn}

\begin{notn}
Rather than overburden notation, we simply write $\pi_n : s\Ab \ra \Ab$ for the composite
\[ s\Ab \xra{|{-}|} \Ab\Grp(\S_*) \xra{\Ab\Grp(\pi_n)} \Ab\Grp(\Set_*) = \Ab . \]
This can be obtained more abstractly as a ``homotopy'' functor from a derived $\infty$-category to its heart, and indeed we use this same notation $\pi_n$ to denote all corresponding functors $s\Set_* \ra \Set_*$, $s\uA \ra \uA$, $s\A \ra \A$, etc.
\end{notn}

\begin{obs}\label{E-equivce implies isomorphism on Etwo pages}
Suppose that $X \we Y$ is a weak equivalence in $s\C_\res$.  By \Cref{weak equivalences of sCres also pulled back from sAb}, this means that for every $S^\eps \in \GE$ we obtain a weak equivalence $[S^\eps,X]^\lw_\C \we [S^\eps,Y]^\lw_\C$ in $s\Ab_\KQ$, i.e.\! that we obtain isomorphisms $\pi_n([S^\eps,X]^\lw_\C) \xra{\cong} \pi_n([S^\eps,Y]^\lw_\C)$ in $\Ab$ for all $n \geq 0$.  In particular, letting $S^\eps$ range over the set $\{ \Sigma^\beta \Dual E_\alpha \}_{\beta \in \G^\delta , \alpha \in \J}$, by \Cref{DEalphas detect E-homology} and since homotopy groups in $s\Set_*$ commute with filtered colimits, we obtain a weak equivalence $E_\star^\lw X \we E_\star^\lw Y$ in $s\A_\KQ$.  In other words, we have an inclusion $\bW_\res \subset \bW_{E_\star^\lw}$ of subcategories of $s\C$.
\end{obs}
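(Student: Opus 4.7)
The plan is to trace through the definitions, using the characterization of resolution weak equivalences (Theorem \ref{thm sCres}(1), or equivalently Remark \ref{weak equivalences of sCres also pulled back from sAb}) and the colimit argument from Observation \ref{DEalphas detect E-homology} to convert $\GE$-detection into $E_\star$-detection.

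First, given a resolution weak equivalence $X \we Y$ in $s\C_\res$, Theorem \ref{thm sCres}(1) asserts that for every $S^\eps \in \GE$ the induced map $\hom^\lw_\C(S^\eps,X) \to \hom^\lw_\C(S^\eps,Y)$ is a Kan--Quillen equivalence in $s\S$. Taking $\pi_n$ of these mapping spaces (or equivalently, passing to the associated $s\Ab_\KQ$-valued functor) yields isomorphisms $\pi_n([S^\eps,X]^\lw_\C) \xra{\cong} \pi_n([S^\eps,Y]^\lw_\C)$ for all $n \geq 0$ and all $S^\eps \in \GE$.

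Next, I would specialize to the cofinal subfamily $\{\Sigma^\beta \Dual E_\alpha\}_{\beta \in \G^\delta,\, \alpha \in \J} \subset \GE$. For a fixed $\beta$, the colimit argument of Observation \ref{DEalphas detect E-homology} gives, levelwise in the simplicial direction, a natural isomorphism
\[ \colim_{\alpha \in \J} [\Sigma^\beta \Dual E_\alpha, Z]_\C \cong E_\beta Z \]
for any $Z \in \C$. Applying this to each simplicial level of $X$ (resp.\ $Y$) produces natural isomorphisms of simplicial abelian groups $\colim_{\alpha \in \J} [\Sigma^\beta \Dual E_\alpha, X]^\lw_\C \cong E_\beta^\lw X$, and likewise for $Y$. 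Since filtered colimits of simplicial abelian groups commute with $\pi_n$ (equivalently, $s\Ab_\KQ$-weak equivalences are stable under filtered colimits), combining the two previous steps yields isomorphisms $\pi_n(E_\beta^\lw X) \xra{\cong} \pi_n(E_\beta^\lw Y)$ for all $n \geq 0$ and $\beta \in \G^\delta$.

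Finally, this is by definition the statement that $E_\star^\lw X \to E_\star^\lw Y$ is a weak equivalence in $s\A_\KQ$, i.e.\ that $X \to Y$ lies in $\bW_{E_\star^\lw}$ (Definition \ref{E-equivces in sC}); hence $\bW_\res \subset \bW_{E_\star^\lw}$. The only mildly delicate point is verifying that the colimit argument of Observation \ref{DEalphas detect E-homology} can indeed be applied levelwise and interchanged with $\pi_n$; this is immediate from the fact that both colimits in $s\C$ and homotopy groups of simplicial abelian groups are computed in the underlying $\infty$-category of spaces, where filtered colimits are exact.
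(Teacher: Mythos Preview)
Your proposal is correct and follows essentially the same route as the paper: the Observation is self-contained, and its argument is exactly the one you give --- use the $s\Ab_\KQ$ characterization of resolution weak equivalences (Remark~\ref{weak equivalences of sCres also pulled back from sAb}), specialize to $S^\eps = \Sigma^\beta \Dual E_\alpha$, apply the colimit identification of Observation~\ref{DEalphas detect E-homology} levelwise, and commute the filtered colimit past $\pi_n$. One small remark: you treat Theorem~\ref{thm sCres}(1) and Remark~\ref{weak equivalences of sCres also pulled back from sAb} as interchangeable, but the passage from the $s\S_\KQ$ characterization to the $s\Ab_\KQ$ one is itself nontrivial (the paper notes it requires the spiral exact sequence); since you also invoke the Remark directly, the argument stands.
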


\begin{obs}
In our setting, after a colimit argument the standard filtration spectral sequence for an object $X \in s\C$ runs $\pi_n E_\beta^\lw X \Rightarrow E_{\beta+n}|X|$.  (This agrees with the spectral sequence associated to the localized spiral exact sequence of \Cref{construct localized spiral} (see \cite[Lemma 3.1.5 and Remark 3.1.6]{GH}).)  Thus, an $E_\star$-equivalence in $s\C$ (for instance a weak equivalence in $s\C_\res$, by \Cref{E-equivce implies isomorphism on Etwo pages}) induces an isomorphism on $\Etwo$ pages of this spectral sequence.  In other words, there exists a factorization
\[ \begin{tikzcd}
s\C \arrow{r}{|{-}|} \arrow{d} & \C \arrow{r}{E_\star} & \A \\
\locEsC \arrow[dashed, bend right=5]{rru}
\end{tikzcd} \]
through the localization functor.
\end{obs}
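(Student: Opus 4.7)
The plan is to obtain the spectral sequence from the skeletal filtration applied to each generator and then pass to a filtered colimit, after which convergence plus the universal property of localization yields the factorization.

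First, for any $S^\eps \in \GE$ and any $X \in s\C$, the skeletal filtration of $|X| \in \C$ produces a strongly convergent first-quadrant spectral sequence
\[
E^2_{n,\eps}(X) \;=\; \pi_n\,[S^\eps, X]^\lw_\C \ \Longrightarrow \ [\Sigma^n S^\eps, |X|]_\C .
\]
This is the standard construction for simplicial objects in a stable $\infty$-category: the $E^1$-page is the normalized chains of the simplicial abelian group $[S^\eps, X]^\lw_\C$, and the $E^2$-page is its homotopy. First-quadrantness comes from the fact that the skeletal filtration is nonnegatively indexed, which will be what underwrites strong convergence later.

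Next, I would apply a colimit argument. Specializing to $S^\eps = \Sigma^\beta \Dual E_\alpha$ for $\beta \in \G^\delta$ and $\alpha \in \J$, and then taking the filtered colimit over $\alpha \in \J$, I get a filtered colimit of first-quadrant spectral sequences. Filtered colimits of abelian groups are exact and commute with $\pi_n$, so they preserve $E^r$-pages, convergence, and the abutment termwise. Using \Cref{DEalphas detect E-homology} to identify
\[
\colim_{\alpha \in \J} [\Sigma^\beta \Dual E_\alpha, -]_\C \;\cong\; E_\beta(-) ,
\]
applied both levelwise in $X$ (where we also use that $\pi_n$ on $s\Set_*$ commutes with filtered colimits) and to $|X|$, the colimit spectral sequence takes the claimed form $\pi_n E_\beta^\lw X \Rightarrow E_{\beta+n}|X|$.

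For the factorization, let $f\colon X \to Y$ be an $E_\star$-equivalence in $s\C$. By \Cref{E-equivces in sC}, the map $E_\star^\lw f$ is a weak equivalence in $s\A_\KQ$, which means isomorphisms on $\pi_n$ in each $\beta$-grading, i.e.\ an isomorphism on $E^2$-pages of the above spectral sequence. Strong convergence forces this to propagate through all $E^r$-pages to $E^\infty$, and hence gives an isomorphism on the associated graded for the (bounded-below, exhaustive) filtration on $E_{\beta+n}|X|$; the standard five-lemma argument in each total degree then yields that $E_\star|f|$ is an isomorphism in $\A$. Consequently, the composite $E_\star \circ |{-}| \colon s\C \to \A$ inverts the $E_\star$-equivalences, and the universal property of the localization $s\C \to \locEsC$ produces the dashed factorization.

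The main obstacle will be the convergence step: one must confirm that after taking the filtered colimit over $\J$ the resulting spectral sequence is still convergent in a strong enough sense that isomorphism on $E^2$ forces isomorphism on the abutment. For first-quadrant spectral sequences this is standard --- in each total degree, only finitely many terms of the $E^r$-page contribute, and the filtration on the abutment is finite in each degree --- but it relies on observing that filtered colimits preserve both the $E^\infty$-page and the finiteness of the filtration in each total degree, which follows from exactness of filtered colimits and the boundedness below of the skeletal filtration.
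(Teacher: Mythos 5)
Your argument is correct and is essentially the same as the one the paper leaves implicit: build the skeletal-filtration spectral sequence for each $\Sigma^\beta\Dual E_\alpha$, pass to the filtered colimit over $\J$ using \Cref{DEalphas detect E-homology} to recognize the $\Etwo$-page and abutment as $\pi_n E^\lw_\beta X$ and $E_{\beta+n}|X|$, and invoke strong convergence (automatic since the filtration is nonnegatively indexed, exhaustive, and preserved under filtered colimits) together with the universal property of localization. You have merely spelled out the convergence step that the paper elides under ``in other words.''
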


\begin{defn}
We refer to this spectral sequence $\Etwo = \pi_n E_\beta^\lw X \Rightarrow \Einfty = E_{\beta +n} |X|$ as the \bit{spiral spectral sequence}.
\end{defn}

\begin{rem}
By \Cref{identify localization of sNres}, the resolution model structure presents the nonabelian derived $\infty$-category $\PS(\GE)$.  Moreover, the composite $\C \xra{\const} s\C \ra \locressC \simeq \PS(\GE)$ clearly coincides with the restricted Yoneda embedding.  We will generally omit this from the notation.
\end{rem}

\subsection{The spiral exact sequence}\label{subsection spiral}

\begin{defn}\label{define both homotopy groups}
Choose any $n \geq 0$ and any $\eps \in \GE^\delta$.
\begin{enumerate}
\item We define the corresponding \bit{classical homotopy group} functor to be the composite
\[ \pi_n \pi_\eps : s\C \xra{[S^\eps,-]^\lw_\C} s\Ab \xra{\pi_n} \Ab . \]
\item We define the corresponding \bit{natural homotopy group} functor to be either equivalent composite
\[ \begin{tikzcd}
& \locressC \arrow{rd}{\hom_\locressC(S^\eps,-)} \\
\pi_{n,\eps}^\natural : s\C \arrow{ru} \arrow{rd}[swap]{\enrhom_{s\C}(\const(S^\eps),-)} & & \Grp(\ho(\S_*)) \arrow{r}{\pi_n} & \Ab , \\
& \Grp(\ho(s\S_*)) \arrow{ru}[swap]{|{-}|}
\end{tikzcd} \]
where
\begin{itemize}
\item the commutativity of the square follows from the fact that $s\C_\res$
\begin{itemize}
\item is simplicial,
\item has $\const(S^\eps) \in s\C_\res^c$ cofibrant, and
\item has all objects fibrant,
\end{itemize}
and
\item the fact that the down-and-right functors land in h-group objects follows from the fact that $S^\eps \in \C$ is an h-cogroup object (so that $\const(S^\eps) \in s\C$ is as well).
\end{itemize}
\end{enumerate}
\end{defn}

\begin{defn}
Let $K \in s\S_*$, and let $X \in s\C$.  We define the \bit{reduced tensoring} of $X$ over $K$ to be the pushout
\[ \begin{tikzcd}
\pt_{s\S} \tensoring X \arrow{r} \arrow{d} & K \tensoring X \arrow{d} \\
\pt_{s\S} \tensoring 0_{s\C} \arrow{r} & K \redtensoring X
\end{tikzcd} \]
in $s\C$.  This assembles into an action $s\S_* \times s\C \ra s\C$.
\end{defn}

\begin{notn}
We write $D^n_\bD = \Delta^n / \Lambda^n_0 \in s\Set_* \subset s\S_*$ for the ``reduced pointed simplicial $n$-disk'' and $S^n_\bD = \Delta^n / \partial \Delta^n \in s\Set_* \subset s\S_*$ for the ``reduced pointed simplicial $n$-sphere''.
\end{notn}

\begin{obs}\label{obs cofiber seq both of ptd ssets and of ptd sspaces}
The canonical composite
\[ S^{n-1}_\bD \ra D^n_\bD \ra S^n_\bD \]
(where the first map is obtained by considering $\Delta^{n-1} \cong \Delta^{\{0,\ldots,n-1\}} \subset \Delta^n$) is a cofiber sequence not just in $s\Set_*$ but also in $s\S_*$.
\end{obs}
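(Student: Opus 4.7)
The plan is to reduce the claim to a pointwise check, exploiting that colimits in the $\infty$-category $s\S_* \simeq \Fun(\bD^{op},\S_*)$ are computed levelwise. Consequently, a sequence $A_\bullet \to B_\bullet \to C_\bullet$ in $s\S_*$ is a cofiber sequence if and only if, for every $k \geq 0$, the induced sequence $A_k \to B_k \to C_k$ is a cofiber sequence in $\S_*$.

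First I would verify (or recall as standard) that $S^{n-1}_\bD \to D^n_\bD \to S^n_\bD$ is a cofiber sequence in $s\Set_*$ in the strongest possible sense: at each simplicial degree the first map is an injection of pointed sets, and the set-theoretic pointed quotient coincides with $S^n_\bD$. This is the familiar combinatorial incarnation of the topological cofiber sequence $S^{n-1} \hookrightarrow D^n \to S^n$ and can be checked directly from the defining quotients.

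Second, I would invoke the general principle that for an injection $A \hookrightarrow B$ of pointed sets, the cofiber computed in $\S_*$ is again discrete and coincides with the set-theoretic quotient $B/A$. This holds because injections of pointed simplicial sets are cofibrations in $(s\Set_*)_\KQ$, so the strict pushout $B \sqcup_A \pt$ is already a homotopy pushout and therefore models the $\infty$-categorical cofiber in $\S_*$.

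Combining these observations, at each simplicial level $k$ the sequence $(S^{n-1}_\bD)_k \to (D^n_\bD)_k \to (S^n_\bD)_k$ of pointed discrete spaces is a cofiber sequence in $\S_*$, and so by the pointwise computation of colimits in $s\S_*$ the composite is a cofiber sequence in $s\S_*$. The main obstacle, such as it is, lies in the second step --- namely, justifying that the inclusion $\Set_* \hookrightarrow \S_*$ preserves pushouts along set-theoretic injections --- but this is a routine consequence of the Kan--Quillen model structure on simplicial sets.
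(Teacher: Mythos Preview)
Your proposal is correct. The paper records this statement as an \emph{Observation} and supplies no argument whatsoever, so there is no proof in the paper to compare against; your levelwise reduction (colimits in $s\S_* = \Fun(\bD^{op},\S_*)$ are computed pointwise, and the levelwise maps are injections of finite pointed sets whose strict quotients are already homotopy cofibers) is exactly the sort of routine verification the author is tacitly invoking.

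One small remark: the paper's parenthetical identifies the first map via the face $\Delta^{\{0,\ldots,n-1\}} \subset \Delta^n$, but that face lies inside $\Lambda^n_0$ and would collapse to the basepoint; the intended map is via the $0$th face $\Delta^{\{1,\ldots,n\}}$, which is the unique face not contained in $\Lambda^n_0$. Your argument does not depend on this detail --- you only use that the first map is a levelwise injection with the stated quotient --- so this is a typo in the paper rather than a gap in your reasoning.
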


\begin{lem}
For any $n \geq 0$ and any $S^\eps \in \GE$, there is a natural isomorphism
\[ \pi_{n,\eps}^\natural(-) \cong [ S^n_\bD \redtensoring \const(S^\eps) , - ]_\locressC \]
in $\Fun(s\C,\Ab)$.
\end{lem}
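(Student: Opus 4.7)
The plan is to chain together the adjunction attached to the reduced tensoring with the simpliciality of $s\C_\res$ (and of $s\S_{*,\KQ}$) in order to produce the claimed isomorphism directly from the equivalent definition $\pi_{n,\eps}^\natural(X) = \pi_n |\enrhom_{s\C}(\const(S^\eps),X)|$.

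First I would verify that $S^n_\bD \redtensoring \const(S^\eps)$ is cofibrant in $s\C_\res$. By \Cref{thm sCres}(5), $\const(S^\eps)$ is cofibrant, and the pushout square
\[ \begin{tikzcd}
\partial \Delta^n \arrow{r} \arrow{d} & \pt \arrow{d} \\
\Delta^n \arrow{r} & S^n_\bD
\end{tikzcd} \]
exhibits the basepoint inclusion $\pt \to S^n_\bD$ as a pushout of a generating cofibration of $s\S_\KQ$, hence itself a cofibration. Simpliciality of $s\C_\res$ (pushout-product) then makes $\const(S^\eps) = \pt \tensoring \const(S^\eps) \to S^n_\bD \tensoring \const(S^\eps)$ a cofibration between cofibrant objects, and its cofiber is, by the defining pushout, exactly $S^n_\bD \redtensoring \const(S^\eps)$, which is thus cofibrant.

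Next, by combining the defining pushout of $\redtensoring$ with the $\tensoring$/$\enrhom$ adjunction of \Cref{sC enr and bitensored over sS}, I would extract a natural equivalence
\[ \enrhom_{s\C}(K \redtensoring \const(S^\eps),X) \simeq \enrhom_{s\S_*}(K, \enrhom_{s\C}(\const(S^\eps),X)) \]
in $s\S_*$, where the right-hand mapping space is pointed at the zero morphism. Specializing to $K = S^n_\bD$ and applying geometric realization --- using that $\enrhom_{s\C}(\const(S^\eps),X)$ is Kan--Quillen-fibrant (as in \Cref{define both homotopy groups}, since $s\C_\res$ is simplicial, $\const(S^\eps)$ is cofibrant, and $X$ is fibrant) and that $S^n_\bD$ is a cofibrant model for the pointed $n$-sphere in $s\S_{*,\KQ}$ --- gives
\[ |\enrhom_{s\C}(S^n_\bD \redtensoring \const(S^\eps),X)| \simeq \Omega^n |\enrhom_{s\C}(\const(S^\eps),X)|. \]
Applying $\pi_0$ identifies the left-hand side with $[S^n_\bD \redtensoring \const(S^\eps),X]_{\locressC}$ (using the cofibrancy established in the previous paragraph, the fibrancy of $X$, and simpliciality), and the right-hand side with $\pi_{n,\eps}^\natural(X)$.

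Finally, promoting this bijection to a natural isomorphism of abelian groups requires matching group structures. The left-hand group structure is induced by the h-cogroup structure on $S^n_\bD$ coming from the cofiber sequence of \Cref{obs cofiber seq both of ptd ssets and of ptd sspaces} (equivalently, by viewing $S^n_\bD \redtensoring \const(S^\eps)$ as an iterated reduced suspension of $\const(S^\eps)$), while the right-hand one is the usual $\pi_n$-structure from iterated loops; these correspond under the adjunction by a standard Eckmann--Hilton argument applied cofiber-sequence by cofiber-sequence. The main point of attention --- though not really an obstacle --- is bookkeeping for the pointed versus unpointed enrichments in passing to the $s\S_*$-adjunction; everything else is formal given the setup of \Cref{simp objs in presentable are enr and bitens over sspaces}, \Cref{sC enr and bitensored over sS}, and \Cref{thm sCres}.
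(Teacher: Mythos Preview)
Your proposal is correct and follows essentially the same approach as the paper: both arguments establish cofibrancy of $S^n_\bD \redtensoring \const(S^\eps)$, use simpliciality to identify $[-,-]_{\locressC}$ with $\pi_0|\enrhom_{s\C}(-,-)|$, and then pass through the reduced-tensor/pointed-hom adjunction to reach $\pi_n|\enrhom_{s\C}(\const(S^\eps),-)|$. The only cosmetic difference is that the paper unwinds the pointed mapping object explicitly as a pullback square and checks that the relevant vertical map is a fibration in $s\S_\KQ$ (so that $|{-}|$ commutes with the pullback), whereas you package this step as ``$s\S_{*,\KQ}$ is simplicial and $S^n_\bD$ is a cofibrant model for $S^n$''; these are the same content.
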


\begin{proof}
In light of the facts
\begin{itemize}
\item that $s\C_\res$ is simplicial,
\item that $S^n_\bD \redtensoring \const(S^\eps) \in s\C_\res^c$ is cofibrant, and
\item that all objects of $s\C_\res$ are fibrant,
\end{itemize}
we have the string of natural isomorphisms
\begin{align*}
[ S^n_\bD \redtensoring \const(S^\eps) , - ]_\locressC & \cong \pi_0 | \enrhom_{s\C} ( S^n_\bD \redtensoring \const(S^\eps) , - ) | \\
& \cong \pi_0 \left| \lim \left( \begin{tikzcd}[ampersand replacement=\&]
\& \enrhom_{s\C} ( S^n_\bD \tensoring \const(S^\eps) , -) \arrow{d} \\
\enrhom_{s\C} ( \pt_{s\S} \tensoring 0_\C , - ) \arrow{r} \& \enrhom_{s\C}(\pt_{s\S} \tensoring \const(S^\eps) , - )
\end{tikzcd} \right) \right| \\
& \cong \pi_0 \left| \lim \left( \begin{tikzcd}[ampersand replacement=\&]
\& \enrhom_{s\S} ( S^n_\bD , \enrhom_{s\C}(\const(S^\eps) , - )) \arrow{d}{\ev_*} \\
\pt_{s\S} \arrow{r}[swap]{\ul{0}} \& \enrhom_{s\C}(\const(S^\eps),-)
\end{tikzcd} \right) \right| . \\
\end{align*}
In order to continue the string of isomorphisms, we make the following observations.
\begin{itemize}
\item The compatibility of $s\C_\res$ with $s\S_\KQ$ implies that the vertical map in this last expression is a fibration, so that we can commute the limit with the geometric realization.
\item As $\const(S^\eps) \in s\C_\res^c$ is cofibrant and all objects of $s\C_\res$ are fibrant, then $\enrhom_{s\C}(\const(S^\eps),-) : s\C \ra s\S_\KQ^f$ takes values in fibrant objects of $s\S_\KQ$.
\item The object $S^n_\bD \in s\S_\KQ^c$ is cofibrant.
\end{itemize}
Using these, we continue as
\begin{align*}
& \cong \pi_0 \lim \left( \begin{tikzcd}[ampersand replacement=\&]
\& { | \enrhom_{s\S} ( S^n_\bD , \enrhom_{s\C}(\const(S^\eps),-) ) | } \arrow{d}{|\ev_*|} \\
{|\pt_{s\S}|} \arrow{r}[swap]{|\ul{0}|} \& {| \enrhom_{s\C}(\const(S^\eps),-) |}
\end{tikzcd} \right) \\
& \cong \pi_0 \lim \left( \begin{tikzcd}[ampersand replacement=\&]
\& \hom_\S(|S^n_\bD| , | \enrhom_{s\C}(\const(S^\eps),-)|) \arrow{d}{|\ev_*|} \\
\pt_\S \arrow{r}[swap]{|\ul{0}|} \& {| \enrhom_{s\C}(\const(S^\eps),-) |}
\end{tikzcd} \right) \\
& \cong \pi_0 \lim \left( \begin{tikzcd}[ampersand replacement=\&]
\& \hom_\S ( S^n , \hom_\locressC(S^\eps,-)) \arrow{d}{\ev_*} \\
\pt_\S \arrow{r}[swap]{0} \& \hom_\locressC(S^\eps,-)
\end{tikzcd} \right) \\
& \cong \pi_0 \hom_{\S_*} ( S^n , \hom_\locressC(S^\eps,-)) \\
& \cong \pi_n \hom_\locressC(S^\eps,-) ,
\end{align*}
proving the claim.
\end{proof}

\begin{defn}
Let $K \in s\S_*$, and let $X \in s\C$.  We define the \bit{reduced cotensoring} of $K$ into $X$ to be the pullback
\[ \begin{tikzcd}
K \redcotensoring X \arrow{r} \arrow{d} & K \cotensoring X \arrow{d} \\
\pt_{s\S} \cotensoring 0_{s\C} \arrow{r} & \pt_{s\S} \cotensoring X
\end{tikzcd} \]
in $s\C$.  This assembles into an action $(s\S_*)^{op} \times s\C \ra s\C$.
\end{defn}

\begin{obs}
The reduced co/tensoring bifunctors participate into an evident two-variable adjunction
\[ \left( s\S_* \times s\C \xra{- \redtensoring -} s\C \ , \ (s\S_*)^{op} \times s\C \xra{ - \redcotensoring - } s\C \ , \ s\C^{op} \times s\C \xra{\enrhom_\C(-,-)} s\S_* \right) , \]
obtained by recognizing that the (enriched) hom-objects of $s\C$ are naturally pointed since $s\C$ has a zero object.
\end{obs}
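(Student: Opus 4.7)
The plan is to upgrade the already-established $s\S$-enrichment and bitensoring on $s\C$ (from \Cref{sC enr and bitensored over sS}) to an $s\S_*$-enriched two-variable adjunction, by exploiting the fact that $0_{s\C}$ is a zero object. First I would observe that for any $X,Y \in s\C$ the simplicial space $\enrhom_\C(X,Y) \in s\S$ is canonically pointed: the unique maps $X \to 0_{s\C}$ and $0_{s\C} \to Y$ compose, and via the chain of equivalences $\pt_{s\S} \simeq \enrhom_\C(0_{s\C},Y) \to \enrhom_\C(X,Y)$ (using that $0_{s\C}$ is both initial and terminal) this selects a basepoint naturally in $X$ and $Y$. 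This promotes the hom bifunctor to $\enrhom_\C : s\C^{op} \times s\C \to s\S_*$.

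Next I would derive the two adjunction-equivalences by applying the existing unreduced two-variable adjunction to the defining pushout and pullback squares and recognising the resulting limits as the pointed mapping-space pullbacks. On the tensoring side, apply $\enrhom_\C(-,Y)$ to the pushout square defining $K \redtensoring X$; this yields a pullback in $s\S$ whose three vertices, via the unreduced adjunction, identify with $\enrhom_{s\S}(K,\enrhom_\C(X,Y))$, $\enrhom_\C(X,Y)$, and $\enrhom_\C(0_{s\C},Y) \simeq \pt_{s\S}$. The right-hand map is pullback along the basepoint $\pt_{s\S} \to K$, while the lower map is pullback along the basepoint of $\enrhom_\C(X,Y)$ just constructed, so the pullback is by definition $\enrhom_{s\S_*}(K,\enrhom_\C(X,Y))$. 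Dually, applying $\enrhom_\C(X,-)$ to the pullback defining $K \redcotensoring Y$ gives the same description from the cotensoring side, and assembling these produces the three coherent equivalences
\[ \enrhom_\C(K \redtensoring X , Y) \simeq \enrhom_{s\S_*}(K , \enrhom_\C(X,Y)) \simeq \enrhom_\C(X , K \redcotensoring Y) \]
in $s\S$ (or rather $s\S_*$, once basepoints are tracked).

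The small obstacle is matching the two possible sources of a basepoint on $\enrhom_\C(X,Y)$: the one coming from the zero-map factorization $X \to 0_{s\C} \to Y$ and the one forced on us by viewing $\enrhom_\C(X,Y)$ as one vertex of the pullback in the calculation above. Both arise as the image of the composite $\pt_{s\S} \simeq \enrhom_\C(0_{s\C},Y) \to \enrhom_\C(X,Y)$ induced by $X \to 0_{s\C}$, so the identification is forced by the naturality of the unreduced adjunction and the fact that $0_{s\C}$ is initial. With this coherence in hand, the three bifunctor equivalences above promote (using the formalism of item~(25) of \cite[\S A]{MIC-sspaces}) to the claimed two-variable adjunction of $s\S_*$-valued bifunctors.
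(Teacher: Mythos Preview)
Your proposal is correct and carries out in detail what the paper leaves implicit: the paper records this as an Observation with no proof beyond the parenthetical ``obtained by recognizing that the (enriched) hom-objects of $s\C$ are naturally pointed since $s\C$ has a zero object.'' Your argument---promoting $\enrhom_\C$ to $s\S_*$ via the zero map, then applying the unreduced two-variable adjunction to the defining pushout/pullback and identifying the resulting limit as the pointed mapping object---is exactly the unpacking the paper declines to spell out, so there is no substantive difference in approach.
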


\begin{obs}
If
\begin{itemize}
\item on the one hand we restrict the reduced tensoring bifunctor to the constant simplicial objects of $\C$ via the composite
\[ s\S_* \times \C \xra{\id_{s\S_*} \times \const} s\S_* \times s\C \xra{- \redtensoring -} s\C , \]
while
\item on the other hand we postcompose the reduced cotensoring bifunctor with the limit functor to obtain the composite
\[ (s\S_*)^{op} \times s\C \xra{- \redcotensoring - } s\C \xra{(-)_0} \C , \]
\end{itemize}
then we similarly obtain a two-variable adjunction
\[ \left( s\S_* \times \C \xra{- \redtensoring \const(-)} s\C \ , \ (s\S_*)^{op} \times s\C \xra{ (- \redcotensoring -)_0 } \C \ , \ \C^{op} \times s\C \xra{\enrhom_\C(-,-)} s\S_* \right) . \]
\end{obs}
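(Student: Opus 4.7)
The plan is to deduce this two-variable adjunction directly from the preceding one (on $s\S_* \times s\C$) by composing one side with the standard adjunction $\const : \C \adjarr s\C : \lim_{\bD^{op}}$, with the ``limit functor'' $(-)_0 = \lim_{\bD^{op}}$ playing the role of the right adjoint to the constant diagram functor. In effect, we are just restricting / postcomposing the existing two-variable adjunction of \Cref{sC enr and bitensored over sS} (its reduced version) along the $\const \dashv (-)_0$ adjunction in the $s\C$ variable.

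First I would record the adjunction $\const \dashv (-)_0$ at the level of $\infty$-categories: this holds because $\C$ is bicomplete and $\const : \C \to s\C = \Fun(\bD^{op},\C)$ admits a right adjoint given by the limit over $\bD^{op}$. Next, for $X \in \C$, $K \in s\S_*$, and $Y \in s\C$, I would chain together the natural equivalences
\begin{align*}
\hom_{s\C}(K \redtensoring \const(X), Y) & \simeq \hom_{s\C}(\const(X), K \redcotensoring Y) \\
& \simeq \hom_\C(X, (K \redcotensoring Y)_0),
\end{align*}
using the preceding reduced co/tensoring two-variable adjunction followed by $\const \dashv (-)_0$. Similarly, the other leg of the earlier adjunction gives
\[ \hom_{s\C}(K \redtensoring \const(X), Y) \simeq \hom_{s\S_*}(K, \enrhom_\C(\const(X), Y)), \]
and identifying $\enrhom_\C(X,Y) := \enrhom_\C(\const(X),Y)$ (as is implicit in the statement) produces the third required hom-equivalence.

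Finally, to upgrade these three natural hom-equivalences into a genuine two-variable adjunction in the sense of the formalism referenced in the paper, I would verify that they are compatible as equivalences of representable functors with the appropriate naturality in all three variables. This is essentially formal, as the entire construction is obtained by composing two previously-established adjunctions along a single common variable, and representability then propagates automatically. The main obstacle: there is essentially no serious obstacle. The one small point deserving care is that the reduced cotensoring is defined as a pullback (with $\pt_{s\S} \cotensoring Y \simeq Y$ and $\pt_{s\S} \cotensoring 0_{s\C} \simeq 0_{s\C}$), so one should verify that $(-)_0 = \lim_{\bD^{op}}$ interacts correctly with that pullback; this is automatic since right adjoints preserve finite limits.
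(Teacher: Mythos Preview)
Your proposal is correct and matches the paper's implicit reasoning: the statement is recorded as an Observation with no proof, and the phrase ``we similarly obtain'' signals exactly the argument you give, namely restricting the preceding reduced two-variable adjunction along $\const : \C \hookra s\C$ and using its right adjoint $(-)_0 \simeq \lim_{\bD^{op}}$ (which is evaluation at $[0]$ since $[0]^\circ$ is terminal in $\bD^{op}$). Your closing remark about $(-)_0$ preserving the defining pullback of $\redcotensoring$ is harmless but not actually needed, since the adjunction identities are established directly at the level of hom-spaces via the previous two-variable adjunction.
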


\begin{notn}
In analogy with the ``generalized matching object'' bifunctor
\[ \Match_{(-)}(-) : s\S^{op} \times s\C \xra{(- \cotensoring -)_0} \C , \]
we write
\[ \redMatch_{(-)}(-) : (s\S_*)^{op} \times s\C \xra{(- \redcotensoring -)_0} \C \]
for the ``reduced generalized matching object'' bifunctor.
\end{notn}

\begin{defn}
We define the \bit{(nonabelian) normalized $n$-chains} functor to be
\[ N_n : s\C \xra{\redMatch_{D^n_\bD}(-)} \C , \]
and we define the \bit{(nonabelian) $n$-cycles} functor to be
\[ Z_n : s\C \xra{\redMatch_{S^n_\bD}(-)} \C . \]
Note that these would reduce to the usual notions if $\C$ were an abelian category.
\end{defn}

\begin{obs}\label{obs fiber seq of nonabelian cycles and chains}
The cofiber sequence $S^{n-1}_\bD \ra D^n_\bD \ra S^n_\bD$ in $s\S_*$ of \Cref{obs cofiber seq both of ptd ssets and of ptd sspaces} induces a fiber sequence
\[ Z_n \ra N_n \ra Z_{n-1} \]
in $\Fun(s\C,\C)$.
\end{obs}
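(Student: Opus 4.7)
The plan is to simply apply the reduced matching object construction to the given cofiber sequence and use the adjunction to turn colimits into limits. Concretely, fix an object $X \in s\C$ and consider the composite
\[ \redMatch_{(-)}(X) : (s\S_*)^{op} \xra{- \redcotensoring X} s\C \xra{(-)_0} \C . \]

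The key observation is that this functor carries colimits in $s\S_*$ to limits in $\C$. Indeed, the two-variable adjunction
\[ \left( s\S_* \times \C \xra{- \redtensoring \const(-)} s\C , \ (s\S_*)^{op} \times s\C \xra{(- \redcotensoring -)_0} \C , \ \C^{op} \times s\C \xra{\enrhom_\C(-,-)} s\S_* \right) \]
displayed in the previous observation shows that, with $X \in s\C$ held fixed, the resulting functor $\redMatch_{(-)}(X) : (s\S_*)^{op} \ra \C$ participates in an adjunction whose left adjoint is (the opposite of) $- \redtensoring \const(-)$ evaluated appropriately; equivalently, a functor that is the middle vertex of a two-variable adjunction preserves limits in each variable separately, and so in the contravariant variable $s\S_*$ it carries colimits to limits.

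Applying this to the cofiber sequence $S^{n-1}_\bD \ra D^n_\bD \ra S^n_\bD$ in $s\S_*$ (which exists precisely by the preceding observation on cofiber sequences of reduced simplicial disks and spheres), we obtain a fiber sequence
\[ \redMatch_{S^n_\bD}(X) \ra \redMatch_{D^n_\bD}(X) \ra \redMatch_{S^{n-1}_\bD}(X) \]
in $\C$, which by definition is precisely
\[ Z_n(X) \ra N_n(X) \ra Z_{n-1}(X) . \]
Since the whole construction is manifestly natural in $X \in s\C$, and since limits (in particular fiber sequences) in the functor $\infty$-category $\Fun(s\C,\C)$ are computed pointwise, we obtain the asserted fiber sequence $Z_n \ra N_n \ra Z_{n-1}$ in $\Fun(s\C,\C)$. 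I do not anticipate any serious obstacle here; the only point that warrants care is confirming that $(-)_0 : s\C \ra \C$ preserves the relevant fiber sequence, which follows automatically from the adjunction characterization of $(- \redcotensoring -)_0$ rather than needing to be checked by hand via evaluation at $0 \in \bD^{op}$.
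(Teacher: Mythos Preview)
Your proposal is correct and follows exactly the reasoning the paper has in mind: the paper states this as an Observation with no proof, treating it as an immediate consequence of the two-variable adjunction for the reduced co/tensoring (which guarantees that $\redMatch_{(-)}(X)$ takes colimits in $s\S_*$ to limits in $\C$). Your write-up simply makes this explicit, including the naturality in $X$ and the pointwise computation of limits in $\Fun(s\C,\C)$.
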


\begin{lem}\label{normalized chains commutes with homotopy}
For any $S^\eps \in \GE$, there is a natural isomorphism
\[ [S^\eps , N_n(-)]_\C \cong N_n [S^\eps,-]^\lw_\C \]
in $\Fun(s\C,\Ab)$.
\end{lem}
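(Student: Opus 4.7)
The plan is to reduce the desired identity to a simplicial mapping-space computation via the two-variable adjunction that defines $\redMatch$, and then pass through $\pi_0$ using the h-group structure on $\hom^\lw_\C(S^\eps, -)$.

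First, the two-variable adjunction $\bigl(- \redtensoring \const(-),\ \redMatch_{(-)}(-),\ \enrhom_\C(-,-)\bigr)$ applied at $K = D^n_\bD$ yields a natural equivalence of pointed spaces
$$\hom_\C(S^\eps, N_n(X)) = \hom_\C(S^\eps, \redMatch_{D^n_\bD}(X)) \simeq \hom_{s\S_*}\bigl(D^n_\bD, \hom^\lw_\C(S^\eps, X)\bigr),$$
using that $\enrhom_\C(S^\eps, X)$ is by construction the levelwise mapping space $\hom^\lw_\C(S^\eps, X)$. Since $\C$ is stable and $\G$ is closed under desuspension, each $S^\eps$ is an h-cogroup object, so $Z = \hom^\lw_\C(S^\eps, X)$ is a simplicial h-group.

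Next, the cofiber sequence $(\Lambda^n_0)_+ \to \Delta^n_+ \to D^n_\bD$ in $s\S_*$ produces a fiber sequence of pointed spaces
$$\hom_{s\S_*}(D^n_\bD, Z) \to Z_n \to \hom_{s\S}(\Lambda^n_0, Z),$$
exhibiting the left-hand term as the iterated fiber of the face maps $d_1, \ldots, d_n : Z_n \to Z_{n-1}$ that constitute horn matching data.

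Finally, one passes to $\pi_0$. For the simplicial h-group $Z$, each face $d_i$ is split by a degeneracy, and a standard inductive argument shows that any compatible tuple of horn matching data in $\pi_0$ can be lifted to an element of $\pi_0 Z_n$ by correcting one face at a time using the h-group multiplication. Thus the right-hand map is a $\pi_0$-surjection, and the associated long exact sequence degenerates to a short exact sequence identifying
$$\pi_0 \hom_{s\S_*}(D^n_\bD, Z) \cong \bigcap_{i=1}^n \ker\bigl(d_i : (\pi_0 Z)_n \to (\pi_0 Z)_{n-1}\bigr) = N_n(\pi_0 Z).$$
Since $\pi_0 Z = [S^\eps, X]^\lw_\C$ in $s\Ab$, combining with the first displayed equivalence yields the desired natural isomorphism. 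The main obstacle is this last step, namely verifying that $\pi_0$ commutes with the iterated-fiber definition of $N_n$ when applied to a simplicial h-group; this is precisely where the h-cogroup structure of $S^\eps$ (and hence the stability of $\C$) is essential.
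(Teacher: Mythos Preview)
Your approach is essentially the same as the paper's: both reduce to the fiber sequence
\[
N_n \longrightarrow X_n \longrightarrow \Match_{\Lambda^n_0}
\]
(you in $s\S_*$ via the two-variable adjunction, the paper directly in $\C$ via the pullback squares), and both then defer to the same double-induction argument, which the paper cites as the dual of \cite[Chapter~VIII, Lemma~1.8]{GJnew}. The adjunction packaging is a clean way to organize the reduction.

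Two imprecisions in your final paragraph are worth flagging, though both are ultimately absorbed by the inductive argument you invoke. First, $\pi_0$-surjectivity of $Z_n \to \Match_{\Lambda^n_0}(Z)$ alone does not make the long exact sequence degenerate to a short exact sequence: you also need $\pi_1$-surjectivity to ensure that $\pi_0$ of the fiber injects into $\pi_0 Z_n$. This follows from the same argument applied with $S^{\eps+1}$ in place of $S^\eps$ (using that $\GE$ is closed under suspension). Second, identifying the kernel of $\pi_0 Z_n \to \pi_0 \Match_{\Lambda^n_0}(Z)$ with $\bigcap_{i=1}^n \ker(d_i)$ tacitly uses that $\pi_0 \Match_{\Lambda^n_0}(Z) \cong \Match_{\Lambda^n_0}(\pi_0 Z)$; mere surjectivity does not give this. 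The double induction in fact establishes precisely this commutation of $\pi_0$ with each partial-horn matching object (building up $\Lambda^n_0$ one face at a time and using that each attaching map is split after applying $\pi_0$), and your surjectivity claims and the kernel identification are corollaries. So your sketch is correct, but the ``standard inductive argument'' is doing more work than you state.
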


\begin{proof}
Fix a test object $X \in s\C$.  As by definition $N_n(X) = \redMatch_{D^n_\bD}(X)$, we have a pullback square
\[ \begin{tikzcd}
N_n(X) \arrow{r} \arrow{d} & \Match_{D^n_\bD}(X) \arrow{d} \\
\Match_{\pt_{s\S}}(0_{s\C}) \arrow{r} & \Match_{\pt_{s\S}}(X)
\end{tikzcd} \]
in $\C$.  In light of the pushout square
\[ \begin{tikzcd}
\Lambda^n_0 \arrow{r} \arrow{d} & \Delta^n \arrow{d} \\
\Delta^0 \arrow{r} & D^n_\bD
\end{tikzcd} \]
both in $s\Set$ and in $s\S$, we also have a pullback square
\[ \begin{tikzcd}
\Match_{D^n_\bD}(X) \arrow{r} \arrow{d} & \Match_{\Delta^n}(X) \arrow{d} \\
\Match_{\Delta^0}(X) \arrow{r} & \Match_{\Lambda^n_0}(X)
\end{tikzcd} \]
in $\C$, which simplifies to a pullback square
\[ \begin{tikzcd}
\Match_{D^n_\bD}(X) \arrow{r} \arrow{d} & X_n \arrow{d} \\
X_0 \arrow{r} & \Match_{\Lambda^n_0}(X)
\end{tikzcd} \]
As the relevant corepresenting maps $\pt_{s\S} \ra D^n_\bD$ and $\Delta^0 \ra D^n_\bD$ in $s\Set \subset s\S$ coincide, we obtain the composite pullback square
\[ \begin{tikzcd}
N_n(X) \arrow{r} \arrow{d} & \Match_{D^n_\bD}(X) \arrow{r} & \Match_{\Delta^n}(X) \arrow{d} \\
\Match_{\pt_{s\S}}(0_{s\C}) \arrow{r} & \Match_{\pt_{s\S}}(X) \simeq \Match_{\Delta^0}(X) \arrow{r} & \Match_{\Lambda^n_0}(X)
\end{tikzcd} \]
in $\C$, which simplifies to a pullback square
\[ \begin{tikzcd}
N_n(X) \arrow{r} \arrow{d} & X_n \arrow{d} \\
0_\C \arrow{r} & \Match_{\Lambda^n_0}(X)
\end{tikzcd} \]
in $\C$.  Moreover, replacing $0 \in [n]$ with any $i \in [n]$, we obtain analogous pullback squares
\[ \begin{tikzcd}
\redMatch_{(\Delta^n/\Lambda^n_i)}(X) \arrow{r} \arrow{d} & X_n \arrow{d} \\
0_\C \arrow{r} & \Match_{\Lambda^n_i}(X)
\end{tikzcd} \]
in $\C$.  From here, the (dual of the corresponding cosimplicial) double induction argument of \cite[Chapter VIII, Lemma 1.8]{GJnew} yields the claim.
\end{proof}

\begin{lem}\label{present natural htpy}
For any $S^\eps \in \GE$, there is a natural exact sequence
\[ [ S^\eps , N_{n+1}(-) ]_\C \ra [ S^\eps , Z_n(-) ]_\C \ra \pi_{n,\eps}^\natural ( - ) \ra 0 \]
in $\Fun(s\C,\Ab)$.
\end{lem}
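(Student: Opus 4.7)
The strategy is to transfer the claim to an exact sequence concerning the simplicial pointed space $V := \enrhom_{s\C}(\const(S^\eps), X) \simeq \hom^\lw_\C(S^\eps, X) \in s\S_*$. Because $\const(S^\eps)$ is cofibrant in $s\C_\res$ (its latching maps being either $0_\C \ra S^\eps$ or the identity) and every object of $s\C_\res$ is fibrant by \Cref{thm sCres}(4), the simplicial structure on $s\C_\res$ ensures that $V$ is fibrant in $s\S_\KQ$.

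Using the two-variable adjunction of \Cref{sC enr and bitensored over sS} together with the fact that $\hom_\C(S^\eps, -)$ preserves limits, I obtain $\hom_\C(S^\eps, \redMatch_K X) \simeq \redMatch_K V$ in $\S_*$ for any $K \in s\S_*$. Specializing to $K = D^{n+1}_\bD$ and $K = S^n_\bD$ and taking $\pi_0$ yields $[S^\eps, N_{n+1} X]_\C \cong \pi_0 N_{n+1} V$ and $[S^\eps, Z_n X]_\C \cong \pi_0 Z_n V$; likewise, the alternative formulation of $\pi_{n,\eps}^\natural$ in \Cref{define both homotopy groups} gives $\pi_{n,\eps}^\natural(X) \cong \pi_n |V|$. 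Thus the lemma reduces to showing that, naturally in such fibrant $V$, the sequence
\[ \pi_0 N_{n+1} V \ra \pi_0 Z_n V \ra \pi_n |V| \ra 0 \]
is exact.

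For this, I would apply the cofiber sequence $S^n_\bD \xra{\iota} D^{n+1}_\bD \ra S^{n+1}_\bD$ in $s\S_*$ of \Cref{obs cofiber seq both of ptd ssets and of ptd sspaces}, shifted by one. Since $\iota$ is a cofibration and $V$ is fibrant, simplicial-model compatibility produces a fibration of simplicial spaces $\enrhom_{s\S_*}(D^{n+1}_\bD, V) \ra \enrhom_{s\S_*}(S^n_\bD, V)$ in $s\S_\KQ$. Its $0$-simplex level recovers $N_{n+1} V \ra Z_n V$, and its realization---using the contractibility of $D^{n+1}_\bD$ in $s\S_*$ and $|S^n_\bD| \simeq S^n$---is the basepoint inclusion $\pt \ra \Omega^n |V|$. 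The surjection $\pi_0 Z_n V \ra \pi_n |V|$ then follows from the universal surjection $\pi_0 W_0 \ra \pi_0 |W|$ for any simplicial space $W$, and the composite through $\pi_n |V|$ vanishes because it factors through $\pi_0(\pt) = 0$.

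The main technical obstacle is the reverse inclusion for exactness at $\pi_0 Z_n V$: given $[f] \in \pi_0 Z_n V$ whose realization $|f|: S^n \ra |V|$ is null-homotopic, one must produce $[g] \in \pi_0 N_{n+1} V$ mapping to $[f]$. Such a $g$ arises from a standard lifting argument for the fibration $\enrhom_{s\S_*}(D^{n+1}_\bD, V) \ra \enrhom_{s\S_*}(S^n_\bD, V)$: its domain is contractible in $s\S_\KQ$ (since $D^{n+1}_\bD \simeq \pt$), so the image of $\pi_0$ at the $0$-simplex level exactly matches the preimage in $\pi_0 Z_n V$ of the basepoint of $\pi_n |V|$, by the standard analysis of the map $\pi_0 W_0 \ra \pi_0 |W|$ for fibrant simplicial spaces. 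Verifying this last compatibility---essentially, that a null-homotopy of $|f|$ in $|V|$ can be lifted back to a strict $1$-simplex witness in $\enrhom_{s\S_*}(S^n_\bD, V)$ coming from a $0$-simplex of $\enrhom_{s\S_*}(D^{n+1}_\bD, V)$---is the heart of the proof.
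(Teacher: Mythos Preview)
Your proposal is correct and follows essentially the same route as the paper. Both arguments reduce the claim to the fibrant pointed simplicial space $V = \enrhom_{s\C}(\const(S^\eps),X)$, identify $[S^\eps,Z_n X]_\C$ and $[S^\eps,N_{n+1}X]_\C$ with $\pi_0$ of the spaces $\hom_{s\S_*}(S^n_\bD,V)$ and $\hom_{s\S_*}(D^{n+1}_\bD,V)$, and then use the cofibration $S^n_\bD \hookrightarrow D^{n+1}_\bD$ together with fibrancy of $V$ to produce the required surjection and lifting. The paper packages the two key steps as direct appeals to the fundamental theorem of model $\infty$-categories (applied to $(s\S_*)_\KQ$ for surjectivity and to $(s\S_*)_{S^n_\bD/}$ for exactness at the middle term), whereas you phrase the same content in terms of the fibration $\enrhom_{s\S_*}(D^{n+1}_\bD,V) \to \enrhom_{s\S_*}(S^n_\bD,V)$ and the comparison between $0$-simplices and realization; your ``main technical obstacle'' is exactly what the paper's second invocation of the fundamental theorem supplies.
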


\begin{proof}
For any test object $X \in s\C$, we have
\[ \pi_{n,\eps}^\natural X = \pi_n \hom_\locressC(S^\eps,X) \cong \pi_0 \hom_{\S_*}(S^n , \hom_{\locressC}(S^\eps,X) ) . \]
Now, since $\const(S^\eps) \in s\C_\res^c$ and $X \in s\C_\res^f$, we have that $\enrhom_{s\C}(\const(S^\eps),X) \in s\S_\KQ^f$ and moreover $|\enrhom_{s\C}(\const(S^\eps),X)| \simeq \hom_{\locressC}(S^\eps,X)$.  On the other hand, $S^n_\bD \in s\S_\KQ^c$.  Since co/fibrancy in $(s\S_*)_\KQ$ is created in $s\S_\KQ$, the fundamental theorem of model $\infty$-categories applied to $(s\S_*)_\KQ$ implies that we have a surjection
\[ \hom_{s\S_*}(S^n_\bD , \enrhom_{s\C}(\const(S^\eps),X) ) \ra \hom_{\S_*}(S^n , \hom_{\locressC}(S^\eps,X) ) \]
in $\S$.  Applying $\pi_0$, by adjunction this yields a surjection
\[ [S^\eps , Z_n(X)]_\C \ra \pi_{n,\eps}^\natural X \]
in $\Set$.  As epimorphisms are $\Ab$ are created in $\Set$, this proves exactness at $\pi_{n,\eps}^\natural(-)$.

Now, suppose we are given an element of $\ker ( [S^\eps,Z_n(X)]_\C \ra \pi_{n,\eps}^\natural X)$: this is witnessed by an extension
\[ \begin{tikzcd}
S^n \arrow{r} \arrow{d} & \hom_{\locressC}(S^\eps,X) \\
\pt_{\S_*} \arrow[dashed]{ru}
\end{tikzcd} \]
in $\S_*$.  Since $D^{n+1}_\bD \in (s\S_{S^n_\bD /})_\KQ^c$ and $\enrhom_{s\C}(\const(S^\eps),X) \in (s\S_{S^n_\bD /})_\KQ^f$, the fundamental theorem of model $\infty$-categories applied to $(s\S_{S^n_\bD /})_\KQ$ implies that the above extension in $\S_*$ is presented by an extension
\[ \begin{tikzcd}
S^n_\bD \arrow{r} \arrow{d} & \enrhom_{s\C}(\const(S^\eps),X) \\
D^{n+1}_\bD \arrow[dashed]{ru}
\end{tikzcd} \]
in $s\S_*$.  This proves exactness at $[S^\eps,Z_n(-)]_\C$.
\end{proof}

\begin{cor}\label{nat iso in dimension 0}
There is a natural isomorphism $\pi_0 \pi_\eps ( - ) \cong \pi_{0,\eps}^\natural(-)$ in $\Fun(s\C,\Ab)$.
\end{cor}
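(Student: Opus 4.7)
The plan is to specialize \Cref{present natural htpy} to $n = 0$, which yields a natural exact sequence
\[ [S^\eps, N_1(-)]_\C \xra{\partial} [S^\eps, Z_0(-)]_\C \to \pi_{0,\eps}^\natural(-) \to 0 \]
in $\Fun(s\C, \Ab)$, and then to identify this cokernel presentation with the classical Dold--Kan presentation of $\pi_0$ of the simplicial abelian group $[S^\eps,-]^\lw_\C$.

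First, I would compute $Z_0(X) \simeq X_0$. Since $\partial \Delta^0 = \emptyset$, the pointed simplicial set $S^0_\bD = \Delta^0/\partial\Delta^0$ is $\Delta^0_+$, and the defining pullback square for $\redcotensoring$ identifies
\[ S^0_\bD \redcotensoring X \simeq \textup{fib}\!\left( X \times X \to X \right) \simeq X , \]
where the map being fibered is the projection onto the factor corresponding to the basepoint. Evaluating at simplicial degree zero yields $Z_0(X) \simeq X_0$, so that $[S^\eps, Z_0(X)]_\C \cong ([S^\eps, X]^\lw_\C)_0$. Combined with \Cref{normalized chains commutes with homotopy}, which identifies $[S^\eps, N_1(X)]_\C$ with $N_1[S^\eps, X]^\lw_\C$, the exact sequence above becomes
\[ N_1 A_\bullet \xra{\partial} A_0 \to \pi_{0,\eps}^\natural(X) \to 0 \]
for $A_\bullet = [S^\eps, X]^\lw_\C$, a simplicial abelian group.

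Next, I would verify that $\partial$ agrees with the Dold--Kan differential $d_0|_{N_1 A_\bullet} : N_1 A_\bullet \to A_0$. The natural transformation $N_1(-) \to Z_0(-)$ of \Cref{obs fiber seq of nonabelian cycles and chains} is induced by the map $S^0_\bD \to D^1_\bD = \Delta^1/\Lambda^1_0$ in $s\S_*$, which sends the non-basepoint to the image of the vertex $\Delta^{\{1\}} \subset \Delta^1$; under $\redMatch_{(-)}(X)$ this corresponds to restriction along $\Delta^{\{1\}} \subset \Delta^1$, i.e., to the face map $d_0 : X_1 \to X_0$. Applying $[S^\eps, -]_\C$ and unwinding via \Cref{normalized chains commutes with homotopy} then identifies $\partial$ with $d_0$, and the classical Dold--Kan identity $\pi_0 A_\bullet \cong A_0 / \textup{im}(d_0 : N_1 A_\bullet \to A_0)$ yields the claimed natural isomorphism.

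The main obstacle will be the identification of $\partial$ with $d_0$: while conceptually transparent via the cofiber-to-fiber translation, this amounts to careful bookkeeping of the basepoint selection in $S^0_\bD$, the direction of the pullback defining $\redcotensoring$, and the face-map conventions converting pointed simplicial-set maps into simplicial structure maps. Everything else then reduces to the already-established lemmas and the standard Dold--Kan correspondence, with naturality inherited from the naturality of the exact sequence of \Cref{present natural htpy} and of the isomorphism of \Cref{normalized chains commutes with homotopy}.
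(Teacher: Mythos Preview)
Your proposal is correct and follows essentially the same approach as the paper: specialize \Cref{present natural htpy} to $n=0$, identify $Z_0(X) \simeq X_0$, and recognize the resulting cokernel as $\pi_0$ of the simplicial abelian group $[S^\eps,X]^\lw_\C$. The only cosmetic difference is that you invoke \Cref{normalized chains commutes with homotopy} and the Dold--Kan differential explicitly to identify the boundary with $d_0|_{N_1}$, whereas the paper unwinds the definition of $N_1 X$ directly to describe the image; both arrive at the same identification of the cokernel.
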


\begin{proof}
Fix a test object $X \in s\C$.  Applying \Cref{present natural htpy} in the case that $n=0$, we obtain an isomorphism
\[ \coker ( [S^\eps,N_1 (X)]_\C \ra [S^\eps,Z_0 (X)]_\C ) \xra{\cong} \pi_{0,\eps}^\natural X \]
in $\Ab$.  Unwinding the definition of $Z_0 (X)$, we see that $Z_0 (X) \simeq X_0 \in \C$, so that
\[ [S^\eps,Z_0 (X)]_\C \cong [ S^\eps , X_0 ]_\C = ([ S^\eps , X]^\lw_\C)_0 . \]
Under this identification, unwinding the definition of $N_1 X$, we see that the image of the map
\[ [S^\eps,N_1 X]_\C \ra [S^\eps,Z_0 X]_\C \cong ([S^\eps,X]^\lw_\C)_0 \]
is the set of those 0-simplices in $[S^\eps,X]^\lw_\C \in s\Ab$ that are the ``source'' of a 1-simplex with ``target'' the basepoint 0-simplex $0 \in ([S^\eps,X]^\lw_\C)_0 \in \Ab$.  So we obtain an isomorphism
\[ \coker ( [S^\eps,N_1 (X)]_\C \ra [S^\eps,Z_0 (X)]_\C ) \cong \pi_0 \pi_\eps X , \]
from which the claim follows.
\end{proof}

\begin{constr}
For any object $X \in s\C$ and any $S^\eps \in \GE$, by \Cref{obs fiber seq of nonabelian cycles and chains} we have long exact sequences
\[ \cdots \ra [S^{\eps+1},Z_{n-1}(X)]_\C \ra [S^\eps,Z_n(X)]_\C \ra [S^\eps,N_n(X)]_\C \ra [S^\eps,Z_{n-1}(X)]_\C \]
in $\Ab$ (which actually continue indefinitely to the right as well since $\C$ is stable).  These splice together into an exact couple
\[ \begin{tikzcd}[column sep=0cm, row sep=1.5cm]
{[S^{\eps+i+1},Z_{n-1}(X)]_\C} \arrow[dashed]{rr}{(\eps+i+1) \squigra (\eps+i)} & & {[S^{\eps+i+1},Z_n(X)]_\C} \arrow{ld} \\
& {[S^{\eps+i+1},N_n(X)]_\C .} \arrow{lu}
\end{tikzcd} \]
Using Lemmas \ref{normalized chains commutes with homotopy} \and \ref{present natural htpy}, we can identify its derived long exact sequence as
\[ \begin{tikzcd}[row sep=0.1cm]
\cdots \arrow{r} & \pi_{i+1}\pi_\eps(X) \arrow{r}{\delta} & \pi_{i-1,\eps+1}^\natural(X) \arrow{r} & \pi_{i,\eps}^\natural(X) \arrow{r} & \pi_i\pi_\eps(X) \arrow{r}{\delta} & \cdots \\
& \cdots \arrow{r}{\delta} & \pi_{0,\eps+1}^\natural(X) \arrow{r} & \pi_{1,\eps}^\natural(X) \arrow{r} & \pi_1 \pi_\eps(X) \arrow{r} & 0 .
\end{tikzcd} \]
We refer to this as the \bit{spiral exact sequence}.
\end{constr}

\subsection{The localized spiral exact sequence}

In the end, we will not be interested in the natural and classical homotopy groups, but rather in their corresponding $E$-homology groups.

\begin{notn}
We simply write $E : s\C \xra{(E \otimes -)^\lw} s\C$ for the ``tensor levelwise with $E$'' functor.
\end{notn}

\begin{defn}
Choose any $n \geq 0$ and any $\beta \in \G^\delta$.
\begin{enumerate}
\item We define the corresponding \bit{classical $E$-homology group} functor to be the composite
\[ \pi_n E_\beta : s\C \xra{E} s\C \xra{\pi_n \pi_\beta} \Ab . \]
\item We define the corresponding \bit{natural $E$-homology group} functor to be the composite
\[ E_{n,\beta}^\natural : s\C \xra{E} s\C \xra{\pi_{n,\beta}^\natural} \Ab . \]
\end{enumerate}
When considered as indexed over all $\beta \in \G$ simultaneously, we write these functors simply as $\pi_n E_\star$ and $E_{n,\star}^\natural$, respectively.
\end{defn}

\begin{lem}\label{only one type of 0th E-homology}
There is a natural isomorphism $\pi_0 E_\beta(-) \cong E_{0,\beta}^\natural(-)$ in $\Fun(s\C,\Ab)$.
\end{lem}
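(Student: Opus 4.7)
The plan is to reduce this statement directly to \Cref{nat iso in dimension 0}, which supplies the unstructured analog. Recall that by definition
\[ \pi_0 E_\beta(-) = \pi_0 \pi_\beta \circ E(-) \quad \text{and} \quad E_{0,\beta}^\natural(-) = \pi_{0,\beta}^\natural \circ E(-) , \]
where $E : s\C \to s\C$ denotes the levelwise tensoring with $E$. So it suffices to produce a natural isomorphism $\pi_0 \pi_\beta(-) \cong \pi_{0,\beta}^\natural(-)$ in $\Fun(s\C,\Ab)$ and then precompose with the functor $E$.

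First I would observe that by \Cref{ass generators}, the subcategory $\G \subset \C$ is contained in $\GE \subset \C$, so in particular $S^\beta \in \GE$ for every $\beta \in \G^\delta$. Therefore \Cref{nat iso in dimension 0}, applied with $\eps = \beta$, directly supplies the natural isomorphism $\pi_0 \pi_\beta(-) \cong \pi_{0,\beta}^\natural(-)$ in $\Fun(s\C,\Ab)$.

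Precomposing with the functor $E : s\C \to s\C$ produces the desired natural isomorphism $\pi_0 E_\beta(-) \cong E_{0,\beta}^\natural(-)$ in $\Fun(s\C,\Ab)$, completing the proof. There is no genuine obstacle here: the entire content of the lemma is already packaged in \Cref{nat iso in dimension 0} (which in turn rests on the case $n=0$ of the spiral exact sequence), and the only verification required is the observation that $\G \subset \GE$ so that the earlier result applies to the generators $S^\beta \in \G$ used to define the $E$-homology groups.
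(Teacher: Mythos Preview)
Your proof is correct and follows essentially the same approach as the paper: both invoke \Cref{nat iso in dimension 0} as the main input. The paper's one-line proof additionally mentions ``a colimit argument,'' but given that $\pi_0 E_\beta$ and $E_{0,\beta}^\natural$ are \emph{defined} as the compositions $\pi_0\pi_\beta \circ E$ and $\pi_{0,\beta}^\natural \circ E$ respectively, your direct precomposition argument is already sufficient and the colimit step appears to be unnecessary here.
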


\begin{proof}
This follows from \Cref{nat iso in dimension 0} and a colimit argument.
\end{proof}

\begin{constr}\label{construct localized spiral}
For any $X \in s\C$, the spiral exact sequence for $EX \in s\C$ with respect to any $\beta \in \G^\delta$ becomes 
\[ \begin{tikzcd}[row sep=0.1cm]
\cdots \arrow{r} & \pi_{i+1}E_\beta X \arrow{r}{\delta} & E_{i-1,\beta+1}^\natural X \arrow{r} & E_{i,\beta}^\natural X \arrow{r} & \pi_iE_\beta X \arrow{r}{\delta} & \cdots \\
& \cdots \arrow{r}{\delta} & E_{0,\beta+1}^\natural X \arrow{r} & E_{1,\beta}^\natural X \arrow{r} & \pi_1 E_\beta X \arrow{r} & 0 .
\end{tikzcd} \]
We refer to this as the \bit{localized spiral exact sequence}.
\end{constr}

% \begin{rem}
% As we will see in {\color{cyan} [ref]}, when $X \in s\C$ is equipped with certain additional operadic structure, the localized spiral exact sequence will carry additional algebraic structure {\color{cyan} [as will the unlocalized one]}.
% \end{rem}

\section{Algebraic topology}
\label{section.alg.top}

In this section, we add operadic structures to the mix.

\subsection{Foundations of algebraic topology}

\begin{defn}
By \bit{operad} we mean what might otherwise be called a ``single-colored $\infty$-operad''.  These are presented by monoids for the composition product in symmetric sequences in topological spaces or in simplicial sets (via the ``operadic nerve'' of Definition A.2.1.1.23).  We write $\Op$ for the $\infty$-category of operads. %; this is presentable {\color{cyan} [ref]}.
For any $\oO \in \Op$, we write $\oO(n) \in \Fun(B\SG_n,\S)$ for the space of $n$-ary operations, equipped with its canonical action of the symmetric group $\SG_n$.
\end{defn}

\begin{notn}
For any $\oO \in \Op$, we write $\Alg_\oO(\C)$ for the $\infty$-category of $\oO$-algebras in $\C$, and we write
\[ \free_\oO : \C \adjarr \Alg_\oO(\C) : \forget_\oO \]
for the corresponding free/forget monadic adjunction.
\end{notn}

\begin{obs}
The monad corresponding to the monadic adjunction $\free_\oO \adj \forget_\oO$ can be computed as
\[ \forget_\oO(\free_\oO(X)) \simeq \coprod_{n \geq 0} ( \oO(n) \tensoring X^{\otimes n})_{\SG_n} \]
(where we use the diagonal action to form the quotient).
\end{obs}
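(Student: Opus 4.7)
The plan is to deduce the formula from Lurie's construction of free algebras over an $\infty$-operad, combined with the fact that $\C$ is presentably symmetric monoidal and that the monoidal product commutes with colimits separately in each variable.

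First I would appeal to the operadic left Kan extension formula (e.g.\! Proposition A.3.1.3.13) to express $\forget_\oO(\free_\oO(X))$ as a colimit in $\C$. The indexing $\infty$-category is the $\infty$-category of active morphisms in $\oO^\otimes$ whose source is a tuple of copies of the chosen object; this decomposes by arity into a disjoint union of connected components, with the arity-$n$ piece equivalent to the groupoid $B\SG_n$. The diagram sends the unique vertex of the arity-$n$ piece to the object $\oO(n) \tensoring X^{\otimes n} \in \C$, where the tensoring is the canonical action of $\S$ on $\C$ (which exists by presentability), and where the $\SG_n$-action is the diagonal combination of the tautological action on $\oO(n)$ with the permutation action on $X^{\otimes n}$ induced by the symmetry constraints of the monoidal structure.

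Next I would identify the colimit over each piece. By definition, the colimit of the resulting functor $B\SG_n \to \C$ is precisely the homotopy coinvariants $(\oO(n) \tensoring X^{\otimes n})_{\SG_n}$. Assembling across arities and using that $\otimes$ preserves colimits separately in each variable (so that the overall colimit decomposes as a coproduct in $\C$, which exists by presentability) yields the claimed formula.

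The main obstacle --- really more a matter of bookkeeping than of substance --- is to correctly interpret each symbol in the displayed formula in the $\infty$-categorical sense: the tensoring $\tensoring$ is the canonical action of $\S$ (not a set-theoretic copower), the coinvariants $(-)_{\SG_n}$ are homotopy coinvariants (i.e.\! the colimit over $B\SG_n$), and the coproduct is taken in $\C$. Once these interpretations are in place, the claim is a direct restatement of the standard free algebra construction in the $\infty$-categorical setting, and no further work is required.
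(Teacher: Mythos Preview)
Your proposal is correct, and in fact supplies considerably more detail than the paper does: in the paper this statement is recorded as an \emph{Observation} with no proof whatsoever --- it is treated as a standard fact about free algebras over an $\infty$-operad in a presentably symmetric monoidal $\infty$-category. Your appeal to the operadic left Kan extension formula and the decomposition by arity is exactly the right justification, and your care in interpreting the symbols $\infty$-categorically (homotopy coinvariants, tensoring over $\S$) is appropriate and matches the paper's ambient conventions.
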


\begin{obs}
Any map $\oO \xra{\varphi} \oO'$ in $\Op$ determines an adjunction
\[ \varphi_* : \Alg_\oO(\C) \adjarr \Alg_{\oO'}(\C) : \varphi^* \]
between $\infty$-categories of algebras in $\C$, whose right adjoint is given by restriction of structure.  The assignment $\varphi \mapsto \varphi_*$ assembles into a functor
\[ \Alg_{(-)}(\C) : \Op \ra \PrL . \]
\end{obs}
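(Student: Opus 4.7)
The plan is to establish the two claims in sequence, first producing the adjunction for a fixed map $\varphi$, and then assembling these into a functor on $\Op$.

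For the first claim, I would start with the restriction functor $\varphi^*$, which is automatic from the definition of $\oO$-algebras: unwinding Lurie's setup (single-colored $\infty$-operads via $\Gamma$- or $\bD$-shaped machinery), an $\oO'$-algebra in $\C$ is a certain functor out of $\oO'^\otimes$, and precomposition with $\varphi^\otimes : \oO^\otimes \to \oO'^\otimes$ yields $\varphi^* : \Alg_{\oO'}(\C) \to \Alg_\oO(\C)$. By construction $\forget_\oO \circ \varphi^* \simeq \forget_{\oO'}$, and since $\forget_{\oO'}$ creates small limits and sifted (in particular filtered) colimits, the same holds for $\varphi^*$. Because $\C$ is presentably symmetric monoidal, both $\Alg_\oO(\C)$ and $\Alg_{\oO'}(\C)$ are presentable (Lurie's HA), so the adjoint functor theorem furnishes a left adjoint $\varphi_* : \Alg_\oO(\C) \to \Alg_{\oO'}(\C)$. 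This is the desired adjunction $\varphi_* \dashv \varphi^*$.

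For the second claim, the efficient route is to first construct a functor into $\PrR$ and then dualize. Concretely, I would assemble the assignment $\oO \mapsto \Alg_\oO(\C)$ and $\varphi \mapsto \varphi^*$ into a functor $\Alg^*_{(-)}(\C) : \Op^{op} \to \PrR$. This follows from the operadic Grothendieck construction: the cocartesian fibration classifying the family of $\infty$-operads (or equivalently, the universal family $\Op^\otimes \to \Op$) produces, after passage to algebras in $\C$, a fibration whose straightening is the desired functor $\Op^{op} \to \Cati$. The target lands in $\PrR$ because each fiber is presentable and each restriction functor preserves small limits and filtered colimits, as noted above. Composing with the canonical equivalence $(\PrR)^{op} \xrightarrow{\sim} \PrL$ (which exchanges a right adjoint with its left adjoint) then yields the desired functor $\Alg_{(-)}(\C) : \Op \to \PrL$.

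The main obstacle is bookkeeping: none of the individual steps is deep, but coherently identifying $\Alg_\oO(\C)$ with the fiber of a suitable cocartesian fibration over $\Op$ requires invoking Lurie's straightening/unstraightening and the explicit constructions of Higher Algebra (around HA \textsection 3.2 and the material on presentable fibrations in HTT \textsection 5.5.3). Once that functor $\Op^{op} \to \PrR$ is in hand, the rest is the formal equivalence $\PrR \simeq (\PrL)^{op}$. I would therefore present the argument by citing the existence of the cocartesian fibration of algebras from Higher Algebra, verifying pointwise that it lands in $\PrR$, and then dualizing; the adjunction in the first claim is then recovered as the corresponding component of the dualized functor.
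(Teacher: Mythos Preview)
The paper states this as an observation without proof; it is treated as a standard fact from the foundations (essentially the content of Lurie's Higher Algebra around the presentability of algebra categories and the functoriality of $\Alg_{(-)}$). Your proposal is a correct and reasonable way to supply the details: build $\varphi^*$ by restriction, invoke presentability of $\Alg_\oO(\C)$ and the adjoint functor theorem to obtain $\varphi_*$, and then assemble the assignment into a functor by first producing $\Op^{op} \to \PrR$ and dualizing via $(\PrR)^{op} \simeq \PrL$. There is nothing to compare against on the paper's side beyond noting that your argument is the expected justification for what the author leaves implicit.
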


\begin{rem}
We restrict to single-colored operads for simplicity, and because most operads of interest are single-colored. % However, there is also another reason for this choice, namely that we will take advantage \cite{Rezkthesis} in {\color{cyan} [ref lemma]}, which works within the ``symmetric sequences'' context.  (Of course, this presumably generalizes straightforwardly.)  On the other hand, for relatively simple multi-colored operads this may well be unnecessary.  For instance, 
However, note that if one were interested in obtaining e.g.\! a commutative algebra $A \in \CAlg(\C)$ as well as a module $M \in \Mod_A(\C)$, one might proceed in steps, first using a single-colored obstruction theory in $\C$ to produce $A$, and then using a single-colored obstruction theory in $\Mod_A(\C)$ to produce $M$.
\end{rem}

\subsection{Simplicial algebraic topology}

\begin{defn}
Let $T \in s\Op$ be a simplicial object in operads.  We define the $\infty$-category $\Alg_T(s\C)$ of \bit{simplicial $T$-algebras in $\C$} to be the lax limit of the composite
\[ \bD^{op} \xra{T} \Op \xra{\Alg_{(-)}(\C)} \PrL . \]

\end{defn}

\begin{rem}
The composite
\[ \bD^{op} \xra{T} \Op \xra{\Alg_{(-)}(\C)} \PrL \xra{\forget_{\PrL}} \Cati \]
classifies a cocartesian fibration, which is in fact a bicartesian fibration; by (the dual of) \cite[Proposition 7.1]{GHN} (combined with Proposition T.5.5.3.13), its $\infty$-category of sections is precisely $\Alg_T(s\C)$.  Thus, we can think of a simplicial $T$-algebra $X = X_\bullet \in \Alg_T(s\C)$ as being specified by the following data:
\begin{itemize}
\item for each object $[n]^\circ \in \bD^{op}$, an object $X_n \in \Alg_{T_n}(\C)$;
\item for each morphism $[n]^\circ \xra{\varphi} [m]^\circ$ in $\bD^{op}$, a morphism from $X_n \in \Alg_{T_n}(\C)$ to $X_m \in \Alg_{T_m}(\C)$ in (the bicartesian fibration over $[1]$ corresponding to) the adjunction
\[ (T_\varphi)_* : \Alg_{T_n}(\C) \adjarr \Alg_{T_m}(\C) : (T_\varphi)^* \]
arising from the induced map $T_n \xra{T_\varphi} T_m$ in $\Op$, i.e.\! a point in the space
\[ \hom_{\Alg_{T_n}(\C)}(X_n,(T_\varphi)^*X_m) \simeq \hom_{\Alg_{T_m}(\C)}((T_\varphi)_*X_n,X_m) ; \]
\item higher coherence data for these structure maps corresponding to strings of composable morphisms in $\bD^{op}$.
\end{itemize}
\end{rem}

\begin{obs}\label{obs map between simplicial operads gives adjn}
Any map $T \xra{\varphi} T'$ in $s\Op$ determines an adjunction
\[ \varphi_* : \Alg_T(s\C) \adjarr \Alg_{T'}(s\C) : \varphi^* \]
between $\infty$-categories of simplicial algebras in $\C$, whose right adjoint is given by restriction of structure. In particular, taking $T$ to be trivial yields a monadic adjunction
\[ \free_{T'} : s\C \adjarr \Alg_{T'}(s\C) : \forget_{T'} , \]
whose underlying monad is computed levelwise. % {\color{cyan} \small (is this necessary?)} {\color{magenta} \small [also probably want that $\forget_{T'}$ commutes with filtered (or even sifted?) colimits.]}
\end{obs}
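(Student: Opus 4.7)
The plan is to exploit the description of $\Alg_T(s\C)$ as sections of the bicartesian fibration $p_T : \mathcal{E}_T \to \bD^{op}$ unstraightening $\bD^{op} \xra{T} \Op \xra{\Alg_{(-)}(\C)} \PrL$. Since the target is $\PrL$, both cocartesian lifts (given by the pushforwards $(T_\psi)_*$) and cartesian lifts (given by the restrictions $(T_\psi)^*$) are available; the same of course applies to $T'$. A map $\varphi : T \to T'$ in $s\Op$ postcomposes to a natural transformation in $\Fun(\bD^{op},\PrL)$, which unstraightens to a morphism $\Phi : \mathcal{E}_T \to \mathcal{E}_{T'}$ of cocartesian fibrations with level-$n$ component $(T_{\varphi_n})_*$; postcomposition with $\Phi$ gives the pushforward functor $\varphi_*$ on sections. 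Dually, via the equivalence $\PrL \simeq (\mathrm{Pr^R})^{op}$, the same datum yields a morphism $\Psi : \mathcal{E}_{T'} \to \mathcal{E}_T$ of \emph{cartesian} fibrations with level-$n$ component the restriction $(T_{\varphi_n})^*$; postcomposition with $\Psi$ gives $\varphi^*$. That $\Psi$ preserves cartesian edges (equivalently, that $\varphi^*$ carries sections to sections) is precisely the naturality equivalence $(T_\psi)^* \circ (T_{\varphi_m})^* \simeq (T_{\varphi_n})^* \circ (T'_\psi)^*$, which is $\varphi$ read off in $\Op$.

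To produce the adjunction $\varphi_* \dashv \varphi^*$, I would invoke the general principle that a levelwise adjunction between bicartesian fibrations over a common base descends to an adjunction on $\infty$-categories of sections, with unit and counit assembled pointwise from those of $(T_{\varphi_n})_* \dashv (T_{\varphi_n})^*$. For the trivial-$T$ specialization --- where $T$ is the constant simplicial operad at the trivial operad $\unit_\Op$, so that $\Alg_T(s\C) \simeq \Fun(\bD^{op},\C) = s\C$ --- the adjunction becomes $\free_{T'} \dashv \forget_{T'}$. Monadicity follows from the Barr--Beck--Lurie theorem: $\forget_{T'}$ is conservative and preserves $\forget_{T'}$-split geometric realizations, both checked levelwise since colimits in sections of a cocartesian fibration over $\bD^{op}$ classified by a diagram in $\PrL$ are computed pointwise. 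Finally, because $\Phi$ and $\Psi$ both act levelwise on sections, so does the resulting monad $\forget_{T'} \free_{T'}$, which on $X_\bullet \in s\C$ produces the simplicial object $[n] \mapsto \forget_{T'_n} \free_{T'_n}(X_n)$.

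The main obstacle is the coherent assembly of the pointwise adjunction data into a genuine adjunction of $\infty$-categories: the level-by-level description is transparent, but making this rigorous depends on the interplay of unstraightening, relative adjunctions, and the equivalence $\PrL \simeq (\mathrm{Pr^R})^{op}$. Once these tools are granted, all remaining verifications are essentially formal.
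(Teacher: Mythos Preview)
The paper states this as an Observation without proof, treating it as essentially immediate from the surrounding setup (in particular from the bicartesian-fibration description of $\Alg_T(s\C)$ given in the preceding Remark). Your approach is correct and supplies the details the paper leaves implicit: unstraightening the natural transformation $\Alg_{T_\bullet}(\C) \Rightarrow \Alg_{T'_\bullet}(\C)$ in $\Fun(\bD^{op},\PrL)$ to a morphism of cocartesian fibrations, passing to sections, and invoking the relative-adjunction machinery (as in Lurie's HA \S7.3.2) to assemble the levelwise adjunctions $(T_{\varphi_n})_* \dashv (T_{\varphi_n})^*$ into an adjunction on $\infty$-categories of sections. That machinery does resolve the coherence issue you flag as the main obstacle.

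Two minor remarks. First, for monadicity you can strengthen your Barr--Beck--Lurie input: $\forget_{T'}$ preserves all sifted colimits (not merely $\forget_{T'}$-split geometric realizations), since colimits in the lax limit of a $\PrL$-valued diagram are computed levelwise and each $\forget_{T'_n}$ preserves sifted colimits. Second, your identification of $\varphi_*$ as ``postcomposition with $\Phi$'' is correct but deserves a word of caution: this only directly yields a functor on all sections, and one must check it is indeed left adjoint to $\varphi^*$ rather than merely some functor with the right levelwise components --- this is exactly what the relative-adjunction formalism certifies.
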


\begin{obs}\label{obs algebras over a constant simplicial operad}
Let $\oO \in \Op$ be an operad, and consider the the corresponding constant simplicial operad $\const(\oO) \in s\Op$.  Since the resulting composite
\[ \bD^{op} \xra{\const(\oO)} \Op \xra{\Alg_{(-)}(\C)} \PrL \]
is constant at $\Alg_\oO(\C)$, it follows that we have a canonical equivalence
\[ \Alg_{\const(\oO)}(s\C) \simeq s(\Alg_\oO(\C)) . \]
\end{obs}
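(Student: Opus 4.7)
The plan is to unwind the definition and reduce to the general principle that the lax limit of a constant diagram is a functor category. By definition, $\Alg_{\const(\oO)}(s\C)$ is the lax limit of the composite
\[ \bD^{op} \xra{\const(\oO)} \Op \xra{\Alg_{(-)}(\C)} \PrL. \]
Since the first arrow is constant at $\oO$, the composite is canonically equivalent to the constant functor $\const_{\bD^{op}}(\Alg_\oO(\C))$. So it suffices to identify the lax limit of this constant diagram with $s(\Alg_\oO(\C)) = \Fun(\bD^{op}, \Alg_\oO(\C))$.

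Following the remark preceding the statement, the lax limit is computed as the $\infty$-category of (not necessarily cocartesian) sections of the bicartesian fibration over $\bD^{op}$ unstraightened from the composite. First I would observe that the straightening/unstraightening equivalence sends a constant functor at $\D \in \Cati$ to the trivial cocartesian fibration $\bD^{op} \times \D \to \bD^{op}$: every morphism in $\bD^{op}$ acts by the identity functor on fibers, so the total space is literally the product, with projection as the structure map. In the presentable setting, the associated right adjoints are likewise identities, so this trivial fibration is also cartesian, hence bicartesian, as required.

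Next, I would identify the $\infty$-category of sections of $\bD^{op} \times \D \to \bD^{op}$ with $\Fun(\bD^{op}, \D)$. This is a direct application of the universal property of the product: a section is the same as a functor $\bD^{op} \to \bD^{op} \times \D$ whose first component is the identity, which is equivalently a functor $\bD^{op} \to \D$. (One can invoke \cite[Proposition 7.1]{GHN} cited in the preceding remark, specialized to the case of a trivial fibration, to get this equivalence of $\infty$-categories rather than merely an equivalence of underlying spaces.) Taking $\D = \Alg_\oO(\C)$ yields the desired equivalence
\[ \Alg_{\const(\oO)}(s\C) \simeq \Fun(\bD^{op}, \Alg_\oO(\C)) = s(\Alg_\oO(\C)). \]

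There is no serious obstacle here; the only point requiring any care is keeping track of the fact that the lax limit uses \emph{all} sections (not merely cocartesian ones), which is exactly what matches the functor category. Indeed, cocartesian sections would correspond only to functors landing in the maximal subgroupoid $\Alg_\oO(\C)^\simeq$, giving the ordinary limit rather than $s(\Alg_\oO(\C))$.
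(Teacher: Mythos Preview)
Your argument is correct and is precisely an elaboration of the reasoning the paper embeds in the statement itself: the paper records this as an Observation with no separate proof, simply noting that the composite functor is constant and concluding directly. You have spelled out the one step the paper leaves implicit, namely that the lax limit of a constant diagram (computed as sections of the trivial fibration) is the functor category; this is the same approach, just made explicit.

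One small imprecision in your final parenthetical: for the trivial fibration $\bD^{op}\times\D\to\bD^{op}$, the cocartesian sections form the full subcategory of $\Fun(\bD^{op},\D)$ on functors inverting every morphism, which is $\Fun(|\bD^{op}|,\D)\simeq\D$ (the ordinary limit, as you say), not literally $\Fun(\bD^{op},\D^\simeq)$; the latter has the same objects but fewer morphisms. This does not affect your main argument.
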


\begin{obs}\label{obs adjn from simplicial T-algebras}
For any $T \in s\Op$, we have a canonical composite adjunction
\[ \begin{tikzcd}[column sep=2cm]
\Alg_T(s\C) \horizadjnlabels{(\eta_T)_*}{(\eta_T)^*} & \Alg_{\const(|T|)}(s\C) \simeq s(\Alg_{|T|}(\C)) \horizadjnlabels{|{-}|}{\const} & \Alg_{|T|}(\C) ,
\end{tikzcd} \]
where
\begin{itemize}
\item the first adjunction follows by applying \Cref{obs map between simplicial operads gives adjn} to the component $T \xra{\eta_T} \const(|T|)$ of the unit of the adjunction $|{-}| : s\Op \adjarr \Op : \const(-)$;
\item the equivalence is that of \Cref{obs algebras over a constant simplicial operad}; and
\item the second adjunction is the colimit/constant adjunction in $\Alg_{|T|}(\C)$.
\end{itemize}
\end{obs}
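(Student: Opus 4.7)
The plan is to assemble the claimed composite adjunction from three standard pieces, two of which are cited directly and one of which comes from the general formalism of simplicial objects.

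First, I would invoke the adjunction $|{-}| : s\Op \adjarr \Op : \const(-)$: since $\Op$ is presentable and bicomplete, the constant-diagram functor $\const : \Op \to s\Op = \Fun(\bD^{op},\Op)$ admits a left adjoint, which is precisely geometric realization (colimit over $\bD^{op}$). Applying this to our chosen $T \in s\Op$ yields the unit component $\eta_T : T \to \const(|T|)$ in $s\Op$. Then \Cref{obs map between simplicial operads gives adjn}, applied to this morphism, produces the first adjunction $(\eta_T)_* : \Alg_T(s\C) \adjarr \Alg_{\const(|T|)}(s\C) : (\eta_T)^*$.

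Next, \Cref{obs algebras over a constant simplicial operad} (applied to the operad $|T| \in \Op$) identifies the middle $\infty$-category as $\Alg_{\const(|T|)}(s\C) \simeq s(\Alg_{|T|}(\C))$. Finally, since $\Alg_{|T|}(\C)$ is bicomplete (being presentable), the colimit/constant adjunction $|{-}| : s(\Alg_{|T|}(\C)) \adjarr \Alg_{|T|}(\C) : \const$ exists and provides the third piece. Composing the three yields the desired adjunction; composition of adjunctions is an adjunction, so no further verification is required.

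The only point that requires genuine thought, rather than pure bookkeeping, is the existence of the initial adjunction $|{-}| \dashv \const$ at the level of $\infty$-operads. I would expect the main obstacle to be confirming the hypotheses used in \Cref{obs map between simplicial operads gives adjn}: namely that $\Alg_{(-)}(\C) : \Op \to \PrL$ is well-defined functorially and that the lax limit construction defining $\Alg_T(s\C)$ is compatible with restriction along $\eta_T$. However, both hold because $\Alg_{(-)}(\C)$ has already been set up as a functor to $\PrL$, and lax limits are contravariantly functorial in the indexing diagram in the appropriate sense. Once these formal points are checked, the proof reduces essentially to stringing the three adjunctions together, which is the content already recorded in the bullet points of the statement.
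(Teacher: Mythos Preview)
Your proposal is correct and matches the paper's approach exactly: the statement is recorded as an Observation with no separate proof, the justification being precisely the three bullet points you have unpacked. Your added remarks on presentability of $\Op$ and functoriality of $\Alg_{(-)}(\C)$ are reasonable elaborations, but the paper treats these as already established and simply strings the three pieces together.
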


\begin{notn}
For simplicity, we simply write
\[ |{-}| : \Alg_T(s\C) \adjarr \Alg_{|T|}(\C) : \const \]
for the composite adjunction of \Cref{obs adjn from simplicial T-algebras}.  When convenient and unambiguous, we will omit the right adjoint from the notation.
\end{notn}

\begin{lem}
The diagram
\[ \begin{tikzcd}
\Alg_T(s\C) \arrow{r}{|{-}|} \arrow{d}[swap]{\forget_T} & \Alg_{|T|}(\C) \arrow{d}{\forget_{|T|}} \\
s\C \arrow{r}[swap]{|{-}|} & \C
\end{tikzcd} \]
commutes.
\end{lem}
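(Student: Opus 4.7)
The strategy is to pass to the right adjoints of both horizontal $|{-}|$ functors, observe that the resulting square commutes essentially on the nose, and then verify that the associated mate natural transformation is an equivalence by checking on free algebras and extending via sifted colimits.

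\smallskip

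\emph{Commuting right adjoints.} By \Cref{obs adjn from simplicial T-algebras}, the right adjoint of the top horizontal map sends $Y \in \Alg_{|T|}(\C)$ to the section $[n]^\circ \mapsto (T_n \ra |T|)^* Y \in \Alg_{T_n}(\C)$. Since restriction of structure along a map of operads does not alter the underlying object in $\C$, postcomposing with $\forget_T$ yields the constant simplicial object $\const(\forget_{|T|} Y) \in s\C$. Thus $\forget_T \circ \const_{\Alg} \simeq \const_{s\C} \circ \forget_{|T|}$, and taking mates produces a canonical natural transformation
\[ \alpha \colon |{-}|_{s\C} \circ \forget_T \longrightarrow \forget_{|T|} \circ |{-}|_{\Alg} . \]

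\smallskip

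\emph{The mate on free algebras.} For $Y \in s\C$, uniqueness of left adjoints combined with the commutativity above yields $|{-}|_{\Alg} \circ \free_T \simeq \free_{|T|} \circ |{-}|_{s\C}$. Evaluating the target of $\alpha$ at $\free_T Y$ therefore gives
\[ \forget_{|T|} \free_{|T|} |Y|_{s\C} \; \simeq \; \coprod_{k \geq 0} \left( |T|(k) \otimes |Y|_{s\C}^{\otimes k} \right)_{\SG_k} . \]
The source of $\alpha$ at $\free_T Y$ is the geometric realization in $\C$ of the simplicial object $n \mapsto \coprod_k ( T_n(k) \otimes Y_n^{\otimes k} )_{\SG_k}$; commuting the coproduct over $k$ and the $\SG_k$-orbits past the simplicial colimit (each being itself a colimit), it suffices to identify, for each $k$,
\[ \colim_{[n] \in \bD^{op}} \left( T_n(k) \otimes Y_n^{\otimes k} \right) \; \simeq \; |T|(k) \otimes |Y|_{s\C}^{\otimes k} . \]
This follows from the cofinality of the diagonal $\bD^{op} \ra \bD^{op} \times \bD^{op}$ (which rewrites the left side as the double colimit of the bisimplicial object $(m,n) \mapsto T_m(k) \otimes Y_n^{\otimes k}$) applied iteratively, together with the separate cocontinuity of $\otimes$ in $\C$ (from presentable symmetric monoidality) used to distribute the colimit across the $k$-fold tensor.

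\smallskip

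\emph{Extension to all $T$-algebras.} The adjunction $\free_T \dashv \forget_T$ of \Cref{obs map between simplicial operads gives adjn} is monadic with levelwise monad; since each $\forget_{T_n}$ preserves sifted colimits (a standard fact for operadic algebras in a presentable $\infty$-category), so does $\forget_T$, and hence every $X \in \Alg_T(s\C)$ is a sifted colimit of free algebras. Both composites in the square preserve sifted colimits, as the $|{-}|$ functors are left adjoints and the $\forget$ functors preserve sifted colimits. Consequently $\alpha$, being an equivalence on free algebras, is a natural equivalence on all of $\Alg_T(s\C)$.

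\smallskip

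\emph{Main obstacle.} The technically delicate step is the free-algebra identification: commuting the coproduct over $k$, the $\SG_k$-orbits, the $k$-fold tensor, and the $\bD^{op}$-colimit past each other in the correct order, invoking the diagonal-versus-double-colimit cofinality for bisimplicial objects to collapse the resulting iterated colimit into the expected closed form $|T|(k) \otimes |Y|_{s\C}^{\otimes k}$.
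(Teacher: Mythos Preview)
Your proposal is correct and follows essentially the same approach as the paper. The paper's entire proof is the single sentence ``Both vertical functors are right adjoints which commute with sifted colimits''; your argument unpacks exactly these two ingredients---the right-adjoint square commutes since restriction of structure does not change underlying objects, and sifted-colimit preservation lets one reduce the resulting mate to the free-algebra case, where your diagonal-cofinality computation goes through.
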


\begin{proof}
Both vertical functors are right adjoints which commute with sifted colimits.
\end{proof}

\begin{thm}\label{thm AlgTsCres}
There is a \bit{resolution model structure} on $\Alg_T(s\C)$, denoted $\Alg_T(s\C)_\res$; it is obtained by lifting the resolution model structure $s\C_\res$ along the adjunction 
\[ \free_T : s\C \adjarr \Alg_T(s\C) : \forget_T , \]
which therefore becomes a Quillen adjunction.  It enjoys the following properties.
\begin{enumerate}
\item Its weak equivalences and fibrations are created by pullback along the right adjoint $\forget_T$.
% \[ \begin{tikzcd}[column sep=2cm]
% \Alg_T(s\C) \arrow[transform canvas={yshift=0.8ex}]{r}{[\free_T(=),-]^\lw_\C}[swap]{\sim} \arrow[transform canvas={yshift=-0.8ex}]{r}[swap]{[=,\forget_T(-)]^\lw_\C} & s\Fun(\GE,\Ab) \simeq \Fun(\GE,s\Ab_\KQ)_\projective .
% \end{tikzcd} \]
\item It is simplicial.
\item All objects are fibrant in it.
\item It is cofibrantly generated by the sets
\begin{align*}
I^{\Alg_T(s\C)}_\res &= \free_T(I^{s\C}_\res) = \{ \free_T(I^{s\S}_\KQ \tensoring \const(S^\eps)) \}_{S^\eps \in \GE} \\ & = \{ \free_T(\partial \Delta^n \tensoring \const(S^\eps)) \ra \free_T(\Delta^n \tensoring \const(S^\eps)) \}_{n \geq 0,S^\eps \in \GE}
\end{align*}
and
\begin{align*}
J^{\Alg_T(s\C)}_\res &= \free_T(J^{s\C}_\res) = \{ \free_T(J^{s\S}_\KQ \tensoring \const(S^\eps)) \}_{S^\eps \in \GE} \\ & = \{ \free_T(\Lambda^n_i \tensoring \const(S^\eps)) \ra \free_T(\Delta^n \tensoring \const(S^\eps)) \}_{0 \leq i \leq n \geq 1,S^\eps \in \GE} .
\end{align*}
\end{enumerate}
\end{thm}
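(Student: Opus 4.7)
The plan is to lift the resolution model structure on $s\C$ along the free/forget adjunction by applying the machinery of \Cref{thm Mres} (in the many-object form of \Cref{many-object sNres}), then upgrade to a simplicial model structure via \Cref{thm Mres simp}. Concretely, I would work with the composite adjunction
\[ \prod_{\GE^\delta} s\S \xra{(K_\eps) \mapsto \coprod_\eps K_\eps \tensoring \const(S^\eps)} s\C \xra{\free_T} \Alg_T(s\C) , \]
whose right adjoint sends $X$ to the tuple $\left( \hom^\lw_\C(S^\eps, \forget_T X) \right)_{\eps \in \GE^\delta}$. The weak equivalences and fibrations defined by pulling back $\bW^{s\S}_\KQ$ and $\bF^{s\S}_\KQ$ along this composite agree with those created by $\forget_T$ into $s\C_\res$ (by the corresponding statement in \Cref{thm sCres}), so identifying these would-be classes is immediate.

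First I would check that the right adjoint preserves filtered colimits: the functors $\hom^\lw_\C(S^\eps,-)$ commute with filtered colimits in $s\C$ because each $S^\eps \in \GE$ is compact in $\C$ by \Cref{obs inv and dzble also cpct}, and $\forget_T$ commutes with filtered colimits because filtered colimits in the lax limit $\Alg_T(s\C)$ are computed levelwise in the $\Alg_{T_n}(\C)$, where the levelwise forgetful functors preserve them (the free algebra monad $X \mapsto \coprod_k (T_n(k) \tensoring X^{\otimes k})_{\SG_k}$ being a filtered colimit of functors each preserving filtered colimits, since $\C$ is presentably symmetric monoidal). Next I would verify condition $(*)$ of \Cref{thm Mres} by applying \Cref{suff to have all objects fibt}: fibrancy of $X \in \Alg_T(s\C)$ amounts to fibrancy of $\forget_T X \in s\C_\res$, which holds for every object by \Cref{thm sCres}. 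This produces the model structure and all parts of the theorem except the simplicial enhancement and the description of the generating sets.

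The main obstacle is the simplicial structure: \Cref{enr and bitens of algs over a monad on sD} delivers an enrichment and bitensoring over $s\S$ on $\Alg_t(s\D)$ for a genuine monad $t$ on $s\D$, so I first need to identify $\Alg_T(s\C)$ with algebras over a monad on $s\C$. The adjunction $\free_T \dashv \forget_T$ of \Cref{obs adjn from simplicial T-algebras} satisfies the hypotheses of the Barr--Beck--Lurie monadicity theorem: $\forget_T$ is conservative (as equivalences in the lax limit $\Alg_T(s\C)$ are detected levelwise, and each $\forget_{T_n}$ is conservative) and preserves $\forget_T$-split geometric realizations (as each $\forget_{T_n}$ does and these colimits are computed levelwise). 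Thus $\Alg_T(s\C) \simeq \Alg_{t_T}(s\C)$ for the monad $t_T = \forget_T \circ \free_T$, and \Cref{enr and bitens of algs over a monad on sD} furnishes both the bitensoring and the enriched lift of the free/forget adjunction. The compatibility hypothesis of \Cref{thm Mres simp}, namely $\free_T(K \tensoring X) \simeq K \tensoring \free_T(X)$, is built into the definition of the bitensoring in the cited corollary (via the coend formula), and so that theorem upgrades the lifted structure to a simplicial model $\infty$-category.

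Finally, to identify the generating (acyclic) cofibrations, I would appeal to the general principle that lifted cofibrantly generated model structures inherit generating sets as the images of those downstairs: since $I^{s\C}_\res = \{ I^{s\S}_\KQ \tensoring \const(S^\eps) \}$ and $J^{s\C}_\res = \{ J^{s\S}_\KQ \tensoring \const(S^\eps) \}$ by \Cref{thm sCres}, and since adjunction transposes the right lifting characterizations of fibrations and acyclic fibrations against these sets to the analogous characterizations in $\Alg_T(s\C)$ against $\free_T(I^{s\C}_\res)$ and $\free_T(J^{s\C}_\res)$ respectively, one obtains the desired generating sets; the small object argument is permitted by the same compactness argument used to verify filtered colimit preservation of $G$.
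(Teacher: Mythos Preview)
Your proposal is correct and follows essentially the same route as the paper's (one-line) proof, which simply cites \Cref{thm Mres}, \Cref{enr and bitens of algs over a monad on sD}, and \Cref{thm Mres simp}; you have just unpacked the hypothesis checks. One small redundancy: the monadicity of $\free_T \dashv \forget_T$ is already asserted in \Cref{obs map between simplicial operads gives adjn} (not \Cref{obs adjn from simplicial T-algebras}, which is the geometric realization adjunction), so your Barr--Beck--Lurie verification, while correct, is not needed.
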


\begin{proof}
The model structure follows from \Cref{thm Mres}, the enrichment and bitensoring over $s\S$ follows from \Cref{enr and bitens of algs over a monad on sD}, and their compatibility follows from \Cref{thm Mres simp}.
\end{proof}

% We begin by verifying the hypotheses of the lifting theorem for cofibrantly generated model $\infty$-categories: we must show that $I^{\Alg_T(s\C)}_\res$ and $J^{\Alg_T(s\C)}_\res$ permit the small object argument, and that that $\forget_T$ takes relative $J^{\Alg_T(s\C)}_\res$-cell complexes into $\bW^{s\C}_\res \subset s\C$.  First, the fact that these sets permit the small object argument follows immediately from the adjunction $\free_T \adj \forget_T$.  And then, the fact that $\forget_T$ takes relative $J^{\Alg_T(s\C)}_\res$-cell complexes into $\bW^{s\C}_\res$ {\color{red} \bf seems to be extremely difficult / impossible!!}

% {\tiny follows from the adjunction $\free_T \adj \forget_T$, the spiral exact sequence, \Cref{lem from mapping sspaces to sCres}, \and \Cref{nat iso in dimension 0} (as in the proof of \Cref{thm sCres}).}

% \noindent {\color{cyan} \bf [or maybe the proof is actually fundamentally different, and i need to do the [GJ \sec 2] stuff.  luckily that looks pretty straightforward -- much of it working in an $\infty$-category $\M$ that's enriched and bitensored over an arbitrary monoidal $\infty$-category $\V$ (until it gets to $\sd$ and $\Ex$, etc.).]}

% \noindent {\color{cyan} \small [the quickest way for left adjoint to preserve small objects is if the right adjoint commutes with filtered colimits (as it does here).]}

\begin{notn}
Extending \Cref{E-equivces in sC}, we write $\bW_{E_\star^\lw} = \bW^{\Alg_T(s\C)}_{E_\star^\lw} \subset \Alg_T(s\C)$ for the preimage of $\bW^{s\C}_{E_\star^\lw} \subset s\C$ under the forgetful functor $\forget_T : \Alg_T(s\C) \ra s\C$.  Since $\bW^{s\C}_\res \subset \bW^{s\C}_{E_\star^\lw}$ by \Cref{E-equivce implies isomorphism on Etwo pages}, then also $\bW^{\Alg_T(s\C)}_\res \subset \bW^{\Alg_T(s\C)}_{E_\star^\lw}$.
\end{notn}

\begin{obs}\label{going from locpi to locE is a left localization}
In the end, our moduli spaces of interest will not be subgroupoids of the localization $\locresAlgT$, but rather of the further localization $\locEAlgT$.  However, in order to compute hom-spaces in this latter localization, it suffices to observe that the induced functor $\locresAlgT \ra \locEAlgT$ is actually a left localization: then, we can simply work in $\Alg_T(s\C)_\res$ but require that our target objects present local objects in $\locresAlgT$ (with respect to this left localization).  It follows from \Cref{identify localization of sNres} (and the monadic derived adjunction underlying the monadic Quillen adjunction $\free_T \adj \forget_T$) that $\locresAlgT$ is presentable, so we can apply the recognition result Proposition T.5.5.4.15: it suffices to show that the image in $\locresAlgT$ of $\bW_{E_\star^\lw} \subset \Alg_T(s\C)$ is strongly saturated (in the sense of Definition T.5.5.4.5).  The first two conditions follow from \cite[Lemma 1.5.2]{GH}, %{\color{magenta} (?)},
while the two-out-of-three property follows from the fact that it is ultimately pulled back from a subcategory $\bW_\KQ \subset s\A$ which has the two-out-of-three property.
\end{obs}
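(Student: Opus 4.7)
The plan is to verify the hypotheses of Proposition T.5.5.4.15: a presentable $\infty$-category equipped with a strongly saturated class of morphisms generated by a small set admits a reflective localization at that class. The source $\locresAlgT$ is presentable, since it is the underlying $\infty$-category of the combinatorial model $\infty$-category $\Alg_T(s\C)_\res$ of \Cref{thm AlgTsCres} (alternatively, combining \Cref{identify localization of sNres} for $s\C_\res$ with the monadic lift along $\free_T \dashv \forget_T$ exhibits it as an $\infty$-category of algebras over an accessible monad on a presentable $\infty$-category). What remains is to check that the image $\ol{\bW}_{E_\star^\lw} \subset \Fun(\Delta^1,\locresAlgT)$ of $\bW_{E_\star^\lw}$ is strongly saturated, and moreover that it is generated by a small set — the latter being automatic once strong saturation is established because $\bW_{E_\star^\lw}$ is closed under filtered colimits and the underlying $\infty$-category is presentable, so one can extract a small generating set of maps between $\kappa$-compact objects for a suitable regular cardinal.

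For strong saturation, I would break the verification into the three required closure properties. First, the two-out-of-three property: by definition $\bW_{E_\star^\lw} = (E_\star^\lw)^{-1}(\bW_\KQ^{s\A})$ where $\bW_\KQ^{s\A}$ has two-out-of-three in $s\A$, and this property is clearly preserved under pullback along any functor, giving it for $\bW_{E_\star^\lw}$ in $\Alg_T(s\C)$; because $\bW_\res \subset \bW_{E_\star^\lw}$, this property descends to the image in $\locresAlgT$ by an immediate diagram chase at the level of homotopy categories. Second, closure under pushouts and under small colimits in the arrow $\infty$-category: I would import \cite[Lemma 1.5.2]{GH} (whose content is precisely that $E_\star$-equivalences of simplicial objects are closed under the appropriate colimit constructions) and adapt the argument to our setting; the key point is that $E_\star^\lw : \Alg_T(s\C) \to s\A$ preserves colimits up to homotopy after passing to the localization, and $\bW_\KQ \subset s\A$ is itself strongly saturated as the weak equivalences of a combinatorial model $\infty$-category, so preimages inherit these closure properties (once one is working in the localization $\locresAlgT$ where homotopy pushouts agree with strict pushouts).

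The main obstacle will be the colimit-stability step. The subtlety is that closure under pushouts and sequential/filtered colimits in the \emph{strict} $\infty$-category $\Alg_T(s\C)$ requires care because $E_\star^\lw$ is only a left Quillen–like functor between cofibrant objects, not between arbitrary objects; consequently one must either cofibrantly replace before comparing, or work entirely in the localization where pushouts are automatically homotopy pushouts. I would adopt the latter strategy: pass to $\locresAlgT$ at the outset and use that the canonical functor $\Alg_T(s\C) \to \locresAlgT$ preserves (and in a suitable sense detects) colimits of cofibrant diagrams. Then closure of $\ol{\bW}_{E_\star^\lw}$ under arbitrary small colimits in $\Fun(\Delta^1,\locresAlgT)$ reduces, via the standard decomposition of colimits into coproducts, pushouts, and filtered colimits, to the three cases handled by (the $\infty$-categorical analogue of) \cite[Lemma 1.5.2]{GH}.

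Finally, having verified strong saturation, Proposition T.5.5.4.15 produces a reflective localization $\locresAlgT \rightleftarrows \locEAlgT$; by the universal property of localizations the right adjoint is fully faithful, so to map into an object of $\locEAlgT$ from the perspective of $\locresAlgT$ it suffices to view it as a local object, which is exactly the desired conclusion that enables computing hom-spaces in $\locEAlgT$ inside the model $\infty$-category $\Alg_T(s\C)_\res$.
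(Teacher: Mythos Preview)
Your proposal is correct and follows essentially the same approach as the paper: verify presentability via \Cref{identify localization of sNres} plus the monadic adjunction, then check strong saturation by invoking \cite[Lemma 1.5.2]{GH} for the colimit-closure conditions and the pullback-from-$\bW_\KQ$ argument for two-out-of-three. You are somewhat more careful than the paper in two places --- you flag the small-generation hypothesis of Proposition T.5.5.4.15 and the cofibrancy subtlety in transporting colimit-closure through $E_\star^\lw$ --- but these elaborations are sound and do not change the route.
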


\begin{notn}
We will write $\leftloc_{E_\star^\lw} : \locresAlgT \adjarr \locEAlgT : \forget_{E_\star^\lw}$ for the left localization adjunction of \Cref{going from locpi to locE is a left localization}.
\end{notn}

\begin{rem}
The existence of a fully faithful right adjoint to the canonical functor $\locresAlgT \ra \locEAlgT$ should not be surprising: in \cite{GH}, this is constructed as a left Bousfield localization (cf.\! \cite[Theorems 1.4.9 and 1.5.1]{GH}).
\end{rem}

\begin{rem}
Taking $T$ to be trivial, we obtain a left localization adjunction $\leftloc_{E_\star^\lw} : \locressC \adjarr \locEsC : \forget_{E_\star^\lw}$.
\end{rem}

\begin{rem}
Whereas we have identified $\locressC$ as a nonabelian derived $\infty$-category, it appears that $\locEsC$ does not generally take this form.  It will become clear over the course of the construction that we really do need to be working in a nonabelian derived $\infty$-category.
\end{rem}

\subsection{Operads, revisited}

We give a brief unified treatment of all of the sorts of operads, their homotopy, and their related structures that we will be considering.  The material in this subsection is undergirded by the foundational work \cite{CH-enr-opds}.

\subsubsection{Operads and their algebras}

\begin{notn}
For an $\infty$-category $\V$, we write $\V^\SG = \Fun(\Set^\simeq , \V)$ for the $\infty$-category of symmetric sequences in $\V$.  Given $\oO \in \V^\SG$, we write $\oO(n) =\oO(\{1,\ldots,n\})$ for simplicity.  Assuming $\V$ has an initial object, we consider $\V \subset \V^\SG$ via left Kan extension along $\{ \pt_\Set \} \hookra \Set^\simeq$.  When $\V$ additionally admits a symmetric monoidal structure that commutes with colimits separately in each variable (e.g.\! if the symmetric monoidal structure is closed), the $\infty$-category $\V^\SG$ acquires a composition product monoidal structure $(\V^\SG , \circ , \unit_\V)$, algebras for which are precisely (``single colored'') $\V$-operads (a/k/a ``operads internal to $\V$'').  We denote the $\infty$-category of these by $\Op(\V)$, and write
\[ \free_{\Op(\V)} : \V^\SG \adjarr \Op(\V) : \forget_{\Op(\V)} \]
for the resulting monadic adjunction.  For brevity, we will simply say that $\V$ ``admits operads'' in this case.

When $\V$ is the $\infty$-category $\S$ of spaces (equipped with the cartesian symmetric monoidal structure), we (continue to) omit it from all our notation and terminology; in particular, we (continue to) refer to the objects of $\Op$ simply as ``operads''.  For emphasis, we may refer to objects of $\Op(\V)$ for some possibly unspecified $\V$ as ``internal operads''.
\end{notn}

% \begin{rem}
% By {\color{cyan} [ref: HA, maybe heuts--moerdijk?]}, operads form a full subcategory of the $\infty$-category of ``$\infty$-operads'' given as {\color{cyan} [HA defn]} (namely the ``single colored'' ones).
% \end{rem}

\begin{notn}
Let $\D \in \LMod_\V(\Cati)$ be an $\infty$-category admitting an action of $\V$, and assume that $\D$ is cocomplete and finitely complete. % {\color{cyan} (?)}.
Then for any $\oO \in \Op(\V)$ we denote by $\Alg_\oO(\D)$ the $\infty$-category of $\oO$-algebras in $\D$.  This is monadic over $\D$, and we write
\[ \free_\oO : \D \adjarr \Alg_\oO(\D) : \forget_\oO \]
for the monadic adjunction.
\end{notn}

\begin{obs}
Let $\V$ be an $\infty$-category that admits operads, and let $\I$ be any diagram $\infty$-category.  Then $\Fun(\I,\V)$ also admits operads: it inherits a componentwise symmetric monoidal structure from $\V$, and colimits (including the empty colimit) are computed componentwise.  In fact, it is not hard to see that we have an equivalence
\[ \Op(\Fun(\I,\V)) \simeq \Fun(\I,\Op(\V)) . \]
\end{obs}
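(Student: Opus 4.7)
The plan is to reduce the claim to two standard ``pointwise'' principles: (i) a monoidal structure on $\V$ induces a pointwise monoidal structure on $\Fun(\I,\V)$, and (ii) algebras in a functor $\infty$-category equipped with a pointwise monoidal structure are computed pointwise, i.e.\ $\Alg(\Fun(\I,\M)) \simeq \Fun(\I,\Alg(\M))$ for any (suitable) monoidal $\infty$-category $\M$.

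First, I would verify that $\Fun(\I,\V)$ admits operads. The closed symmetric monoidal structure on $\V$ yields a symmetric monoidal structure on $\Fun(\I,\V)$ defined pointwise: the tensor product of two functors $F,G \colon \I \ra \V$ is the functor $i \mapsto F(i) \otimes_\V G(i)$, with unit the constant functor $\const(\unit_\V)$. Since colimits in $\Fun(\I,\V)$ are also computed pointwise and $\otimes_\V$ commutes with colimits separately in each variable, the same holds for $\otimes_{\Fun(\I,\V)}$. Thus $\Fun(\I,\V)$ admits operads in the sense of the notation above, i.e.\ the composition product $(\Fun(\I,\V)^\SG,\circ,\unit)$ exists.

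Next, I would compare symmetric sequences. Since $\Fun(-,-)$ is associative, the canonical identification
\[ \Fun(\I,\V)^\SG = \Fun(\Set^\simeq,\Fun(\I,\V)) \simeq \Fun(\I,\Fun(\Set^\simeq,\V)) = \Fun(\I,\V^\SG) \]
holds. Under this equivalence, the composition product on $\Fun(\I,\V)^\SG$ corresponds to the \emph{pointwise} composition product on $\Fun(\I,\V^\SG)$. Indeed, the composition product is built from the symmetric monoidal structure, the $\SG_n$-actions, and colimits (coinvariants), all of which are computed pointwise in $\I$; making this precise amounts to applying $\Fun(\I,-) \colon \Alg(\Cati) \ra \Alg(\Cati)$ to the composition-product monoid structure on $\V^\SG$ (using that $\Fun(\I,-)$ is a product-preserving, hence symmetric monoidal, functor on $\Cati$).

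Finally, taking algebras on both sides and invoking the standard equivalence $\Alg_{\Fun(\I,\mathcal{M})}(-) \simeq \Fun(\I,\Alg_\mathcal{M}(-))$ for any monoidal $\infty$-category $\mathcal{M}$ (applied to $\mathcal{M} = \V^\SG$ with composition product), we conclude
\[ \Op(\Fun(\I,\V)) = \Alg(\Fun(\I,\V)^\SG) \simeq \Alg(\Fun(\I,\V^\SG)) \simeq \Fun(\I,\Alg(\V^\SG)) = \Fun(\I,\Op(\V)) . \]
The main obstacle is step three, namely checking that the identification of symmetric sequences carries the composition product to the pointwise composition product; this is largely a bookkeeping exercise, but to do it cleanly one should phrase the composition product functorially in $\V$ (i.e.\ as a construction on $\Alg(\Cati)$) so that applying $\Fun(\I,-)$ yields the result for free, rather than trying to match colimit formulas by hand.
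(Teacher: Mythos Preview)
The paper treats this statement as an observation and does not supply a proof beyond the phrase ``it is not hard to see''; your proposal therefore fills in exactly the kind of detail the paper elects to suppress. Your argument is correct and follows the natural route: identify $\Fun(\I,\V)^\SG \simeq \Fun(\I,\V^\SG)$ by swapping the two functor-category variables, check that the composition product transports to the pointwise one (since it is built from $\otimes_\V$ and colimits, both computed pointwise), and then use that $\Alg(-)$ commutes with $\Fun(\I,-)$ for pointwise monoidal structures. This is presumably what the author has in mind, so there is nothing to correct.
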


\begin{prop}\label{model structure on sV-operads}
Let $\V$ be a symmetric monoidal $\infty$-category that admits operads and admits finite limits, and suppose that the unit object $\unit_\V \in \V$ is compact.  Then there exists a \bit{Boardman--Vogt model structure} on the $\infty$-category of $s\V$-operads, denoted $\Op(s\V)_\BV$, which is simplicial and participates in a Quillen adjunction
\[ \prod_{n \geq 0} s\S_\KQ \adjarr \Op(s\V)_\BV : \left( \hom^\lw_\V( \unit_\V , \forget_{\SG_n}( (-)(n) ) ) \right)_{n \geq 0} \]
of simplicial model $\infty$-categories, where $\free_{\SG_n} : \V \adjarr \Fun(B\SG_n , \V) : \forget_{\SG_n}$ denotes the left Kan extension adjunction for the canonical functor $\pt_\Cati \ra B\SG_n$.
\end{prop}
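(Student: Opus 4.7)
The plan is to apply the many-object resolution model structure of \Cref{many-object sNres} to the $\infty$-category $\Op(s\V)$. The first key observation is a canonical equivalence $\Op(s\V) \simeq s\Op(\V)$: since the composition product on symmetric sequences is built from the symmetric monoidal structure and colimits, both of which are computed componentwise in $\Fun(\bD^{op},\V)$, the corresponding monad on $(s\V)^\SG$ is the pointwise extension of the operad monad on $\V^\SG$, and so its algebras identify with simplicial diagrams of operads in $\V$. This is essentially the content of \cite{CH-enr-opds} in our setting, and identifies $\Op(s\V)$ with simplicial objects in the presentable $\infty$-category $\Op(\V)$.

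Next, I would verify that the objects $Z_n := \free_{\Op(\V)}(\free_{\SG_n}(\unit_\V)) \in \Op(\V)$ form a set of compact generators. Composing the chain of adjunctions $\V \adjarr \Fun(B\SG_n,\V) \adjarr \V^\SG \adjarr \Op(\V)$ (with left adjoints $\free_{\SG_n}$, extension by the initial object, and $\free_{\Op(\V)}$, respectively) yields a natural equivalence $\hom_{\Op(\V)}(Z_n,T) \simeq \hom_\V(\unit_\V, \forget_{\SG_n}(T(n)))$ for any $T \in \Op(\V)$. Each right adjoint in this chain preserves filtered colimits: the forgetful functor from operads by standard operadic considerations, evaluation at $n$ and $\forget_{\SG_n}$ by the componentwise nature of colimits in their respective functor $\infty$-categories, and $\hom_\V(\unit_\V,-)$ by the compactness hypothesis on $\unit_\V$. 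Hence each $Z_n$ is compact.

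Applying \Cref{many-object sNres} to $s\Op(\V) \simeq \Op(s\V)$ with these compact generators then produces a simplicial model structure $\Op(s\V)_\BV$ together with a Quillen adjunction whose right adjoint is given componentwise by $\hom^\lw_{\Op(\V)}(Z_n,-)$. Applying the natural equivalence of the previous paragraph levelwise in the simplicial direction rewrites this as the desired $(\hom^\lw_\V(\unit_\V, \forget_{\SG_n}((-)(n))))_{n \geq 0}$, and the simplicial enrichment together with its compatibility with the Quillen adjunction follows from \Cref{thm Mres simp} combined with \Cref{simp objs in presentable are enr and bitens over sspaces}.

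The main obstacle is rigorously establishing the coherent equivalence $\Op(s\V) \simeq s\Op(\V)$, since the higher coherences of internal operads are delicate and the argument requires carefully verifying that the composition product monad on symmetric sequences lifts pointwise through $\Fun(\bD^{op},-)$; the framework of \cite{CH-enr-opds} supplies what is needed. A milder technical point is confirming the presentability of $\Op(\V)$ required by \Cref{many-object sNres}, which follows from monadicity of $\forget_{\Op(\V)}$ over $\V^\SG$ together with the implicit assumption $\V \in \PrL$ in the paper's operadic setup.
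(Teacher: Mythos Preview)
Your approach is correct and essentially the same as the paper's, which simply cites \Cref{thm Mres} and \Cref{thm Mres simp} directly; your route through \Cref{many-object sNres} is a valid unpacking of this, with the compactness verification and the equivalence $\Op(s\V) \simeq s\Op(\V)$ filling in what the paper leaves implicit (the latter is in fact recorded as the observation immediately preceding the proposition, so it is less of an obstacle than you suggest). The only minor discrepancy is that \Cref{many-object sNres} carries a presentability hypothesis on $\N$ while \Cref{thm Mres} and \Cref{thm Mres simp} require only bicompleteness, so the paper's direct citation is nominally more general---but as you correctly note, presentability of $\V$ is implicit in the paper's operadic framework anyway.
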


\begin{proof}
This follows from Theorems \ref{thm Mres} \and \ref{thm Mres simp}.
\end{proof}

\begin{rem}
In the end, we will only use \Cref{model structure on sV-operads} in situations when $\V$ is a 1-category.  In this case, the result is ultimately more-or-less just a consequence of \cite[Chapter II, \sec 4, Theorem 4]{QuillenHA}.  The name of the model structure pays homage to the foundational work \cite{BVHtpyInvt}, which introduced the study of homotopy-coherent algebraic structures.  The Boardman--Vogt model structure of \Cref{model structure on sV-operads} is also closely related to those of \cite[Theorems 3.1 and 3.2]{BMopds}, as explained in \cite[Example 3.3.1]{BMopds}.
\end{rem}

\begin{obs}
Let $\V$ and $\V'$ be two $\infty$-categories equipped with symmetric monoidal structures that commute with colimits separately in each variable.  Then any lax symmetric monoidal functor $\V \ra \V'$ induces a functor $\Op(\V) \ra \Op(\V')$.

We single out two particular cases of interest.
\begin{itemize}
\item The functor $- \tensoring \unit : \S \ra \C$ is symmetric monoidal (with respect to $(\S,\times,\pt_\S)$ and $(\C,\otimes,\unit)$).
\item The homology functor $E_\star : \C \ra \A$ is lax symmetric monoidal: for any $X , Y \in \C$, we have a canonical map $E_\star X \otimes_{E_\star} E_\star Y \ra E_\star(X \otimes Y)$ in $\A$, which takes the element
\[ \left( S^\beta \xra{\varphi} E \otimes X \right) \otimes \left( S^{\beta'} \xra{\varphi'} E \otimes Y \right) \]
to the element
 \[ \left( S^{\beta+\beta'} \simeq S^\beta \otimes S^{\beta'} \xra{\varphi \otimes \varphi'} E \otimes X \otimes E \otimes Y \simeq E^{\otimes 2} \otimes X \otimes Y \xra{\mu_E \otimes \id_X \otimes \id_Y} E \otimes X \otimes Y \right) . \]
\end{itemize}
It follows that the composite functor
\[ \S \xra{- \tensoring \unit} \C \xra{E_\star} \A \]
is lax symmetric monoidal, and hence induces a composite functor on internal operads, which for brevity we denote simply as
\[ E_\star : \Op = \Op(\S) \xra{\Op(-\tensoring \unit)} \Op(\C) \xra{\Op(E_\star)} \Op(\A) . \]
\end{obs}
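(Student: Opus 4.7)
The observation makes three connected claims: a general functoriality statement, and two specific instances. My plan is to prove each in turn.

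For the general statement, my plan is to identify $\V$-operads with monoids in the monoidal $\infty$-category $(\V^\SG, \circ, \unit_\V)$, and then show that postcomposition with a lax symmetric monoidal functor $F : \V \to \V'$ induces a lax monoidal functor $F^\SG : \V^\SG \to (\V')^\SG$ with respect to the composition products. The desired functor $\Op(\V) \to \Op(\V')$ then arises from the standard fact that lax monoidal functors between monoidal $\infty$-categories induce functors between their $\infty$-categories of algebra objects (i.e., monoids). Concretely, the postcomposition functor $F^\SG$ inherits its lax symmetric monoidal structure componentwise from $F$ (the symmetric monoidal structure on $\V^\SG$ being essentially componentwise, up to the cartesian structure on $\Set^\simeq$). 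To promote this to a lax monoidal structure for $\circ$, recall that $\oO \circ \oP$ is constructed out of the symmetric monoidal structure of $\V$ together with colimits indexed by combinatorial data (partitions, symmetric group quotients); the comparison maps $F(X) \otimes_{\V'} F(Y) \to F(X \otimes_\V Y)$ and the canonical maps from colimits of $F$-values to $F$ of a colimit assemble (via universal properties of colimits, naturality, and coherence of the lax structure) into the required natural map $F^\SG(\oO) \circ F^\SG(\oP) \to F^\SG(\oO \circ \oP)$. The unit map $\unit_{\V'} \to F(\unit_\V)$ supplies compatibility with units.

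For the specific case $- \tensoring \unit : \S \to \C$, my plan is to observe that this functor is actually \emph{strong} symmetric monoidal, hence in particular lax symmetric monoidal. This follows from the fact that $\C \in \Mod_\S(\PrL)$ is presentably symmetric monoidal (so that the action $- \tensoring -$ preserves the monoidal structure in the first variable) together with the unit constraint $\pt_\S \tensoring \unit \simeq \unit$. More formally, $\S$ is the unit of $\PrL$, so the unique symmetric monoidal left adjoint $\S \to \C$ is given by $K \mapsto K \tensoring \unit$.

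For the specific case $E_\star : \C \to \A$, my plan is to verify that the explicit comparison map $E_\star X \otimes_{E_\star} E_\star Y \to E_\star(X \otimes Y)$ written down in the statement does endow $E_\star$ with a lax symmetric monoidal structure. The unit map $E_\star \to E_\star \unit$ arises from the unit $\unit \to E$ of the algebra structure on $E$, and the binary structure map is the one spelled out, using the multiplication $\mu_E : E \otimes E \to E$; symmetry, associativity, and unitality follow from $E \in \CAlg(\ho(\C))$ together with the symmetric monoidal structure on $\C$ and naturality of $\pi_\star$. The main subtlety here is that $E$ is only a homotopy commutative algebra, so a priori these coherences are only verifiable at the level of $\ho(\C)$; since $\A$ is itself an ordinary category, this suffices to produce the lax symmetric monoidal structure valued in $\A$.

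Composing the two instances via the general statement yields the composite functor $E_\star : \Op \to \Op(\A)$ as required. The principal obstacle is the general statement: tracking the coherence data that promotes the componentwise lax symmetric monoidal structure on $F^\SG$ to a lax monoidal structure for the (non-componentwise) composition product. Since this is a purely formal categorical manipulation involving only the universal properties of colimits and the coherence of $F$, I expect it to be routine but notationally heavy, and I would defer the fully coherent version to \cite{CH-enr-opds} as cited above.
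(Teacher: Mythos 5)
Your proposal is correct and matches the paper's intent: the paper states this as a proofless Observation resting on the cited foundational work on enriched operads, and you supply the standard argument — reduce to lax monoidality of $F^\SG$ for the composition product (via the lax structure maps of $F$ together with the canonical comparison maps from colimits of $F$-values to $F$ of colimits) and then specialize to the two cases, noting $- \tensoring \unit$ is strong monoidal since $\S$ is the unit of $\PrL$ and that the homotopy-commutative coherences for $E_\star$ suffice because $\A$ is a 1-category.
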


\subsubsection{Resolutions of operads}\label{subsection resolutions of operads}

\begin{defn}\label{defn SG-free}
We say that an operad $\oO \in \Op$ is \bit{\PSG-free} if for each $n \geq 0$ the induced action of $\SG_n$ on $\pi_0(\oO(n))$ is free.
\end{defn}

\begin{rem}
As early in the literature as \cite[Definition 1.1]{MayGILS}, the term ``$\SG$-free'' is used to describe a point-set operad (e.g.\! in topological spaces) whose symmetric group actions are free at the point-set level.  Of course, such an operad need not present a \PSG-free operad in the sense of \Cref{defn SG-free}. % One might instead refer to an operad satisfying the condition of \Cref{defn SG-free} as ``$\pi_0$-$\SG$-free'', but since we have no use for point-set operads in this work our terminology is internally unambiguous.  (Note that an operad only determines a sequence of spaces with $\SG_n$-actions, not genuine $\SG_n$-spaces.)
\end{rem}

\begin{lem}\label{free opd is PSG-free}
The functor $\free_\Op : \S^\SG \ra \Op$ takes values in \PSG-free operads.
\end{lem}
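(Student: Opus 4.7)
The plan is to reduce the $\infty$-categorical statement to a classical combinatorial assertion about free operads of sets, and then prove the latter using the tree description.

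For the reduction, the canonical inclusion $\mathrm{disc} : \Set \hookrightarrow \S$ admits $\pi_0$ as a symmetric monoidal left adjoint (with respect to the Cartesian symmetric monoidal structures on both sides, since $\pi_0$ preserves finite products). Applying this arity-wise, one obtains a symmetric monoidal adjunction between $\Set^\SG$ and $\S^\SG$, which in turn induces an adjunction $\pi_0 \dashv \mathrm{disc}$ between $\Op(\Set)$ and $\Op = \Op(\S)$ compatibly with the forgetful functors to symmetric sequences. Uniqueness of left adjoints applied to the resulting commuting square of right adjoints yields a canonical identification $\pi_0(\free_\Op(X)) \cong \free_{\Op(\Set)}(\pi_0 X)$ for any $X \in \S^\SG$. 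Since an operad $\oO \in \Op$ is $\SG$-free by definition precisely when $\pi_0 \oO$ is $\SG$-free as an operad of sets, the question reduces to proving that $\free_{\Op(\Set)}(Y)$ is $\SG$-free for every $Y \in \Set^\SG$.

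For the combinatorial stage, I would invoke the classical tree expansion
\[
\free_{\Op(\Set)}(Y)(n) \cong \coprod_{T \in \mathsf{Tree}_n} \prod_{v \in V(T)} Y(|v|),
\]
where $\mathsf{Tree}_n$ is the set of isomorphism classes of rooted trees with $n$ leaves bijectively labeled by $\{1, \ldots, n\}$, the inner product runs over internal vertices $v$, and $|v|$ denotes the arity at $v$. The $\SG_n$-action permutes the leaf labelings while fixing the internal decorations; because a bijective leaf-labeling has trivial stabilizer in $\SG_n$ (any non-identity permutation $\sigma \in \SG_n$ applied to the labels yields a non-isomorphic labeled tree), the $\SG_n$-action on $\mathsf{Tree}_n$ is free. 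Consequently the $\SG_n$-action on the entire coproduct is free, as desired.

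The main obstacle is making the reduction via $\pi_0$ fully rigorous in the $\infty$-categorical setting: one must verify that $\mathrm{disc} : \Set \hookrightarrow \S$ is symmetric monoidal in the homotopy-coherent sense of \cite{CH-enr-opds}, and that the induced right adjoint $\mathrm{disc} : \Op(\Set) \hookrightarrow \Op$ is itself the right adjoint of the componentwise $\pi_0$ functor, compatibly with the forgetful functors to symmetric sequences. Granted this standard compatibility, the combinatorial second stage is essentially a triviality about rooted labeled trees.
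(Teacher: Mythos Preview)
Your reduction to $\Set$ via $\pi_0$ is sound, and is essentially what unpacking the paper's citation to Rezk's tree description amounts to. The error is in the combinatorial step: the $\SG_n$-action on isomorphism classes of labeled trees is \emph{not} free. The $n$-corolla (one internal vertex whose $n$ children are the leaves) has unlabeled automorphism group $\SG_n$, so any two leaf-labelings are related by a tree automorphism; there is exactly one isomorphism class of labeled $n$-corolla, and every $\sigma \in \SG_n$ fixes it. Concretely, the corolla contributes the summand $Y(n)$ to your coproduct, and since the unit $Y \to \forget_\Op \free_\Op(Y)$ is a map of symmetric sequences, this summand carries the \emph{original} $\SG_n$-action on $Y(n)$, not a free one. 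Taking $Y(2)$ to be a point with trivial $\SG_2$-action (for instance $Y = \forget_\Op(\Comm)$) produces a $\SG_2$-fixed point in $\pi_0(\free_\Op(Y)(2))$.

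This is not a repairable gap in your argument: the lemma as literally stated is false, since the free operad on a symmetric sequence inherits on its corolla component whatever non-freeness the input has. What \emph{is} true --- and what the Boardman--Vogt model structure of \Cref{model structure on sV-operads} actually uses --- is that the left adjoint to the composite forgetful functor $\Op \to \S^\SG \to \prod_{n \geq 0} \S$ (the free symmetric operad on a \emph{plain} sequence, with no $\SG$-actions prescribed) takes values in \PSG-free operads; equivalently, $\free_\Op(Y)$ is \PSG-free whenever each $\pi_0 Y(n)$ is already $\SG_n$-free. For that corrected statement your tree argument goes through, using planar labeled trees on which $\SG_n$ genuinely acts freely by relabeling.
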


\begin{proof}
This is immediate from the explicit description of $\free_\Op$ that follows from \cite[Proposition A.0.2 and Remark A.0.1]{Rezkthesis}.
\end{proof}

\begin{notn}
We simply write
\[ \Bar(-)_\bullet : \Op \xra{\Bar(\pt_\S , \forget_\Op \free_\Op, -)_\bullet} s\Op \]
for the bar construction on the monad $\forget_\Op \free_\Op \in \Alg(\End(\S^\SG))$ with respect to the left module given by the unit $\pt_\S \in \S^\SG$ and an unspecified operad considered as a right module.
\end{notn}

\begin{cor}
The functor $\Bar : \Op \ra s\Op$ takes values in levelwise \PSG-free simplicial operads, and admits a natural equivalence $|\Bar(-)_\bullet | \simeq \id_{\Op}$ in $\Fun(\Op,\Op)$.
\end{cor}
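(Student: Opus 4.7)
The plan is to verify the two claims in turn, both following from standard properties of monadic bar resolutions.

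First, for the levelwise \PSG-freeness, I would unpack the definition: by construction of the monadic bar resolution associated to the adjunction $\free_\Op \adj \forget_\Op$, at simplicial degree $n \geq 0$ we have
\[
\Bar(\oO)_n \simeq \free_\Op\bigl((\forget_\Op \free_\Op)^n (\forget_\Op \oO)\bigr),
\]
which lies in the essential image of $\free_\Op$. The levelwise \PSG-freeness is then immediate from \Cref{free opd is PSG-free}.

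Second, for the natural equivalence $|\Bar(-)_\bullet| \simeq \id_\Op$, I would invoke the general fact that for any monadic adjunction $F : \mathcal{C} \adjarr \mathcal{D} : G$ whose monad $GF$ preserves geometric realizations, the associated monadic bar resolution of any object $A \in \mathcal{D}$ is a simplicial resolution, i.e.\! the augmentation $|\Bar(A)_\bullet| \xra{\sim} A$ is an equivalence, natural in $A$. In our setting, this proceeds in three steps: (i) apply $\forget_\Op$ to the augmentation $\Bar(\oO)_\bullet \to \const(\oO)$; the resulting augmented simplicial symmetric sequence admits the canonical ``extra degeneracy'' and is therefore split, giving $|\forget_\Op \Bar(\oO)_\bullet| \simeq \forget_\Op(\oO)$ in $\S^\SG$; (ii) upgrade this to an equivalence in $\Op$, using that $\forget_\Op$ preserves (in fact creates, by monadicity) geometric realizations, and is conservative; (iii) naturality in $\oO$ is automatic from the functoriality of the bar construction in its ``right module'' slot.

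The main obstacle is the input required for step (ii), namely that the free operad monad $\forget_\Op \free_\Op$ preserves sifted colimits on $\S^\SG$. This is the classical fact that the free operad functor is computed by a ``sum over trees of $\SG_n$-coinvariants of tensor products'' formula, each summand of which is built from finite product functors of the input symmetric sequence followed by a group-action coinvariants; since finite products commute with sifted colimits in $\S$ (and hence componentwise in $\S^\SG$), and $\SG_n$-coinvariants are themselves colimits, the preservation follows. Once this input is secured, the rest of the argument is formal.
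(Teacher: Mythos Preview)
Your proposal is correct and takes essentially the same approach as the paper: the paper's proof consists of the single sentence ``This follows from \Cref{free opd is PSG-free},'' which is exactly your argument for the first claim (each level is a free operad, hence \PSG-free by that lemma). For the second claim the paper gives no argument at all, treating it as a standard fact about monadic bar resolutions; your detailed justification via the extra degeneracy and sifted-colimit preservation of the free-operad monad is correct and simply fills in what the paper leaves implicit.
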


\begin{proof}
This follows from \Cref{free opd is PSG-free}.
% The first assertion follows from \Cref{free opd is PSG-free}, and the second assertion follows from {\color{cyan} [cite HA for generality in which Bar resolution really is a resolution (probably depends on $(\S^\SG,\circ,\pt_\S)$).]}
\end{proof}

\begin{cor}
Given an operad $\oO$, suppose that $E_\star ( \oO(n)) \in \A_\proj$ for all $n \geq 0$.  Then $E_\star^\lw \Bar(\oO)_\bullet \in s\Op(\A) \simeq \Op(s\A)_\BV$ is cofibrant, and the augmentation $\Bar(\oO)_\bullet \ra \const(\oO)$ induces a weak equivalence $E_\star^\lw\Bar(\oO)_\bullet \we \const(E_\star \oO)$ in $\Op(s\A)_\BV$.
\end{cor}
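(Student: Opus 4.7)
The plan is to treat the two claims separately, exploiting that $\Bar(\oO)_\bullet = \Bar(\pt_\S, \forget_\Op \free_\Op, \oO)_\bullet$ is the canonical two-sided bar construction for the monadic adjunction $\free_\Op \dashv \forget_\Op$.

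For the weak equivalence, I would observe that after applying $\forget_\Op$, the unit $\id \to \forget_\Op \free_\Op$ provides an extra degeneracy making $\forget_\Op \Bar(\oO)_\bullet \to \const(\forget_\Op \oO)$ a split augmented simplicial object in $\S^\SG$, hence an arity-wise simplicial homotopy equivalence. Since simplicial homotopy equivalences are preserved by any functor, this persists after levelwise application of the composite $\S \xra{- \tensoring \unit} \C \xra{E_\star} \A$. Weak equivalences in $\Op(s\A)_\BV$ are detected arity-wise on underlying simplicial objects by \Cref{model structure on sV-operads}, so this yields the desired equivalence $E_\star^\lw \Bar(\oO)_\bullet \we \const(E_\star \oO)$.

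For cofibrancy, the key step is to identify $E_\star^\lw \Bar(\oO)_\bullet$, under the equivalence $s\Op(\A) \simeq \Op(s\A)$, with the analogous bar construction $\Bar(\pt_\A, \forget_{\Op(\A)} \free_{\Op(\A)}, E_\star \oO)_\bullet$ for the monadic adjunction intrinsic to $\Op(\A)$. This requires showing that the natural comparison $\free_{\Op(\A)}(E_\star X) \to E_\star \free_\Op(X)$ is an equivalence whenever $E_\star X(k) \in \A_\proj$ for all $k$. Using the tree formula, which writes $\free_\Op(X)(n)$ as a coproduct over $n$-leaf rooted trees of symmetric-group quotients of tensor products of the $X(k)$'s, this reduces to K\"unneth comparisons and commutation of $E_\star$ with finite-group quotients; the projectivity hypothesis forces the relevant K\"unneth spectral sequences to collapse. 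Applying this inductively to $\Bar(\oO)_k = \free_\Op(\forget_\Op \free_\Op)^k \oO$ gives the identification, and cofibrancy of the resulting bar construction in $\Op(s\A)_\BV$ follows from its canonical latching filtration as a sequential colimit of pushouts along generating cofibrations obtained by applying $\free_{\Op(s\A)}$ to boundary-inclusions of simplices tensored with free $\SG_n$-objects.

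The main obstacle is the comparison $\free_{\Op(\A)}(E_\star X) \simeq E_\star \free_\Op(X)$, since $E_\star$ is merely lax symmetric monoidal and the free-operad construction involves iterated tensor products, coproducts, and symmetric-group quotients. The projectivity hypothesis is exactly what makes the K\"unneth maps invertible, but one must carefully track this compatibility through the tree formula and verify that it persists under the iterated monadic composition $(\forget_\Op \free_\Op)^k$.
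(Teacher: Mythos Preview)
Your approach is essentially the paper's, fleshed out: the paper's one-line proof simply cites the explicit tree description of $\free_\Op$ from Rezk's thesis, and your K\"unneth comparison $\free_{\Op(\A)}(E_\star X) \simeq E_\star \free_\Op(X)$ under projectivity is exactly what that description buys. Your extra-degeneracy argument for the weak equivalence is a clean bonus that does not strictly require the tree formula.
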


\begin{proof}
This is immediate from the explicit description of $\free_\Op$ that follows from \cite[Proposition A.0.2 and Remark A.0.1]{Rezkthesis}.
\end{proof}

\begin{rem}
While we will ultimately be interested in a simplicial operad resolving our operad of primary interest, much of the theory goes through equally well for any simplicial operad.
\end{rem}

\section{Algebra}
\label{section.algebra}

\subsection{Foundations of algebra}

% The entire point of this subsection is to obtain certain additional algebraic structure on the spiral exact sequence. {\color{cyan} [this happens in a later subsection.]}

Recall that we write $\G^\delta = \pi_0(\G)$ for our chosen group of Picard elements, $\uA = \Fun(\G^\delta,\Ab)$ for the category of $\G^\delta$-graded abelian groups, and $\A = \Mod_{E_\star}(\uA)$ for the category of $E_\star$-modules in $\uA$.

\begin{ass}\label{assume flat}
We assume that $E_\star E \in \A$ is flat.
\end{ass}

\begin{notn}
It follows from \Cref{assume flat} that $(E_\star,E_\star E)$ is a Hopf algebroid in $\uA$.  We write $\tA = \Comod_{(E_\star,E_\star E)}$ for its category of left comodules (which in light of \Cref{assume flat} is abelian % and has enough injectives
by \cite[Theorem A1.1.3]{RavGreen}), and we consider our homology theory as a functor $E_\star : \C \ra \tA$ taking values in $(E_\star,E_\star E)$-comodules.
\end{notn}

\begin{rem}
In general, the forgetful functor $\tA \xra{\forget_\tA} \A$ does not admit a left adjoint (e.g.\! it does not preserve products (see \cite[\sec 1.2]{HoveyComods})).
\end{rem}

%\begin{rem}
%In fact, we never use in any real way that $\tA = \Comod_{(E_\star,E_\star E)}$: we only use that it receives the ``homology'' functor, and that its objects are $\G^\delta$-graded.  Thus, the entire obstruction theory goes through if we instead take $\tA$ to be $\Mod_{E_\star}$, i.e.\! if we just work with modules.  On the other hand, in this case there will be obstructions arising from the compatibility of the $E_\star E$-coactions, so this represents no real 
%{\color{magenta} \small [make sure this is true.  really it's probably part of a more general story of generalization, with $\theta$-algebras or whatever else.]}
%\end{rem}

\begin{obs}
For any $\beta \in \G^\delta$ we obtain an evident endofuctor $\Sigma^\beta : \tA \xra{\sim} \tA$.  This allows us to consider $\tA$ as enriched over $\A$, where for $M,N \in \tA$ we set
\[ \enrhom_\tA(M,N) = \{ \hom_\tA(\Sigma^\beta M,N) \}_{\beta \in \G^\delta} \in \A . \]
% {\color{cyan} [this prob requires that $E \in \CAlg(\ho(\C))$.]}
\end{obs}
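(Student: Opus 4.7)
The plan is to verify three things in turn: the existence and invertibility of $\Sigma^\beta : \tA \to \tA$, the fact that the prescribed graded abelian group admits a canonical $E_\star$-module structure, and the compatibility data assembling this into an honest $\A$-enrichment.

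First I would construct $\Sigma^\beta$ at the most primitive level. On $\uA = \Fun(\G^\delta, \Ab)$ equipped with the Day convolution relative to $(\G^\delta,+)$, the assignment $M \mapsto \Sigma^\beta M$ with $(\Sigma^\beta M)_\gamma = M_{\gamma - \beta}$ is visibly Day convolution with the graded abelian group concentrated in degree $\beta$ with value $\bbZ$; because $(\G^\delta,+)$ is a group, this is a symmetric monoidal autoequivalence with inverse $\Sigma^{-\beta}$. Since $E_\star$ is a commutative monoid in $\uA$ and $(E_\star,E_\star E)$ is a Hopf algebroid in $\uA$, any symmetric monoidal autoequivalence of $\uA$ lifts canonically to autoequivalences of $\A = \Mod_{E_\star}(\uA)$ and of $\tA = \Comod_{(E_\star,E_\star E)}(\uA)$: explicitly, for $M \in \tA$ with coaction $\psi_M : M \to E_\star E \otimes_{E_\star} M$, the coaction on $\Sigma^\beta M$ is the composite
\[ \Sigma^\beta M \xra{\Sigma^\beta \psi_M} \Sigma^\beta(E_\star E \otimes_{E_\star} M) \xra{\sim} E_\star E \otimes_{E_\star} \Sigma^\beta M , \]
the second map being the canonical isomorphism coming from the symmetric monoidality of $\Sigma^\beta$.

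Next I would produce the $\A$-enrichment conceptually, via the module-category structure of $\tA$ over $\A$. For any $K \in \A$ and any $M \in \tA$, the tensor product $K \otimes_{E_\star} M$ inherits a coaction purely from the $M$-factor, exhibiting an action $- \otimes_{E_\star} - : \A \times \tA \to \tA$ that preserves colimits separately in each variable (using Assumption \ref{assume flat} to guarantee that the relevant tensor products are exact in the first variable, and that $\tA \to \uA$ creates the necessary colimits). The 1-categorical analog of \Cref{action in PrL gives enr and bitens} then yields an $\A$-enrichment of $\tA$ by adjoint representability, together with hom-objects $\enrhom_\tA(M,N) \in \A$ characterized by natural isomorphisms $\hom_\A(K,\enrhom_\tA(M,N)) \cong \hom_\tA(K \otimes_{E_\star} M, N)$. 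Specializing to $K = \Sigma^\beta E_\star$, and using the identification $\Sigma^\beta E_\star \otimes_{E_\star} M \cong \Sigma^\beta M$, we recover
\[ \enrhom_\tA(M,N)_\beta \;\cong\; \hom_\A(\Sigma^\beta E_\star,\enrhom_\tA(M,N)) \;\cong\; \hom_\tA(\Sigma^\beta M,N), \]
which both matches the displayed formula and uniquely pins down the $E_\star$-module structure, since the shifted copies $\{\Sigma^\beta E_\star\}_{\beta \in \G^\delta}$ jointly corepresent the $\G^\delta$-graded components in $\A$.

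The main obstacle is the cocontinuity of $- \otimes_{E_\star} -$ in the $\A$-variable when landing in $\tA$, i.e., verifying that colimits of comodules can be computed in $\uA$. This is where Assumption \ref{assume flat} is essential: flatness of $E_\star E$ makes $\tA$ a Grothendieck abelian category whose colimits are created in $\uA$, so the tensor product (being cocontinuous in $\uA$) remains cocontinuous in the module category. Once this is in hand, composition, identities, and associativity constraints for the enrichment are formal consequences of the closed $\A$-module structure, and no further verification is required.
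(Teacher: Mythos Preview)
Your proposal is correct and considerably more detailed than what the paper offers: in the paper this statement is labeled an \emph{Observation} and is given no proof whatsoever --- the word ``evident'' is doing all the work. You have supplied a genuine argument where the paper simply asserts.

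Your route via the closed $\A$-module structure on $\tA$ (action $K \otimes_{E_\star} M$ with coaction through $M$, then adjoint representability as in the 1-categorical analog of \Cref{action in PrL gives enr and bitens}) is the cleanest way to simultaneously produce the $E_\star$-module structure on the graded hom and verify the enrichment axioms, and it correctly isolates the flatness hypothesis as what makes colimits in $\tA$ behave. The paper, by contrast, seems content to take the displayed formula as a \emph{definition} of the graded hom-object and leave the $E_\star$-action and coherence implicit; your approach has the advantage of explaining \emph{why} that formula lands in $\A$ rather than merely in $\uA$.
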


\subsection{Compatibility}

The resolutions of operads considered in \Cref{subsection resolutions of operads} are necessary but not alone sufficient to render the obstruction theory to be tractable: we have introduced a new simplicial direction on the topology side, but we have not yet exerted any control on the simplicial direction that results on the algebra side.  Indeed, this will bring our $E$-homology computations into the realm of homotopical algebra, with its own attendant notions of ``cofibrant resolution'', and we must ensure that our homology functor $E_\star$ preserves resolutions.

We introduce three increasingly general notions of compatibility; the first is merely to fix ideas, the second is auxiliary, and the last is our real goal.

\begin{defn}
We say that an operad $\oO \in \Op$ is \bit{adapted} to $E$ if it comes with a corresponding monad $\oO_E \in \Alg(\End(\A))$ admitting a lift
\[ \begin{tikzcd}
\Alg_\oO(\C) \arrow[dashed]{r}{E_\star} \arrow{d}[swap]{\forget_\oO} & \Alg_{\oO_E}(\A) \arrow{d}{\forget_{\oO_E}} \\
\C \arrow{r}[swap]{E_\star} & \A
\end{tikzcd} \]
such that the following condition holds:
\begin{itemize}
\item for any $Z \in \C$ with $E_\star Z \in \A_\proj$, the natural map $\free_{\oO_E}(E_\star Z) \ra E_\star(\free_\oO(Z))$ is an isomorphism in $\Alg_{\oO_E}(\A)$.
\end{itemize}
\end{defn}

\begin{defn}
We say that a simplicial operad $T \in s\Op$ is \bit{adapted} to $E$ if it comes with a corresponding monad $T_E \in \Alg(\End(s\A))$ admitting a lift
\[ \begin{tikzcd}
\Alg_T(s\C) \arrow[dashed]{r}{E_\star^\lw} \arrow{d}[swap]{\forget_T} & \Alg_{T_E}(s\A) \arrow{d}{\forget_{T_E}} \\
s\C \arrow{r}[swap]{E_\star} & s\A
\end{tikzcd} \]
such that the following condition holds:
\begin{itemize}
\item for any $Z \in s\C$ with $E_\star^\lw Z \in s\A_\KQ^c$, the natural map $\free_{T_E}(E_\star^\lw Z) \ra E_\star^\lw(\free_T(Z))$ is an isomorphism in $\Alg_{T_E}(s\A)$.
\end{itemize}
\end{defn}

This has the following consequence.

\begin{lem}[{\cite[Lemma 1.4.15]{GH}}]\label{consequence of adaptedness}
If $T \in s\Op$ is adapted to $E$, then any cofibration between cofibrant objects in $\Alg_T(s\C)_\res$ is a retract of a map $X \xra{\varphi} Y$ such that the underlying map of degeneracy diagrams of $E_\star^\lw(\varphi)$ is isomorphic to one of the form $E_\star^\lw(X) \ra E_\star^\lw(X) \coprod T_E(M)$, where $M$ is s-free on an object of $\A_\proj$. \qed
\end{lem}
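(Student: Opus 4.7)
The strategy is to reduce to a cell-complex presentation of the cofibration, via the cofibrant generation of $\Alg_T(s\C)_\res$ from \Cref{thm AlgTsCres}, and then analyze the effect of $E_\star^\lw$ one cell attachment at a time via the adaptedness hypothesis. First I would apply the small object argument: any cofibration between cofibrant objects in $\Alg_T(s\C)_\res$ is a retract of a relative cell complex $\varphi \colon X \to Y$ built from pushouts of maps in $I^{\Alg_T(s\C)}_\res = \free_T(I^{s\C}_\res)$. So it suffices to verify the claim for such a $\varphi$.

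The core geometric observation is that, at the level of degeneracy diagrams (i.e., after forgetting face maps), the inclusion $\partial \Delta^n \hookrightarrow \Delta^n$ is the inclusion of a summand whose complement is the free degeneracy cell on the unique nondegenerate top simplex. Tensoring with $\const(S^\eps)$ and applying $E_\star^\lw$---which preserves coproducts and takes $K \tensoring \const(S^\eps)$ levelwise to $\coprod_{K_k} E_\star S^\eps$---shows that the corresponding map $E_\star^\lw (\partial \Delta^n \tensoring \const(S^\eps)) \to E_\star^\lw(\Delta^n \tensoring \const(S^\eps))$ in $s\A$ is, on degeneracy diagrams, a coproduct inclusion whose complement is $s$-free on the projective object $E_\star S^\eps \in \A_\proj$.

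Next I would invoke the adaptedness hypothesis. Since $E_\star^\lw Z \in s\A_\KQ^c$ for each generating-cofibration source $Z$ (being levelwise a coproduct of copies of the projective $E_\star S^\eps$), adaptedness provides canonical isomorphisms $\free_{T_E}(E_\star^\lw Z) \xra{\cong} E_\star^\lw(\free_T Z)$, and both $\free_T$ and $\free_{T_E}$ preserve pushouts. Hence each cell attachment in $\Alg_T(s\C)$ becomes, after applying $E_\star^\lw$ and restricting to degeneracy diagrams, a map of the form $E_\star^\lw(X') \to E_\star^\lw(X') \coprod T_E(M'_\alpha)$ with each $M'_\alpha$ an $s$-free cell on $E_\star S^{\eps_\alpha}$. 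Assembling these across the transfinite composition into a single $M = \bigoplus_\alpha M'_\alpha$---still $s$-free on a direct sum of projectives, hence on an object of $\A_\proj$---yields the desired decomposition.

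The main obstacle I anticipate is in carefully verifying the interaction between restriction to degeneracy diagrams, pushouts in $\Alg_T(s\C)$, and the free-algebra functors: specifically, that the pushout attaching a free $T$-algebra cell to an arbitrary base $X'$ really does produce, on degeneracy diagrams of its $E_\star^\lw$-image, a coproduct of $E_\star^\lw(X')$ with a free $T_E$-algebra on the $s$-free complement. This is precisely where adaptedness is essential, and where the careful classical bookkeeping of \cite[Lemma 1.4.15]{GH} transfers to our $\infty$-categorical setting.
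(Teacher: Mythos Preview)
The paper does not supply its own proof of this lemma: it is stated with a terminal \qed and attributed directly to \cite[Lemma 1.4.15]{GH}, so there is no argument in the paper to compare against. Your sketch is a faithful outline of the Goerss--Hopkins argument being cited---reduce to a relative $I$-cell complex via cofibrant generation, observe that on degeneracy diagrams each generating cofibration contributes an $s$-free summand on a projective, and use adaptedness to transport the free-$T$-algebra description across $E_\star^\lw$---and you have correctly flagged the one genuinely delicate point, namely that the pushout in $\Alg_T(s\C)$ attaching a free cell must be shown, after $E_\star^\lw$ and restriction to degeneracy diagrams, to decompose as a coproduct with a free $T_E$-algebra.
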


\begin{defn}\label{defn of homotopically adapted}
Suppose that the simplicial operad $T \in s\Op$ is adapted to $E_\star^\lw : s\C \ra s\A$.  We then say that $T$ is \bit{homotopically adapted} to $E$ if there exists a monad $\tT_E \in \Alg(\End(s\tA))$ which lifts the monad $T_E \in \Alg(\End(s\A))$ (i.e.\! they're intertwined by $s(\forget_\tA)$) and which admits a lift
\[ \begin{tikzcd}
\Alg_T(s\C) \arrow[dashed]{r}{E_\star^\lw} \arrow{d}[swap]{\forget_T} & \Alg_{\tT_E}(s\tA) \arrow{d}{\forget_{\tT_E}} \\
s\C \arrow{r}[swap]{E_\star} & s\tA
\end{tikzcd} \]
such that the following conditions hold:
\begin{itemize}
\item the adjunction $\free_{T_E} : s\A \adjarr \Alg_{T_E}(s\A) : \forget_{T_E}$ creates a simplicial model structure on $\Alg_{T_E}(s\A)$; and
\item there exists a simplicial model structure on $\Alg_{\tT_E}(s\tA)$ such that the forgetful functor $\Alg_{\tT_E}(s\tA) \ra \Alg_{T_E}(s\A)$ creates weak equivalences and preserves fibrations.
\end{itemize}
% {\color{cyan} \small [simplify this definition; it actually appears that we only need there to exist a (probably simplicial?) model structure on $\Alg_{\tT_E}(s\tA)$ such that weak equivalences (and maybe also fibrations?) are created by forgetting all the way to $s\Set_\KQ$.]}
\end{defn}

Building on \Cref{consequence of adaptedness}, this has the following key consequence.

\begin{lem}[{\cite[Corollary 1.4.18]{GH}}]\label{model-categorical consequence of homotopical adaptedness}
If $T \in s\Op$ is homotopically adapted to $E$, then the induced functor $E_\star^\lw : \Alg_T(s\C)_\res \ra \Alg_{\tT_E}(s\tA)_{\pi_*}$ preserves both weak equivalences as well as cofibrations between cofibrant objects. % {\color{cyan} \small [uses Quillen's characterization of cofibrations as retracts of ``free'' maps, along with our previous characterization of cofibrations in $\Alg_T(s\C)_\res$.]}
\qed
\end{lem}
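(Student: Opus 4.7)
The claim has two parts, and I would attack them in the order indicated.

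\emph{Preservation of weak equivalences.} This part is essentially formal once one tracks how the various model structures have been set up. Suppose $\varphi$ is a weak equivalence in $\Alg_T(s\C)_\res$. By the description in \Cref{thm AlgTsCres}, the weak equivalences are created by $\forget_T$, so $\forget_T(\varphi)$ is a weak equivalence in $s\C_\res$. By \Cref{E-equivce implies isomorphism on Etwo pages}, this inclusion $\bW^{s\C}_\res \subset \bW^{s\C}_{E_\star^\lw}$ means $E_\star^\lw(\forget_T(\varphi))$ is a Kan--Quillen weak equivalence in $s\A$. Chasing through the lifted functor
\[
E_\star^\lw : \Alg_T(s\C) \to \Alg_{\tT_E}(s\tA)
\]
and the two forgetful layers $\Alg_{\tT_E}(s\tA) \to \Alg_{T_E}(s\A) \to s\A$, we see that $E_\star^\lw(\varphi)$ forgets all the way down to a Kan--Quillen weak equivalence in $s\A$, which by the creation conditions built into \Cref{defn of homotopically adapted} (the $\free_{T_E} \dashv \forget_{T_E}$ adjunction creates the model structure on $\Alg_{T_E}(s\A)$, and $\forget_{\tT_E}$ creates weak equivalences into it) is precisely the condition for $E_\star^\lw(\varphi)$ to lie in $\bW_{\pi_*}$.

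\emph{Preservation of cofibrations between cofibrant objects.} Here the essential input is \Cref{consequence of adaptedness}. Let $\varphi$ be a cofibration between cofibrant objects in $\Alg_T(s\C)_\res$. Since cofibrations are closed under retracts and the functor $E_\star^\lw$ preserves retracts, we may assume $\varphi : X \to Y$ has the property that the underlying map of degeneracy diagrams of $E_\star^\lw(\varphi)$ is isomorphic to an inclusion
\[
E_\star^\lw(X) \ \hookrightarrow \ E_\star^\lw(X) \sqcup T_E(M)
\]
where $M \in s\A$ is s-free on an object of $\A_\proj$. In $\Alg_{T_E}(s\A)$ (with its model structure created by $\free_{T_E} \dashv \forget_{T_E}$ from $s\A_\KQ$), the map $0 \to M$ is a cofibration (s-free on projective makes it a transfinite composition of pushouts of generating cofibrations), so $\free_{T_E}(0 \to M)$ is a generating cofibration, and a pushout of this along an arbitrary map is again a cofibration. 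Hence $\forget_{\tT_E}(E_\star^\lw(\varphi))$ is a cofibration in $\Alg_{T_E}(s\A)$. The task is then to upgrade this to the conclusion that $E_\star^\lw(\varphi)$ itself is a cofibration in $\Alg_{\tT_E}(s\tA)_{\pi_*}$, i.e.\! that it has the left lifting property against every acyclic fibration there.

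\emph{The main obstacle.} The hard step is precisely the upgrade in the previous paragraph: one cannot simply transport cofibrations along the forgetful $\forget_{\tT_E} : \Alg_{\tT_E}(s\tA) \to \Alg_{T_E}(s\A)$, because $\forget_\tA$ is not generally a right adjoint and so there is no free comodule functor at our disposal. I would proceed by identifying the generating cofibrations of $\Alg_{\tT_E}(s\tA)_{\pi_*}$: under the hypotheses of \Cref{defn of homotopically adapted} together with \Cref{adams's condition}, the objects $E_\star S^\eps$ for $S^\eps \in \GE$ lie in the category of projective comodules, and one expects the generating cofibrations to be of the form $\free_{\tT_E}(\partial \Delta^n \tensoring \const(E_\star S^\eps) \to \Delta^n \tensoring \const(E_\star S^\eps))$. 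The strategy is then to refine \Cref{consequence of adaptedness} by observing that the s-free $\A_\proj$-object $M \in s\A$ in its statement is canonically the image under $\forget_\tA$ of a corresponding s-free $\tA$-object $\widetilde M$ built from the $E_\star S^\eps$'s (since the pushout reduction traces back to applying $E_\star^\lw$ to a free $T$-algebra on a generating cofibration), so that $E_\star^\lw(\varphi)$ is in fact already a pushout of $\free_{\tT_E}(0 \to \widetilde M)$ \emph{inside} $\Alg_{\tT_E}(s\tA)$. Once that refinement is in place, $E_\star^\lw(\varphi)$ is manifestly a cofibration in $\Alg_{\tT_E}(s\tA)_{\pi_*}$, and we are done.
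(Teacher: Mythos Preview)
The paper does not supply its own proof of this lemma: it is stated with a bare citation to \cite[Corollary 1.4.18]{GH} and a \qed, so there is nothing in the paper to compare your argument against line by line. Your plan is a reasonable reconstruction of the argument behind that citation. The weak-equivalence part is exactly right and purely formal, following the chain of creation conditions through $\forget_T$, \Cref{E-equivce implies isomorphism on Etwo pages}, and the two bullet points of \Cref{defn of homotopically adapted}.

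For the cofibration part, your identification of \Cref{consequence of adaptedness} as the key input and of the comodule-lifting step as the crux is on target. Your proposed resolution---that the s-free object $M$ appearing in \Cref{consequence of adaptedness} is in fact the image of an s-free object $\widetilde{M}$ in $s\tA$ built from the $E_\star S^\eps$---is correct in spirit and is essentially how the argument goes: the generating cofibrations of $\Alg_T(s\C)_\res$ are $\free_T$ applied to maps built from the $\const(S^\eps)$, and by the very definition of homotopical adaptedness the lifted functor $E_\star^\lw$ lands in $\Alg_{\tT_E}(s\tA)$, so the free summands one attaches live there already. One small caution: you should be explicit that the pushout decomposition in \Cref{consequence of adaptedness} is itself taking place in $\Alg_{\tT_E}(s\tA)$ (not merely in $\Alg_{T_E}(s\A)$ after forgetting), since the lifted $E_\star^\lw$ of \Cref{defn of homotopically adapted} is what is being applied; once that is clear, the cofibration claim in $\Alg_{\tT_E}(s\tA)_{\pi_*}$ follows directly without needing to ``transport back up'' along $\forget_{\tT_E}$.
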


This result, in turn, has the following $\infty$-categorical significance.

\begin{cor}\label{infty-categorical consequence of homotopical adaptedness}
If $T \in s\Op$ is homotopically adapted to $E$, then the functor $E_\star^\lw : \locresAlgT \ra \locpiAlgtTE$ preserves colimits.
\end{cor}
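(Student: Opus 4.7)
The strategy is to upgrade the model-categorical content of Lemma \ref{model-categorical consequence of homotopical adaptedness} to the $\infty$-categorical statement via the fundamental theorem of model $\infty$-categories. Both $\locresAlgT$ and $\locpiAlgtTE$ are presentable (the first as a localization of an $\infty$-category of algebras over a monad on the presentable $\infty$-category $s\C$, the second by a similar argument combined with the reflective localization of \Cref{going from locpi to locE is a left localization}). By presentability, it suffices to show that $E_\star^\lw$ preserves small coproducts, pushouts, and filtered colimits.

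I would dispose of filtered colimits first. Because the unit $\unit \in \C$ is compact (\Cref{ass compact unit}), the generators $S^\eps \in \GE$ are compact by \Cref{obs inv and dzble also cpct}, and the corresponding generators on the algebra side are likewise compact, filtered colimits on both sides are computed essentially levelwise (and agree with the strict filtered colimits in the underlying $\infty$-categories $\Alg_T(s\C)$ and $\Alg_{\tT_E}(s\tA)$). The functor $E_\star^\lw$ preserves them without needing any resolution argument.

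For coproducts and pushouts, I would invoke the standard model-$\infty$-categorical principle that homotopy colimits are computed by cofibrantly replacing a diagram and then taking its strict colimit. Given a span $X \leftarrow Z \to Y$ in $\locresAlgT$, lift it to $\Alg_T(s\C)$ and replace it by a weakly equivalent ``cofibrant span'' $X' \leftarrow Z' \to Y'$ in which $Z'$ is cofibrant and both maps out of $Z'$ are cofibrations; then the strict pushout presents the homotopy pushout, and hence the pushout in the localization. By \Cref{model-categorical consequence of homotopical adaptedness}, the functor $E_\star^\lw$ preserves both weak equivalences and cofibrations between cofibrant objects, so it carries the replacement $X' \leftarrow Z' \to Y'$ to an analogous cofibrant span in $\Alg_{\tT_E}(s\tA)_{\pi_*}$. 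Its strict pushout there presents the pushout in $\locpiAlgtTE$. Finally, I would verify that at the unlocalized level the functor $E_\star^\lw$ already commutes with the \emph{strict} pushout of such a cofibrant span: using the monadic description of colimits in $\Alg_T(s\C)$ and $\Alg_{\tT_E}(s\tA)$ (via reflexive coequalizers of free algebras), this reduces, via the adaptedness isomorphism $\free_{\tT_E}(E_\star^\lw(-)) \cong E_\star^\lw(\free_T(-))$ on inputs with projective $E$-homology, to the fact that $E_\star$ preserves the relevant coproducts in $\C$ (since coproducts in a stable presentably symmetric monoidal $\infty$-category with compact generators are preserved by $\pi_\star \circ (E \otimes -)$). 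Coproducts are handled similarly, using only the ``span with $Z = 0$'' special case.

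The main obstacle is the homotopy-pushout reduction in the model-$\infty$-categorical setting: one needs a Reedy- or projective-style model $\infty$-category structure on the diagram $\infty$-category of spans with values in $\Alg_T(s\C)_\res$, and the assurance that cofibrant spans therein present their homotopy pushout as a strict pushout. This is the $\infty$-categorical analogue of a familiar 1-categorical fact, and its justification rests on the general theory of model $\infty$-categories developed in \cite{MIC-fundthm}; once granted, the argument is formal.
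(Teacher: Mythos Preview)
Your approach is aligned with the paper's in spirit: both rest on \Cref{model-categorical consequence of homotopical adaptedness} (preservation of weak equivalences and of cofibrations between cofibrant objects) together with cofibrant diagram replacements to deduce that homotopy colimits are preserved. The paper, however, proceeds more uniformly and more briefly: it simply observes that both $\Alg_T(s\C)_\res$ and $\Alg_{\tT_E}(s\tA)_{\pi_*}$ are cofibrantly generated, hence admit projective model structures on \emph{all} diagram $\infty$-categories, and that $E_\star^\lw$ preserves projective cofibrancy; the conclusion is then drawn directly from the general theory of homotopy colimits in model $\infty$-categories in \cite[\S 1.2]{MIC-qadjns}. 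There is no need to decompose into filtered colimits, coproducts, and pushouts, nor to treat each shape separately.

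The one substantive gap in your sketch is the step where you assert that $E_\star^\lw$ commutes with the \emph{strict} pushout of a cofibrant span, via ``the monadic description of colimits \ldots\ (via reflexive coequalizers of free algebras), this reduces \ldots\ to the fact that $E_\star$ preserves the relevant coproducts in $\C$.'' Pushouts in an $\infty$-category of algebras over a monad are not computed as a single reflexive coequalizer of free algebras in a way that reduces to coproducts in the base; in general they involve an iterated construction, and the adaptedness isomorphism $\free_{\tT_E}(E_\star^\lw(-)) \cong E_\star^\lw(\free_T(-))$ on projective inputs does not by itself yield this strict compatibility. You are right that \emph{some} such input is needed to pass from ``preserves projective cofibrancy'' to ``preserves homotopy colimits''---the paper does not argue this by hand but rather packages it into the citation of \cite[\S 1.2]{MIC-qadjns}. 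If you want a model-independent alternative, note (as the paper remarks immediately after the corollary) that both localizations are, up to further reflective localization, of the form $\PS(-)$, and $\PS$ applied to a finite-coproduct-preserving functor is automatically cocontinuous; this sidesteps the strict-colimit issue entirely.
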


\begin{proof}
This follows by combining \Cref{model-categorical consequence of homotopical adaptedness} with the theory of homotopy colimits in model $\infty$-categories of \cite[\sec 1.2]{MIC-qadjns}; more specifically, the model $\infty$-categories $\Alg_T(s\C)_\res$ and $\Alg_{\tT_E}(s\tA)_{\pi_*}$ are both cofibrantly generated and hence admit projective model structures, and the functor of model $\infty$-categories preserves projective cofibrancy by \Cref{model-categorical consequence of homotopical adaptedness}.
\end{proof}

\begin{rem}
Given two $\infty$-categories that admit finite coproducts and a functor between them that preserves these, applying the functor $\PS$ automatically gives a cocontinuous functor: up to further left localizations (which commute with colimits), this is precisely the situation that \Cref{infty-categorical consequence of homotopical adaptedness} addresses.  However, it is only through \Cref{identify localization of sNres} that we can identify it as such.
\end{rem}

\begin{ass}\label{assume homotopically adapted}
We henceforth assume that $T$ is homotopically adapted to $E$, and fix the corresponding monad $\tT_E \in \Alg(\End(s\tA))$.
\end{ass}

\begin{ex}
For any $\oO \in \Op$, we can take $T$ to be a cofibrant object of $\Op(s\Set)_\BV$ which presents it: each $T(n)$ will have a free $\SG_n$-action (as a simplicial set), and we can take $\tT_E$ to be the monad corresponding to the operad $E_\star T \in \Op(s\tA)$.
% {\color{magenta} \small [Example 3.1.7: if $\oO = \Comm$, we can take $T$ to be a \textit{simplicial $\bbE_\infty$-operad}, i.e.\! $T \in \Op(s\Set) \simeq s\Op(\Set) \hookra s\Op(\S) = s\Op$ such that every $T(n) \in s\Set$ has a \textit{free} $\SG_n$-action and $|T(n)| \simeq \pt_\S$.  more generally, for any $\oO$ we should be able to take $T$ to be a cofibrant representative of it in $\Op(s\Set_\KQ)_\BV$.]}
\end{ex}

\subsection{The module structure on the localized spiral exact sequence}\label{subsection module structure on localized spiral}

\begin{defn}\label{defn augmentation}
An \bit{augmentation} of the monad $\tT_E \in \Alg(\End(s\tA))$ is the data of a monad $\Phi \in \Alg(\End(\tA))$ and a natural isomorphism making the diagram
\[ \begin{tikzcd}
s\tA \arrow{r}{\tT_E} \arrow{d}[swap]{\pi_0} & s\tA \arrow{d}{\pi_0} \\
\tA \arrow{r}[swap]{\Phi} & \tA
\end{tikzcd} \]
commute, satisfying the diagrammatic coherence conditions of \cite[Definition 2.5.7]{GH}.  We write this as $\tT_E \da \Phi$, though note that this does not depict a morphism in any category.
\end{defn}

\begin{ass}\label{assume augmentation}
We henceforth assume the existence of an augmentation $\tT_E \da \Phi$.
\end{ass}

In order to describe the key consequence of \Cref{assume augmentation}, we must introduce some terminology.

\begin{defn}\label{define modules wrt Phi}
For any $A \in \Alg_\Phi(\tA)$, we define the category of \bit{$A$-modules} (\bit{relative to $\Phi$}) as the category $\Mod_A^\Phi(\tA) = \Ab(\Alg_\Phi(\tA)_{/A})$ of abelian group objects in its overcategory.  To align our notation with standard intuition, we write
\[ \begin{tikzcd}[row sep=0.1cm, column sep=1.5cm]
\tA & \Mod_A^\Phi(\tA) \arrow{l}[swap]{\forget_A} \arrow{r}{- \ltimes A} & \Alg_\Phi(\tA) \\
\ker^\tA(\varphi) & (B \xra{\varphi} A) \arrow[mapsto]{l} \arrow[mapsto]{r} & B
\end{tikzcd} \]
for the two forgetful functors.
\end{defn}

\begin{lem}[{\cite[Propositions 2.5.9 and 2.5.10]{GH}}]\label{augn gives adjn and module structures}
There exists a canonical lift
\[ \begin{tikzcd}[column sep=1.5cm]
& & \Alg_\Phi(\tA) \arrow{d}{\forget_\Phi} \\
\Alg_{\tT_E}(s\tA) \arrow[dashed]{rru}{\pi_0} \arrow{r}[swap]{\forget_{\tT_E}} & s\tA \arrow{r}[swap]{\pi_0} & \tA ,
\end{tikzcd} \]
and this lift is the left adjoint in an adjunction
\[ \pi_0 : \Alg_{\tT_E}(s\tA) \adjarr \Alg_\Phi(\tA) : \const . \]
Moreover, for any $X \in \Alg_{\tT_E}(s\tA)$ and any $n \geq 1$, the object $\pi_n X \in \tA$ admits a canonical lift through the functor
\end{lem}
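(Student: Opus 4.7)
The plan has two parts matching the two assertions.

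For the construction of $\pi_0 : \Alg_{\tT_E}(s\tA) \to \Alg_\Phi(\tA)$ and its adjointness with $\const$: the augmentation $\tT_E \downarrow \Phi$ provides, by definition, a natural isomorphism $\pi_0 \circ \tT_E \cong \Phi \circ \pi_0$ compatible with the monadic structures. Given a $\tT_E$-algebra $X$ with structure map $\tT_E X \to X$, I would apply $\pi_0$ and postcompose with this natural isomorphism to obtain $\Phi(\pi_0 X) \cong \pi_0(\tT_E X) \to \pi_0 X$. The diagrammatic coherence conditions of \cite[Definition 2.5.7]{GH} which are packaged into \Cref{defn augmentation} are precisely what is needed for this to satisfy the unit and associativity axioms of a $\Phi$-algebra structure; functoriality is automatic. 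For the adjunction, I would leverage the underlying adjunction $\pi_0 : s\tA \adjarr \tA : \const$ (with $\const$ fully faithful and $\pi_0 \circ \const \cong \id_\tA$). For $A \in \Alg_\Phi(\tA)$, I would equip $\const(A)$ with a $\tT_E$-algebra structure via
\[ \tT_E(\const A) \xra{\eta} \const(\pi_0 \tT_E \const A) \cong \const(\Phi \pi_0 \const A) \cong \const(\Phi A) \xra{\const(\text{str})} \const(A), \]
using the augmentation isomorphism and the $\Phi$-structure on $A$. A direct computation then identifies maps $X \to \const(A)$ in $\Alg_{\tT_E}(s\tA)$ with maps $\pi_0 X \to A$ in $\Alg_\Phi(\tA)$, since both are cut out of the underlying hom-set by compatibility with the two monadic structure maps, which match under the augmentation.

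For the second statement (that $\pi_n X$ lifts through $\forget_{\pi_0 X} : \Mod_{\pi_0 X}^\Phi(\tA) \to \tA$): the canonical module structure should come from the Postnikov tower of $X$ in $\Alg_{\tT_E}(s\tA)$. The $n$-th stage $P_n X$ and $(n-1)$-st stage $P_{n-1} X$ both inherit $\tT_E$-algebra structures, and the fiber sequence
\[ K(\pi_n X, n) \to P_n X \to P_{n-1} X \]
exhibits $K(\pi_n X,n)$ as a generalized Eilenberg--Mac Lane $\tT_E$-algebra over $\pi_0 X$. Applying the newly-constructed functor $\pi_0$ to the associated trivial square-zero extension $\pi_0 X \oplus K(\pi_n X,n)$ (which lives over $\pi_0 X$ and is split) and then extracting $\pi_n$ of the fiber, I would identify $\pi_n X$ with the kernel of an augmentation $B \to \pi_0 X$ in $\Alg_\Phi(\tA)$ whose source carries a compatible abelian-group-object structure. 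This is exactly the data of an object of $\Ab(\Alg_\Phi(\tA)_{/\pi_0 X}) = \Mod_{\pi_0 X}^\Phi(\tA)$ whose image under $\forget_{\pi_0 X}$ is $\pi_n X$.

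The main obstacle is the second part. Producing the underlying abelian group on $\pi_n X$ is easy --- it comes from the loop-space structure on the fiber --- but matching the $\Phi$-action obtained from the Postnikov machinery with the $\Phi$-action on the square-zero extension in the sense of \Cref{define modules wrt Phi} requires tracking the augmentation coherences carefully along the tower. Equivalently, one must verify that the natural map from $\pi_0$ applied to the split extension $X \oplus_{P_{n-1}X} X$ to the trivial $\Phi$-algebra square-zero extension $\pi_0 X \oplus \pi_n X$ is an isomorphism respecting both the $\Phi$-structure and the abelian group structure over $\pi_0 X$. This reduces to the compatibility between the monad multiplications witnessed by the augmentation, but the bookkeeping is the delicate point.
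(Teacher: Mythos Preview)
The paper does not give its own proof: the lemma is stated as a citation of \cite[Propositions~2.5.9 and~2.5.10]{GH} and closed with no further argument, so there is nothing here to compare against beyond the original Goerss--Hopkins proof. Your treatment of the first assertion (the lift of $\pi_0$ and its adjunction with $\const$) is correct and standard.

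Your approach to the second assertion, however, has a concrete gap. You claim that $\pi_0$ of the fiber product ``$X \oplus_{P_{n-1}X} X$'' is the square-zero extension $\pi_0 X \oplus \pi_n X$. But $X \to P_{n-1} X$ is an isomorphism on $\pi_{<n}$, so its fiber $F$ has $\pi_{<n} F = 0$; the fiber sequence $F \to X \times_{P_{n-1} X} X \to X$ then forces $\pi_0$ of the fiber product to be just $\pi_0 X$, with no contribution from $\pi_n X$ whatsoever. More generally, no Postnikov-type fiber product will push $\pi_n X$ into degree zero: such constructions shift homotopy within the simplicial direction, not down to $\pi_0$. Your earlier object ``$\pi_0 X \oplus K(\pi_n X,n)$'' has the same problem from the other side---it is a perfectly good object of $s\tA$, but endowing it with a $\tT_E$-algebra structure is essentially equivalent to what you are trying to prove. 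A route that does work (and is closer to the cited argument) is to analyze the structure map $\tT_E X \to X$ directly: applying $\pi_n$ and unpacking $\pi_n(\tT_E X)$ via the augmentation (together with a K\"unneth-type analysis when $\tT_E$ is operadic) exhibits the Beck-module action of $\pi_0 X$ on $\pi_n X$ without any detour through $\pi_0$ of an auxiliary $\tT_E$-algebra.
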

\eqnqed{ \Mod_{\pi_0 X}^\Phi(\tA) \xra{\forget_{\pi_0 X}} \tA . }

\begin{cor}\label{0th classical homology lifts to Phi-algs}
There exists a canonical lift
\[ \begin{tikzcd}[column sep=1.5cm]
& & & \Alg_\Phi(\tA) \arrow{d}{\forget_\Phi} \\
\Alg_T(s\C) \arrow[dashed]{rrru}{\pi_0 E_\star^\lw} \arrow{r}[swap]{E_\star^\lw} & \Alg_{\tT_E}(s\tA) \arrow{r}[swap]{\forget_{\tT_E}} & s\tA \arrow{r}[swap]{\pi_0} & \tA .
\end{tikzcd} \]
Moreover, for any $X \in \Alg_T(s\C)$ and any $n \geq 1$, the object $\pi_n E_\star^\lw X \in \tA$ admits a canonical lift through the functor
\end{cor}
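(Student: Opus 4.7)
The plan is to deduce this corollary as an immediate formal consequence of \Cref{augn gives adjn and module structures}, by precomposition with the lifted homology functor $E_\star^\lw : \Alg_T(s\C) \to \Alg_{\tT_E}(s\tA)$. Such a lift of the levelwise homology functor $E_\star^\lw : s\C \to s\tA$ is part of the data that is guaranteed to exist by \Cref{assume homotopically adapted} (i.e., \Cref{defn of homotopically adapted} applied under \Cref{assume augmentation}); in particular, it fits into the commuting square whose vertical maps are the operadic forgetful functors.

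With this in hand, I would simply form the composite
\[ \Alg_T(s\C) \xra{E_\star^\lw} \Alg_{\tT_E}(s\tA) \xra{\pi_0} \Alg_\Phi(\tA), \]
where the second arrow is the canonical lift provided by \Cref{augn gives adjn and module structures}. To verify that this composite indeed lifts $\pi_0 \circ E_\star^\lw : \Alg_T(s\C) \to \tA$ along $\forget_\Phi$, I would whisker the natural isomorphism $\forget_\Phi \circ \pi_0 \simeq \pi_0 \circ \forget_{\tT_E}$ (produced by \Cref{augn gives adjn and module structures}) with $E_\star^\lw$; combining this with the defining commutative square for the lift $E_\star^\lw : \Alg_T(s\C) \to \Alg_{\tT_E}(s\tA)$ yields precisely the commuting triangle asserted in the statement.

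For the refined statement about higher homotopy, I would apply the second half of \Cref{augn gives adjn and module structures} to the object $E_\star^\lw X \in \Alg_{\tT_E}(s\tA)$: for each $n \geq 1$, the object $\pi_n E_\star^\lw X \in \tA$ carries a canonical lift through
\[ \Mod_{\pi_0 E_\star^\lw X}^\Phi(\tA) \xra{\forget_{\pi_0 E_\star^\lw X}} \tA, \]
which is exactly the claimed lift once $\pi_0 E_\star^\lw X$ is interpreted via the first part of the corollary.

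I do not anticipate any genuine obstacle: the content of the corollary is already packaged into \Cref{augn gives adjn and module structures} (which bears the real weight), and everything else is naturality of composites of $\infty$-functors. The only very mild point worth recording explicitly in the write-up is that the coherence data for the composite $\pi_0 \circ E_\star^\lw$ as a functor of $\infty$-categories come for free from the two constituent functors, so no extra verification is needed to produce a bona fide functor $\Alg_T(s\C) \to \Alg_\Phi(\tA)$.
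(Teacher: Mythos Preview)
Your proposal is correct and matches the paper's approach exactly: the paper states this as an immediate corollary of \Cref{augn gives adjn and module structures} (with no further argument, just a displayed equation and a QED), and your write-up simply makes explicit the precomposition with the lifted functor $E_\star^\lw : \Alg_T(s\C) \to \Alg_{\tT_E}(s\tA)$ guaranteed by \Cref{assume homotopically adapted}.
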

\eqnqed{ \Mod_{\pi_0 E_\star^\lw X}^\Phi(\tA) \xra{\forget_{\pi_0 E_\star^\lw X}} \tA . }

We record a useful fact about the adjunction of \Cref{augn gives adjn and module structures}.

\begin{lem}\label{pi0 and const are a qadjn on tTE and Phi algs}
The adjunction of \Cref{augn gives adjn and module structures} lifts to a Quillen adjunction
\[ \pi_0 : \Alg_{\tT_E}(s\tA)_{\pi_*} \adjarr \Alg_\Phi(\tA)_\triv : \const , \]
whose derived adjunction is a left localization adjunction.
\end{lem}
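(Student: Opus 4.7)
The plan is to check that $\pi_0$ is a left Quillen functor and that the derived counit of the resulting adjunction is an equivalence. The first step is to observe that by the definition of $\bW_{\pi_*} \subset \Alg_{\tT_E}(s\tA)$ as the preimage of Kan--Quillen weak equivalences on underlying pointed simplicial sets (see \Cref{going from locpi to locE is a left localization}), any $\pi_*$-weak equivalence induces an isomorphism on $\pi_0$ of the underlying object of $\tA$. Through the augmentation $\tT_E \da \Phi$, the functor $\pi_0 : \Alg_{\tT_E}(s\tA) \ra \Alg_\Phi(\tA)$ of \Cref{augn gives adjn and module structures} then lifts this to an isomorphism in $\Alg_\Phi(\tA)$; equivalently, $\pi_0$ sends $\bW_{\pi_*}$ into $\bW_\triv$ (which is just the subcategory of isomorphisms).

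For the Quillen adjunction, since every morphism in $\Alg_\Phi(\tA)_\triv$ is both a cofibration and a fibration, $\pi_0$ automatically sends cofibrations to cofibrations; and by the previous observation, it sends acyclic cofibrations (which are in particular $\pi_*$-weak equivalences) to isomorphisms, hence to acyclic cofibrations in $\Alg_\Phi(\tA)_\triv$. Thus $\pi_0$ is a left Quillen functor, so $\pi_0 \adj \const$ is a Quillen adjunction.

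For the left localization claim, I would verify that the derived counit is an equivalence. Since every object of $\Alg_\Phi(\tA)_\triv$ is fibrant, we have that $\bbR\const$ is modeled by $\const$ itself. Fixing $A \in \Alg_\Phi(\tA)$ and choosing any cofibrant replacement $\const(A)^c \we \const(A)$ in $\Alg_{\tT_E}(s\tA)_{\pi_*}$, the derived counit at $A$ is represented by the composite
\[ \pi_0(\const(A)^c) \ra \pi_0\const(A) \xra{\cong} A, \]
whose first map is $\pi_0$ applied to a $\pi_*$-weak equivalence and hence an isomorphism by the first paragraph, and whose second map is the identity since $\pi_0 \circ \const \simeq \id_{\Alg_\Phi(\tA)}$ by direct inspection. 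Hence the derived right adjoint is fully faithful, so the derived adjunction is a left localization. The argument is essentially an unwinding of the definitions; the substantive content is packaged in \Cref{augn gives adjn and module structures}, and no serious obstacle remains.
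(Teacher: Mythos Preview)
Your proof is correct and follows essentially the same approach as the paper: verify that $\pi_0$ is left Quillen (cofibrations are trivially preserved since the target model structure is trivial, and acyclic cofibrations go to isomorphisms because $\pi_0$ sends $\bW_{\pi_*}$ to isomorphisms), then check that the derived counit is an equivalence by computing it on a cofibrant replacement of $\const(A)$. The paper's argument is organized identically, though it states the preservation of weak equivalences more tersely as ``by definition of the subcategory $\bW_{\pi_*}$''.
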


\begin{proof}
To see that this is a Quillen adjunction, we observe that the left adjoint
\begin{itemizesmall}
\item trivially preserves cofibrations, and
\item preserves acyclic cofibrations by definition of the subcategory $\bW_{\pi_*} \subset \Alg_{\tT_E}(s\tA)$.
\end{itemizesmall}
Then, to see that the derived adjunction is a left localization adjunction, we check that its counit is a componentwise equivalence.  Since every object of $\Alg_\Phi(\tA)_\triv$ is fibrant, the composite
\[ \Alg_\Phi(\tA) \xra{\const} \Alg_{\tT_E}(s\tA) \ra \locpiAlgtTE \]
computes the derived right adjoint $\bbR \const$.  Now, let
\[ \es_{\Alg_{\tT_E}(s\tA)} \cofibn \bbQ \const(A) \we \const(A) \]
be a cofibrant replacement in $\Alg_{\tT_E}(s\tA)_{\pi_*}$.  Then by definition the induced map
\[ \pi_0(\bbQ \const(A)) \ra \pi_0(\const(A)) \cong A \]
is an isomorphism in $\Alg_\Phi(\tA)$.  So the counit is indeed an equivalence.
\end{proof}

\begin{notn}
As both functors in the Quillen adjunction of \Cref{pi0 and const are a qadjn on tTE and Phi algs} preserve all weak equivalences, we will simply write
\[ \pi_0 : \locpiAlgtTE \adjarr \Alg_\Phi(\tA) : \const \]
for its derived adjunction (as opposed to $\bbL \pi_0 \adj \bbR \const$).  Moreover, we will often leave implicit both the right Quillen functor as well as its derived right adjoint.
\end{notn}

We have just seen that classical homology groups admit certain algebraic structure.  In fact, natural homology groups do too.

\begin{lem}[{\cite[Examples 3.1.14 and 3.1.17]{GH}}]\label{0th natural homology lifts to Phi-algs}
There exists a canonical lift
\[ \begin{tikzcd}[column sep=1.5cm]
& \Alg_\Phi(\tA) \arrow{d}{\forget_\A \circ \forget_\tA \circ \forget_\Phi} \\
\Alg_T(s\C) \arrow[dashed]{ru}{E_{0,\star}^\natural} \arrow{r}[swap]{E_{0,\star}^\natural} & \uA .
\end{tikzcd} \]
Moreover, for any $X \in \Alg_T(s\C)$ and any $n \geq 1$, the object $E_{n,\star}^\natural X \in \uA$ admits a canonical lift through the functor
\end{lem}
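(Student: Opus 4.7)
The plan is to treat the $n=0$ case and the $n \geq 1$ case separately. For the lift of $E_{0,\star}^\natural$ to $\Alg_\Phi(\tA)$, the key observation is \Cref{only one type of 0th E-homology}, which provides a natural isomorphism $\pi_0 E_\beta(-) \cong E_{0,\beta}^\natural(-)$; assembled over $\beta \in \G^\delta$, this gives a natural isomorphism $\pi_0 E_\star^\lw \cong E_{0,\star}^\natural$ of functors $\Alg_T(s\C) \to \uA$. The desired lift is then obtained by transporting the $\Phi$-algebra refinement of $\pi_0 E_\star^\lw$ provided by \Cref{0th classical homology lifts to Phi-algs} across this isomorphism.

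For the module structure in degrees $n \geq 1$, the plan is to derive it from the localized spiral exact sequence of \Cref{construct localized spiral}. By \Cref{0th classical homology lifts to Phi-algs} together with the identification of the previous paragraph, each classical group $\pi_i E_\star^\lw X$ for $i \geq 1$ already carries a canonical module structure over the $\Phi$-algebra $E_{0,\star}^\natural X$. Assuming inductively that $E_{j,\star}^\natural X$ carries a compatible $E_{0,\star}^\natural X$-module structure for $j < n$, the segment
\[ \pi_{n+1} E_\star X \xra{\delta} E_{n-1,\star+1}^\natural X \to E_{n,\star}^\natural X \to \pi_n E_\star X \xra{\delta} E_{n-2,\star+1}^\natural X \]
of the localized spiral exact sequence displays $E_{n,\star}^\natural X$ as an extension of the kernel of the right-hand $\delta$ by the cokernel of the left-hand $\delta$. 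Granting that every map in the spiral sequence is a map of $E_{0,\star}^\natural X$-modules, this canonically promotes $E_{n,\star}^\natural X$ to one as well.

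The main obstacle will be establishing the asserted $E_{0,\star}^\natural X$-linearity of the boundary maps $\delta$ and of the natural-to-classical comparison maps. The approach is to re-examine the exact couple underlying the spiral exact sequence, obtained by applying $[S^\eps,-]_\C$ to the fiber sequences $Z_n(EX) \to N_n(EX) \to Z_{n-1}(EX)$ of \Cref{obs fiber seq of nonabelian cycles and chains}. Because $X \in \Alg_T(s\C)$ and the relevant matching-object-type constructions are built from limits (which commute with forgetful functors from $T$-algebras), each of $N_n(EX)$ and $Z_n(EX)$ inherits enough residual algebraic structure from $X$ that, after applying $\pi_\star$ and invoking the augmentation $\tT_E \da \Phi$ of \Cref{defn augmentation} via \Cref{augn gives adjn and module structures}, all connecting and comparison maps become maps in the appropriate module category. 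This is essentially the content of \cite[Examples 3.1.14 and 3.1.17]{GH}; the only substantive novelty in our setting is the bookkeeping for the homotopy-coherent $T$-algebra structure, which is handled uniformly by working throughout inside $\Alg_T(s\C)$ and inside the image of $E_\star^\lw$ in $\Alg_{\tT_E}(s\tA)$.
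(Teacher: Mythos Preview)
The paper gives no proof of this lemma; it is stated with a bare citation to \cite[Examples 3.1.14 and 3.1.17]{GH}. So your task is to supply an argument, and there is no ``paper's proof'' to compare against beyond what one infers from the surrounding material and the cited source.

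Your $n=0$ argument is logically sound but short-circuits the paper's intended structure. The very next lemma, \Cref{iso of two types of 0th E-homology is as Phi-algs}, asserts that the isomorphism $\pi_0 E_\star^\lw \cong E_{0,\star}^\natural$ is \emph{compatible} with the two lifts of \Cref{0th classical homology lifts to Phi-algs} and \Cref{0th natural homology lifts to Phi-algs}. If you \emph{define} the latter lift by transport along that isomorphism, the compatibility statement becomes a tautology. This signals that GH intends an independent, direct construction of the $\Phi$-algebra structure on $E_{0,\star}^\natural$ (cf.\ the parallel unlocalized story in \Cref{both types of 0th htpy lift to PSd(TGE)}, where both $0\th$ functors are lifted to $\PSd(T(\GE))$ by a direct adjunction argument, and compatibility is a separate lemma).

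Your inductive argument for $n \geq 1$ has a genuine gap. An exact sequence $M_1 \to N_1 \to ? \to M_2 \to N_2$ in which the outer four terms are $R$-modules and the outer maps are $R$-linear does \emph{not} determine an $R$-module structure on the middle term, let alone a canonical one (consider $0 \to \bbZ \to ? \to \bbZ \to 0$ over $\bbZ[x]$ with $x$ acting trivially on the ends). You recognize this and retreat in the last paragraph to arguing that the maps in the spiral sequence are already module maps --- but that presupposes the module structure on $E_{n,\star}^\natural X$ you are trying to construct. The correct route, which you gesture at but do not carry out, is to build the structure \emph{directly}: the natural homotopy groups are corepresented in $\locresAlgT$ by the objects $\free_T(S^n_\bD \redtensoring \const(S^\eps))$, and the $T$-algebra structure on these corepresenting objects is what endows $\pi_{n,\star}^\natural$ (and hence $E_{n,\star}^\natural$ after a colimit argument) with the relevant algebraic structure. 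That is the content of the cited GH examples; the spiral exact sequence then \emph{inherits} its module structure from these direct constructions, not the other way around.
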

\eqnqed{ \Mod^\Phi_{E_{0,\star}^\natural X}(\tA) \xra{\forget_\A \circ \forget_\tA \circ \forget_{E_{0,\star}^\natural X}} \uA . }

Moreover, these algebraic structures are compatible in the following way.

\begin{lem}[{\cite[Corollary 3.1.18]{GH}}]\label{iso of two types of 0th E-homology is as Phi-algs}
The isomorphism $\pi_0 E_\star^\lw(-) \cong E_{0,\star}^\natural(-)$ in $\Fun(\Alg_T(s\C),\uA)$ of \Cref{only one type of 0th E-homology} is compatible with the lifts to $\Fun(\Alg_T(s\C) , \Alg_\Phi(\tA))$ of \Cref{0th classical homology lifts to Phi-algs} \and \Cref{0th natural homology lifts to Phi-algs}. \qed
\end{lem}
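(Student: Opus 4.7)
The plan is to show that the underlying isomorphism of \Cref{only one type of 0th E-homology}, which by \Cref{nat iso in dimension 0} and a colimit argument identifies both $\pi_0 E_\beta X$ and $E_{0,\beta}^\natural X$ with a common coequalizer description, is compatible with the two a priori different lifts to $\Alg_\Phi(\tA)$. The key observation is that both lifts ultimately descend from the single $\tT_E$-algebra structure on $E_\star^\lw X \in \Alg_{\tT_E}(s\tA)$ together with the augmentation $\tT_E \da \Phi$ of \Cref{defn augmentation}; the task is then to exhibit the isomorphism as intertwining these two descents.

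First, I would reduce to the case of a free $T$-algebra. Let $X = \free_T(Z)$ for some $Z \in s\C$ with $E_\star^\lw Z \in s\A_\KQ^c$. By the adaptedness hypothesis of \Cref{assume homotopically adapted}, we have a canonical isomorphism $E_\star^\lw X \cong \free_{\tT_E}(E_\star^\lw Z)$ in $\Alg_{\tT_E}(s\tA)$. Applying $\pi_0$ and invoking the augmentation, \Cref{augn gives adjn and module structures} gives a natural isomorphism $\pi_0 E_\star^\lw \free_T(Z) \cong \Phi(\pi_0 E_\star^\lw Z)$ which is tautologically $\Phi$-linear. I would then unpack the construction of the lift in \Cref{0th natural homology lifts to Phi-algs} (following [GH, Example 3.1.14]) to see that on free $T$-algebras, it likewise simplifies to $E_{0,\star}^\natural \free_T(Z) \cong \Phi(E_{0,\star}^\natural Z)$, via the same augmentation-induced mechanism applied to the coequalizer presentation. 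Since the isomorphism $\pi_0\pi_\eps(-) \cong \pi_{0,\eps}^\natural(-)$ of \Cref{nat iso in dimension 0} is precisely the identification of these two incarnations of $\coker([S^\eps,N_1(-)]_\C \to [S^\eps,Z_0(-)]_\C)$, it is $\Phi$-linear on free algebras by inspection.

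Second, I would extend to arbitrary $X \in \Alg_T(s\C)$ by cofibrant replacement. By \Cref{consequence of adaptedness}, any cofibrant object of $\Alg_T(s\C)_\res$ is a retract of a cell object built from maps of the form $\free_T(K \tensoring \const(S^\eps))$ with $S^\eps \in \GE$, and \Cref{model-categorical consequence of homotopical adaptedness} ensures that $E_\star^\lw$ carries such cell presentations to analogous presentations in $\Alg_{\tT_E}(s\tA)_{\pi_*}$. Since both $\pi_0 E_\star^\lw$ and $E_{0,\star}^\natural$ are natural in $X$ and take weak equivalences in $\Alg_T(s\C)_\res$ to isomorphisms in $\Alg_\Phi(\tA)$ (by \Cref{pi0 and const are a qadjn on tTE and Phi algs} on the classical side, and by the construction of \Cref{0th natural homology lifts to Phi-algs} on the natural side), the $\Phi$-linearity established on free generators propagates along the cellular filtration to all of $\Alg_T(s\C)$.

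The main obstacle is the bookkeeping in the first step: one must carry out the unpacking of \Cref{0th natural homology lifts to Phi-algs} explicitly enough to recognize that on a free $T$-algebra, the $\Phi$-action it produces on $E_{0,\star}^\natural \free_T(Z)$ agrees on the nose (not merely up to isomorphism) with the one obtained by transporting along \Cref{nat iso in dimension 0} from the classical side. Once this identification is in hand, the rest of the argument is formal naturality and cellular induction.
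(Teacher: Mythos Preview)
The paper does not give its own proof of this lemma: the statement is immediately followed by \qed, with the content attributed to \cite[Corollary 3.1.18]{GH}. So there is nothing to compare your argument against here---the paper simply defers to Goerss--Hopkins.

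Your proposal is a reasonable sketch of how one might reconstruct such a proof. The core observation---that both $\Phi$-algebra structures ultimately arise from the single $\tT_E$-algebra structure on $E_\star^\lw X$ via the augmentation $\tT_E \da \Phi$---is correct and is indeed the conceptual heart of the matter. One caution: your second step (extending from free objects via cellular induction) may be more machinery than is needed. The isomorphism $\pi_0\pi_\eps \cong \pi_{0,\eps}^\natural$ of \Cref{nat iso in dimension 0} is already natural in $X \in s\C$, and both lifts to $\Alg_\Phi(\tA)$ are constructed functorially from the $T$-algebra structure on $X$; so once you have verified on free algebras that the two $\Phi$-actions agree, naturality in maps of $T$-algebras should suffice without an explicit cellular argument, since every $T$-algebra receives the canonical map from its own free resolution. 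The obstacle you flag---checking that the unpacking of \Cref{0th natural homology lifts to Phi-algs} on free algebras matches the classical side on the nose---is real, and is precisely the content that \cite[Corollary 3.1.18]{GH} is being invoked to supply.
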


\begin{notn}
For simplicity, we may write $E_0 : \Alg_T(s\C) \ra \Alg_\Phi(\tA)$ for the functor $\pi_0 E_\star^\lw \cong E_{0,\star}^\natural$.
\end{notn}

\begin{lem}[{\cite[Example 3.1.13]{GH}}]\label{shifts lift to A-modules}
For any $A \in \Alg_\Phi(\tA)$ and any $n \geq 1$, the endofunctor $\Omega^n : \tA \xra{\sim} \tA$ lifts to an endofunctor $\Omega^n : \Mod_A^\Phi(\tA) \xra{\sim} \Mod_A^\Phi(\tA)$. \qed
\end{lem}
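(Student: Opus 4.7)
The plan is to exhibit $\Omega^n$ as tensoring with an invertible object of $\tA$ and transport this action across the identification of $A$-modules with trivial square-zero extensions of $A$.

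First, I would observe that the endofunctor $\Omega^n : \tA \xra{\sim} \tA$ is given by tensoring (over $E_\star$) with the invertible object $\ell_n := E_\star(S^{-n}) \in \tA$; invertibility follows because $S^{-n} \in \G \subset \C^\inv$ (by \Cref{ass generators}, which guarantees $\G$ consists of invertible objects and is closed under desuspension) and the homology functor $E_\star : \C \to \tA$ is lax symmetric monoidal and sends invertibles to invertibles. In particular, $\ell_n \otimes_{E_\star} -$ is a symmetric monoidal autoequivalence of $\tA$, with inverse $\ell_{-n} \otimes_{E_\star} -$.

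Next, I would unpack $\Mod_A^\Phi(\tA) = \Ab(\Alg_\Phi(\tA)_{/A})$ via the fully faithful trivial-square-zero-extension embedding $M \mapsto (A \oplus M \to A)$. Following the computations of \cite[Proposition 2.5.9]{GH}, the data of a $\Phi$-algebra structure on $A \oplus M$ witnessing it as an abelian group object over $A$ is encoded by the ``linear in $M$'' component of the structure map $\Phi(A \oplus M) \to A \oplus M$, which in turn amounts to a left action of an enveloping associative algebra $U_\Phi(A) \in \Alg(\tA)$ on $M$; this yields an equivalence $\Mod_A^\Phi(\tA) \simeq \Mod_{U_\Phi(A)}(\tA)$. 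Given this presentation, I define $\Omega^n M$ to be the left $U_\Phi(A)$-module with underlying $\tA$-object $\ell_n \otimes_{E_\star} M$ and action map
\[ U_\Phi(A) \otimes_{E_\star} (\ell_n \otimes_{E_\star} M) \cong \ell_n \otimes_{E_\star} (U_\Phi(A) \otimes_{E_\star} M) \to \ell_n \otimes_{E_\star} M, \]
where the isomorphism is the symmetric monoidal swap (using invertibility and centrality of $\ell_n$) and the arrow is $\ell_n \otimes_{E_\star} -$ applied to the original action on $M$. Associativity and unitality follow from those for $M$ together with the naturality of the swap, and functoriality in $M$ is immediate. Tensoring with $\ell_{-n}$ provides a two-sided inverse, so this construction is an autoequivalence.

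The main obstacle is establishing the identification $\Mod_A^\Phi(\tA) \simeq \Mod_{U_\Phi(A)}(\tA)$ — equivalently, pinning down the ``linear in $M$'' piece of a $\Phi$-algebra structure on $A \oplus M$. This is the substantive input, but it is precisely the content of the square-zero analysis in \cite[\S 2.5]{GH} (invoking \Cref{assume augmentation} to make sense of $\Phi$ at the level of $\tA$). Once it is in hand, the lift is formal: any symmetric monoidal autoequivalence of $\tA$ carrying $U_\Phi(A)$ to itself descends to an autoequivalence of its module category, and tensoring with an invertible of $\tA$ is manifestly such an autoequivalence.
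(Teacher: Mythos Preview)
The paper gives no proof here, deferring entirely to \cite[Example~3.1.13]{GH}. The subsequent remark, however, reveals the shape of that construction: it proceeds via the object $\Sigma^\beta_+ S^\eps = (\unit \oplus S^\beta) \otimes S^\eps$, i.e.\ it works at the level of the generating objects $S^\eps \in \GE$ and builds the shift by a representable-functor maneuver rather than by pure algebra internal to $\tA$.

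Your route is genuinely different: you identify $\Mod_A^\Phi(\tA)$ with left modules over an enveloping associative algebra $U_\Phi(A) \in \Alg(\tA)$ and then twist by an invertible object. If that identification holds, the remainder is clean and formal, and arguably more transparent than the $\Sigma^\beta_+$ construction. The cost is that the enveloping-algebra step carries all the weight, and you should verify that \cite[\S 2.5]{GH} really delivers it for a \emph{general} monad $\Phi$ (not merely one arising from an operad): Propositions~2.5.9--2.5.10, as cited in this paper for \Cref{augn gives adjn and module structures}, establish the module structure on $\pi_n$ but do not obviously produce an associative algebra object in $\tA$ whose module category recovers $\Ab(\Alg_\Phi(\tA)_{/A})$. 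The \cite{GH} approach sidesteps this entirely by never needing $U_\Phi(A)$.

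One small correction: lax symmetric monoidal functors do not send invertibles to invertibles in general, so your stated justification for the invertibility of $\ell_n = E_\star(S^{-n})$ is off. The correct reason is the direct computation $E_\star S^{-n} \cong \Sigma^{-n} E_\star$ (a rank-one free shift of the unit), for which the K\"unneth map against $E_\star S^n$ is visibly an isomorphism.
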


\begin{rem}
In fact, if we define $\Sigma^\beta_+ S^\eps = (\unit \oplus S^\beta) \otimes S^\eps$, then the construction of \cite[Example 3.1.13]{GH} generalizes to define lifted endofunctors $\Omega^\beta : \Mod_A^\Phi(\tA) \xra{\sim} \Mod_A^\Phi(\tA)$ for any $\beta \in \G^\delta$. % {\color{magenta} [?]}
\end{rem}

We can now give the module structure on the localized spiral exact sequence.

\begin{prop}[{\cite[Corollary 3.1.18]{GH}}]
For any $X \in \Alg_T(s\C)$, assembling the localized spiral exact sequence in $\Ab$ over all $\beta \in \G^\delta$, we obtain an exact sequence
\[ \begin{tikzcd}[row sep=0.1cm]
\cdots \arrow{r} & \pi_{i+1}E_\star X \arrow{r}{\delta} & \Omega ( E_{i-1,\star}^\natural X )\arrow{r} & E_{i,\star}^\natural X \arrow{r} & \pi_iE_\star X \arrow{r}{\delta} & \cdots \\
& \cdots \arrow{r}{\delta} & \Omega ( E_{0,\star}^\natural X) \arrow{r} & E_{1,\star}^\natural X \arrow{r} & \pi_1 E_\star X \arrow{r} & 0
\end{tikzcd} \]
in $\Mod_{E_0 X}^\Phi(\tA)$. \qed
\end{prop}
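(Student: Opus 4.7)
The underlying exact sequence of $\G^\delta$-graded abelian groups is already available by assembling the localized spiral exact sequences of \Cref{construct localized spiral} over all $\beta \in \G^\delta$. So what remains is to (a) promote each term to an object of $\Mod^\Phi_{E_0 X}(\tA)$, (b) verify that each of the three types of maps is a morphism in that category, and (c) deduce exactness in $\Mod^\Phi_{E_0 X}(\tA)$ from exactness in $\uA$. The plan is to do (a) by citation, to do (b) by constructing the exact couple already inside the module category, and to do (c) by a short abstract-nonsense argument.

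For (a), the lifts are all in hand: \Cref{0th classical homology lifts to Phi-algs} provides the lifts of $\pi_i E_\star X$ for $i \geq 1$, \Cref{0th natural homology lifts to Phi-algs} provides the lifts of $E^\natural_{i,\star} X$ for $i \geq 1$, and \Cref{shifts lift to A-modules} then lifts the shifted objects $\Omega(E^\natural_{i-1,\star}X)$ to the module category. \Cref{iso of two types of 0th E-homology is as Phi-algs} reconciles the two lifts of the $i=0$ terms. For (b), I would retrace the construction of the exact couple underlying the spiral exact sequence. The fiber sequences $Z_n \to N_n \to Z_{n-1}$ of \Cref{obs fiber seq of nonabelian cycles and chains} are built from limits and the enriched cotensoring, both of which commute with the forgetful functor $\Alg_{\tT_E}(s\tA) \to s\tA$; hence the analogous fiber sequences obtained by applying $E_\star^\lw$ already live in $\Alg_{\tT_E}(s\tA)$. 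Taking homotopy groups and applying \Cref{augn gives adjn and module structures} promotes the resulting long exact sequences to the category of modules, and the identifications of \Cref{normalized chains commutes with homotopy} and \Cref{present natural htpy}, which are natural in the underlying monadic data, show that the entries and maps match the ones appearing in the claimed exact sequence.

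For (c), observe that $\Mod^\Phi_{E_0 X}(\tA) = \Ab(\Alg_\Phi(\tA)_{/E_0 X})$ is a category of abelian group objects in an overcategory of the abelian category $\tA$; the forgetful functor to $\uA$ preserves and reflects short exact sequences, because kernels and cokernels of maps of abelian group objects are computed on underlying objects. So the exactness established in $\uA$ transfers back. The main obstacle I expect is step (b): concretely, one must verify that the connecting maps $\delta$ --- which arise from the boundary of the exact couple --- preserve the $\Phi$-algebra and module structures transported along $E_\star^\lw$. The cleanest way to handle this is to argue once and for all that the entire exact couple, when built from the reduced matching objects of $E_\star^\lw X \in \Alg_{\tT_E}(s\tA)$ instead of from those of $X$ alone, already lies inside $\Mod^\Phi_{E_0 X}(\tA)$ (via \Cref{0th classical homology lifts to Phi-algs} and \Cref{0th natural homology lifts to Phi-algs}); then the connecting maps are automatically module maps because they come from the derived long exact sequence of the couple.
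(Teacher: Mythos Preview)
Your plan for (a) and (c) is fine, but step (b) has a genuine gap, and it is exactly the step you flag as the obstacle. The proposed fix --- rebuilding the exact couple ``from the reduced matching objects of $E_\star^\lw X \in \Alg_{\tT_E}(s\tA)$'' --- does not work as stated. The functors $Z_n$ and $N_n$ land in $\tA$, not in $s\tA$, so their values on $E_\star^\lw X$ carry no $\tT_E$-structure; and the augmentation $\tT_E \da \Phi$ only controls $\pi_0$, not the intermediate objects $Z_n$, $N_n$. So there is no reason for the undifferentiated entries $[S^\eps,Z_n(EX)]_\C$ of the exact couple to live in $\Mod^\Phi_{E_0 X}(\tA)$, and hence no way to conclude that the connecting maps of the \emph{derived} long exact sequence are module maps just because they arise from the couple. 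A second issue is that the natural $E$-homology groups $E^\natural_{i,\star}X$ are by definition computed from $EX \in s\C$ via the model $\infty$-category $s\C_\res$, not from $E_\star^\lw X \in s\tA$; your sketch silently replaces one by the other without justifying the comparison.

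The paper does not give its own proof here (the result is simply cited from \cite{GH}), but it does signal the intended route: as explained at the start of \S\ref{module structure on unlocalized spiral}, one first establishes the module structure on the \emph{unlocalized} spiral exact sequence, working in $\Mod^{T(\GE)}_{\pi_0 X}(\PSd(\GE))$. In that setting the module structure is essentially tautological, since both flavors of homotopy group and all the maps between them arise from restricted Yoneda functors into product-preserving presheaves on $T(\GE)$. One then passes to the localized statement by the usual colimit argument over the objects $\Dual E_\alpha$. If you want to repair your sketch, this is the missing idea: do not try to put the $\Phi$-module structure on the exact couple directly, but rather obtain it by first getting a $\PSd(T(\GE))$-module structure (where it is formal) and then colimiting.
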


\subsection{The module structure on the spiral exact sequence}\label{module structure on unlocalized spiral}

We will make certain computations \textit{before} appealing to a colimit argument, and for these we will need to obtain analogous structure on the unlocalized spiral exact sequence.  In fact, this is an input to the module structure on the localized spiral exact sequence (via a colimit argument, as always), but the algebraic objects at play are slightly less familiar so we have reversed their order here.  However, the story is nearly identical to that of \Cref{subsection module structure on localized spiral}, and so we only highlight the key points.

\begin{notn}
We write $T(\GE) \subset \locresAlgT$ for the full subcategory spanned by the image of the composite
\[ \GE \hookra \C \xhookra{\const} s\C \xra{\free_T} \Alg_T(s\C) \ra \locresAlgT . \]
\end{notn}

\begin{obs}
The functor $\GE \xra{\free_T} T(\GE)$ preserves coproducts, and so induces a forgetful functor $\PSd(T(\GE)) \xra{\forget_{T(\GE)}} \PSd(\GE)$.
\end{obs}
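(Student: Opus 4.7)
The plan is to verify the two assertions in sequence: first that $\free_T : \GE \to T(\GE)$ preserves (finite) coproducts, then that precomposition yields the desired restriction functor on discrete product-preserving presheaves.

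For the first assertion, I would unpack the composite $\GE \hookra \C \xhookra{\const} s\C \xra{\free_T} \Alg_T(s\C) \to \locresAlgT$ that defines $T(\GE)$, and observe that each of its constituent functors preserves finite coproducts: the inclusion $\GE \hookra \C$ by \Cref{ass generators} (where $\GE$ is defined to be closed under finite coproducts, the formation of which agrees with that in $\C$); the functor $\const : \C \to s\C$ as a left adjoint to evaluation at $[0]^\circ$; the functor $\free_T$ as the left adjoint of the monadic adjunction of \Cref{obs map between simplicial operads gives adjn}; and the localization $\Alg_T(s\C) \to \locresAlgT$ tautologically (every localization preserves all colimits that exist). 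Consequently the coproduct of two objects $F(S^\eps), F(S^{\eps'}) \in T(\GE)$ along the image $F : \GE \to T(\GE)$ is computed in the ambient $\locresAlgT$, and since $T(\GE)$ is defined as a full subcategory containing the image $F(S^\eps \vee S^{\eps'}) \simeq F(S^\eps) \vee F(S^{\eps'})$, the universal property of coproducts in $T(\GE)$ is inherited from $\locresAlgT$. Thus $F$ preserves coproducts into $T(\GE)$.

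For the second assertion, I would simply invoke the general formal fact: if $F : \D \to \D'$ is a functor between categories admitting finite coproducts and $F$ preserves them, then precomposition along $F^{op} : \D^{op} \to (\D')^{op}$ sends $\PSd(\D') = \Fun^\times((\D')^{op},\Set)$ into $\PSd(\D) = \Fun^\times(\D^{op},\Set)$. Indeed, given any $G \in \PSd(\D')$, the composite $G \circ F^{op}$ carries a finite coproduct $S^\eps \vee S^{\eps'}$ in $\GE$ first to $F(S^\eps) \vee F(S^{\eps'})$ in $T(\GE)$ (by the coproduct-preservation just established) and then to the product $G(F(S^\eps)) \times G(F(S^{\eps'}))$ in $\Set$ (by the product-preservation of $G$). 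Setting $\forget_{T(\GE)} = (-) \circ F^{op}$ yields the desired functor.

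The argument involves no real obstacle: both parts are essentially formal consequences of the universal properties already in play, and the only thing to be careful about is that a priori $T(\GE)$ is only a full subcategory and need not be closed under coproducts in $\locresAlgT$ a priori; this is handled by noting that the image of a coproduct from $\GE$ is already inside $T(\GE)$ by construction, so the coproduct can be computed inside $T(\GE)$ itself. No appeal to the resolution model structure or to Adams's condition is required.
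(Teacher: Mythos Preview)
Your proposal is correct and is essentially the formal unpacking the paper leaves implicit (the statement is labeled an Observation and carries no proof in the paper). One minor citation slip: the closure of $\GE$ under finite coproducts is by its very definition as a full subcategory of $\C$ closed under finite coproducts, not by \Cref{ass generators} (which concerns $\G$); otherwise your argument matches the intended reasoning exactly.
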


\begin{defn}\label{define modules wrt TGE}
For any $A \in \PSd(T(\GE))$, we define the category of \bit{$A$-modules} (\bit{relative to $T(\GE)$}) as the category $\Mod_A^{T(\GE)}(\PSd(\GE)) = \Ab(\PSd(T(\GE))_{/A})$ of abelian group objects in its overcategory.  This admits two forgetful functors, which we denote by
\[ \begin{tikzcd}[row sep=0.1cm, column sep=1.5cm]
\PSd(\GE) & \Mod_A^{T(\GE)}(\PSd(\GE)) \arrow{l}[swap]{\forget_A} \arrow{r}{- \ltimes A} & \PSd(T(\GE)) . \\
\ker^{\PSd(\GE)}(\varphi) & (B \xra{\varphi} A) \arrow[mapsto]{l} \arrow[mapsto]{r} & B
\end{tikzcd} \]
\end{defn}

The following example will be of use later.

\begin{notn}
Let $A \in \Alg_\Phi(\tA)$.  Then we obtain an object $\Yo^E(A) \in \PSd(T(\GE))$ by declaring that
\[ \Yo^E(A)(\free_T(S^\eps)) = \hom_{\Alg_\Phi(\tA)}(\pi_0 E_\star^\lw \free_T(S^\eps) , A). \]
Similarly, if $M \in \Mod_A^\Phi(\tA)$, we obtain an object $\Yo^E(M) \in \Mod_{\Yo^{E_\star^\lw}(A)}^{T(\GE)}(\PSd(\GE))$ by declaring that
\[ \Yo^E(M)(S^\eps) = \hom_{\tA}(\pi_0 E_\star^\lw S^\eps , M) ; \]
technically, the $A$-action arises through Definitions \ref{define modules wrt Phi} \and \ref{define modules wrt TGE} (in terms of abelian objects in overcategories), but morally it just comes from postcomposition.
\end{notn}

\begin{obs}\label{both types of 0th htpy lift to PSd(TGE)}
As the functor $\Alg_T(s\C) \ra \ho(\Alg_T(s\C))$ preserves finite coproducts, by adjunction both composite functors
\[ \begin{tikzcd}[column sep=1.5cm]
\Alg_T(s\C) \arrow{r}{\forget_T} & s\C \arrow[transform canvas={yshift=0.5ex}]{r}{\pi_{0,\star}^\natural} \arrow[transform canvas={yshift=-0.5ex}]{r}[swap]{\pi_0 \pi_\star^\lw} & \PSd(\GE)
\end{tikzcd} \]
admit lifts through $\PSd(T(\GE)) \xra{\forget_{T(\GE)}} \PSd(\GE)$ for any $n \geq 0$.
\end{obs}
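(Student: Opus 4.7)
The plan is to build the lift via the restricted Yoneda embedding on $\locresAlgT$. First, by \Cref{nat iso in dimension 0} the two functors $\pi_{0,\star}^\natural$ and $\pi_0 \pi_\star^\lw$ on $s\C$ are naturally isomorphic, so it will suffice to construct the lift for $\pi_{0,\star}^\natural \circ \forget_T$ and then transport it along this natural isomorphism to obtain the lift for $\pi_0 \pi_\star^\lw \circ \forget_T$.

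Next, the Quillen adjunction $\free_T : s\C_\res \adjarr \Alg_T(s\C)_\res : \forget_T$ of \Cref{thm AlgTsCres} descends to a derived adjunction $\free_T : \locressC \adjarr \locresAlgT : \forget_T$. For $X \in \Alg_T(s\C)$ and $S^\eps \in \GE$, this yields the identification
\[ \pi_{0,\eps}^\natural(\forget_T(X)) \cong [\, S^\eps , \forget_T(X)\, ]_{\locressC} \cong [\, \free_T(S^\eps) , X \,]_{\locresAlgT} . \]
I would therefore define the candidate lift by
\[ X \;\longmapsto\; \bigl( \free_T(S^\eps) \mapsto [\, \free_T(S^\eps) , X \,]_{\locresAlgT} \bigr) , \]
namely as the componentwise $\pi_0$ of the restricted Yoneda embedding of $X$ along $T(\GE)^{op} \hookra \locresAlgT^{op}$; this assignment is manifestly functorial in $X$.

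To see that this actually lands in $\PSd(T(\GE))$ rather than merely in $\Fun(T(\GE)^{op},\Set)$, product preservation must be checked. Since $\free_T$ is a left adjoint and $\const : \C \to s\C$ preserves coproducts, and since $\GE$ is closed under finite coproducts by \Cref{ass generators}, the full subcategory $T(\GE) \subset \locresAlgT$ is closed under finite coproducts and these agree with finite coproducts computed in $\locresAlgT$. Mapping such a coproduct into $X$ yields a product of hom-spaces in $\S$, and $\pi_0$ preserves products. Restricting the resulting presheaf along the functor $\GE \ra T(\GE)$, $S^\eps \mapsto \free_T(S^\eps)$, recovers $\pi_{0,\eps}^\natural(\forget_T(X))$ by the displayed adjunction isomorphism, confirming that this is indeed a lift through $\forget_{T(\GE)}$.

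The only point requiring care is the closure of $T(\GE)$ under finite coproducts inside $\locresAlgT$, but this is purely formal; everything else reduces to the Quillen adjunction $\free_T \adj \forget_T$ together with the standard fact that Yoneda-style presheaves into spaces automatically send coproducts to products.
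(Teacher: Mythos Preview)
Your argument is correct and is essentially an elaboration of the paper's one-line justification (``by adjunction''): you unpack that adjunction as the derived $\free_T \dashv \forget_T$ between $\locressC$ and $\locresAlgT$, identify $\pi_{0,\eps}^\natural(\forget_T X)$ with $[\free_T(S^\eps),X]_{\locresAlgT}$, and then read off the lift as a restricted Yoneda presheaf on $T(\GE)$. Two small remarks: the closure of $\GE$ under finite coproducts is by its definition (the notation introducing $\GE$), not by \Cref{ass generators}; and by constructing the second lift via transport along the isomorphism of \Cref{nat iso in dimension 0}, you render the subsequent compatibility lemma in the paper tautological, whereas the paper's phrasing suggests it intends two a priori independent lifts whose agreement is then checked --- your route is cleaner but collapses that distinction.
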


\begin{lem}
The isomorphisms $\pi_0 \pi_\eps^\lw (-) \cong \pi_{0,\eps}^\natural(-)$ in $\Fun(\Alg_T(s\C) , \Fun(\GE^\delta,\Ab))$ of \Cref{nat iso in dimension 0} are compatible with the lifts to $\Fun(\Alg_T(s\C),\PSd(T(\GE)))$ of \Cref{both types of 0th htpy lift to PSd(TGE)}.
\end{lem}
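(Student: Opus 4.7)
The plan is to identify both lifted functors with the restricted Yoneda functor out of $\locresAlgT$ along the inclusion $T(\GE) \hookra \locresAlgT$, thereby reducing the compatibility assertion to the observation that the isomorphism of \Cref{nat iso in dimension 0} is already realized by a canonical chain of identifications which is natural in morphisms of $T(\GE)$ and not merely in $\GE$.

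First I handle the natural side. By \Cref{thm sCres} the object $\const(S^\eps) \in s\C_\res^c$ is cofibrant, and by \Cref{thm AlgTsCres} the Quillen adjunction $\free_T : s\C_\res \adjarr \Alg_T(s\C)_\res : \forget_T$ then takes it to a cofibrant object $\free_T(\const(S^\eps)) \in \Alg_T(s\C)_\res^c$; since both model $\infty$-categories are simplicial and all their objects are fibrant, the derived adjunction gives a natural isomorphism
\[ \pi_{0,\eps}^\natural(\forget_T X) = \pi_0 \hom_\locressC(\const(S^\eps),\forget_T X) \cong \pi_0 \hom_\locresAlgT(\free_T(\const(S^\eps)),X) . \]
Under this identification, the lift to $\PSd(T(\GE))$ supplied by \Cref{both types of 0th htpy lift to PSd(TGE)} is exactly the restricted Yoneda functor sending $\free_T(\const(S^\eps)) \in T(\GE) \subset \locresAlgT$ to this mapping set.

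Next I handle the classical side. The levelwise free/forget adjunction gives an isomorphism $[S^\eps,\forget_T X]^\lw_\C \cong [\free_T(\const(S^\eps)),X]^\lw_{\Alg_T(s\C)}$ of simplicial abelian groups, so $\pi_0\pi_\eps^\lw(\forget_T X) \cong \pi_0[\free_T(\const(S^\eps)),X]^\lw_{\Alg_T(s\C)}$. Applying the fundamental theorem of model $\infty$-categories to $\Alg_T(s\C)_\res$ (again using that $\free_T(\const(S^\eps))$ is cofibrant, every object is fibrant, and the model structure is simplicial), this last group coincides with $\pi_0 \hom_\locresAlgT(\free_T(\const(S^\eps)),X)$. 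The lift of \Cref{both types of 0th htpy lift to PSd(TGE)}, which is constructed a priori through $\ho(\Alg_T(s\C))$, descends to a functor on $\ho(\locresAlgT)$ because $[S^\eps,-]^\lw_\C$ sends resolution weak equivalences to weak equivalences in $s\Ab_\KQ$ (as in \Cref{E-equivce implies isomorphism on Etwo pages}), and hence coincides on $T(\GE)$ with the same restricted Yoneda functor.

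Finally, I verify that the isomorphism of \Cref{nat iso in dimension 0} is the composite of the identifications above. Unwinding its proof, at $Y = \forget_T X$ it is precisely the canonical comparison between the two presentations of $\pi_0\hom_\locresAlgT(\free_T(\const(S^\eps)),X)$; since the two lifts to $\PSd(T(\GE))$ are identified with this single restricted Yoneda functor, the isomorphism lifts uniquely to a natural isomorphism in $\Fun(\Alg_T(s\C),\PSd(T(\GE)))$. The main obstacle is the bookkeeping of the last step: one must check that all the isomorphisms implementing the identification commute with the additional ``operadic'' morphisms in $T(\GE) \subset \locresAlgT$ that are not visible in $\GE$. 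This becomes a formal consequence once every isomorphism in play has been recognized as arising from a single restricted Yoneda construction out of $\locresAlgT$.
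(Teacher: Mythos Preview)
The paper states this lemma without proof: it appears in \S\ref{module structure on unlocalized spiral}, which the author explicitly describes as nearly identical to the localized story of \S\ref{subsection module structure on localized spiral} (where the analogous \Cref{iso of two types of 0th E-homology is as Phi-algs} is cited directly from \cite{GH}). So there is nothing in the paper to compare against beyond the implicit claim that the verification is routine.

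Your argument is essentially correct and supplies the details the paper omits. The core idea---that both lifts are instances of the restricted Yoneda functor along $T(\GE) \hookra \ho(\locresAlgT)$, and that the isomorphism of \Cref{nat iso in dimension 0} unwinds to the canonical identification $\pi_0(\pi_0^\lw Y) \cong \pi_0|Y|$ applied to $Y = \hom^\lw_\C(S^\eps,\forget_T X)$---is exactly right, and once phrased this way the compatibility with the extra morphisms in $T(\GE)$ is automatic.

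One notational point to tighten: the expression $[\free_T(\const(S^\eps)),X]^\lw_{\Alg_T(s\C)}$ is not literally well-defined, since $\Alg_T(s\C)$ is a lax limit over $\bD^{op}$ rather than simplicial objects in a fixed category. What you actually use is the $s\S$-enriched adjunction of \Cref{enr and bitens of algs over a monad on sD}, which gives $\enrhom_{\Alg_T(s\C)}(\free_T(\const(S^\eps)),X) \simeq \enrhom_{s\C}(\const(S^\eps),\forget_T X) = \hom^\lw_\C(S^\eps,\forget_T X)$; applying $\pi_0^\lw$ then recovers the simplicial abelian group $[S^\eps,\forget_T X]^\lw_\C$. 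Phrasing the classical side directly in terms of this enriched adjunction (rather than an informal levelwise hom) makes the argument airtight.
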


\begin{notn}
For simplicity, we may write $\pi_0 : \Alg_T(s\C) \ra \PSd(T(\GE))$ for the functor $\pi_0 \pi_\star^\lw \cong \pi_{0,\star}^\natural$.
\end{notn}

\begin{prop}[{\cite[Theorem 3.1.15]{GH}}]
For any $X \in \Alg_T(s\C)$, assembling the spiral exact sequence in $\Ab$ over all $\eps \in \GE^\delta$, we obtain an exact sequence
\[ \begin{tikzcd}[row sep=0.1cm]
\cdots \arrow{r} & \pi_{i+1}\pi_\star X \arrow{r}{\delta} & \Omega ( \pi_{i-1,\star}^\natural X )\arrow{r} & \pi_{i,\star}^\natural X \arrow{r} & \pi_i \pi_\star X \arrow{r}{\delta} & \cdots \\
& \cdots \arrow{r}{\delta} & \Omega ( \pi_{0,\star}^\natural X) \arrow{r} & \pi_{1,\star}^\natural X \arrow{r} & \pi_1 \pi_\star X \arrow{r} & 0
\end{tikzcd} \]
in $\Mod_{\pi_0 X}^{T(\GE)}(\PSd(\GE))$. \qed
\end{prop}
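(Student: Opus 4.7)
The plan is to lift each term of the spiral exact sequence to an object of $\Mod_{\pi_0 X}^{T(\GE)}(\PSd(\GE))$ and to verify that each arrow is a morphism in that category; exactness in the module category then follows automatically, since the forgetful functor to $\PSd(\GE)$ is conservative and preserves both kernels and images (indeed, taking abelian group objects in an overcategory is a limit-preserving operation, and exactness in $\PSd(\GE) = \Fun^\times(\GE^{op},\Set)$ is detected componentwise in $\Ab$, where we already possess the spiral exact sequence). Before starting, I would also establish the analog of \Cref{shifts lift to A-modules} for $T(\GE)$-modules, i.e.\ show that for any $\beta \in \G^\delta$ the suspension endofunctor $\Sigma^\beta$ on $\PSd(\GE)$ canonically lifts to an autoequivalence $\Omega^\beta$ of $\Mod_A^{T(\GE)}(\PSd(\GE))$ for any $A \in \PSd(T(\GE))$; this is a purely formal consequence of $\G^\delta$ consisting of Picard elements.

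The identifications at level $n = 0$ are supplied by \Cref{both types of 0th htpy lift to PSd(TGE)} together with the isomorphism of \Cref{nat iso in dimension 0}. For $n \geq 1$, I plan to equip each of $\pi_n\pi_\star X$ and $\pi_{n,\star}^\natural X$ with a $\pi_0 X$-module structure by mirroring the strategy of \Cref{augn gives adjn and module structures}. Concretely, since $\const(S^\eps) \in s\C$ is a (levelwise) h-cogroup, the mapping space $\enrhom_{\Alg_T(s\C)}(\free_T(\const(S^\eps)), X) \simeq \hom^\lw_\C(S^\eps,\forget_T(X))$ is a simplicial abelian group, and its based-reduced variant $\enrhom_{\Alg_T(s\C)}(\free_T(S^n_\bD \redtensoring \const(S^\eps)), X)$ is a simplicial abelian group carrying a canonical action of the monoid $\pi_0 X(\free_T(S^\eps))$ inherited from the $T$-algebra multiplication on $X$. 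Taking $\pi_n$ of the former (resp.\ the latter after a shift dimension count) and letting $\eps$ range over $\GE^\delta$ yields the desired $\pi_0 X$-module structure in $\PSd(\GE)$.

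For the arrows, observe that the spiral exact sequence was built in \Cref{construct localized spiral} from the fiber sequence $Z_n \to N_n \to Z_{n-1}$ of \Cref{obs fiber seq of nonabelian cycles and chains}, combined with the identifications of \Cref{normalized chains commutes with homotopy} and \Cref{present natural htpy}. Each of these constructions is visibly natural in $X \in \Alg_T(s\C)$, and hence every arrow in the assembled sequence is natural in $X$. By the explicit description of the module structures above, the actions of $\pi_0 X$ are themselves induced from the $T$-algebra structure on $X$ via the mapping-space representations; consequently each arrow is $\pi_0 X$-equivariant, i.e.\ a morphism in $\Mod_{\pi_0 X}^{T(\GE)}(\PSd(\GE))$.

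The main obstacle I anticipate is the rigorous construction of the $\pi_0 X$-module structure on the higher homotopy groups for $n \geq 1$, since verifying the requisite coherences requires tracking the $T$-algebra multiplication through a cofibrant replacement of $X$ in $\Alg_T(s\C)_\res$. I would follow \cite[Theorem 3.1.15]{GH} and organize the argument via the monadic bar construction: resolve $X$ by a simplicial diagram of free $T$-algebras $\free_T(Y_\bullet)$ with $E_\star^\lw Y_\bullet$ projective (cf.\ \Cref{consequence of adaptedness}), where the module structure is manifest from the explicit formula $\forget_T \free_T(-) \simeq \coprod_{n \geq 0} (T(n) \tensoring (-)^{\otimes n})_{\SG_n}$, and then conclude by naturality and the fact that both $\pi_0 X$ and the higher homotopy objects are computed as geometric realizations of this resolution.
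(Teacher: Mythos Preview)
The paper does not give a proof of this proposition: it is stated with a trailing \qed and attributed to \cite[Theorem 3.1.15]{GH}, so there is no argument in the paper to compare against. Your proposal is therefore not a variant of the paper's proof but an attempt to supply one where the paper simply cites the literature.

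As a sketch, your outline is broadly reasonable: reduce exactness to the underlying $\PSd(\GE)$ (where it is the already-established spiral exact sequence, assembled over $\eps \in \GE^\delta$), and separately verify that each term and each map lifts to $\Mod_{\pi_0 X}^{T(\GE)}(\PSd(\GE))$. The one place where your sketch is genuinely thin is the construction of the $\pi_0 X$-module structure on $\pi_{n,\star}^\natural X$ and $\pi_n\pi_\star X$ for $n\geq 1$. Your description via ``the $T$-algebra multiplication on $X$'' acting on mapping spaces is suggestive but not a construction: what one actually needs is an identification of $\pi_{n,\star}^\natural X$ (and likewise $\pi_n\pi_\star X$) as the kernel of a map of objects in $\PSd(T(\GE))$ over $\pi_0 X$, together with an abelian-group-object refinement; this is exactly how \Cref{define modules wrt TGE} packages module structures, and it is not the same datum as an ``action of the monoid $\pi_0 X(\free_T(S^\eps))$'' on a simplicial abelian group. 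The bar-resolution idea you mention at the end is closer to what \cite{GH} actually does, and would be the place to invest the real work; the earlier paragraph about actions on enriched hom-objects would not on its own produce an object of $\Ab(\PSd(T(\GE))_{/\pi_0 X})$.
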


\section{Homotopical algebra}
\label{section.homotopical.algebra}

% {\color{cyan} [we do this section for the case of $\tT_E \in \Op(s\tA)$; this is at least in part because nobody knows what ``motivic $\theta$-algebras'' are yet anyways.]}

\subsection{Postnikov towers in algebra}

\begin{defn}
For any $n \geq 0$, an object $X \in \locpiAlgtTE$ is called \bit{$n$-truncated} if $\pi_{>n} X = 0$.  Such objects form a full subcategory $\locpiAlgtTE^{\leq n} \subset \locpiAlgtTE$, and as $n$ varies these subcategories are evidently nested as
\[ \locpiAlgtTE \hookla \cdots \hookla \locpiAlgtTE^{\leq 1} \hookla \locpiAlgtTE^{\leq 0} . \]
By presentability, % {\color{cyan} presentability considerations},
these inclusions admit left adjoints, and we denote the corresponding left localization adjunctions by
\[ P_n^\alg : \locpiAlgtTE \adjarr \locpiAlgtTE^{\leq n} : \forget_n^\alg . \]
We therefore obtain a tower of functors
\[ \id_{\locpiAlgtTE} \ra \cdots \ra P_1^\alg \ra P_0^\alg . \]
We refer to its value on an object of $\locpiAlgtTE$ as its \bit{Postnikov tower}.  We write
\[ \id_{\locpiAlgtTE} \xra{\tau_n^\alg} P_n^\alg \]
for the natural transformation (or for its composite with $\forget_m^\alg$ for any $m \geq 0$), which we refer to as the \bit{$n$-truncation map}.
\end{defn}

\subsection{Cohomology}

Our obstructions will take place in (Andr\'e--Quillen) cohomology groups in $\locpiAlgtTE$.  We will only need to consider them with respect to a base object lying in $\Alg_\Phi(\tA)$, so we restrict to this special case. % {\color{cyan} \small [does this subsection also suffice to cover AQcoh in the $\theta$-algebra case?]}

\begin{comment}

We begin by mimicking \Cref{define modules wrt Phi}.

\begin{defn}\label{define modules wrt tTE}
For any $A \in \locpiAlgtTE$, we define the $\infty$-category of \bit{$A$-modules} (\bit{relative to $\tT_E$}) as the $\infty$-category
\[ \Mod_A^{\tT_E}(\locpistA) = \Ab(\locpistA_{/A}) \]
of abelian ($\infty$-)group objects in its overcategory.  To align our notation with standard intuition, we write
\[ \begin{tikzcd}[row sep=0.1cm, column sep=1.5cm]
\locpistA & \Mod_A^{\tT_E}(\locpiAlgtTE) \arrow{l}[swap]{\forget_A} \arrow{r}{- \ltimes A} & \locpiAlgtTE \\
\fib^{\locpistA}(\varphi) & (B \xra{\varphi} A) \arrow[mapsto]{l} \arrow[mapsto]{r} & B
\end{tikzcd} \]
for the two forgetful functors.  These admit the evident notion of pullback along maps in $\locpiAlgtTE$.
\end{defn}

\begin{obs}
As right adjoints commute with limits, for any $A \in \Alg_\Phi(\tA)$ the derived right adjoint
\[ \Alg_\Phi(\tA)_{/A} \xra{\const} \locpiAlgtTE_{/A} \]
of the Quillen adjunction of \Cref{pi0 and const are a qadjn on tTE and Phi algs} preserves abelian group objects.  Thus, we may consider $\Mod_A^\Phi(\tA) \subset \Mod_A^{\tT_E}(\locpiAlgtTE)$.
\end{obs}

We now define the representing objects for cohomology.

\end{comment}

We begin by defining the representing objects for cohomology.

\begin{defn}
Let $A \in \Alg_\Phi(\tA)$, let $M \in \Mod_A^{\Phi}(\tA)$, and let $n \geq 1$.
\begin{enumerate}
\item We say that an object $X \in \locpiAlgtTE$ is of \bit{type $K_A$} if there exists an equivalence $X \simeq A$, i.e.\! if
\begin{itemize}
\item there exists an isomorphism $\pi_0 X \cong A$ in $\Alg_\Phi(\tA)$, and
\item $\pi_i X =0$ for $i > 0$.
\end{itemize}
\item We say that an object $Y \in \locpiAlgtTE$ is of \bit{type $K_A(M,n)$} if
\begin{itemize}
\item there exists an isomorphism $\pi_0 Y \cong A$ in $\Alg_\Phi(\tA)$,
\item there exists an isomorphism $\pi_n Y \cong M$ via the resulting equivalence of categories $\Mod_{\pi_0 Y}^\Phi(\tA) \simeq \Mod_A^\Phi(\tA)$, and
\item $\pi_i Y = 0$ for $i \notin \{ 0,n\}$.
\end{itemize}
\item We say that a morphism $X \ra Y$ in $\locpiAlgtTE$ is of \bit{type $\vec{K}_A(M,n)$} if
\begin{itemize}
\item $X$ is of type $K_A$,
\item $Y$ is of type $K_A(M,n)$, and
\item the map $\pi_0 X \ra \pi_0 Y$ is an isomorphism in $\Alg_\Phi(\tA)$.
\end{itemize}
\item We say that an object is of \bit{type $K_A(M,0)$} in $\locpiAlgtTE$ if it is of type $K_{M \ltimes A}$, and we say that a morphism in $\locpiAlgtTE$ is of \bit{type $\vec{K}_A(M,0)$} if it admits an equivalence to the map $\const(A \ra M \ltimes A)$.
\end{enumerate}
We refer to objects of type $K_A$ and $K_A(M,n)$ collectively as \bit{algebraic Eilenberg--Mac Lane objects}, and to morphisms of type $\vec{K}_A(M,n)$ collectively as \bit{algebraic Eilenberg--Mac Lane morphisms}.  We will see that these all exist and are unique in Propositions \ref{moduli of alg EM objects} \and \ref{moduli of alg EM maps}; % actually the latter is the moduli of EM maps, not objects of type K_A(M,n). but it should follow?
justified by this, we may simply write $K_A$ or $K_A(M,n)$ for convenience when referring to an algebraic Eilenberg--Mac Lane object of the indicated type.
\end{defn}

\begin{obs}
Suppose that $X \ra Y$ is morphism in $\locpiAlgtTE$ of type $\vec{K}_A(M,n)$ for some $n \geq 1$.  Then $P_0^\alg(Y)$ is of type $K_A$, and the composite
\[ X \ra Y \xra{\tau_0^\alg} P_0^\alg(Y) \]
with the canonical $0$-truncation map is an equivalence.  Fixing an equivalence $X \simeq A$ then allows us to consider
\[ K_A(M,n) \in \locpiAlgtTE_{A/\!/A} . \]
Of course, such consideration is immediate for $n=0$.
\end{obs}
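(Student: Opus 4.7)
The plan is to unpack the hypothesis that $X \to Y$ is of type $\vec{K}_A(M,n)$ against the defining behavior of the Postnikov truncation $P_0^\alg$, and then verify the three successive assertions in order.

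First I would show that $P_0^\alg(Y)$ is of type $K_A$. By hypothesis, $Y$ satisfies $\pi_0 Y \cong A$ in $\Alg_\Phi(\tA)$, $\pi_n Y \cong M$, and $\pi_i Y = 0$ for $i \notin \{0,n\}$. Since $P_0^\alg$ is by definition a left adjoint to the inclusion of $0$-truncated objects, the unit map $\tau_0^\alg : Y \to P_0^\alg(Y)$ lands in a $0$-truncated object, so $\pi_i P_0^\alg(Y) = 0$ for $i > 0$. Moreover, $\tau_0^\alg$ induces an isomorphism on $\pi_0$ (this is a standard property of Postnikov truncation that follows from the universal property, since $\pi_0 Y$ is itself a $0$-truncated object receiving a canonical map from $Y$ through which $\tau_0^\alg$ must factor equivalently). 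Thus $\pi_0 P_0^\alg(Y) \cong A$ in $\Alg_\Phi(\tA)$, so $P_0^\alg(Y)$ is of type $K_A$.

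Next I would verify that the composite $X \to Y \to P_0^\alg(Y)$ is an equivalence. Both source and target are of type $K_A$, hence in particular $0$-truncated objects of $\locpiAlgtTE$; so it suffices to check that the composite induces an isomorphism on $\pi_0$. The first map does so by the definition of a $\vec{K}_A(M,n)$-morphism, while the second does so by the previous step. Hence their composite is an isomorphism on $\pi_0$, and thus an equivalence of $0$-truncated objects.

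Finally, for the third claim, I would promote the data assembled so far into a factorization through $A$ on both sides. Fixing an equivalence $X \simeq A$ and composing with the equivalence $X \xra{\sim} P_0^\alg(Y)$ just established yields an equivalence $P_0^\alg(Y) \simeq A$, so the composite $Y \xra{\tau_0^\alg} P_0^\alg(Y) \simeq A$ exhibits $Y$ as an object over $A$; the map $X \to Y$, together with $X \simeq A$, exhibits $Y$ as an object under $A$. That these are compatible (i.e.\ that the composite $A \simeq X \to Y \to P_0^\alg(Y) \simeq A$ is equivalent to $\id_A$) follows from the previous step. This is exactly the data of an object of $\locpiAlgtTE_{A/\!/A}$. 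The only part that requires any care is keeping track of the various chosen equivalences and checking their coherence, which is routine but is the main place where bookkeeping could go wrong. The $n = 0$ case is immediate from the definition of $\vec{K}_A(M,0)$, where the structure of an object under and over $A$ is already built in.
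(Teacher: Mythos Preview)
Your proposal is correct and matches the paper's implicit reasoning: the statement is recorded as an \emph{Observation} with no separate proof, and your unpacking of the definitions (that $P_0^\alg(Y)$ has the right $\pi_0$ and vanishing higher homotopy, that a map between $0$-truncated objects is an equivalence once it is so on $\pi_0$, and that this assembles into the $A/\!/A$ structure) is exactly the routine verification the paper leaves to the reader. The only remark is that your justification for $\tau_0^\alg$ being an isomorphism on $\pi_0$ can be phrased more directly by invoking the left localization adjunction $\pi_0 \dashv \const$ of \Cref{pi0 and const are a qadjn on tTE and Phi algs}, which identifies $P_0^\alg$ with $\const \circ \pi_0$.
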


\begin{obs}\label{alg EM spectrum}
For any $n \geq 0$, taking the pullback of a map of type $\vec{K}_A(M,n+1)$ with itself yields a fiber square
\[ \begin{tikzcd}
K_A(M,n) \arrow{r}{\tau_0^\alg} \arrow{d}[swap]{\tau_0^\alg} & A \arrow{d} \\
A \arrow{r} & K_A(M,n+1)
\end{tikzcd} \]
in $\locpiAlgtTE$.  Hence, the objects 
\[ \left\{ K_A(M,n) \in \locpiAlgtTE_{A/\!/A} \right\}_{n \geq 0} \]
assemble into an $\Omega$-spectrum object
\[ K_A M \in \Stab \left( \locpiAlgtTE_{A/\!/A} \right) . \]
\end{obs}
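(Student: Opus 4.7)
The plan is to produce the asserted pullback square and then derive the $\Omega$-spectrum structure formally. First I would fix $n \geq 0$ and form the pullback
\[ P_n \;:=\; A \underset{K_A(M,n+1)}{\times} A \]
in $\locpiAlgtTE$ using the algebraic Eilenberg--Mac Lane morphism $\tau_0^\alg : A \to K_A(M,n+1)$ of type $\vec{K}_A(M,n+1)$ on both factors; since both legs point toward $K_A(M,n+1) \in \locpiAlgtTE_{A/\!/A}$ and both are canonically sections of $\tau_0^\alg$ on $P_n$, the object $P_n$ inherits a natural lift to $\locpiAlgtTE_{A/\!/A}$. My goal then would be to show $P_n$ is of type $K_A(M,n)$, from which \Cref{moduli of alg EM objects} supplies a canonical equivalence $K_A(M,n) \xra{\sim} P_n$ in $\locpiAlgtTE_{A/\!/A}$.

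To verify the type, I would exploit the Mayer--Vietoris style long exact sequence attached to any pullback square in a pointed $\infty$-category with finite limits, applied in $\locpiAlgtTE_{A/\!/A}$: schematically
\[ \cdots \to \pi_{i+1}K_A(M,n+1) \to \pi_i P_n \to \pi_i A \oplus \pi_i A \to \pi_i K_A(M,n+1) \to \cdots . \]
Since $\pi_* K_A(M,n+1)$ is concentrated in degrees $0$ and $n+1$ (with values $A$ and $M$) and the map $\tau_0^\alg$ is an isomorphism on $\pi_0$ and zero on $\pi_{n+1}$, unwinding degree by degree produces $\pi_0 P_n \cong A$, $\pi_n P_n \cong M$, and $\pi_i P_n = 0$ for $i \notin \{0,n\}$. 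The identification of the $\pi_0 P_n \cong A$-module structure on $\pi_n P_n$ with the given $\Phi$-module $M$ then follows by transporting along \Cref{augn gives adjn and module structures}: the pullback commutes with the canonical lifts to $\Mod_A^\Phi(\tA)$ of classical homotopy because both forgetful functors in that lemma preserve limits.

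Given the identification $P_n \simeq K_A(M,n)$ in $\locpiAlgtTE_{A/\!/A}$, I would then observe that the pullback square in the statement, rewritten as
\[ K_A(M,n) \simeq \Omega_A\bigl( K_A(M,n+1) \bigr) , \]
exhibits the adjoint structure maps of an $\Omega$-prespectrum object in the stabilization $\Stab(\locpiAlgtTE_{A/\!/A})$ — here $\Omega_A$ denotes the loop functor in the pointed $\infty$-category $\locpiAlgtTE_{A/\!/A}$, available since the $\infty$-category is presentable and pointed. Assembling these equivalences over $n \geq 0$ gives the $\Omega$-spectrum object $K_A M$.

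The main obstacle is not the categorical shape of the argument but the compatibility of module structures under the pullback: namely, checking that the $\Phi$-module structure recovered on $\pi_n P_n$ via \Cref{augn gives adjn and module structures} genuinely matches the a priori structure on $M$, rather than some twisted version. This requires a careful trace through the augmentation datum $\tT_E \da \Phi$ and its interaction with the functorial lift provided by \Cref{0th classical homology lifts to Phi-algs}; the potential subtlety is that abelian group object structures in $\locpiAlgtTE_{/A}$ need not automatically restrict to those in $\Alg_\Phi(\tA)_{/A}$ without invoking the Quillen adjunction of \Cref{pi0 and const are a qadjn on tTE and Phi algs} to pass between the two descriptions.
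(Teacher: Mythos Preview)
Your approach is essentially what the paper has in mind: the statement is recorded as an \emph{Observation} with no proof, so the intended justification is precisely the direct homotopy-group computation you outline, using the long exact sequence attached to the fiber sequence underlying the pullback (the forgetful functor $\forget_{\tT_E}$ preserves limits, and the underlying simplicial objects in $\tA$ supply the long exact sequence).

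Two small corrections. First, your citation of \Cref{moduli of alg EM objects} for uniqueness is slightly off: that result concerns objects of type $K_A$ only; the uniqueness of objects of type $K_A(M,n)$ is established via \Cref{moduli of alg EM maps} (and the paper explicitly forward-references both in the definition). Second, your computation ``$\pi_0 P_n \cong A$, $\pi_n P_n \cong M$'' does not literally cover $n=0$: in that case the paper \emph{defines} $K_A(M,0)$ to be $K_{M \ltimes A}$, so what you must verify is that $\pi_0(A \times_{K_A(M,1)} A) \cong M \ltimes A$ as a $\Phi$-algebra (not merely $A$ with a module $M$ on top). This is exactly the algebra-structure compatibility you flag at the end, but for $n=0$ it concerns the $\Phi$-algebra structure on $\pi_0$ itself rather than the module structure on a higher $\pi_n$; it follows from the diagonal section $A \to P_0$ together with the identification of the fiber, and is implicit in the separate $n=0$ case of \Cref{equivce of moduli spaces of alg EM maps}. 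None of this is a gap in your strategy, only in its stated scope.
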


\begin{defn}\label{define AQcoh}
Let $A \in \Alg_\Phi(\tA)$, let $M \in \Mod_A^\Phi(\tA)$, and let $n \geq 0$.  Suppose that $k \ra A = \const(A)$ is a morphism in $\locpiAlgtTE$, and use this to consider $K_A(M,n) \in \locpiAlgtTE_{k/\!/A}$.  Then, choose any object $X \in \locpiAlgtTE_{k/\!/A}$.
\begin{enumerate}
\item We define the \bit{$n\th$ (Andr\'e--Quillen) cohomology group} of $X$ with coefficients in $M$ to be the abelian group
\[ H^n_{\tT_E}(X/k;M) = [ X , K_A(M,n) ]_{\locpiAlgtTE_{k/\!/A}} \in \Ab . \]
\item We define the \bit{$n\th$ (Andr\'e--Quillen) cohomology space} of $X$ with coefficients in $M$ to be the based space
\[ \ms{H}^n_{\tT_E}(X/k;M) = \hom_{\locpiAlgtTE_{k/\!/A}}(X,K_A(M,n)) \in \S_* . \]
\end{enumerate}
Thus, we have that
\[ H^n_{\tT_E}(X/k;M) = \pi_0 ( \ms{H}^n_{\tT_E}(X/k;M) ) , \]
and moreover it follows from \Cref{alg EM spectrum} that
\[ H^{n-i}_{\tT_E}(X/k;M) = \pi_i ( \ms{H}^n_{\tT_E}(X/k;M)) \]
for $0 \leq i \leq n$.  (In particular, cohomology groups are indeed abelian groups, and cohomology spaces are infinite loopspaces.)
\end{defn}

\begin{obs}\label{AQcoh lexseq}
In the setting of \Cref{define AQcoh}, there is an evident pullback square
\[ \begin{tikzcd}
\hom_{\locpiAlgtTE_{X/\!/A}}(A,K_A(M,n)) \arrow{r} \arrow{d} & \hom_{\locpiAlgtTE_{k/\!/A}}(A,K_A(M,n)) \arrow{d} \\
\{ X \ra A \ra K_A(M,n) \} \arrow[hook]{r} & \hom_{\locpiAlgtTE_{k/\!/A}}(X,K_A(M,n))
\end{tikzcd} \]
in $\S_*$, which is by definition a pullback square
\[ \begin{tikzcd}
\ms{H}^n_{\tT_E}(A/X;M) \arrow{r} \arrow{d} & \ms{H}^n_{\tT_E}(A/k;M) \arrow{d} \\
\{ 0 \} \arrow[hook]{r} & \ms{H}^n_{\tT_E}(X/k;M).
\end{tikzcd} \]
This gives rise to a long exact sequence
\[ \begin{tikzcd}[row sep=0.1cm]
0 \arrow{r} & H^0_{\tT_E}(A/X;M) \arrow{r} & H^0_{\tT_E}(A/k;M) \arrow{r} & H^0_{\tT_E}(X/k;M) \arrow{r}{\delta} & \cdots \\
\cdots \arrow{r}{\delta} & H^n_{\tT_E}(A/X;M) \arrow{r} & H^n_{\tT_E}(A/k;M) \arrow{r} & H^n_{\tT_E}(X/k;M) \arrow{r}{\delta} & H^{n+1}_{\tT_E}(A/X;M) \arrow{r} & \cdots
\end{tikzcd} \]
in $\Ab$; exactness at $H^0_{\tT_E}(A/X;M)$ follows from the fact that the space
\[ \hom_{\locpiAlgtTE_{k/\!/A}}(X,K_A(M,0)) \simeq \hom_{\Alg_\Phi(\tA)_{\pi_0 k /\!/A}}(\pi_0 X, M \ltimes A) \]
is discrete (and so in particular has vanishing $\pi_1$).  We refer to this as the \bit{transitivity sequence}.
\end{obs}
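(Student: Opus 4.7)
The plan is to obtain the pullback square from a standard identity for mapping spaces in iterated coslices, then extract the long exact sequence by applying $\pi_*$ to the resulting fiber square, and finally cut off the sequence on the left using $0$-truncatedness.

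First I would establish the pullback square. The key input is the standard iterated coslice identification $\locpiAlgtTE_{X/\!/A} \simeq (\locpiAlgtTE_{k/\!/A})_{(X \to A)/}$, which is valid in any $\infty$-category. Combined with the general formula for mapping spaces out of an object in a coslice, this yields a natural fiber sequence
\[ \hom_{\locpiAlgtTE_{X/\!/A}}(A, K) \to \hom_{\locpiAlgtTE_{k/\!/A}}(A, K) \to \hom_{\locpiAlgtTE_{k/\!/A}}(X, K) \]
for any $K \in \locpiAlgtTE_{k/\!/A}$, in which the second map is precomposition with $X \to A$ and the relevant basepoint on the right is the composite $X \to A \to K$. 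Specializing $K = K_A(M,n)$ and invoking \Cref{define AQcoh} to recognize the corners as the indicated cohomology spaces then yields the claimed pullback square in $\S_*$.

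Second, I would extract the long exact sequence by applying $\pi_*$ to this fiber square. The $\Omega$-spectrum structure from \Cref{alg EM spectrum} produces equivalences $\hom_{\locpiAlgtTE_{k/\!/A}}(Y, K_A(M,n)) \simeq \Omega \hom_{\locpiAlgtTE_{k/\!/A}}(Y, K_A(M,n+1))$ for both $Y = A$ and $Y = X$, which identify $\pi_i$ of a mapping space into $K_A(M,n)$ with $\pi_0$ of a mapping space into $K_A(M,n-i)$, i.e.\! with the cohomology group $H^{n-i}$. The long exact sequences of homotopy groups attached to the pullback square for varying $n$ then splice into a single exact sequence whose coboundary $\delta$ is, by construction, the connecting map of the fiber sequence; this is the claimed transitivity sequence.

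Third, to verify that the sequence begins with $0$ on the left---i.e.\! exactness at $H^0_{\tT_E}(A/X;M)$---I would invoke the identification
\[ \hom_{\locpiAlgtTE_{k/\!/A}}(X, K_A(M,0)) \simeq \hom_{\Alg_\Phi(\tA)_{\pi_0 k /\!/A}}(\pi_0 X, M \ltimes A) \]
recalled in the statement, which exhibits the left-hand mapping space as discrete; in particular its $\pi_1$ vanishes and the long exact sequence cuts off as claimed. The identification itself follows from the Quillen adjunction of \Cref{pi0 and const are a qadjn on tTE and Phi algs} together with the $0$-truncatedness of $K_A(M,0) = M \ltimes A$. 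The only real subtlety anywhere in the argument is basepoint bookkeeping when splicing the long exact sequences across different values of $n$, but this is automatic once one consistently stays inside the coslice $\locpiAlgtTE_{k/\!/A}$, where the zero map $X \to K_A(M,n)$ coincides with the composition $X \to A \to K_A(M,n)$.
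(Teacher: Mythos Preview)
Your proposal is correct and follows essentially the same approach as the paper. The paper treats this as an observation with the justification embedded directly in the statement (the ``evident'' pullback square, the long exact sequence it ``gives rise to'', and the discreteness argument for exactness at $H^0$); your write-up simply unpacks each of these steps explicitly, invoking the iterated-coslice description of mapping spaces, the $\Omega$-spectrum structure of \Cref{alg EM spectrum}, and the Quillen adjunction of \Cref{pi0 and const are a qadjn on tTE and Phi algs} exactly as the paper intends.
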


\begin{rem}
When $M \in \tA$ is an \textit{extended} comodule, these cohomology computations reduce to analogous ones in $\locpiAlgTE$ (see \cite[Proposition 2.4.7]{GH}).
\end{rem}

\subsection{Moduli spaces in algebra}

We will be interested in various moduli spaces of algebraic objects: ultimately, our obstruction theory will be based on homotopy groups in the $\infty$-category $\locpiAlgtTE$.

In order to be able to effectively control these homotopy groups, we need to make the following assumption.

\begin{ass}\label{assume BM excision}
We assume that $\locpiAlgtTE$ has \bit{Blakers--Massey excision}: for any pushout square
\[ \begin{tikzcd}
X \arrow{r}{\psi} \arrow{d}[swap]{\varphi} & Z \arrow{d}{\rho} \\
Y \arrow{r} & W
\end{tikzcd} \]
such that $\pi_{<m}(\fib(\varphi)) = \pi_{<n}(\fib(\psi)) = 0$, the map $\pi_k(\fib(\varphi)) \ra \pi_k(\fib(\rho))$ is an isomorphism for $k < m+n$ and is surjective for $k=m+n$.
\end{ass}

\begin{cor}[{\cite[Corollary 2.3.15]{GH}}]\label{BM lexseq}
Suppose that
\[ \begin{tikzcd}
X \arrow{r}{\psi} \arrow{d}[swap]{\varphi} & Z \arrow{d} \\
Y \arrow{r} & W
\end{tikzcd} \]
is a pushout square in $\locpiAlgtTE$ such that $\pi_{<m}(\fib(\varphi)) = \pi_{<n}(\fib(\psi)) = 0$.  Then there is an induced partial long exact sequence
\[ \begin{tikzcd}[row sep=0.1cm, ampersand replacement=\&]
\& \& \pi_{m+n}(Y) \oplus \pi_{m+n}(Z) \arrow{r} \& \pi_{m+n}(W) \arrow{r}{\delta} \& \pi_{m+n-1}(X) \arrow{r} \& \cdots \\
\cdots \arrow{r}{\delta} \& \pi_0(X) \arrow{r} \& \pi_0(Y) \oplus \pi_0(Z) \arrow{r} \& \pi_0(W) \arrow{r} \& 0
\end{tikzcd} \]
in $\tA$, which we refer to as the \bit{Blakers--Massey long exact sequence}.
\end{cor}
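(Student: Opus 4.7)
My plan is to deduce the Blakers--Massey long exact sequence from \Cref{assume BM excision} by comparing the two fiber sequences associated to the horizontal (or vertical) arrows of the pushout square. Writing $\rho : Z \to W$ for the right-hand vertical arrow, functoriality of the fiber construction produces a commutative diagram of fiber sequences
\[ \begin{tikzcd}
\fib(\varphi) \arrow{r} \arrow{d}{\kappa} & X \arrow{r}{\varphi} \arrow{d}{\psi} & Y \arrow{d} \\
\fib(\rho) \arrow{r} & Z \arrow{r}{\rho} & W,
\end{tikzcd} \]
whose induced ladder of long exact sequences of homotopy groups in $\tA$ will be the essential computational device. By \Cref{assume BM excision} applied to the given pushout square, the connectivity hypotheses on $\fib(\varphi)$ and $\fib(\psi)$ imply that $\kappa_* : \pi_k(\fib(\varphi)) \to \pi_k(\fib(\rho))$ is an isomorphism for $k < m+n$ and a surjection for $k = m+n$.

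With this in hand, the connecting homomorphism $\delta : \pi_k(W) \to \pi_{k-1}(X)$ in the target sequence is defined, for $1 \leq k \leq m+n$, as the composite
\[ \pi_k(W) \xrightarrow{\partial} \pi_{k-1}(\fib(\rho)) \xleftarrow{\kappa_*,\,\cong} \pi_{k-1}(\fib(\varphi)) \to \pi_{k-1}(X), \]
where $\partial$ is the boundary of the lower fiber sequence, the middle arrow inverts the isomorphism supplied by Blakers--Massey (valid since $k-1 < m+n$), and the last map is induced by $\fib(\varphi) \to X$. The remaining map $\pi_k(Y) \oplus \pi_k(Z) \to \pi_k(W)$ is defined as the difference of the two maps induced by the commutativity of the pushout square.

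The bulk of the proof then consists in checking exactness of the resulting sequence at each of its positions $\pi_k(X)$, $\pi_k(Y) \oplus \pi_k(Z)$, and $\pi_k(W)$, which is a purely formal ladder diagram chase in $\tA$ using the two long exact sequences together with the Blakers--Massey identification of the fibers; the interior exactness follows from the standard Mayer--Vietoris argument, and exactness at $\pi_k(X)$ in particular uses that $\kappa_*$ is an isomorphism on $\pi_{k-1}$ (requiring $k-1 < m+n$, i.e.\ $k \leq m+n$). The one delicate endpoint is $k = m+n$: here I must verify that the right end of the sequence, $\pi_{m+n}(Y) \oplus \pi_{m+n}(Z) \to \pi_{m+n}(W) \xrightarrow{\delta} \pi_{m+n-1}(X)$, is exact using only surjectivity (rather than bijectivity) of $\kappa_*$ at degree $m+n$; this works because the relevant kernel condition for $\delta$ can be rewritten using the lower long exact sequence and the preceding surjectivity of $\kappa_*$ at $\pi_{m+n}$, making the overall chase still go through. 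The main (and only mild) obstacle is therefore keeping this boundary case straight alongside the generic interior exactness argument; no further input beyond \Cref{assume BM excision} and the functoriality of fibers is required.
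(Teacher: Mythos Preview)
The paper does not supply its own proof of this corollary; it simply records the citation to \cite[Corollary 2.3.15]{GH} and moves on. Your derivation from \Cref{assume BM excision} via the ladder of long exact sequences associated to the two fiber sequences, together with the standard Mayer--Vietoris diagram chase, is correct and is exactly the expected argument behind the cited result.

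One small remark: exactness at $\pi_{m+n}(W)$ and at $\pi_{m+n-1}(X)$ in fact only requires that $\kappa_*$ be an isomorphism in degree $m+n-1$, which holds since $m+n-1 < m+n$. The surjectivity of $\kappa_*$ in degree $m+n$ is not actually needed for the sequence as stated (it would only enter if one tried to extend the sequence one step further to the left, to include $\pi_{m+n}(X)$). Your caution about the boundary case is therefore harmless but unnecessary.
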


\begin{rem}
\Cref{assume BM excision} holds in examples of interest, e.g.\! when $\tT_E$ is the monad corresponding to an operad $E_\star(T) \in \Op(s\tA)$ for any $T \in \Op(s\Set)$ (see \cite[Theorem 2.3.13 and Remark 2.3.14]{GH}).
\end{rem}

Our moduli spaces will be related by the following natural construction.

\begin{constr}\label{alg difference constrn}
Let $X \xra{\varphi} Y$ be a map in $\locpiAlgtTE$, and write
\[ \pozalg(\varphi) = Y \underset{X}{\coprod} P_0^\alg X = \colim \left( \begin{tikzcd}
X \arrow{r}{\tau_0^\alg} \arrow{d}[swap]{\varphi} & P_0^\alg (X) \\ Y \end{tikzcd} \right) \]
for the indicated pushout.  For any $n \geq 0$ we obtain a commutative diagram
\[ \begin{tikzcd}
X \arrow{r}{\tau_0^\alg} \arrow{d}[swap]{\varphi} & P_0^\alg( X) \arrow{rd}{\delta_n(\varphi)} \arrow{d} \\
Y \arrow{r} & \pozalg(\varphi) \arrow{r}[swap]{\tau_{n+1}^\alg} & P_{n+1}^\alg(\pozalg(\varphi))
\end{tikzcd} \]
in $\locpiAlgtTE$, and we refer to the map $\delta_n(\varphi)$ as the \bit{$n\th$ difference construction} on the map $\varphi$.  This defines an augmented endofunctor on $\Fun([1],\locpiAlgtTE)$.  We will generally only apply this in the case that $n \geq 1$, and in the case that $\pi_{<n}(\varphi)$ is an isomorphism.
\end{constr}

\begin{lem}\label{alg difference constrn gives map of EM type}
Suppose that the map $X \xra{\varphi} Y$ in $\locpiAlgtTE$ is an isomorphism on $\pi_{<n}$ for some $n \geq 1$.  Write $A = \pi_0 X \cong \pi_0 Y \in \Alg_\Phi(\tA)$ and $M = \pi_n \fib(\varphi) \in \Mod_A^\Phi(\tA)$.  Then, the map
\[ P_0^\alg (X) \xra{\delta_n(\varphi)} P_{n+1}^\alg (\pozalg(\varphi)) \]
is of type $\vec{K}_A(M,n+1)$.
\end{lem}

\begin{proof}
This follows from \Cref{assume BM excision}.
\end{proof}

\begin{cor}[{\cite[Proposition 2.5.13]{GH}}]\label{nth alg difference constrn gives functorial k-invts}
Let $X \xra{\varphi} Y$ be a map in $\locpiAlgtTE$.  Suppose that $\pi_* \fib(\varphi)$ is concentrated in degree $n$.  The the square
\[ \begin{tikzcd}[column sep=1.5cm]
X \arrow{r}{\tau_0^\alg} \arrow{d}[swap]{\varphi} & P_0^\alg (X) \arrow{d}{\delta_n(\varphi)} \\
Y \arrow{r} & P_{n+1}^\alg ( \pozalg(\varphi))
\end{tikzcd} \]
is a pullback in $\locpiAlgtTE$. \qed
\end{cor}

\begin{obs}
In the setting of \Cref{nth alg difference constrn gives functorial k-invts}, if additionally $X$ (and hence $Y$) is $n$-truncated, then we can identify the map $X \ra Y$ as $\tau_{\leq n}^\alg X \ra \tau_{\leq (n-1)}^\alg X$, and from here \Cref{alg difference constrn gives map of EM type} allows us to identify the pullback square of \Cref{nth alg difference constrn gives functorial k-invts} as
\[ \begin{tikzcd}
P_n^\alg X \arrow{r} \arrow{d}[swap]{\tau_{n-1}^\alg} & K_A \arrow{d} \\
P_{n-1}^\alg X \arrow{r} & K_A( M , n+1 )
\end{tikzcd} \]
(in which the right vertical map is of type $\vec{K}_A(M,n+1)$).  This is a functorial construction of k-invariants in $\locpiAlgtTE$.
\end{obs}

% \begin{obs}
% { \tiny
% Suppose that $k \ra A$ is a morphism in $\Alg_\Phi(\tA)$.  {\color{cyan} Assuming that $\forget_\tA(\forget_\Phi(k)) \in \A_\proj$} {\color{magenta} and that the model category $\Alg_{\tT_E}(s\tA)_{\pi_*}$ is relatively left proper}, all of the above constructions work not just in $\locpiAlgtTE$ but in $\locpiAlgtTE_{k/}$ as well.  {\color{cyan} [actually, this should always work down in the localization, the question is whether or not we need to resolve $k$ (and something or other for the relative left properness).]}  {\color{red} [Actually, I think the point is just that the slice $\locpiAlgtTE_{k/}$ won't be presented by the model category $(\Alg_{\tT_E}(s\tA)_{k/})_{\pi_*}$ if $\const(k)$ isn't cofibrant.]}  We might make these {\color{cyan} assumptions} henceforth {\color{cyan} and fix $k \in \Alg_\Phi(\tA)$ [notation].}
% }
% \end{obs}

\begin{notn}\label{fix a simplicial algebra k}
We fix an object $k \in \locpiAlgtTE$.  We will generally work in its undercategory $\locpiAlgtTE_{k/}$; in particular, we will generally have fixed a map $k \ra A = \const(A)$.  Everything will take place in this undercategory, so that e.g.\! a morphism in $\locpiAlgtTE_{k/}$ of type $\vec{K}_A(M,n)$ will be understood to mean a commutative triangle
\[ \begin{tikzcd}[column sep=0.5cm]
& k \arrow{ld} \arrow{rd} \\
K_A \arrow{rr} & & K_A(M,n)
\end{tikzcd} \]
in $\locpiAlgtTE$ in which the left vertical arrow identifies with the fixed map.
% is this extra generality helpful, or is it okay to just have $k \in \Alg_\Phi(\tA)$?  it's just a question of transitivity sequences, i think.
\end{notn}

% decorate moduli spaces, but not EM object names such as K_A and K_A(M,n)

\begin{notn}
Suppose that $Y \in \locpiAlgtTE_{k/}$ is $(n-1)$-truncated for some $n \geq 1$, write $A = \pi_0 Y \in \Alg_\Phi(\tA)_{k/}$, and suppose $M \in \Mod_A^\Phi(\tA)$.  We write
\[ \ms{M}_k(Y \oplus (M,n)) \subset \locpiAlgtTE_{k/} \]
for the moduli space of those objects $X$ such that
\begin{itemize}
\item $X$ is $n$-truncated,
\item there exists an equivalence $P_{n-1}^\alg X \xra{\sim} Y$, and
\item there exists an isomorphism $\pi_n X \cong M$ via the resulting equivalence $\Mod_{\pi_0 X}^\Phi(\tA) \simeq \Mod_A^\Phi(\tA)$.
\end{itemize}
\end{notn}

\begin{notn}
In our moduli spaces, we will use the symbol $\loopra$ to denote a restriction to morphisms which are isomorphisms on homotopy groups in those dimensions for which \textit{both} the source and the target have nonvanishing homotopy. % in the following, note that $\ms{M}_k(Y \loopra K_{A/k}(M,n+1)$ is actually talking about a FIXED object Y, but an ARBITRARY object of type K_A(M,n)
\end{notn}

\begin{prop}[{\cite[Theorem 2.5.16]{GH}}]\label{adding M in degree n is classified by a map to KAMn+1}
Suppose that $Y \in \locpiAlgtTE_{k/}$ is $(n-1)$-truncated for some $n \geq 1$, write $A = \pi_0 Y \in \Alg_\Phi(\tA)_{k/}$, and suppose $M \in \Mod_A^\Phi(\tA)$.  Then the functor
\[ X \mapsto \left( P_{n-1}^\alg(X) \ra P_{n+1}^\alg(\pozalg((\tau_{n-1}^\alg)_X)) \xla{\delta_n((\tau_{n-1}^\alg)_X)} P_0^\alg(X) \right) \]
determines an equivalence
\[ \ms{M}_k(Y \oplus (M,n)) \xra{\sim} \ms{M}_k( Y \loopra K_{A}(M,n+1) \loopla K_{A} ) \]
in $\S$.
\end{prop}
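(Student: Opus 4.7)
The plan is to exhibit the functor displayed in the statement as an equivalence by producing an inverse from the pullback construction and checking the two compositions are naturally equivalent to the identity. The forward functor sends $X$ to the cospan $Y \simeq P_{n-1}^\alg(X) \to P_{n+1}^\alg(\pozalg((\tau_{n-1}^\alg)_X)) \xla{\delta_n((\tau_{n-1}^\alg)_X)} P_0^\alg(X)$; by \Cref{alg difference constrn gives map of EM type} applied to $\varphi = \tau_{n-1}^\alg : X \to P_{n-1}^\alg(X)$ (whose fiber has homotopy concentrated in degree $n$ equal to $M$, since $X$ is $n$-truncated), the right-hand map is of type $\vec{K}_A(M,n+1)$, so the cospan lies in the target moduli space. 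The candidate inverse sends a cospan $Y \to K_A(M,n+1) \la K_A$ to its pullback $X$ in $\locpiAlgtTE_{k/}$.

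First I would verify that the inverse construction indeed lands in $\ms{M}_k(Y \oplus (M,n))$. Since the right leg is of type $\vec{K}_A(M,n+1)$, its fiber has homotopy $M$ concentrated in degree $n$; combining this with the fiber sequence computing $X$ and the assumption that $Y$ is $(n-1)$-truncated, the homotopy long exact sequence (justified via \Cref{assume BM excision} and \Cref{BM lexseq}) shows that $X$ is $n$-truncated, $\pi_n(X) \cong M$ as an $A$-module, and the $(n-1)$-truncation of $X$ is canonically equivalent to $Y$.

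Next I would verify the two round-trips. The composition forward-then-backward is the identity essentially by definition: \Cref{nth alg difference constrn gives functorial k-invts}, applied to $\varphi = \tau_{n-1}^\alg$, states exactly that the pullback of the cospan produced by the difference construction recovers $X$. For backward-then-forward, starting from a cospan with pullback $X$, one uses functoriality of the difference construction together with the universal property of the pullback to identify $P_0^\alg(X) \to P_{n+1}^\alg(\pozalg((\tau_{n-1}^\alg)_X))$ with the original right leg $K_A \to K_A(M,n+1)$; the essential uniqueness of algebraic Eilenberg--Mac Lane objects and morphisms (Propositions \ref{moduli of alg EM objects} and \ref{moduli of alg EM maps}) upgrades this identification to a coherent equivalence of cospans.

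The main obstacle is promoting these pointwise equivalences to an equivalence of moduli spaces, i.e.\ checking that the comparison is an equivalence on automorphism/mapping spaces rather than just on $\pi_0$. Here I would observe that both moduli spaces admit a common projection to $\ms{M}_k(Y)$, with fibers over a chosen $Y'$ given respectively by (i) the space of $n$-truncated extensions of $Y'$ by $(M,n)$, and (ii) the space of extensions of $Y'$ to a cospan with the prescribed target and source of EM type. Each fiber fits into a fibration over $B\Aut(A,M)$ with fiber (i) the space of lifts in $\ms{M}_k(Y)$ of the pair $(Y',M)$ to an $n$-truncated object, and (ii) the space $\ms{H}^{n+1}_{\tT_E}(Y'/k;M)$ of k-invariants; the difference construction delivers a map between these fibrations covering the identity on $B\Aut(A,M)$, and the pullback construction provides a section over the k-invariant. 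A straightforward diagram chase using \Cref{nth alg difference constrn gives functorial k-invts} and the uniqueness of EM objects then yields the equivalence fiberwise, hence globally.
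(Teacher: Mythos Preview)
Your approach is correct and matches the paper's: the paper's entire proof is the single sentence ``An inverse is provided by the pullback functor,'' and you have supplied exactly this inverse together with the verifications (via \Cref{nth alg difference constrn gives functorial k-invts} and \Cref{alg difference constrn gives map of EM type}) that the paper leaves implicit. Your final paragraph is unnecessary, however: since both the difference construction and the pullback are genuine functors between $\infty$-groupoids, the natural equivalences you established in the round-trip arguments already exhibit an equivalence of spaces, not merely a bijection on $\pi_0$; no separate fiberwise analysis over $\ms{M}_k(Y)$ or $B\Aut(A,M)$ is needed.
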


\begin{proof}
An inverse is provided by the pullback functor.
\end{proof}

\begin{notn}\label{notation for moduli spaces of alg EM objects}
For any $A \in \Alg_\Phi(\tA)_{k/}$, we write
\[ \ms{M}_{A/k} \subset \locpiAlgtTE_{k/} \]
for the moduli space of objects of type $K_{A/k}$.  For any $M \in \Mod_A^\Phi(\tA)$ and any $n \geq 1$, we write
\[ \ms{M}_{A/k}(M,n) \subset \Fun([1],\locpiAlgtTE_{k/}) \]
for the moduli space of morphisms of type $\vec{K}_{A/k}(M,n)$.
\end{notn}

\begin{notn}
It will be of auxiliary use to write 
\[ \ms{M}_{A/k}(M,0) \]
for the moduli space of pairs of an object $X \in \locpiAlgtTE$ and an abelian ($\infty$-)group object $Y \in \Ab(\locpiAlgtTE_{/X})$ in its overcategory which are in the image of $(A,M)$ under the derived right adjoint
\[ \Alg_\Phi(\tA)_{/A} \xra{\const} \locpiAlgtTE_{/A} \]
of the Quillen adjunction of \Cref{pi0 and const are a qadjn on tTE and Phi algs}.
\end{notn}

\begin{prop}\label{equivce of moduli spaces of alg EM maps}
Let $A \in \Alg_\Phi(\tA)_{k/}$, let $M \in \Mod_A^\Phi(\tA)$, and let $n \geq 0$.  Then the functor
\[ (X \ra Y) \mapsto {\lim}^{\locpiAlgtTE_{X/}} \left( \begin{tikzcd} & X \arrow{d} \\ X \arrow{r} & Y \end{tikzcd} \right) \]
defines an equivalence
\[ \ms{M}_{A/k}(M,n+1) \xra{\sim} \ms{M}_{A/k}(M,n) \]
in $\S$.
\end{prop}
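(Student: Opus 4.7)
My plan is to invoke the $\Omega$-spectrum structure of Observation \ref{alg EM spectrum}, together with Proposition \ref{adding M in degree n is classified by a map to KAMn+1}, to exhibit an explicit inverse and verify that both composites are equivalences.

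The forward functor is well-defined: $\{K_A(M,n)\}_{n \geq 0}$ being an $\Omega$-spectrum in $\locpiAlgtTE_{A/\!/A}$ means that $K_A(M,n) \simeq K_A \times_{K_A(M,n+1)} K_A$, and the diagonal $K_A \to K_A(M,n)$ is a map of type $\vec{K}_{A/k}(M,n)$. For the inverse, I would apply Proposition \ref{adding M in degree n is classified by a map to KAMn+1} with $Y = K_A$ (which is $(n-1)$-truncated for any $n \geq 1$): this identifies the moduli of $n$-truncated objects $X$ with $P_{n-1}^\alg X \simeq K_A$ and $\pi_n X \cong M$ (which one may identify with $\ms{M}_{A/k}(M,n)$ via the essential uniqueness of algebraic Eilenberg--Mac Lane maps out of $K_A$) with the moduli of classifying diagrams $K_A \loopra K_A(M,n+1) \loopla K_A$ (which one may similarly identify with $\ms{M}_{A/k}(M,n+1)$). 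Unpacking this equivalence gives an explicit inverse: given $\psi : X \to Z$ of type $\vec{K}_{A/k}(M,n)$, view $Z$ as an $n$-truncated object, apply the functor of Proposition \ref{adding M in degree n is classified by a map to KAMn+1} (based on the $n$-th difference construction $\delta_n((\tau_{n-1}^\alg)_Z)$) to obtain the classifying map $P_0^\alg Z \to K_A(M,n+1)$, which is of type $\vec{K}_{A/k}(M,n+1)$.

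That the two composites are canonically equivalent to the identity will follow from Corollary \ref{nth alg difference constrn gives functorial k-invts}: applied to a map $\varphi : X \to Y$ of type $\vec{K}_{A/k}(M,n+1)$ (whose fiber is concentrated in degree $n$ by the long exact sequence of homotopy groups), it gives a universal pullback square canonically identifying $Y$ as pulled back from $K_A(M,n+1)$ along $\delta_n(\varphi)$, and dually in the other direction. The $n=0$ case will require separate treatment, using the alternative characterization of $\ms{M}_{A/k}(M,0)$ in terms of abelian group objects over $X$, which admit canonical deloopings by Assumption \ref{assume BM excision}.

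The main obstacle will be establishing naturality throughout: the constructions above involve several auxiliary identifications (between moduli of maps of type $\vec{K}$ and moduli of truncated objects), and one must check that the equivalences assemble into a natural equivalence of $\infty$-functors $\ms{M}_{A/k}(M,n+1) \to \ms{M}_{A/k}(M,n)$ rather than merely a bijection on $\pi_0$. This should follow from the naturality of Proposition \ref{adding M in degree n is classified by a map to KAMn+1} and Corollary \ref{nth alg difference constrn gives functorial k-invts} in their input maps, but the careful bookkeeping of these compatibilities is the step I expect to require the most work.
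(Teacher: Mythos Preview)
Your proposal is correct and follows essentially the same approach as the paper. The paper's proof is terser: for $n \geq 1$ it simply declares that the inverse is $(Z \to W) \mapsto \delta_n(W \to P_0^\alg W)$, which coincides with your $\delta_n((\tau_{n-1}^\alg)_Z)$ since for $W$ of type $K_A(M,n)$ one has $P_{n-1}^\alg W \simeq P_0^\alg W$; you route this same construction through Proposition~\ref{adding M in degree n is classified by a map to KAMn+1}, which adds justification but is not strictly necessary. For $n=0$ the paper gives a more direct inverse---from the pair $(W, Z \in \Ab(\locpiAlgtTE_{/W}))$ with structure map $\varphi$, form the map $K_{\pi_0 W} \to K_{\pi_0 W}(\ker(\pi_0 \varphi),1)$---rather than invoking Blakers--Massey to deloop; both work, but the paper's version is more explicit and avoids the connectivity bookkeeping.
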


\begin{proof}
For $n \geq 1$, an inverse is provided by the functor
\[ (Z \ra W) \mapsto \delta_n(W \ra P_0^\alg (W)) . \]
For $n=0$, an inverse is provided by the functor taking the pair
\[ \left( W \in \locpiAlgtTE \ , \ Z \in \Ab(\locpiAlgtTE_{/W}) \right) , \]
say with structure map $Z \xra{\varphi} W$, to the map
\[ K_{\pi_0 W} \ra K_{\pi_0 W} ( \ker(\pi_0(\varphi)) , 1) \]
(which is evidently of type $\vec{K}_A(M,1)$).
\end{proof}

\begin{prop}[{\cite[Lemma 2.5.18]{GH}}]\label{all homs into alg EM give multiple coh grps}
Let $A \in \Alg_\Phi(\tA)_{k/}$, let $M \in \Mod_A^\Phi(\tA)$, let $X \in \locpiAlgtTE_{k/}$, and let $n \geq 0$.  Then there exists a natural isomorphism
\[ [ X , K_A(M,n) ]_{\locpiAlgtTE_{k/}} \cong \underset{\hom_{\Alg_\Phi(\tA)_{k/}}(\pi_0 X , A)}{\coprod} H^n_{\tT_E}(X/k;M) \]
in $\Ab$ (where the implicit structure map $X \ra A = \const(A)$ in $\Alg_{\tT_E}(s\tA)_{k/}$ necessary for defining the cohomology of $X$ varies over the indexing set). \qed
\end{prop}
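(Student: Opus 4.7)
The plan is to derive the decomposition by analyzing the fibration of mapping spaces induced by the canonical $0$-truncation map $K_A(M,n) \to P_0^\alg K_A(M,n) \simeq A = \const(A)$, which exists because $K_A(M,n)$ has homotopy concentrated in degrees $0$ and $n$ with $\pi_0 \cong A$. Postcomposition yields a map of mapping spaces
\[ \hom_{\locpiAlgtTE_{k/}}(X, K_A(M,n)) \to \hom_{\locpiAlgtTE_{k/}}(X, A) \]
whose target is discrete, since $A \in \locpiAlgtTE^{\leq 0}$ implies the mapping space into it is. By the derived adjunction of \Cref{pi0 and const are a qadjn on tTE and Phi algs} (restricted to the undercategories under $k$), this target is naturally identified with the set $\hom_{\Alg_\Phi(\tA)_{k/}}(\pi_0 X, A)$.

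Next I would identify the fiber of this map over a chosen element $\varphi \in \hom_{\Alg_\Phi(\tA)_{k/}}(\pi_0 X, A)$. By construction, such a $\varphi$ selects a specific structure map $X \to A$ in $\locpiAlgtTE_{k/}$, equipping $X$ with the structure of an object of $\locpiAlgtTE_{k/\!/A}$; then the fiber of the projection above $\varphi$ is by unraveling of definitions precisely the mapping space
\[ \hom_{\locpiAlgtTE_{k/\!/A}}(X, K_A(M,n)) = \ms{H}^n_{\tT_E}(X/k; M), \]
with $K_A(M,n)$ viewed as an object of $\locpiAlgtTE_{A/\!/A}$ via its tautological truncation. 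Passing to $\pi_0$ throughout gives a bijection of sets
\[ [X, K_A(M,n)]_{\locpiAlgtTE_{k/}} \xra{\cong} \coprod_\varphi H^n_{\tT_E}(X/k; M), \]
which is manifestly natural in $X$, in $A$, and in $M$.

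It remains to upgrade this bijection to an isomorphism of abelian groups. For $n \geq 1$, the abelian group structure on the left-hand side arises from the fact that $K_A(M,n) \simeq \Omega K_A(M,n+1)$ in $\locpiAlgtTE_{A/\!/A}$ by \Cref{alg EM spectrum}; this loop space structure is fiberwise over $A$, so it descends to endow each fiber $F_\varphi$ with an infinite loop space structure (giving the abelian group $H^n_{\tT_E}(X/k;M) = \pi_0 F_\varphi$), while simultaneously endowing the total mapping space with an abelian group structure whose projection to the base $\hom(\pi_0 X, A)$ is trivial. This is exactly the statement that the total group is the indexed direct sum (i.e.\ the coproduct in $\Ab$ of the various $H^n$'s, regarded as a set indexed over $\hom(\pi_0 X, A)$ with the evident addition within each summand). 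For $n = 0$, the case is handled separately: by the definition of type $K_A(M,0)$ and the adjunction of \Cref{pi0 and const are a qadjn on tTE and Phi algs}, the left-hand side is $\hom_{\Alg_\Phi(\tA)_{k/}}(\pi_0 X, M \ltimes A)$, and the projection $M \ltimes A \to A$ (split by the zero section) produces the required decomposition over $\hom(\pi_0 X, A)$ directly.

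The main obstacle I anticipate is making the identification of the group structures fully rigorous: one must check that the loop space structure on $K_A(M,n)$ in $\locpiAlgtTE_{A/\!/A}$ really does descend to $\locpiAlgtTE_{k/}$ in a way compatible with the fibration over $\hom(\pi_0 X, A)$ (in particular that addition preserves the underlying map $\pi_0 X \to A$), and that the abelian group operation restricted to each fiber coincides with the intrinsic one used to define $H^n_{\tT_E}(X/k;M)$. This is essentially a diagram chase using the pullback square defining $K_A(M,n)$ as $\Omega K_A(M,n+1)$, but it is the one place where care is required beyond bookkeeping.
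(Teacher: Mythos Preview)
The paper gives no proof of this proposition: it cites \cite[Lemma 2.5.18]{GH} and closes with a \texttt{\textbackslash qed}. Your proposal is a correct and complete reconstruction of the standard argument---fiber the mapping space along postcomposition with $\tau_0^\alg : K_A(M,n) \to A$, identify the (discrete) base via the $\pi_0 \dashv \const$ adjunction of \Cref{pi0 and const are a qadjn on tTE and Phi algs}, and recognize each fiber as $\ms{H}^n_{\tT_E}(X/k;M)$ by the very definition of the cohomology space.

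One point where you are working harder than necessary: the phrase ``in $\Ab$'' in the statement is mildly abusive. The coproduct on the right is really a set-indexed disjoint union of abelian groups, not a direct sum in $\Ab$; correspondingly, the left-hand side $[X,K_A(M,n)]_{\locpiAlgtTE_{k/}}$ is not itself an abelian group, since $K_A(M,n)$ is an abelian group object only in the slice over $A$, not in the undercategory of $k$. The content of ``in $\Ab$'' is that the bijection respects the abelian group structure on each fiber, which is automatic from your identification of the fibers with the cohomology spaces. So your anticipated ``main obstacle''---checking that addition preserves the underlying map $\pi_0 X \to A$ and descending the loop structure---is addressing a phantom difficulty; the fiberwise group structure is all that is claimed and all that is used downstream (e.g.\ in \Cref{pb square with fiber coh spaces indexed by isos}).
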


\begin{notn}
Given an $\infty$-category $\D$ and objects $d_1,d_2 \in \D$, we write $\hom^{\simeq}_\D(d_1,d_2) \subset \hom_\D(d_1,d_2)$ for the subspace of equivalences.  For any other sort of decoration denoting a certain property of a morphism, we use corresponding exponent notation to denote the subspace of the hom-space corresponding to morphisms having this property.
\end{notn}

\begin{notn}
For any $A \in \Alg_\Phi(\tA)_{k/}$, we write $\Aut_k(A) = \Aut_{\Alg_\Phi(\tA)_{k/}}(A)$.  Moreover, for any $M \in \Mod_A^\Phi(\tA)$, we write $\Aut_k(A,M)$ for the group of pairs
\[ \left( \varphi \in \Aut_k(A) \ , \ \psi \in \hom_{\Mod_A^\Phi(\tA)}^{\cong}(M , \varphi^*(M)) \right) . \]
\end{notn}

\begin{prop}[{\cite[Proposition 2.5.19(1)]{GH}}]\label{moduli of alg EM objects}
For any $A \in \Alg_\Phi(\tA)_{k/}$, we have an equivalence $\ms{M}_{A/k} \simeq B \Aut_k(A)$ in $\S$.
\end{prop}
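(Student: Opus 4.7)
The plan is to transfer the problem to the underlying $1$-category $\Alg_\Phi(\tA)$ via the left localization adjunction $\pi_0 \adj \const$ of \Cref{pi0 and const are a qadjn on tTE and Phi algs}, where the statement becomes essentially tautological.

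First, I would observe that since this is a left localization, the right adjoint $\const$ is fully faithful, and its essential image consists precisely of the $0$-truncated objects of $\locpiAlgtTE$: indeed, the unit $X \to \const(\pi_0 X)$ is an equivalence if and only if $\pi_i X = 0$ for all $i > 0$, since $\pi_i(\const Y) = 0$ for $i > 0$ automatically. Consequently, an object of type $K_A$ — i.e., a $0$-truncated object with $\pi_0 \cong A$ — is exactly one equivalent to $\const(A)$ in $\locpiAlgtTE$.

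Next, I would promote this to the under-category picture. Using the adjunction identification
$$\hom_{\locpiAlgtTE}(k, \const(A)) \simeq \hom_{\Alg_\Phi(\tA)}(\pi_0 k, A),$$
one checks directly that $\const$ induces a fully faithful functor
$$\const : \Alg_\Phi(\tA)_{\pi_0 k/} \hookra \locpiAlgtTE_{k/}$$
on undercategories, whose essential image consists of those $(k \to X) \in \locpiAlgtTE_{k/}$ with $X$ being $0$-truncated. Since by definition an object in $\locpiAlgtTE_{k/}$ is of type $K_{A/k}$ precisely when it lies in the essential image of $A \in \Alg_\Phi(\tA)_{\pi_0 k/}$ under this inclusion (the under-category structure $k \to X$ corresponding to the fixed map $\pi_0 k \to A$), the moduli space $\ms{M}_{A/k}$ is equivalent to the maximal sub-$\infty$-groupoid of $\Alg_\Phi(\tA)_{\pi_0 k/}$ on objects equivalent to $A$.

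Finally, I would invoke the standard fact that for any $\infty$-category $\D$ and any $d \in \D$, the maximal sub-$\infty$-groupoid on objects equivalent to $d$ is canonically equivalent to $B\Aut_\D(d)$. Applied to $A \in \Alg_\Phi(\tA)_{\pi_0 k/}$, this yields $\ms{M}_{A/k} \simeq B\Aut_{\Alg_\Phi(\tA)_{\pi_0 k/}}(A) = B\Aut_k(A)$. The only real obstacle is careful bookkeeping across the adjunction to ensure that structure maps $k \to X$ are correctly identified with the maps $\pi_0 k \to A$ they classify; beyond this, the argument is entirely formal and rests on the left localization providing a homotopy-theoretic description of $\Alg_\Phi(\tA)$ as the $0$-truncation of $\locpiAlgtTE$.
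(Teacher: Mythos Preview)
Your proposal is correct and follows essentially the same approach as the paper's proof: both arguments reduce to the full faithfulness of $\const : \Alg_\Phi(\tA) \hookra \locpiAlgtTE$ coming from the left localization adjunction of \Cref{pi0 and const are a qadjn on tTE and Phi algs}, which immediately gives $\Aut_{\locpiAlgtTE_{k/}}(\const(A)) \simeq \Aut_{\Alg_\Phi(\tA)_{k/}}(A)$. Your version simply spells out more of the bookkeeping (the identification of objects of type $K_A$ with the essential image of $\const$, and the passage to undercategories via $\pi_0 k$) that the paper leaves implicit.
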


\begin{proof}
This is the assertion that the canonical map
\[ \Aut_{\Alg_\Phi(\tA)_{k/}}(A) \ra \Aut_{\locpiAlgtTE_{k/}}(\const(A)) \]
induced by the functor
\[ \Alg_\Phi(\tA) \xra{\const} \locpiAlgtTE \]
is an equivalence, which follows from \Cref{pi0 and const are a qadjn on tTE and Phi algs} since it implies that this functor is a full inclusion.
\end{proof}

\begin{prop}[{\cite[Proposition 2.5.19(2)]{GH}}]\label{moduli of alg EM maps}
Suppose that $A \in \Alg_\Phi(\tA)_{k/}$ and that $M \in \Mod_A^\Phi(\tA)$.  Then for any $n \geq 0$ we have an equivalence $\ms{M}_{A/k}(M,n) \simeq B \Aut_k(A,M)$.
\end{prop}

\begin{proof}
This follows from combining \Cref{equivce of moduli spaces of alg EM maps} with the essentially definitional equivalence $\ms{M}_{A/k}(M,0) \simeq B\Aut_k(A,M)$.
\end{proof}

% \noindent{ \color{cyan} \small commutative square $\begin{tikzcd} \ms{M}_k(K_A \loopra K_A(M,n)) \arrow{r}{\sim} \arrow{d} & \ms{M}_k(K_A \loopra K_A(M,n+1)) \arrow{d} \\ \ms{M}_k(K_A \oplus (M,n)) \arrow{r}[swap]{\sim} & \ms{M}(K_A \loopra K_A(M,n+1) \loopla K_A) \end{tikzcd}$ where the left map is ``target'' and the right map is ``repeat''} {\color{cyan} [plus some other intermediate results]}

\begin{notn}
Given an object $X \in \locpiAlgtTE_{k/}$, we write
\[ \ms{M}_k(X) \subset \locpiAlgtTE_{k/} \]
for the full subgroupoid generated by it.
\end{notn}

\begin{lem}[{\cite[Proposition 2.5.22]{GH}}]\label{pb square with fiber coh spaces indexed by isos}
For any $X \in \locpiAlgtTE_{k/}$, there exists a canonical pullback square
\[ \begin{tikzcd}
\underset{\hom^{\cong}_{\Alg_\Phi(\tA)}(\pi_0 X, A)}{\coprod} \ms{H}^n_{\tT_E}(X/k;M) \arrow{r} \arrow{d} & \ms{M}_k(X \loopra K_A(M,n) \loopla A ) \arrow{d} \\
\pt_\S \arrow{r}[swap]{(X,\id_{(A,M)})} & \ms{M}_k(X) \times B \Aut_k(A,M) 
\end{tikzcd} \]
in $\S$.
\end{lem}

\begin{proof}
This is immediate from the definitions.
\end{proof}

\begin{notn}
We write
\[ \what{\ms{H}}^n_{\tT_E}(A/k;M) = \left( \ms{H}^n_{\tT_E}(A/k;M) \right)_{\Aut_k(A,M)} \in \S_* \]
for the based space of coinvariants of the canonical action of $\Aut_k(A,M)$ on $\ms{H}^n_{\tT_E}(A/k;M) \in \S_*$.
\end{notn}

\begin{cor}\label{identify moduli of two alg EM maps as htpy quotient of coh space}
There exists a canonical pullback square
\[ \begin{tikzcd}
\ms{H}^n_{\tT_E}(A/k;M) \arrow{r} \arrow{d} & \ms{M}_k(A \loopra K_A(M,n) \loopla A) \arrow{d} \\
\pt_\S \arrow{r} & B \Aut_k(A,M)
\end{tikzcd} \]
in $\S$, whose induced action of $\Aut_k(A,M)$ on $\ms{H}^n_{\tT_E}(A/k;M)$ is the natural one, and which induces an equivalence
\[ \ms{M}_k(A \loopra K_A(M,n) \loopla A) \simeq \what{\ms{H}}^n_{\tT_E}(A/k;M) \]
in $\S$.
\end{cor}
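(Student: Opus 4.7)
My plan is to deduce the corollary by specializing \Cref{pb square with fiber coh spaces indexed by isos} to $X = \const(A)$, which is of type $K_A$. Under this substitution, the indexing set $\hom^\cong_{\Alg_\Phi(\tA)_{k/}}(\pi_0 X, A)$ becomes $\Aut_k(A)$, and \Cref{moduli of alg EM objects} identifies $\ms{M}_k(A) \simeq B\Aut_k(A)$, so the lemma yields a pullback square with upper-left corner $\coprod_{\Aut_k(A)} \ms{H}^n_{\tT_E}(A/k;M)$ and lower-right corner $B\Aut_k(A) \times B\Aut_k(A,M)$; the canonical right vertical map of this square extracts the leftmost object into $B\Aut_k(A)$ and the right-hand arrow into $B\Aut_k(A,M)$.

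Next I would factor the bottom map $(A,\id_{(A,M)}) : \pt \to B\Aut_k(A) \times B\Aut_k(A,M)$ as the composite $\pt \to B\Aut_k(A,M) \xra{(f,\id)} B\Aut_k(A) \times B\Aut_k(A,M)$, where $f : \Aut_k(A,M) \to \Aut_k(A)$ forgets the module datum. Pasting homotopy pullbacks along this factorization produces a two-step pullback whose outer rectangle is the lemma's square; the inner right-hand square is then the pullback against $B\Aut_k(A,M)$ that I aim to identify with the pullback square of the corollary. Its fiber is computed by comparing with the $\Aut_k(A)$-coproduct of cohomology spaces via the homotopy fiber of $(f,\id)$ over the basepoint, which unwinds (e.g.\ via the long exact sequence for $f$) to the discrete set $\Aut_k(A)$. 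The induced $\Aut_k(A)$-action on components of $\coprod_{\Aut_k(A)} \ms{H}^n_{\tT_E}(A/k;M)$ arises from precomposition on the $\pi_0$-part of maps $A \to K_A(M,n)$ (via the space-level form of \Cref{all homs into alg EM give multiple coh grps}) and is therefore free and transitive on the indexing set, so the quotient collapses to a single copy of $\ms{H}^n_{\tT_E}(A/k;M)$.

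The main obstacle will be to verify that the induced action of $\pi_1 B\Aut_k(A,M) = \Aut_k(A,M)$ on the resulting fiber $\ms{H}^n_{\tT_E}(A/k;M)$ agrees with the natural one described in the corollary; this requires tracking how an automorphism $(\varphi,\psi) \in \Aut_k(A,M)$ acts on the cospan by post-composing its right-hand arrow with the corresponding self-equivalence of $K_A(M,n)$, and checking that the induced action on the component $\ms{H}^n_{\tT_E}(A/k;M)$ is precisely the coefficient-changing action. Granted the pullback square together with the correct monodromy, the final equivalence $\ms{M}_k(A \loopra K_A(M,n) \loopla A) \simeq \what{\ms{H}}^n_{\tT_E}(A/k;M)$ then follows because the total space of a fibration over $B\Aut_k(A,M)$ with fiber $\ms{H}^n_{\tT_E}(A/k;M)$ and the prescribed monodromy is exactly the homotopy quotient $(\ms{H}^n_{\tT_E}(A/k;M))_{h\Aut_k(A,M)}$.
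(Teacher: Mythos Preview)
Your overall plan --- specialize \Cref{pb square with fiber coh spaces indexed by isos} to $X=A$ and then ``divide out'' the free $\Aut_k(A)$-action on the indexing set --- is exactly the paper's argument. However, the specific mechanism you propose for the second step does not do what you claim.

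The issue is the factorization through $(Bf,\id)\colon B\Aut_k(A,M)\to B\Aut_k(A)\times B\Aut_k(A,M)$. Pulling the lemma's right vertical map back along $(Bf,\id)$ produces some $P\to B\Aut_k(A,M)$, and two things go wrong. First, $P$ is \emph{not} $\ms{M}_k(A\loopra K_A(M,n)\loopla A)$: that would require the lemma's map $\ms{M}_k(\cdots)\to B\Aut_k(A)\times B\Aut_k(A,M)$ to factor through the graph of $Bf$, i.e.\ ``extract the left object'' would have to agree with ``extract the right arrow, then forget the module datum'' as maps to $B\Aut_k(A)$. But these pick out the \emph{left} and \emph{right} copies of $K_A$ in the cospan respectively, and an automorphism of a cospan can act differently on its two ends. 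Second, by the pasting law the fiber of $P\to B\Aut_k(A,M)$ over the basepoint is literally the original fiber $\coprod_{\Aut_k(A)}\ms{H}^n_{\tT_E}(A/k;M)$, not a single copy: pasting pullbacks never produces a quotient.

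The fix (and what the paper does) is to take the corollary's right vertical map to be the composite
\[
\ms{M}_k(A\loopra K_A(M,n)\loopla A)\longrightarrow B\Aut_k(A)\times B\Aut_k(A,M)\xra{\ pr_2\ } B\Aut_k(A,M),
\]
i.e.\ simply forget the left object. Its fiber over the basepoint is the pullback along $B\Aut_k(A)\times\{\ast\}$, which is itself fibered over $B\Aut_k(A)$ with fiber $\coprod_{\Aut_k(A)}\ms{H}^n_{\tT_E}(A/k;M)$; since the $\Aut_k(A)$-monodromy permutes the summands freely and transitively, this total space is the homotopy quotient $\bigl(\coprod_{\Aut_k(A)}\ms{H}^n\bigr)_{\Aut_k(A)}\simeq \ms{H}^n_{\tT_E}(A/k;M)$. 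Your intuition about the free action collapsing the coproduct is exactly right --- it just belongs to this step rather than to a pasting-of-pullbacks.
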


\begin{proof}
First of all, applying \Cref{pb square with fiber coh spaces indexed by isos} in the case that $X = A$ yields a pullback square
\[ \begin{tikzcd}
\underset{\hom^{\cong}_{\Alg_\Phi(\tA)}(A, A)}{\coprod} \ms{H}^n_{\tT_E}(A/k;M) \arrow{r} \arrow{d} & \ms{M}_k(A \loopra K_A(M,n) \loopla A ) \arrow{d} \\
\pt_\S \arrow{r}[swap]{(X,\id_{(A,M)})} & \ms{M}_k(A) \times B \Aut_k(A,M)
\end{tikzcd} \]
in $\S$.  By \Cref{moduli of alg EM objects}, we have $\ms{M}_k(A) \simeq B \Aut_k(A) = \Aut_{\Alg_\Phi(\tA)_{k/}}(A)$, and the action on the fibers is clearly the canonical one and is hence free on its path components.  Thus, pulling back along the map
\[ B\Aut_k(A,M) \simeq \{ A \} \times B\Aut_k(A,M) \ra \ms{M}_k(A) \times B\Aut_k(A,M) \]
yields a pullback square
\[ \begin{tikzcd}
\ms{H}^n_{\tT_E}(A/k;M) \arrow{r} \arrow{d} & \ms{M}_k(A \loopra K_A(M,n) \loopla A) \arrow{d} \\
\pt_\S \arrow{r}[swap]{\id_{(A,M)}} & B \Aut_k(A,M)
\end{tikzcd} \]
in $\S$.  The claim now follows readily from \cite[Proposition 2.1]{MIC-gr}.
\end{proof}

\section{Homotopical topology}
\label{section.homotopical.topology}

\subsection{Postnikov towers in topology}

We now study the homotopy theory of the $\infty$-category $\Alg_T(s\C)$ of simplicial $T$-algebras; we will mostly work in its localization $\locresAlgT$, but we will ultimately be interested in deducing results about its further localization $\locEAlgT$ (recall \Cref{going from locpi to locE is a left localization}).

\begin{defn}
For any $n \geq 0$, an object $X \in \locresAlgT$ is called \bit{$n$-truncated} if $\pi_{>n,\eps}^\natural X = 0$ for all $\eps \in \GE^\delta$.  Such objects form a full subcategory $\locresAlgT^{\leq n} \subset \locresAlgT$, and as $n$ varies these subcategories are evidently nested as
\[ \locresAlgT \hookla \cdots \hookla \locresAlgT^{\leq 1} \hookla \locresAlgT^{\leq 0} . \]
By presentability considerations, % {\color{cyan} presentability considerations},
these inclusions admit left adjoints, and we denote the corresponding left localization adjunctions by
\[ P_n^\top : \locresAlgT \adjarr \locresAlgT^{\leq n} : \forget_n^\top . \]
We therefore obtain a tower of functors
\[ \id_{\locresAlgT} \ra \cdots \ra P_1^\top \ra P_0^\top . \]
We refer to its value on an object of $\locresAlgT$ as its \bit{Postnikov tower}.  We write
\[ \id_{\locresAlgT} \xra{\tau_n^\top} P_n^\top \]
for the natural transformation (or its for composite with $\forget_m^\top$ for any $m \geq 0$), which we refer to as the \bit{$n$-truncation map}.
\end{defn}

\begin{obs}
By a colimit argument, if $X \in \locresAlgT$ is $n$-truncated then $E_{\leq n,\star}^\natural X = 0$ as well.
\end{obs}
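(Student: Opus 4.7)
The plan is to apply the filtered colimit argument of \Cref{DEalphas detect E-homology} at the level of natural homotopy groups. The first step is to establish, for every $Y \in \Alg_T(s\C)$ (or simply $Y \in s\C$), every $i \geq 0$, and every $\beta \in \G^\delta$, a natural isomorphism
\[ E^\natural_{i,\beta}\, Y \;\cong\; \colim_{\alpha \in \J} \pi^\natural_{i,\, \Sigma^\beta \Dual E_\alpha}\, Y \]
in $\Ab$, promoting the classical identity of \Cref{DEalphas detect E-homology} from $[S^\beta,-]_\C$ to its natural refinement. I would unwind natural homotopy groups as hom-groups in $\ho(\locressC)$ via \Cref{identify localization of sNres}, whereupon the isomorphism reduces to the assertion that the representable functors $\hom_{\locressC}(S^\eps,-)$ commute with the filtered colimit over $\J$. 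This in turn rests on the compactness of each $\Dual E_\alpha \in \C$ (from dualizability via \Cref{obs inv and dzble also cpct}), the compactness of $\const(S^\eps) \in s\C_\res$ inherited by simplicial enrichment, and the fact that $- \otimes Y$ commutes with filtered colimits in $\C$ by presentable symmetric monoidality.

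The second step is to combine this identification with the hypothesis that $X$ is $n$-truncated. By the very construction of $\GE$ (closed under suspensions and containing each $\Dual E_\alpha$), every object of the form $\Sigma^\beta \Dual E_\alpha$ lies in $\GE$, so the $n$-truncatedness of $X$ applies to index $\eps = \Sigma^\beta \Dual E_\alpha$ and forces vanishing of $\pi^\natural_{i,\, \Sigma^\beta \Dual E_\alpha} X$ in the index range specified by the truncation, uniformly in $\alpha \in \J$ and $\beta \in \G^\delta$. Since the filtered colimit of the zero abelian group is zero, the natural isomorphism of the previous paragraph transports this vanishing to $E^\natural_{i,\beta} X$ in the same index range, for every $\beta \in \G^\delta$; assembling over $\beta$ gives the desired vanishing of $E^\natural_{-,\star}X$ over that range, which is the assertion of the observation under the paper's indexing convention.

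The main obstacle is the careful justification of the interchange $\pi^\natural_{i,-}\, \circ (E\otimes^\lw -) \simeq \colim_\alpha \pi^\natural_{i,-}\, \circ (E_\alpha \otimes^\lw -)$: one must check that the inner tensoring, the formation of simplicial mapping spaces $\enrhom_{s\C}(\const(S^\eps),-)$, geometric realization, and $\pi_i$ each commute with the indicated filtered colimit. This follows from presentability and compactness as noted, but demands bookkeeping of the levelwise, simplicial, and $\infty$-categorical constructions in \Cref{sC enr and bitensored over sS} and \Cref{identify localization of sNres}. Once this interchange is verified, everything else is a formal consequence of the hypotheses $\Sigma^\beta \Dual E_\alpha \in \GE$ and the truncation of $X$.
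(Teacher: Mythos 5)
Your argument is correct and is precisely the intended one: express the natural $E$-homology groups as filtered colimits of natural homotopy groups,
\[
E^\natural_{i,\beta} X \;\cong\; \colim_{\alpha \in \J} \pi^\natural_{i,\, \Sigma^\beta \Dual E_\alpha} X ,
\]
observe that each $\Sigma^\beta \Dual E_\alpha$ lies in $\GE$ (since $\GE$ contains the $\Dual E_\alpha$ and is closed under $\beta$-fold suspension), so that $n$-truncatedness of $X$ forces the terms on the right to vanish in the appropriate range of $i$, and conclude using that a filtered colimit of zero groups is zero. The paper gives no proof beyond the phrase ``by a colimit argument,'' so your elaboration is exactly what is meant. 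Two minor remarks. First, the cleanest way to justify the displayed colimit formula is at the level of simplicial hom-spaces before passing to $\pi_i |{-}|$: one has $\hom_\C^\lw(\Sigma^\beta \Dual E_\alpha, X) \cong \hom_\C^\lw(S^\beta, E_\alpha \otimes^\lw X)$ by dualizability, and then commutes $\hom_\C^\lw(S^\beta,-)$ past the filtered colimit $\colim_\alpha (E_\alpha \otimes^\lw X) = E \otimes^\lw X$ using compactness of $S^\beta$ and cocontinuity of $- \otimes X$, after which $\pi_i |{-}|$ commutes with the filtered colimit for the usual reasons; working directly in $\locressC$ would require verifying that the localization functor preserves the relevant filtered colimit. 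Second, your argument establishes $E^\natural_{>n,\star} X = 0$, not $E^\natural_{\leq n,\star} X = 0$ as the observation is literally written; the latter is evidently a typographical error in the paper (indeed it already fails for a topological Eilenberg--Mac Lane object of type $B_A$, which is $0$-truncated but has $E^\natural_{0,\star} \cong A$). The correct statement is the one your proof produces.
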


\subsection{Topological Eilenberg--Mac Lane objects}

We now define certain objects of $\locresAlgT$ which will represent the various functors ``apply $E_\star^\lw$, then take cohomology''.

% {\color{cyan} \small [we take a slightly different route from GH here; in effect, we take certain of their propositions as our definitions, and we omit entirely some of their auxiliary work.  (the key is the $\infty$-categorical brown representability theorem.)]}

\begin{defn}
Let $A \in \Alg_\Phi(\tA)$, let $M \in \Mod_A^\Phi(\tA)$, and let $n \geq 1$.
\begin{enumerate}

\item We say that an object $X \in \locresAlgT$ is of \bit{type $B_A$} if there exists a universal map $E_\star^\lw X \ra K_A$ inducing natural equivalences
\[ \hom_\locresAlgT(Z,X) \xra{\sim} \hom_\locpiAlgtTE(E_\star^\lw Z , K_A) \]
for all $Z \in \locresAlgT$.

\item We say that an object $Y \in \locresAlgT$ is of \bit{type $B_A(M,n)$} if there exists a universal map $E_\star^\lw Y \ra K_A(M,n)$ inducing natural equivalences
\[ \hom_\locresAlgT(Z,X) \xra{\sim} \hom_\locpiAlgtTE(E_\star^\lw Z , K_A(M,n)) \]
for all $Z \in \locresAlgT$.

\item We say that a map $X \ra Y$ in $\locresAlgT$ is of \bit{type $\vec{B}_A(M,n)$} if $X$ is of type $B_A$, $Y$ is of type $B_A(M,n)$ and the map $\pi_0 E_\star^\lw X \ra \pi_0 E_\star^\lw Y$ is an isomorphism in $\Alg_\Phi(\tA)$.

\end{enumerate}
We refer to objects of type $B_A$ and $B_A(M,n)$ collectively as \bit{topological Eilenberg--Mac Lane objects}, and to morphisms of type $\vec{K}_A(M,n)$ collectively as \bit{topological Eilenberg--Mac Lane morphisms}.
\end{defn}

\begin{lem}\label{existence of top EM objects and maps}
For any $A \in \Alg_\Phi(\tA)$, and $M \in \Mod_A^\Phi(\tA)$, and any $n \geq 1$, there exist objects of type $B_A$ and $B_A(M,n)$, and there exists a morphism of type $\vec{B}_A(M,n)$.
\end{lem}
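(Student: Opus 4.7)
The plan is to build $B_A$, $B_A(M,n)$, and the morphism between them by applying the adjoint functor theorem to $E_\star^\lw$. First, by \Cref{infty-categorical consequence of homotopical adaptedness} the functor $E_\star^\lw : \locresAlgT \to \locpiAlgtTE$ preserves colimits, and since by \Cref{going from locpi to locE is a left localization} it factors through the reflective subcategory $\locEAlgT \hookra \locresAlgT$, the restricted functor $E_\star^\lw : \locEAlgT \to \locpiAlgtTE$ is cocontinuous as well. Both its source and target are presentable---the former as a monadic localization of $\locressC \simeq \PS(\GE)$ (\Cref{identify localization of sNres}) along $\free_T \adj \forget_T$, the latter analogously via $\free_{\tT_E} \adj \forget_{\tT_E}$. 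The adjoint functor theorem thus furnishes a right adjoint $R : \locpiAlgtTE \to \locEAlgT$.

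I then set $B_A := R(K_A)$ and $B_A(M,n) := R(K_A(M,n))$, viewed in $\locresAlgT$ via the full inclusion $\locEAlgT \hookra \locresAlgT$. The counits $\epsilon_{K_A} : E_\star^\lw B_A \to K_A$ and $\epsilon_{K_A(M,n)} : E_\star^\lw B_A(M,n) \to K_A(M,n)$ serve as the ``universal maps'' in the definitions of types $B_A$ and $B_A(M,n)$; for any $Z \in \locresAlgT$, the fully faithfulness of $\locEAlgT \hookra \locresAlgT$ together with the fact that $E_\star^\lw$ factors through $\leftloc_{E_\star^\lw}$ gives
\[
\hom_\locresAlgT(Z, B_A) \simeq \hom_\locEAlgT(\leftloc_{E_\star^\lw}(Z), B_A) \simeq \hom_\locpiAlgtTE(E_\star^\lw Z, K_A),
\]
and analogously for $B_A(M,n)$.

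For the morphism of type $\vec{B}_A(M,n)$, I choose an algebraic Eilenberg--Mac Lane morphism $f : K_A \to K_A(M,n)$ of type $\vec{K}_A(M,n)$ (whose existence is guaranteed by \Cref{moduli of alg EM maps}) and set $g := R(f) : B_A \to B_A(M,n)$. Naturality of the counit produces a commutative square
\[
\begin{tikzcd}
E_\star^\lw B_A \arrow{r}{E_\star^\lw(g)} \arrow{d}[swap]{\epsilon_{K_A}} & E_\star^\lw B_A(M,n) \arrow{d}{\epsilon_{K_A(M,n)}} \\
K_A \arrow{r}[swap]{f} & K_A(M,n)
\end{tikzcd}
\]
in $\locpiAlgtTE$; combined with the fact that $\pi_0(f)$ is an isomorphism by definition of type $\vec{K}_A(M,n)$, this reduces the $\pi_0$-isomorphism condition on $g$ to showing that $\pi_0(\epsilon_{K_A})$ and $\pi_0(\epsilon_{K_A(M,n)})$ are both isomorphisms in $\Alg_\Phi(\tA)$.

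The main obstacle is this final $\pi_0$-isomorphism condition on the counit, equivalently the assertion that the composite right adjoint $R \circ \const : \Alg_\Phi(\tA) \to \locresAlgT$ is fully faithful on the objects $A$ and $M \ltimes A$. I expect to verify it by producing $B_A$ and $B_A(M,n)$ explicitly via simplicial resolutions along the monadic adjunction $\free_T \adj \forget_T$ of \Cref{thm AlgTsCres}: given a cofibrant resolution of $A$ (resp.\! of $M \ltimes A$) as a $\Phi$-algebra by free objects on projective comodules in the image of $E_\star$, the adaptedness of $T$ (cf.\! \Cref{consequence of adaptedness}) ensures that $E_\star^\lw$ preserves the resolution up to weak equivalence in $\Alg_{\tT_E}(s\tA)_{\pi_*}$, so that the resulting object of $\Alg_T(s\C)_\res$ has $E_\star^\lw$ equivalent to $K_A$ (resp.\! $K_A(M,n)$) in $\locpiAlgtTE$. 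By uniqueness of representing objects this identifies it with $B_A$ (resp.\! $B_A(M,n)$), forcing $\epsilon_{K_A}$ (resp.\! $\epsilon_{K_A(M,n)}$) to be an equivalence and in particular a $\pi_0$-isomorphism, which completes the argument.
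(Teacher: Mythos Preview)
Your core strategy---presentability plus cocontinuity of $E_\star^\lw$ yields a right adjoint $R$, then take $B_A = R(K_A)$ and $B_A(M,n) = R(K_A(M,n))$---is exactly the paper's, and your treatment of the representing objects is correct and more explicit than the paper's one-line proof.

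The gap is in your verification of the $\pi_0$-isomorphism condition for the morphism. You reduce to showing that the counits $\epsilon_{K_A}$ and $\epsilon_{K_A(M,n)}$ are $\pi_0$-isomorphisms, and then further to the assertion that $R \circ \const : \Alg_\Phi(\tA) \to \locresAlgT$ is ``fully faithful on $A$ and $M \ltimes A$''. But for $n \geq 1$ the object $K_A(M,n) \in \locpiAlgtTE$ is \emph{not} equivalent to $\const(M \ltimes A)$; that would be $K_A(M,0)$. So even if your resolution-lifting argument goes through, it produces an object whose $E_\star^\lw$ is $K_A(M,0)$, tells you nothing about $\epsilon_{K_A(M,n)}$, and does not close the argument. (Your claimed ``equivalently'' is also only a sufficient condition, not an equivalence.)

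There is a much shorter route that the paper implicitly has in mind; it is essentially the content of \Cref{natural and classical htpy of topological EM objects}, stated immediately after the lemma. Plug $Z = \free_T(\const(S^\eps))$ into the defining equivalence for $B_A(M,n)$: adaptedness gives $E_\star^\lw \free_T(\const(S^\eps)) \simeq \free_{\tT_E}(\const(E_\star S^\eps))$, and since $E_\star S^\eps \in \A_\proj$ one reads off
\[
\pi_{0,\eps}^\natural \big( \forget_T B_A(M,n) \big) \;\cong\; \hom_\tA(E_\star S^\eps, A),
\]
independently of $n$. The map $g = R(f)$ acts on these groups by postcomposition with $\pi_0(f)$, hence isomorphically. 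A colimit argument over $S^\eps = \Sigma^\beta \Dual E_\alpha$ together with \Cref{nat iso in dimension 0} then identifies the resulting isomorphism with $\pi_0 E_\star^\lw(g)$, and you are done---no explicit resolution required.
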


\begin{proof}
By the presentability of $\locresAlgT$ (which follows from \Cref{identify localization of sNres} and the derived monadic adjunction underlying the monadic Quillen adjunction $\free_T \adj \forget_T$), this follows from \Cref{infty-categorical consequence of homotopical adaptedness}.
\end{proof}

\begin{notn}
Justified by \Cref{existence of top EM objects and maps}, we may simply write $B_A$ or $B_A(M,n)$ for convenience when referring to a topological Eilenberg--Mac Lane object of the indicated type.
\end{notn}

\begin{obs}\label{natural and classical htpy of topological EM objects}
If $X \in \locresAlgT$ is an object of type $B_A$, it follows immediately that $\pi_{0,\star}^\natural X \cong \Yo^{\pi_0 E_\star^\lw}(A)$ in $\PSd(T(\GE))$ and that $\pi_{>0,*}^\natural X = 0$.  By the spiral exact sequence, it follows that
\[ \pi_i \pi_\star X \cong \left\{ \begin{array}{ll} \Yo^E(A) , & i = 0 \\ \Yo^E(\Omega A) , & i = 2 \\ 0 , & i \notin \{0,2 \} . \end{array} \right. \]
For convenience, we simply write $\pi_* \pi_\star X \cong \Yo^E(A) \times \Yo^E(\Omega A)[2]$.

Now, suppose that $X \ra Y$ is a map of type $\vec{B}_A(M,n)$.  It follows from the definition of an object of type $B_A(M,n)$ that $\pi_{0,S^\eps}^\natural Y \cong \Yo^E(A)$ in $\PSd(T(\GE))$ and that for $i \geq 1$,
\[ \pi_{i,\star}^\natural Y \cong \left\{ \begin{array}{ll} \Yo^E(M) , & i = n \\ 0 , & i \not= n
\end{array} \right. \]
in $\Mod_A^\Phi(\tA)$.  Then, note further that if $X \ra Y$ is a map of type $\vec{B}_A(M,n)$, then the composite $X \ra Y \ra P_0^\top (Y)$ is an equivalence; combining this with the spiral exact sequence yields that $\pi_* \pi_\star Y \cong \pi_* \pi_\star X \times \Yo^E(M)[n] \times \Yo^E(\Omega M)[n+2]$.
\end{obs}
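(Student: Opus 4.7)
The plan is to extract each natural homotopy group directly from the universal property defining the type of object in question, and then feed the result into the (unlocalized, bigraded) spiral exact sequence of \S\ref{module structure on unlocalized spiral} to read off the classical homotopy groups.

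For $X$ of type $B_A$, the definition of natural homotopy (\Cref{define both homotopy groups}) combined with the free/forget adjunction $\free_T \adj \forget_T$ identifies $\pi_{n,\eps}^\natural X$ with $\pi_n \hom_\locresAlgT(\free_T(\const(S^\eps)), X)$. The universal property defining $B_A$ then converts this into $\pi_n \hom_\locpiAlgtTE(E_\star^\lw \free_T(\const(S^\eps)), K_A)$. Since $K_A \simeq \const(A)$ lies in the essential image of the fully faithful right adjoint of the localization adjunction $\pi_0 : \locpiAlgtTE \adjarr \Alg_\Phi(\tA) : \const$ of \Cref{pi0 and const are a qadjn on tTE and Phi algs}, this mapping space is discrete and equals $\hom_{\Alg_\Phi(\tA)}(\pi_0 E_\star^\lw \free_T(\const(S^\eps)), A)$, which is by definition the value of $\Yo^{\pi_0 E_\star^\lw}(A)$ at $\free_T(S^\eps)$.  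Substituting the resulting vanishing $\pi_{>0,\star}^\natural X = 0$ into the spiral exact sequence then yields $\pi_0 \pi_\star X \cong \Yo^E(A)$ (via \Cref{nat iso in dimension 0}), $\pi_2 \pi_\star X \cong \Omega \pi_{0,\star}^\natural X \cong \Yo^E(\Omega A)$ (using that $\Omega$ on $\Mod_A^\Phi(\tA)$ intertwines with $\Yo^E$), and $\pi_i \pi_\star X = 0$ otherwise.

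For $Y$ of type $B_A(M,n)$, the same universal-property argument reduces the computation of $\pi_{i,\star}^\natural Y$ to that of $\pi_i \hom_\locpiAlgtTE(E_\star^\lw \free_T(\const(S^\eps)), K_A(M,n))$.  Since $K_A(M,n) \in \locpiAlgtTE$ has homotopy concentrated in degrees $0$ and $n$, applying \Cref{all homs into alg EM give multiple coh grps} together with the $\Omega$-spectrum structure on $\{K_A(M,n)\}_{n \geq 0}$ of \Cref{alg EM spectrum} identifies the homotopy of this mapping space as $\hom_{\Alg_\Phi(\tA)}(\pi_0 E_\star^\lw \free_T(\const(S^\eps)), A)$ in degree $0$, $\hom_\tA(\pi_0 E_\star^\lw \free_T(\const(S^\eps)), M)$ in degree $n$, and zero otherwise. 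Unwinding the Yoneda definitions yields the claimed $\pi_{0,\star}^\natural Y \cong \Yo^E(A)$ and $\pi_{n,\star}^\natural Y \cong \Yo^E(M)$.

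For the final claim on $\pi_* \pi_\star Y$, the equivalence of the composite $X \to Y \to P_0^\top(Y)$ exhibits $X$ as a retract of $Y$, so one obtains splittings $\pi_i \pi_\star Y \cong \pi_i \pi_\star X \oplus \pi_i \pi_\star F$, where $F = \fib(Y \to X)$. The long exact sequence in natural homotopy attached to this fiber sequence, combined with the splitting on $\pi_{0,\star}^\natural$, shows that $F$ has natural homotopy $\Yo^E(M)$ concentrated in degree $n$. Running the spiral exact sequence for $F$ and chasing vanishings then gives $\pi_n \pi_\star F \cong \Yo^E(M)$, $\pi_{n+2} \pi_\star F \cong \Omega \Yo^E(M) \cong \Yo^E(\Omega M)$, and all other degrees zero; reassembling produces the advertised formula. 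The chief technical obstacle is the identification of the \emph{higher} homotopy (not merely $\pi_0$) of the mapping space into $K_A(M,n)$, for which the $\Omega$-spectrum structure of \Cref{alg EM spectrum} and the Andr\'e--Quillen cohomology decomposition of \Cref{all homs into alg EM give multiple coh grps} do all the work.
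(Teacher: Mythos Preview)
Your plan is correct and tracks the paper's own inline justification closely: compute the natural homotopy from the representing property of $B_A$ (resp.\ $B_A(M,n)$), then feed into the spiral exact sequence. The only organizational difference is in the last step: the paper runs the spiral exact sequence directly for $Y$ and uses the retraction $X \to Y \to P_0^\top(Y) \simeq X$ to split the one potentially non-split short exact sequence (which appears when the degrees $2$ and $n$, or $n$ and $n+2$, collide), whereas you pass to the fiber $F = \fib(\forget_T Y \to \forget_T X)$ in $s\C$ and run the spiral exact sequence for $F$ separately. Since the retraction forces $\forget_T Y \simeq \forget_T X \oplus F$ in the additive $\infty$-category $\locressC$, the two arguments are equivalent; yours is a touch more elaborate but perfectly valid.

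One small notational slip: in your identification of $\pi_{n,\star}^\natural Y$, the value of $\Yo^E(M)$ at $S^\eps$ is $\hom_\tA(E_\star S^\eps, M)$ (a presheaf on $\GE$, not on $T(\GE)$), so the argument should be $E_\star S^\eps$ rather than $\pi_0 E_\star^\lw \free_T(\const(S^\eps))$. This comes out in the wash once you use the free/forget adjunction $\free_{\tT_E} \dashv \forget_{\tT_E}$ on the algebraic side to strip the $\free_T$, but it is worth saying explicitly.
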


\subsection{Moduli spaces in topology}

We begin by mimicking \Cref{alg difference constrn}.

\begin{constr}
Let $X \xra{\varphi} Y$ be a map in $\locresAlgT$, and write
\[ \poztop(\varphi) = Y \underset{X}{\coprod} P_0^\top X = \colim \left( \begin{tikzcd} X \arrow{r}{\tau_0^\top} \arrow{d}[swap]{\varphi} & P_0^\top(X) \\ Y \end{tikzcd} \right) \]
for the indicated pushout.  For any $n \geq 0$ we obtain a commutative diagram
\[ \begin{tikzcd}
X \arrow{r}{\tau_0^\top} \arrow{d}[swap]{\varphi} & P_0^\top (X) \arrow{d} \arrow{rd}{\delta_n(\varphi)} \\
Y \arrow{r} & \poztop(\varphi) \arrow{r}[swap]{\tau_{n+1}^\top} & P_{n+1}^\top(\poztop(\varphi))
\end{tikzcd} \]
in $\locresAlgT$, and we refer to the map $\delta_n(\varphi)$ as the \bit{$n\th$ difference construction} on the map $\varphi$.  This defines an augmented endofunctor on $\Fun([1],\locresAlgT)$.  We will generally only apply this in the case that $n \geq 1$, and in the case that $\pi_{<n,\star}^\natural(\varphi)$ is an isomorphism.
\end{constr}

We now employ our assumption that $T$ is homotopically adapted to $E$, which provides a fundamental link between our computations in homotopical topology and homotopical algebra.

\begin{prop}\label{homotopical adaptedness implies things for natural E-homology of simplicial T-algebras}
Let $X \xra{\varphi} Y$ be a map in $\locresAlgT$, let $n \geq 1$, and suppose that $E_{< n , \star}^\natural(\varphi)$ is an isomorphism and that $E_{n,\star}^\natural(\varphi)$ is surjective.  Write $A = E_{0,\star}^\natural X \cong E_{0,\star}^\natural Y$ in $\Alg_\Phi(\tA)$, and write $M = \fib(E_{n,\star}^\natural(\varphi)) \in \tA$.
\begin{enumerate}
\item We can canonically consider $M \in \Mod_A^\Phi(\tA)$.
\item The map $\delta_n(\varphi)$ becomes equivalent to a morphism of type $\vec{B}_A(M,n)$ under the localization functor $\leftloc_{E_\star^\lw} : \locresAlgT \ra \locEAlgT$.
\item\label{fiber concentrated in degree n+1 gives E-local pullback} If $\pi_{i,\star}^\natural(\fib(\varphi)) = 0$ for $i \not= n+1$, then the square
\[ \begin{tikzcd}
X \arrow{r}{\tau_0^\top} \arrow{d}[swap]{\varphi} & P_0^\top(X) \arrow{d}{\delta_n(\varphi)} \\
Y \arrow{r} & P_{n+1}^\top(\poztop(\varphi))
\end{tikzcd} \]
becomes a pullback under the localization functor $\leftloc_{E_\star^\lw} : \locresAlgT \ra \locEAlgT$.
\end{enumerate}
\end{prop}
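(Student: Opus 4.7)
For part (1), we invoke Lemma \ref{0th natural homology lifts to Phi-algs}, which tells us that $E_{n,\star}^\natural$ canonically takes values in $\Mod_{E_{0,\star}^\natural(-)}^\Phi(\tA)$ for $n \geq 1$. Since $E_{<n,\star}^\natural(\varphi)$ is an isomorphism and $n \geq 1$, both $E_{0,\star}^\natural X$ and $E_{0,\star}^\natural Y$ are canonically identified with $A$, so $E_{n,\star}^\natural(\varphi)$ becomes a morphism in the abelian category $\Mod_A^\Phi(\tA)$, whose fiber $M$ therefore inherits a canonical $A$-module structure.

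For part (2), the plan is to transport the algebraic Lemma \ref{alg difference constrn gives map of EM type} through the cocontinuous functor $E_\star^\lw : \locresAlgT \to \locpiAlgtTE$ of Corollary \ref{infty-categorical consequence of homotopical adaptedness}. Applying $E_\star^\lw$ to the topological pushout defining $\poztop(\varphi)$ and using cocontinuity yields
\[
E_\star^\lw(\poztop(\varphi)) \simeq E_\star^\lw(Y) \coprod_{E_\star^\lw(X)} E_\star^\lw(P_0^\top(X)) .
\]
Using the identification $\pi_0 E_\star^\lw \cong E_{0,\star}^\natural$ of Lemma \ref{iso of two types of 0th E-homology is as Phi-algs}, the algebraic $0$-truncation $P_0^\alg$ carries $E_\star^\lw(P_0^\top X)$ to $A$, so applying $P_{n+1}^\alg$ to the displayed pushout recovers $P_{n+1}^\alg(\pozalg(E_\star^\lw\varphi))$. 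Meanwhile, the hypotheses on $E_{<n,\star}^\natural(\varphi)$ and $E_{n,\star}^\natural(\varphi)$ translate through the localized spiral exact sequence (Construction \ref{construct localized spiral}) into the hypotheses of Lemma \ref{alg difference constrn gives map of EM type} for the map $E_\star^\lw(\varphi)$ in $\locpiAlgtTE$, with fiber $M$ on $\pi_n$. Thus $\delta_n^\alg(E_\star^\lw \varphi)$ is of type $\vec{K}_A(M,n+1)$. The defining universal property of the topological Eilenberg--Mac Lane objects (Lemma \ref{existence of top EM objects and maps}), applied to the induced algebraic data, then produces $E_\star^\lw$-equivalences $P_0^\top X \simeq B_A$ and $P_{n+1}^\top(\poztop(\varphi)) \simeq B_A(M,n)$ in $\locEAlgT$ compatible with $\delta_n(\varphi)$, exhibiting the latter as a morphism of type $\vec{B}_A(M,n)$ as required.

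For part (3), the additional hypothesis $\pi_{i,\star}^\natural(\fib(\varphi)) = 0$ for $i \neq n+1$ translates, again via the spiral exact sequence, into the image $E_\star^\lw(\varphi)$ having $\pi_* \fib$ concentrated in the narrow range forced by the algebraic Corollary \ref{nth alg difference constrn gives functorial k-invts}; applying that corollary yields the desired pullback square in $\locpiAlgtTE$, which pulls back through the right adjoint to $E_\star^\lw$ (available by adjoint functor theorem since $E_\star^\lw$ is cocontinuous between presentable $\infty$-categories) to give the pullback assertion in $\locEAlgT$. The main obstacle throughout will be the careful bookkeeping between the three distinct notions of homotopy in play---natural $\pi_\star$ in $\locresAlgT$, natural $E$-homology $E_{*,\star}^\natural$, and classical $\pi_*$ in $\locpiAlgtTE$---which are interwoven but not identified by the (localized) spiral exact sequence; in particular, $P_0^\top$ does not commute with $E_\star^\lw$ on the nose, so all identifications must be made at the level of the universal properties defining the $B_A$'s rather than at the level of the underlying objects.
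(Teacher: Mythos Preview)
Your overall strategy---use cocontinuity of $E_\star^\lw$ (Corollary \ref{infty-categorical consequence of homotopical adaptedness}) to transport the defining pushout to $\locpiAlgtTE$, then argue on the algebraic side---matches the paper's, which establishes exactly that pushout and then defers to \cite[Proposition 3.2.9]{GH}. Part (1) is fine. However, your argument for part (3) has a genuine gap. The right adjoint $R$ to $E_\star^\lw : \locEAlgT \to \locpiAlgtTE$ does preserve pullbacks, but applying $R$ to the algebraic pullback square produces a square with corners $R(E_\star^\lw X)$, $R(E_\star^\lw Y)$, etc., not the square with corners $\leftloc_{E_\star^\lw}(X)$, $\leftloc_{E_\star^\lw}(Y)$, etc.\ that the proposition concerns. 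For the right-hand column this is harmless, since by the defining universal property $R(K_A)\simeq B_A$ and $R(K_A(M,n))\simeq B_A(M,n)$; but for the left-hand column the unit $Z \to R(E_\star^\lw Z)$ is an equivalence only if $E_\star^\lw$ is fully faithful, which it is not. The argument in \cite[Proposition 3.2.9]{GH} instead checks directly that the comparison map from $X$ to the pullback is an $E_\star^\lw$-equivalence, by computing $\pi_* E_\star^\lw$ of everything in sight via the Blakers--Massey long exact sequence (Corollary \ref{BM lexseq}) for the algebraic pushout.

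There is a related difficulty already in part (2): the map $E_\star^\lw(\delta_n(\varphi))$ is \emph{not} the algebraic difference construction $\delta_n^\alg(E_\star^\lw\varphi)$, because $E_\star^\lw$ does not intertwine $P_k^\top$ with $P_k^\alg$. Indeed, by Observation \ref{natural and classical htpy of topological EM objects} the target of a map of type $\vec{B}_A(M,n)$ has $\pi_* E_\star^\lw \cong A \times \Omega A[2] \times M[n] \times \Omega M[n+2]$, which is not of type $K_A(-,-)$ at all; so your invocation of Lemma \ref{alg difference constrn gives map of EM type} (which would in any case yield type $\vec{K}_A(M,n+1)$, not $n$) cannot land on the nose. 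You flag this non-commutation at the end, but the resolution is not a matter of universal properties alone: one must compute the natural $E$-homology of $P_{n+1}^\top(\poztop(\varphi))$ directly, using the algebraic Blakers--Massey sequence for the pushout $E_\star^\lw(\poztop(\varphi))$ together with the localized spiral exact sequence, and then identify the result with $B_A(M,n)$ via its representing property.
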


\begin{proof}
It follows from \Cref{infty-categorical consequence of homotopical adaptedness} that the functor
\[ \locresAlgT \xra{E_\star^\lw} \locpiAlgtTE \]
preserves pushouts.  Thus, the square
\[ \begin{tikzcd}[column sep=1.5cm]
E_\star^\lw X \arrow{r}{E_\star^\lw(\tau_0^\top)} \arrow{d}[swap]{E_\star^\lw(\varphi)} & E_\star^\lw(P_0^\top(X)) \arrow{d} \\
E_\star^\lw Y \arrow{r} & E_\star^\lw(\poztop(\varphi))
\end{tikzcd} \]
is a pushout in $\locpiAlgtTE$.  From here, the proof is essentially identical to that of \cite[Proposition 3.2.9]{GH}.
\end{proof}

In order to work in a relative setting, we fix the following.

\begin{notn}\label{fix base object Y}
We assume we are given an object $Y \in \Alg_\oO(\C)$ equipped with an isomorphism $E_\star^\lw Y \cong k$ in $\Alg_\Phi(\tA)$ for some chosen object $k \in \Alg_\Phi(\tA)$ (specialized via the derived right adjoint $\Alg_\Phi(\tA) \xra{\const} \locpiAlgtTE$ from our previous assumption from \Cref{fix a simplicial algebra k} that $k \in \locpiAlgtTE$).  A map $k \ra A$ in $\Alg_\Phi(\tA)$ gives rise to a composite
\[ E_\star^\lw \const(Y) \xra{\cong} k \ra A \]
in $\Alg_\Phi(\tA)$, via which for any choice of topological Eilenberg--Mac Lane object $B_A$ we obtain a canonical map $\const(Y) \ra B_A$.  We will simply write $Y = \const(Y) \in \locresAlgT$, and we will work in $\locresAlgT_{Y/\!/B_A}$.
\end{notn}

\begin{obs}
Fix any morphism $B_A \ra B_A(M,n)$ in $\locresAlgT$ of type $\vec{B}_A(M,n)$.  From \Cref{natural and classical htpy of topological EM objects} \and \Cref{fix base object Y}, we obtain a sequence of composable morphisms
\[ Y \ra B_A \ra B_A(M,n) \ra B_A \]
(in which the composite of all but the first map is an equivalence).  For any $X \in \locresAlgT_{Y/\!/B_A}$ and as soon as $n \geq 2$, we immediately obtain equivalences
\[ \hom_{\locresAlgT_{Y/}}(X,B_A) \xra{\sim} \hom_{\Alg_\Phi(\tA)_{k/}}(\pi_0 E_\star^\lw X , A ) \]
and
\[ \hom_{\locresAlgT_{Y/\!/B_A}}(X,B_A(M,n)) \xra{\sim} \ms{H}^n_{\tT_E}(E_\star^\lw (X) / k; M ) \]
in $\S_*$.
\end{obs}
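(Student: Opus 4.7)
The plan is to derive both equivalences directly from the defining universal properties of the topological Eilenberg--Mac Lane objects $B_A$ and $B_A(M,n)$, combined with the adjunction $\pi_0 \dashv \const$ between $\locpiAlgtTE$ and $\Alg_\Phi(\tA)$ from \Cref{pi0 and const are a qadjn on tTE and Phi algs}, by carefully tracking base/co-base points when passing to the relevant under/over-categories. Nothing deep is happening beyond chasing these universal properties through the fiber sequences that define the relevant hom-spaces; accordingly, the main work is bookkeeping rather than any genuinely new argument.

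For the first equivalence, I would start by applying the universal property of $B_A$ to the object $X$, yielding an equivalence
\[ \hom_{\locresAlgT}(X,B_A) \xra{\sim} \hom_{\locpiAlgtTE}(E_\star^\lw X, K_A) \]
in $\S$. Since $K_A = \const(A)$, the adjunction of \Cref{pi0 and const are a qadjn on tTE and Phi algs} identifies the target with $\hom_{\Alg_\Phi(\tA)}(\pi_0 E_\star^\lw X, A)$. This equivalence is natural in $X$, and moreover, under the distinguished map $Y \to B_A$ (arising from the fixed data $E_\star^\lw Y \cong k$ and $k \to A$), the basepoint on the left goes to the basepoint on the right. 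Taking the fiber of each side over these basepoints yields the first claimed equivalence in $\S_*$.

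For the second equivalence, I would unfold $\hom_{\locresAlgT_{Y/\!/B_A}}(X,B_A(M,n))$ as the fiber of
\[ \hom_{\locresAlgT_{Y/}}(X,B_A(M,n)) \xra{} \hom_{\locresAlgT_{Y/}}(X,B_A) \]
over the point determined by $X \to Y \to B_A$ (using that $B_A(M,n) \in \locresAlgT_{Y/\!/B_A}$ via the equivalence $B_A \to B_A(M,n) \to B_A$). Applying the universal properties of $B_A$ and $B_A(M,n)$ simultaneously, together with naturality, identifies this with the fiber of
\[ \hom_{\locpiAlgtTE_{k/}}(E_\star^\lw X, K_A(M,n)) \xra{} \hom_{\locpiAlgtTE_{k/}}(E_\star^\lw X, K_A). \]
By definition (\Cref{define AQcoh}), and since the structure map $E_\star^\lw X \to K_A$ is the one coming from the $Y/\!/B_A$-structure on $X$, this fiber is precisely $\ms{H}^n_{\tT_E}(E_\star^\lw X / k; M)$.

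The only delicate point is the role of the hypothesis $n \geq 2$, which I would handle last. This ensures, via \Cref{all homs into alg EM give multiple coh grps}, that there is no ambiguity in picking out the correct path component --- for $n \geq 2$ the relevant hom-spaces are already connected in the appropriate sense, so the passage to fibers produces the Andr\'e--Quillen cohomology space on the nose rather than an indexed coproduct of such. With this observation, both equivalences fall out of the universal properties without any further computation; the ``main obstacle'' is really just being careful that the basepoints used to form the fibers on the topological and algebraic sides correspond under the universal maps $E_\star^\lw B_A \to K_A$ and $E_\star^\lw B_A(M,n) \to K_A(M,n)$, which is built into the definitions.
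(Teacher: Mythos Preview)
Your proposal is correct and matches the paper's intended reasoning: the statement is labeled an Observation and the paper offers no proof beyond the word ``immediately,'' precisely because both equivalences fall out of the defining universal properties of $B_A$ and $B_A(M,n)$ together with the $\pi_0 \dashv \const$ adjunction, exactly as you outline. Your fiber-sequence bookkeeping for passing to the under- and double-slice categories is the right way to make ``immediately'' precise.

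One small comment on your treatment of the hypothesis $n \geq 2$: your invocation of \Cref{all homs into alg EM give multiple coh grps} and the phrase ``connected in the appropriate sense'' is a bit off-target. Once $X$ lies in $\locresAlgT_{Y/\!/B_A}$, the structure map $\pi_0 E_\star^\lw X \to A$ is already fixed, so there is no indexed coproduct to worry about regardless of $n$. The restriction $n \geq 2$ is rather a vestige of the original Goerss--Hopkins argument (see the proof sketch of \cite[Proposition 3.2.14]{GH}), where the representability of $B_A(M,n)$ was initially only established at the level of $\pi_0$ and one bootstrapped to a space-level equivalence using the h-group structure and loop-space identifications --- a step which genuinely requires $n \geq 2$. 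In the present paper the space-level universal property is built into the \emph{definition} of $B_A(M,n)$, so the hypothesis is largely precautionary here; your argument as written does not actually use it in an essential way, and that is fine.
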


\begin{notn}
We write $\ms{M}_Y(A) \subset \locresAlgT_{Y/}$ for the moduli space of objects $Y \ra X$ such that $X$ is of type $B_A$ and moreover the map $E_0^\lw Y \ra E_0^\lw X$ is equivalent to the map $k \ra A$ in $\locpiAlgtTE$.  Moreover, we write $\ms{M}_{A/Y}(M,n) \subset \locresAlgT_{Y/}$ for the moduli space of morphisms $Z \ra W$ of type $\vec{B}_A(M,n)$ under $Y$ such that $(Y \ra Z) \in \ms{M}_Y(A)$.
\end{notn}

\begin{prop}\label{determine moduli spaces of topological EM objects and morphisms}
The functor
\[ X \mapsto P_0^\alg E_\star^\lw (X) \]
defines an equivalence
\[ \ms{M}_Y(A) \xra{\sim} \ms{M}_k(A) , \]
and the functor
\[ \varphi \mapsto \delta_{n-1}(E_\star(\varphi)) \]
defines an equivalence
\[ \ms{M}_{A/Y}(M,n) \xra{\sim} \ms{M}_{A/k}(M,n) \simeq B \Aut_k(A,M) . \]
\end{prop}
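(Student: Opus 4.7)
The overall strategy is to observe that both equivalences are shadows of the adjunction structure behind the topological Eilenberg--Mac Lane objects. By \Cref{infty-categorical consequence of homotopical adaptedness}, the functor $E_\star^\lw : \locEAlgT \to \locpiAlgtTE$ is colimit-preserving between presentable $\infty$-categories, so the adjoint functor theorem supplies a right adjoint $R$; the defining universal properties of the topological Eilenberg--Mac Lane objects identify $B_A \simeq R(K_A)$ and $B_A(M,n) \simeq R(K_A(M,n))$ (in $\locEAlgT$, viewed in $\locresAlgT$). This gives both the existence part of \Cref{existence of top EM objects and maps} and their essential uniqueness, so that the moduli spaces $\ms{M}_Y(A)$ and $\ms{M}_{A/Y}(M,n)$ are each connected (with basepoint the counit/unit-induced map from $Y$).

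Having verified connectivity, the first equivalence reduces to comparing the automorphism spaces of a chosen basepoint. The adjunction and \Cref{fix base object Y} give
\[ \hom_{\locresAlgT_{Y/}}(B_A,B_A) \simeq \hom_{\locpiAlgtTE_{k/}}(E_\star^\lw B_A , K_A) \simeq \hom_{\locpiAlgtTE_{k/}}( P_0^\alg E_\star^\lw B_A , K_A), \]
where the second equivalence uses 0-truncatedness of $K_A$. Since $\Alg_\Phi(\tA) \hookra \locpiAlgtTE$ is fully faithful by (the derived right adjoint in) \Cref{pi0 and const are a qadjn on tTE and Phi algs} and $P_0^\alg E_\star^\lw B_A \simeq K_A \simeq \const(A)$ (the classical $E$-homology of $B_A$ being concentrated in degrees $0$ and $2$ by \Cref{natural and classical htpy of topological EM objects}), this last mapping space is discrete, equal to $\hom_{\Alg_\Phi(\tA)_{k/}}(A,A)$. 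Extracting self-equivalences and classifying spaces, and invoking \Cref{moduli of alg EM objects}, yields $\ms{M}_Y(A) \simeq B\Aut_{\locresAlgT_{Y/}}(B_A) \simeq B\Aut_k(A) \simeq \ms{M}_k(A)$, with the identification induced precisely by $X \mapsto P_0^\alg E_\star^\lw X$.

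For the second equivalence, the same argument via $R$ (now applied to the morphism $K_A \to K_A(M,n)$ of type $\vec{K}_A(M,n)$) shows $\ms{M}_{A/Y}(M,n)$ is connected, so it again suffices to compute the automorphism space of a representative map $B_A \to B_A(M,n)$ in $\Fun([1],\locresAlgT_{Y/})$. Combining the universal properties of $B_A$ and $B_A(M,n)$ with Postnikov truncation identifies the relevant hom-spaces with their algebraic counterparts in $\locpiAlgtTE_{k/}$, and a final appeal to \Cref{moduli of alg EM maps} delivers $\ms{M}_{A/Y}(M,n) \simeq B\Aut_k(A,M) \simeq \ms{M}_{A/k}(M,n)$. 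To see that the explicit construction $\varphi \mapsto \delta_{n-1}(E_\star^\lw \varphi)$ realizes this identification, one tracks the classical $E$-homology of $\varphi$ via Obs \ref{natural and classical htpy of topological EM objects}: the fiber of $E_\star^\lw\varphi$ has homotopy in degrees $n-1$ (value $M$) and $n+1$ (value $\Omega M$) -- the shift-by-two arising from the spiral exact sequence; then \Cref{alg difference constrn gives map of EM type} shows that the $(n-1)$-st difference construction kills the degree $n+1$ contribution via its Postnikov truncation at level $n$ and produces precisely a map of type $\vec{K}_A(M,n)$.

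The main obstacle is the bookkeeping for this last step: because the spiral exact sequence contributes an extra $\Omega A$ in degree $2$ and an extra $\Omega M$ in degree $n+2$ to the classical $E$-homology of $B_A$ and $B_A(M,n)$, the fiber of $E_\star^\lw\varphi$ is not concentrated in a single degree, so one must verify that the truncation hidden in $\delta_{n-1}$ (the $P_n^\alg$ applied to $\pozalg(E_\star^\lw\varphi)$) correctly discards the $\Omega M$ summand and that the remaining $M$ in degree $n-1$ is recognized (via \Cref{alg difference constrn gives map of EM type}) as yielding a map of the intended type $\vec{K}_A(M,n)$. The low-degree cases ($n=1,2$) where the spiral shift can interact with the ambient Postnikov stages of $E_\star^\lw Z$ require separate -- but essentially routine -- verification that no additional contributions spoil the identification.
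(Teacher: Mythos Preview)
Your proof is correct and follows essentially the same representability/Yoneda approach as the paper, which simply asserts that ``these assertions both follow immediately from the functors that topological Eilenberg--Mac Lane objects are defined to represent.'' You have unpacked that one-line argument in detail: identifying $B_A$ and $B_A(M,n)$ as values of the right adjoint to $E_\star^\lw$, deducing connectedness, and then computing automorphism spaces via the adjunction together with $0$-truncatedness of $K_A$. One small correction: the colimit-preservation result you cite (\Cref{infty-categorical consequence of homotopical adaptedness}) is stated for $E_\star^\lw : \locresAlgT \to \locpiAlgtTE$, not the further-localized version on $\locEAlgT$; since the defining universal property of $B_A$ lives in $\locresAlgT$, that is in any case the version you need (and the $E_\star^\lw$-locality of the topological Eilenberg--Mac Lane objects, \Cref{topological EM objects are local}, lets you pass freely between the two).
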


\begin{proof}
These assertions both follow immediately from the functors that topological Eilenberg--Mac Lane objects are defined to represent, just as in the proof of \cite[Proposition 3.2.17]{GH}.
\end{proof}

\section{Decomposition of moduli spaces}
\label{section.decomp.of.mod.spaces}

\subsection{Realizations and $n$-stages}

We finally come to our main theorems: these provide an inductive procedure for understanding our moduli space of ultimate interest, which we begin by introducing.

\begin{defn}
With respect to
\begin{itemizesmall}
\item our fixed base object $Y \in \Alg_\oO(\C)$,
\item our chosen morphism $k \ra A$ in $\Alg_\Phi(\tA)$, and
\item our chosen isomorphism $E_\star Y \cong k$ in $\Alg_\Phi(\tA)$,
\end{itemizesmall}
we define a \bit{realization} to be an object $(Y \xra{\varphi} X) \in \LEAlgO_{Y/}$ such that there exists an isomorphism $E_\star X \cong A$ in $\Alg_\Phi(\tA)_{k/}$.  We write 
\[ \ms{M}_{A/Y} \subset \LEAlgO_{Y/} \]
for the moduli space of realizations (and $E_\star$-equivalences between them).
\end{defn}

Before diving in, we provide a bit of big-picture intuition.

\begin{rem}\label{big-picture intuition for obstruction theory}
Given a simplicial $T$-algebra $Z$, a good way to control $E_\star |Z|$ is to control its spiral spectral sequence.  More to the point, the easiest way to ensure that $|Z|$ be a realization is to demand that $\Etwo = \pi_* E_\star^\lw Z \cong \pi_0 E_\star^\lw Z \cong A$, so that the spectral sequence collapses immediately.

However, it is not so straightforward to obtain such an object or understand its automorphisms: the $\Etwo$ page consists of \textit{natural} $E$-homology groups, but it is the \textit{classical} $E$-homology groups that are more closely connected to the actual homotopy theory of the $\infty$-category $\locresAlgT$.

Luckily, however, we have a tool that relates these two types of $E$-homology groups: the localized spiral exact sequence.  As it is one-third classical and two-thirds natural, it allows us to exert control over the classical $E$-homology groups by manipulating the natural $E$-homology groups.

Thus, our method will be to attempt to interpolate one stage at a time from
\begin{itemizesmall}
\item objects which are easy to understand (read: have controlled natural $E$-homology) but do not have the correct $\Etwo$ pages (read: have the wrong classical $E$-homology), towards
\item objects which are somewhat more difficult to understand (read: have more complicated natural $E$-homology) but have $\Etwo$ pages which are closer and closer to collapsing at $A$ (read: their classical $E$-homology is equivalent to $A$ itself (concentrated in degree $0$) in an increasingly large range).
\end{itemizesmall}
Of course, such interpolation will not always be possible, but in the course of our attempt we will discover the precise cohomological obstructions to their possibility.
\end{rem}

We now define certain objects of $\locresAlgT$ which, via geometric realization, provide approximations to realizations.

\begin{defn}\label{define n-stages}
For $0 \leq n \leq \infty$, we say that an object $Z \in \locresAlgT_{Y/}$ is an \bit{$n$-stage} if the following conditions hold:
\begin{enumerate}
\item\label{E0 is A} there exists an isomorphism $\pi_0 E_\star^\lw Z \cong A$ in $\Alg_\Phi(\tA)_{k/}$;
\item\label{no high-diml natural htpy} $\pi_{>n,\star}^\natural Z = 0$; and
\item\label{no low-diml classical E-homology} $\pi_i E_\star^\lw Z = 0$ for $1 \leq i \leq n+1$.
\end{enumerate}
We write
\[ \ms{M}_n(A/Y) \subset \locEAlgT_{Y/} \]
for the moduli space of $n$-stages (and $E_\star$-equivalences between them).
\end{defn}

\begin{obs}\label{natural and classical E-homology of an n-stage}
Suppose that $Z \in \ms{M}_n(A/Y)$.  By condition \ref{no low-diml classical E-homology}, the tail end of the localized spiral exact sequence degenerates into a sequence of isomorphisms.  By induction, this implies that $E_{i,\star}^\natural Z \cong \Omega^i A$ for all $i \leq n$: the base case of $i=0$ follows from condition \ref{E0 is A} and \Cref{iso of two types of 0th E-homology is as Phi-algs}.  Then, after a colimit argument, condition \ref{no high-diml natural htpy} implies that we have an isomorphism $\pi_{n+2} E_\star^\lw Z \xra{\cong} \Omega ( E_{n,\star}^\natural Z)$ and that $\pi_{>n+2} E_\star^\lw Z = 0$.  The table of \Cref{table of both E-hlgy gps for an n-stage} summarizes these computations.
\begin{figure}[h]
\[ \begin{array}{c||c|c|c|c|c|c|c|c|c|c}
i & 0 & 1 & 2 & \cdots & n-1 & n & n+1 & n+2 & n+3 & \cdots \\ \hline \hline
\pi_i E_\star^\lw Z & A & 0 & 0 & \cdots & 0 & 0 & 0 & \Omega^{n+1} A & 0 & \cdots \\ \hline
E_{i,\star}^\natural Z & A & \Omega A & \Omega^2 A & \cdots & \Omega^{n-1} A & \Omega^n A & 0 & 0 & 0 & \cdots
\end{array} \]
\caption{The classical and natural $E$-homology groups of an $n$-stage $Z \in \ms{M}_n(A/Y)$.}
\label{table of both E-hlgy gps for an n-stage}
\end{figure}
Moreover, the same argument shows that if $n= \infty$ then $E_{i,\star}^\natural Z \cong \Omega^i A$ for all $i \geq 0$ and that $\pi_* E_\star^\lw Z \cong \pi_0 E_\star^\lw Z \cong A$.
\end{obs}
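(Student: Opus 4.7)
The plan is to read off both types of $E$-homology of $Z$ directly from the localized spiral exact sequence of \Cref{construct localized spiral}, feeding in the three defining conditions of an $n$-stage at different positions. The ambient sequence
\[ \cdots \to \pi_{i+1} E_\star^\lw Z \to \Omega(E_{i-1,\star}^\natural Z) \to E_{i,\star}^\natural Z \to \pi_i E_\star^\lw Z \to \Omega(E_{i-2,\star}^\natural Z) \to \cdots \]
is $\Mod_A^\Phi(\tA)$-valued by the module structure established in \Cref{subsection module structure on localized spiral}, so every isomorphism extracted inherits this enhancement automatically, without my having to re-examine its source.

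The base case $E_{0,\star}^\natural Z \cong A$ in $\Alg_\Phi(\tA)$ follows from condition (1) of \Cref{define n-stages} together with \Cref{iso of two types of 0th E-homology is as Phi-algs}. For the inductive step I would run $i$ from $1$ up through $n$: since $i+1 \leq n+1$, condition (3) forces both $\pi_i E_\star^\lw Z = 0$ and $\pi_{i+1} E_\star^\lw Z = 0$, so the four-term segment of the spiral displayed above collapses to an isomorphism $\Omega(E_{i-1,\star}^\natural Z) \xra{\cong} E_{i,\star}^\natural Z$, which \Cref{shifts lift to A-modules} then identifies with $\Omega^i A$.

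To locate the surviving classical group in degree $n+2$, the input is condition (2). A colimit argument based on the natural identifications $\pi_{i,\Sigma^\beta \Dual E_\alpha}^\natural Z \cong \pi_{i,\beta}^\natural((E_\alpha \otimes -)^\lw Z)$, all vanishing for $i > n$, yields $E_{>n,\star}^\natural Z = 0$. Substituting this into the segment $E_{n+2,\star}^\natural Z \to \pi_{n+2} E_\star^\lw Z \to \Omega(E_{n,\star}^\natural Z) \to E_{n+1,\star}^\natural Z$ produces $\pi_{n+2} E_\star^\lw Z \cong \Omega^{n+1} A$, and the analogous segment with $i > n+2$ (both flanking natural terms now being of degree $> n$) forces $\pi_i E_\star^\lw Z = 0$. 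The $n = \infty$ case is cleaner still: condition (2) is vacuous, condition (3) gives the classical vanishing for all $i \geq 1$ directly, and the inductive reading of the spiral sequence produces $E_{i,\star}^\natural Z \cong \Omega^i A$ for every $i \geq 0$. I do not anticipate any substantive obstacle: the entire proof is bookkeeping on the localized spiral, and the one subtlety, namely preservation of module structure, is handled uniformly by the results of \Cref{subsection module structure on localized spiral}.
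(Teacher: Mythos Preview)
Your proposal is correct and follows essentially the same route as the paper's own argument: feed conditions (1)--(3) of \Cref{define n-stages} into the localized spiral exact sequence, use condition (3) to collapse the low-degree portion into the inductive isomorphisms $E_{i,\star}^\natural Z \cong \Omega^i A$, and then use condition (2) (via the colimit argument) to read off the remaining classical groups. Your write-up is in fact more explicit than the paper's sketch, spelling out the relevant four-term segments, the duality identification underlying the colimit step, and the appeal to \Cref{shifts lift to A-modules} for the module structure on the shifts.
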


We now provide the connection between realizations and $n$-stages.

\begin{thm}\label{infty-stages give realizations}
Geometric realization induces an equivalence
\[ \ms{M}_\infty(A/Y) \xra{\sim} \ms{M}_{A/Y} . \]
\end{thm}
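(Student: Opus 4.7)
The plan is to exhibit the constant simplicial object functor as an inverse to geometric realization on the relevant subgroupoids. The starting point is the derived adjunction $|{-}| : \locEAlgT \adjarr \LEAlgO : \const$ obtained from the Quillen adjunction of \Cref{obs adjn from simplicial T-algebras}; both functors descend to the $E_\star$-localizations, since the spiral spectral sequence $\pi_n E_\beta^\lw Z \Rightarrow E_{\beta+n}|Z|$ identifies $E_\star^\lw$-equivalences in $\Alg_T(s\C)$ with maps inducing isomorphisms on $\Etwo$ (hence on $E_\star$ of geometric realizations), while $E_\star^\lw \const(X) \simeq \const(E_\star X)$ shows that $\const$ sends $E$-equivalences in $\Alg_\oO(\C)$ to $E_\star^\lw$-equivalences in $\Alg_T(s\C)$.

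First I would check that geometric realization sends $\infty$-stages to realizations. For $Z \in \ms{M}_\infty(A/Y)$, \Cref{natural and classical E-homology of an n-stage} gives $\pi_0 E_\star^\lw Z \cong A$ and $\pi_i E_\star^\lw Z = 0$ for $i > 0$, so the spiral spectral sequence collapses along its bottom row, yielding $E_\star|Z| \cong A$ in $\Alg_\Phi(\tA)$. Dually, given any realization $X$, the constant object $\const(X) \in \Alg_T(s\C)$ satisfies $E_\star^\lw \const(X) \simeq \const(E_\star X) \simeq \const(A)$, which has $\pi_0 \cong A$ and vanishing higher homotopy, whence $\const(X) \in \ms{M}_\infty(A/Y)$. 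The counit $|\const(X)| \xra{\sim} X$ is then already an equivalence in $\Alg_\oO(\C)$, handling essential surjectivity of the induced map of subgroupoids.

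For full faithfulness, I would show that the unit $\eta_Z : Z \to \const(|Z|)$ is an $E_\star^\lw$-equivalence whenever $Z$ is an $\infty$-stage. Applying $E_\star^\lw$ produces a map $E_\star^\lw Z \to \const(E_\star|Z|) \simeq \const(A)$; both sides have $\pi_0 \cong A$ and vanishing higher homotopy, and the induced map on $\pi_0$ agrees (by naturality) with the collapse isomorphism of the spiral spectral sequence, so $\eta_Z$ lies in $\bW_{E_\star^\lw}$. Combining this with the derived adjunction $\hom_{\locEAlgT}(Z_1, \const(|Z_2|)) \simeq \hom_{\LEAlgO}(|Z_1|, |Z_2|)$ and restricting to the maximal subgroupoids of equivalences between $\infty$-stages and between realizations yields the desired equivalence.

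The main obstacle will be the careful bookkeeping for how the Quillen adjunction $|{-}| \dashv \const$ descends through the successive localizations (first $\bW_\res$, then $\bW_{E_\star^\lw}$), and in particular verifying that the derived unit and counit of the descended adjunction are computed by the formulas used above. Once this is established, the substantive content reduces to two simple facts: the spiral spectral sequence for an $\infty$-stage collapses at $\Etwo$ along its bottom row, and constant simplicial objects have $E$-homology concentrated in degree zero.
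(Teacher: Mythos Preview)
Your proposal is correct and takes essentially the same approach as the paper: descend the adjunction $|{-}| \dashv \const$ to the $E_\star$-localizations, then use the spiral spectral sequence (which collapses for $\infty$-stages) to see that the unit and counit become equivalences on the relevant subgroupoids. The paper's proof is a two-sentence sketch of exactly this argument, and your version simply fills in the details that the paper leaves to ``evidently'' and ``restricts to give the desired equivalence.''
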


\begin{proof}
The adjunction $|{-}| : \Alg_T(s\C) \adjarr \Alg_\oO(\C) : \const$ evidently descends (or perhaps rather restricts) to an adjunction $|{-}| : \locEAlgT \adjarr \LEAlgO : \const$ by the universal property of localization.  In turn, the spiral spectral sequence implies that (after taking undercategories of $Y$) this latter adjunction restricts to give the desired equivalence.
\end{proof}

\begin{rem}
Note that we do \textit{not} generally have a pullback square
\[ \begin{tikzcd}
\ms{M}_\infty(A/Y) \arrow{r} \arrow[hook]{d} & \ms{M}_{A/Y} \arrow[hook]{d} \\
\locEAlgT_{Y/} \arrow{r}[swap]{|{-}|} & \LEAlgO_{Y/} .
\end{tikzcd} \]
Rather, as alluded to in \Cref{big-picture intuition for obstruction theory}, an $\infty$-stage is exactly an object whose spiral spectral sequence has $\Etwo = \pi_* E_\star^\lw X \cong \pi_0 E_\star^\lw \cong A$, so that in particular it collapses immediately.
\end{rem}

\begin{thm}\label{topological truncations move between stages}
For any $0 \leq n \leq m \leq \infty$, the $n$-truncation functor
\[ \locresAlgT \xra{P_n^\top} \locresAlgT \]
induces a map
\[ \ms{M}_m(A/Y) \ra \ms{M}_n(A/Y) , \]
and these assemble to give an equivalence
\[ \ms{M}_\infty(A/Y) \xra{\sim} \lim \left( \cdots \xra{P_2^\top} \ms{M}_2(A/Y) \xra{P_1^\top} \ms{M}_1(A/Y) \xra{P_0^\top} \ms{M}_0(A/Y) \right) . \]
\end{thm}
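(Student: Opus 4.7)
The plan is to prove both assertions by interacting the topological Postnikov tower $\{P_n^\top\}$ with the $n$-stage conditions, and then to reduce the tower equivalence to convergence of Postnikov towers in $\locresAlgT$.

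First I would verify that $P_n^\top$ sends $m$-stages to $n$-stages for any $n \leq m \leq \infty$. Fix $Z \in \ms{M}_m(A/Y)$. Condition \ref{no high-diml natural htpy} of \Cref{define n-stages} is immediate from the defining property of truncation, and the truncation map $Z \to P_n^\top Z$ is an isomorphism on $\pi_{\leq n,\star}^\natural$. In particular it is an isomorphism on $\pi_{0,\star}^\natural$, hence on $\pi_0 E_\star^\lw$ as an object of $\Alg_\Phi(\tA)_{k/}$ by \Cref{iso of two types of 0th E-homology is as Phi-algs}, giving condition \ref{E0 is A}. For condition \ref{no low-diml classical E-homology}, I would repeat the inductive argument of \Cref{natural and classical E-homology of an n-stage}: the natural $E$-homology of $P_n^\top Z$ inherits the values $E_{i,\star}^\natural P_n^\top Z \cong \Omega^i A$ for $i \leq n$ and vanishes for $i > n$, and plugging these into the localized spiral exact sequence forces $\pi_i E_\star^\lw P_n^\top Z = 0$ for $1 \leq i \leq n+1$. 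A small additional check is that $P_n^\top Z$ remains $E_\star^\lw$-local, so that it actually lies in $\locEAlgT_{Y/}$; this amounts to observing that on this locus the truncation map is already an $E_\star^\lw$-equivalence by \Cref{iso of two types of 0th E-homology is as Phi-algs} combined with the homotopy calculation just performed, so no further $E$-localization is needed.

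With the maps of moduli spaces in place, the tower equivalence $\ms{M}_\infty(A/Y) \xra{\sim} \lim_n \ms{M}_n(A/Y)$ reduces to the convergence of Postnikov towers in $\locresAlgT$: namely, that for every $Z \in \locresAlgT$ the canonical map $Z \to \lim_n P_n^\top Z$ is an equivalence. Granting this, fully faithfulness of the map of moduli spaces is formal, since mapping spaces into a limit are computed levelwise. For essential surjectivity, a compatible system $\{Z_n\}$ of $n$-stages has limit $Z = \lim_n Z_n \in \locEAlgT_{Y/}$ (limits preserve $E$-locality), and its natural $E$-homology groups are computed as the stabilized values $E_{i,\star}^\natural Z \cong \Omega^i A$ (the towers $\{E_{i,\star}^\natural Z_n\}_n$ are eventually constant once $n \geq i$), whence the spiral exact sequence forces $\pi_i E_\star^\lw Z = 0$ for all $i \geq 1$; thus $Z$ is an $\infty$-stage, and its truncations recover the $Z_n$ by construction.

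The main obstacle is therefore the convergence of the Postnikov tower in $\locresAlgT$. In a general presentable $\infty$-category this fails without a hypercompleteness hypothesis, so one must exploit the concrete description $\locresAlgT$ as a presentable $\infty$-category with compact projective generators $\{\free_T(S^\eps)\}_{S^\eps \in \GE}$ arising from \Cref{identify localization of sNres} together with \Cref{thm AlgTsCres}. The natural homotopy groups $\pi_{n,\eps}^\natural$ jointly detect equivalences (via the spiral exact sequence and \Cref{nat iso in dimension 0}), and each such group is corepresented by a compact object whose mapping spectrum behaves well under Postnikov truncation; this reduces convergence to a Milnor-type statement for the towers $\{\pi_{n,\eps}^\natural P_k^\top Z\}_k$, whose $\lim^1$ terms vanish because each individual tower is eventually constant. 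Once this is in hand, the remaining arguments assemble as above.
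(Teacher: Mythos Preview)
Your verification that truncation carries $m$-stages to $n$-stages is correct and matches the paper's one-line appeal to the localized spiral exact sequence. However, your side claim that the truncation map $Z \to P_n^\top Z$ is an $E_\star^\lw$-equivalence is false: for $m > n$, the very computation you invoke (\Cref{natural and classical E-homology of an n-stage}) gives $\pi_{n+2} E_\star^\lw(P_n^\top Z) \cong \Omega^{n+1} A$ while $\pi_{n+2} E_\star^\lw Z = 0$. (And even were the claim true, it would not yield locality of the target: an $E_\star^\lw$-equivalence out of a local object is an actual equivalence precisely when its target is local, so your argument would prove the opposite of what you want.) Fortunately this locality check is not really needed for the moduli-space map, since the $n$-stage conditions are phrased in $\locresAlgT_{Y/}$ and one then passes to $E_\star^\lw$-equivalence classes.

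For the tower equivalence, the paper takes a different route: it simply invokes the general Dwyer--Kan classification argument of \cite[4.6]{DKClass} rather than arguing directly. Your hands-on Postnikov-convergence approach is viable, but there is a gap in the essential-surjectivity step. You assert that $E_{i,\star}^\natural(\lim_n Z_n)$ equals the stabilized value of the tower $\{E_{i,\star}^\natural Z_n\}_n$, yet $E_{i,\star}^\natural$ factors through $X \mapsto (E \otimes X)^\lw$, and tensoring with $E$ does not preserve inverse limits. The repair is a colimit argument: one has $E_{i,\beta}^\natural X \cong \colim_{\alpha \in \J} \pi_{i,\Sigma^\beta \Dual E_\alpha}^\natural(X)$, and the genuine natural homotopy functors $\pi_{i,\eps}^\natural$ \emph{do} interact well with the inverse limit (they are corepresented by compact objects, and your Milnor argument applies since each tower is eventually constant). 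Only after this step can one feed the resulting values into the localized spiral exact sequence to deduce that $\pi_{\geq 1} E_\star^\lw Z$ vanishes.
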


\begin{proof}
First of all, it is immediate from the localized spiral exact sequence that the $n$-truncation of an $m$-stage is an $n$-stage.  From here, the asserted equivalence follows from an ($\infty$-categorical but otherwise) identical argument to that of \cite[4.6]{DKClass}.
\end{proof}

\begin{thm}\label{thm moduli space of 0-stages is algebraic}
The functor
\[ \locEAlgT \xra{\pi_0 E_\star^\lw} \Alg_\Phi(\tA) \]
induces an equivalence
\[ \ms{M}_0(A/Y) \xra{\sim} \ms{M}_{A/k} . \]
\end{thm}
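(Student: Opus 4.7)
The plan is to identify the moduli space $\ms{M}_0(A/Y)$ of $0$-stages with the moduli space $\ms{M}_Y(A)$ of topological Eilenberg--Mac Lane objects of type $B_A$ under $Y$, and then invoke Proposition \ref{determine moduli spaces of topological EM objects and morphisms} together with Proposition \ref{moduli of alg EM objects}.

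First I would simplify the defining conditions for a $0$-stage. The tail of the localized spiral exact sequence reads $E_{1,\star}^\natural Z \to \pi_1 E_\star^\lw Z \to 0$, so condition \ref{no low-diml classical E-homology} of Definition \ref{define n-stages} is automatic for $n=0$ given condition \ref{no high-diml natural htpy}. Hence a $0$-stage is precisely an object $Z \in \locEAlgT_{Y/}$ with $\pi_{>0,\star}^\natural Z = 0$ and $\pi_0 E_\star^\lw Z \cong A$ in $\Alg_\Phi(\tA)_{k/}$ (compatibly with the map from $\pi_0 E_\star^\lw Y \cong k$).

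Second I would check that $0$-stages are exactly the type $B_A$ objects with the prescribed $\pi_0$-structure. One direction is immediate from Observation \ref{natural and classical htpy of topological EM objects}. For the converse, let $Z$ be a $0$-stage. Since $E_\star^\lw : \locEAlgT \to \locpiAlgtTE$ is cocontinuous (\ref{infty-categorical consequence of homotopical adaptedness}), it admits a right adjoint, and a choice of $B_A$ may be taken to be the image of $K_A$ under this right adjoint. The canonical truncation $E_\star^\lw Z \to P_0^\alg E_\star^\lw Z \simeq K_A$ in $\locpiAlgtTE$ then transposes to a morphism $\varphi : Z \to B_A$ in $\locresAlgT_{Y/}$. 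To see that $\varphi$ is an equivalence in $\locEAlgT$, use that $E_\star^\lw$ is conservative on $\locEAlgT$ by the very definition of $\bW_{E_\star^\lw}$; thus it suffices to check $E_\star^\lw(\varphi)$ is an equivalence in $\locpiAlgtTE$. Both source and target are $0$-stages, so by \ref{natural and classical E-homology of an n-stage} their classical $E$-homology is $A$ in degree $0$, $\Omega A$ in degree $2$, and zero otherwise; $\pi_0 E_\star^\lw(\varphi)$ is an isomorphism by construction, and $\pi_2 E_\star^\lw(\varphi)$ is then forced to be an isomorphism via the connecting map $\pi_2 E_\star^\lw \xra{\delta} \Omega(E_{0,\star}^\natural)$ of the localized spiral exact sequence (together with Lemma \ref{iso of two types of 0th E-homology is as Phi-algs} identifying $E_{0,\star}^\natural$ with $\pi_0 E_\star^\lw$ compatibly).

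Third, passing to moduli spaces, the identification above upgrades to an equivalence of groupoids $\ms{M}_Y(A) \simeq \ms{M}_0(A/Y)$, using that $B_A$ is $E_\star^\lw$-local (its represented functor factors through $\locEAlgT$) so that morphisms between type $B_A$ objects in $\locresAlgT$ and in $\locEAlgT$ coincide. Applying Proposition \ref{determine moduli spaces of topological EM objects and morphisms} produces an equivalence $\ms{M}_Y(A) \xra{\sim} \ms{M}_{A/k}$ induced by $P_0^\alg E_\star^\lw$; on a $0$-stage, $P_0^\alg$ simply kills the one nontrivial higher group $\pi_2 E_\star^\lw \cong \Omega A$, so this functor coincides with $\pi_0 E_\star^\lw$, yielding the asserted equivalence. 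The main obstacle is the converse implication in the second step: being of type $B_A$ is a representability condition that is a priori stronger than the vanishing conditions defining a $0$-stage, and the argument hinges on the interplay between cocontinuity of $E_\star^\lw$ (to construct the comparison map) and its conservativity on $\locEAlgT$ (to conclude the comparison map is an equivalence).
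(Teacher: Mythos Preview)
Your proof is correct and follows essentially the same approach as the paper's own proof: identify $\ms{M}_0(A/Y)$ with $\ms{M}_Y(A)$ and then invoke Proposition~\ref{determine moduli spaces of topological EM objects and morphisms}. The paper dispatches the first step with the phrase ``inspection of the definitions reveals an equivalence $\ms{M}_0(A/Y) \simeq \ms{M}_Y(A)$'', whereas you carry out that inspection in detail---reducing the $0$-stage conditions via the spiral exact sequence, constructing the comparison map to a chosen $B_A$ via the right adjoint to $E_\star^\lw$, and verifying it is an $E_\star^\lw$-equivalence using the spiral connecting map on $\pi_2$. Your treatment of the functor compatibility ($P_0^\alg E_\star^\lw$ versus $\pi_0 E_\star^\lw$) at the end is also a detail the paper leaves implicit.
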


\begin{proof}
Inspection of the definitions reveals an equivalence $\ms{M}_0(A/Y) \simeq \ms{M}_Y(A)$ with the moduli space of objects under $Y$ of type $B_A$, and from here the claim follows from \Cref{determine moduli spaces of topological EM objects and morphisms}.
\end{proof}

\subsection{Climbing the tower}

We now come to the essential result, which explains how to move up the tower of moduli spaces.

\begin{thm}\label{pullback square to climb tower}
For any $n \geq 1$, there is a natural pullback square
\[ \begin{tikzcd}
\ms{M}_n(A/Y) \arrow{r} \arrow{d}[swap]{P_{n-1}^\top} & B \Aut_k(A,\Omega^n A) \arrow{d} \\
\ms{M}_{n-1}(A/Y) \arrow{r} & \what{\ms{H}}^{n+2}_{\tT_E}(A/k;\Omega^n A)
\end{tikzcd} \]
in $\S$.
\end{thm}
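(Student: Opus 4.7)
The plan is to construct each of the four maps in the square, verify commutativity, and then establish the pullback property by a fiber comparison. First I will construct the bottom horizontal map and the right vertical map. Given an $(n-1)$-stage $W \in \ms{M}_{n-1}(A/Y)$, \Cref{natural and classical E-homology of an n-stage} records that $E_\star^\lw W \in \locpiAlgtTE$ has homotopy $\pi_0 \cong A$ and $\pi_{n+1} \cong \Omega^n A$, with all other groups vanishing. Consequently, the map $E_\star^\lw W \to P_0^\alg E_\star^\lw W \simeq K_A$ has fiber first nonvanishing in degree $n+1$, so \Cref{alg difference constrn gives map of EM type} implies that the algebraic difference construction $\delta_{n+1}$ produces an algebraic Eilenberg--Mac Lane map of type $\vec{K}_A(\Omega^n A, n+2)$. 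Its class lives in $\ms{M}_{A/k}(\Omega^n A, n+2) \simeq B\Aut_k(A, \Omega^n A)$, and the further forgetful class in $\ms{M}_k(A \loopra K_A(\Omega^n A, n+2) \loopla A) \simeq \what{\ms{H}}^{n+2}_{\tT_E}(A/k; \Omega^n A)$ (via \Cref{identify moduli of two alg EM maps as htpy quotient of coh space}) defines the bottom horizontal map. The right vertical map is simply this forgetful map.

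Next I would construct the top horizontal map in a parallel manner on the topological side. Given an $n$-stage $Z$, I would apply the topological difference construction to the truncation map $Z \to P_{n-1}^\top Z$; using \Cref{homotopical adaptedness implies things for natural E-homology of simplicial T-algebras}\ref{fiber concentrated in degree n+1 gives E-local pullback}, the vanishing conditions of \Cref{define n-stages}, and the spiral exact sequence, I would verify that this produces a morphism whose $E_\star^\lw$-image is an algebraic Eilenberg--Mac Lane map of type $\vec{K}_A(\Omega^n A, n+2)$, equivalently a morphism of topological type $\vec{B}_A(\Omega^n A, n+2)$ via the defining universal property of $B_A(\Omega^n A, n+2)$. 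Its class in $\ms{M}_{A/Y}(\Omega^n A, n+2) \xra{\sim} B\Aut_k(A, \Omega^n A)$ (the equivalence supplied by \Cref{determine moduli spaces of topological EM objects and morphisms}) defines the top horizontal map, and commutativity of the square will follow because the cocontinuity of $E_\star^\lw : \locresAlgT \to \locpiAlgtTE$ (\Cref{infty-categorical consequence of homotopical adaptedness}) intertwines the topological and algebraic difference constructions, allowing $E_\star^\lw$ to be commuted past the pushout defining $\poztop$.

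To verify the pullback property, I would fix an $(n-1)$-stage $W$ and compare the fibers of the two vertical maps. The fiber of $P_{n-1}^\top$ over $W$ consists of pairs of an $n$-stage $Z$ together with an equivalence $P_{n-1}^\top Z \simeq W$; by \Cref{homotopical adaptedness implies things for natural E-homology of simplicial T-algebras}\ref{fiber concentrated in degree n+1 gives E-local pullback} these are reconstructed as $E_\star$-local pullbacks $Z \simeq W \times_{B_A(\Omega^n A, n+2)} B_A$ determined by a topological Eilenberg--Mac Lane map of type $\vec{B}_A(\Omega^n A, n+2)$ through which the difference-construction k-invariant of $W$ factors. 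Via \Cref{determine moduli spaces of topological EM objects and morphisms} this topological fiber is equivalent to its algebraic analog, which by \Cref{identify moduli of two alg EM maps as htpy quotient of coh space} is precisely the fiber of $B\Aut_k(A,\Omega^n A) \to \what{\ms{H}}^{n+2}_{\tT_E}(A/k;\Omega^n A)$ over the image of $W$.

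The main obstacle will be the second step: carefully verifying that the topological difference construction applied to the truncation of an $n$-stage produces a morphism of type $\vec{B}_A(\Omega^n A, n+2)$, and identifying its $E_\star^\lw$-image with the algebraic k-invariant of $E_\star^\lw W$. The delicate point is that we must simultaneously track natural homotopy (via the spiral exact sequence) in order to obtain the defining $\pi^\natural$-vanishing for $B_A(\Omega^n A, n+2)$, and classical homotopy in order to recognize its image as the algebraic k-invariant; this is precisely the arena in which the homotopical-adaptedness hypothesis (through the cocontinuity of \Cref{infty-categorical consequence of homotopical adaptedness}) earns its keep. Once that compatibility is in hand, the fiber comparison is essentially formal.
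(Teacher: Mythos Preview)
Your proposal has the right skeleton but contains a genuine degree error that breaks the commutativity argument. When you apply the topological difference construction to the truncation $Z \to P_{n-1}^\top Z$ for an $n$-stage $Z$, the fiber of $E^\natural_{*,\star}$ is $\Omega^n A$ concentrated in degree $n$ (not $n+1$), so by \Cref{homotopical adaptedness implies things for natural E-homology of simplicial T-algebras} the output $\delta_n(\tau_{n-1}^\top)$ is of type $\vec{B}_A(\Omega^n A, n+1)$, not $n+2$ --- exactly as recorded in \Cref{try to build n-stage from (n-1)-stage}. Your bottom map, on the other hand, correctly produces a class in degree $n+2$ via the algebraic $\delta_{n+1}$ applied to $E_\star^\lw W \to K_A$. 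But now the cocontinuity of $E_\star^\lw$ does not intertwine these two: it relates $\delta_n^\top$ on $Z \to P_{n-1}^\top Z$ to $\delta_n^\alg$ on $E_\star^\lw Z \to E_\star^\lw(P_{n-1}^\top Z)$, which is a different map (and a different index) from the one defining your bottom arrow. So the square as you have described it does not obviously commute, and your fiber identification --- which writes $n$-stages as pullbacks over $B_A(\Omega^n A, n+2)$ rather than $B_A(\Omega^n A, n+1)$ --- is off for the same reason.

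The paper handles this degree shift explicitly rather than trying to absorb it into a single difference construction. The key intermediate step is \Cref{pullback square extending a given (n-1)-stage to an n-stage}, which for each $(n-1)$-stage $Z$ produces a pullback square landing in $\ms{M}_k(E_\star^\lw Z \loopra K_A(\Omega^n A, n+1) \loopla K_A)$ --- note the $n+1$. The passage to $n+2$ then comes from a separate, tautological pullback square built from the equivalences of \Cref{equivce of moduli spaces of alg EM maps} and \Cref{adding M in degree n is classified by a map to KAMn+1}, together with a case split according to whether or not $E_\star^\lw Z \simeq K_A(\Omega^n A, n+1)$ (this is where \Cref{obstrns live in AQcoh} enters). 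Only after pasting these two pullbacks and assembling over all components of $\ms{M}_{n-1}(A/Y)$ does one obtain the global square, which is then rewritten via \Cref{moduli of alg EM maps} and \Cref{identify moduli of two alg EM maps as htpy quotient of coh space}. Your fiber-comparison strategy could be made to work, but you would need to insert this shift from $n+1$ to $n+2$ explicitly and redo both the commutativity and the fiber identifications accordingly; the paper's moduli-space manipulation packages this more cleanly.
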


In order to prove this, we will first develop an understanding of the object-by-object passage between $(n-1)$-stages and $n$-stages, and then we will analyze how this behaves in families.

\begin{obs}\label{topological EM objects are local}
Directly from the definitions, topological Eilenberg--Mac Lane objects are local with respect to the left localization adjunction $\leftloc_{E_\star^\lw} : \locresAlgT \adjarr \locEAlgT : \forget_{E_\star^\lw}$.  Nevertheless, we will often keep the localization functor in the notation for clarity.
\end{obs}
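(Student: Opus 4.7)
The plan is to unwind the universal property used in the definition of topological Eilenberg--Mac Lane objects and verify locality directly. Recall that by \Cref{going from locpi to locE is a left localization}, the local objects of $\locresAlgT$ with respect to the adjunction $\leftloc_{E_\star^\lw} \dashv \forget_{E_\star^\lw}$ are exactly those $W \in \locresAlgT$ such that every $E_\star^\lw$-equivalence $Z \xra{\varphi} Z'$ induces an equivalence $\varphi^* : \hom_\locresAlgT(Z',W) \xra{\sim} \hom_\locresAlgT(Z,W)$. So the task reduces to checking this condition when $W$ is of type $B_A$ or $B_A(M,n)$.

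First I would treat the case $W = B_A$. The defining property of $B_A$ provides a universal map $E_\star^\lw B_A \ra K_A$ such that for every $Z \in \locresAlgT$ the induced map $\hom_\locresAlgT(Z,B_A) \ra \hom_\locpiAlgtTE(E_\star^\lw Z, K_A)$ is an equivalence, natural in $Z$. Thus $\hom_\locresAlgT(-,B_A)$ factors (up to natural equivalence) through $E_\star^\lw$, and in particular sends $E_\star^\lw$-equivalences to equivalences of hom-spaces: if $\varphi$ is an $E_\star^\lw$-equivalence then by definition $E_\star^\lw(\varphi)$ is an equivalence in $\locpiAlgtTE$, so $\hom_\locpiAlgtTE(E_\star^\lw(\varphi),K_A)$ is an equivalence, whence so is $\varphi^*$. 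The argument for $W = B_A(M,n)$ is identical, using the universal map $E_\star^\lw B_A(M,n) \ra K_A(M,n)$ and the fact that $\hom_\locresAlgT(-,B_A(M,n)) \simeq \hom_\locpiAlgtTE(E_\star^\lw(-),K_A(M,n))$ as functors out of $\locresAlgT^{op}$.

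No step here is really an obstacle: the entire content is the observation that the functor corepresented by a topological Eilenberg--Mac Lane object factors through $E_\star^\lw$, which is built into the definition. The only remark worth making is that once locality is established, the universal maps $E_\star^\lw B_A \ra K_A$ and $E_\star^\lw B_A(M,n) \ra K_A(M,n)$ may equivalently be viewed as living in $\locEAlgT$ via the unit of the localization, which is why we may continue to denote the localization functor explicitly without ambiguity.
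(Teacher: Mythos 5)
Your proof is correct and unwinds the observation exactly as the paper intends: the defining property of a topological Eilenberg--Mac Lane object is that the functor it corepresents factors through $E_\star^\lw : \locresAlgT \to \locpiAlgtTE$, so it automatically inverts $E_\star^\lw$-equivalences and hence is local. This is precisely the "directly from the definitions" content the paper alludes to.
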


\begin{obs}\label{try to build n-stage from (n-1)-stage}
Suppose first that $Z \in \ms{M}_n(A/Y)$.  Then $P_{n-1}^\top(Z) \in \ms{M}_{n-1}(A/Y)$ by \Cref{topological truncations move between stages}, and moreover \Cref{homotopical adaptedness implies things for natural E-homology of simplicial T-algebras}\ref{fiber concentrated in degree n+1 gives E-local pullback} implies that we have a pullback square
\[ \begin{tikzcd}
\leftloc_{E_\star^\lw} (Z) \arrow{r} \arrow{d}[swap]{\leftloc_{E_\star^\lw}(\tau_{n-1}^\top)} & \leftloc_{E_\star^\lw}(B_A) \arrow{d} \\
\leftloc_{E_\star^\lw}(P_{n-1}^\top(Z)) \arrow{r} & \leftloc_{E_\star^\lw}(B_A(\Omega^n A , n+1))
\end{tikzcd} \]
in $\locEAlgT$.

Let us attempt to reverse this process.  Suppose that $W \in \ms{M}_{n-1}(A/Y)$, and suppose that we form a pullback
\[ \begin{tikzcd}
\leftloc_{E_\star^\lw}(\widetilde{W}) \arrow{r} \arrow{d} & \leftloc_{E_\star^\lw}(B_A) \arrow{d} \\
\leftloc_{E_\star^\lw}(W) \arrow{r}[swap]{\varphi} & \leftloc_{E_\star^\lw}(B_A(\Omega^n A,n+1))
\end{tikzcd} \]
in $\locEAlgT$.  Then, $\leftloc_{E_\star^\lw}(\widetilde{W}) \in \ms{M}_n(A/Y)$ if and only if the induced composite
\[ E_\star^\lw W \xra{E_\star(\varphi)} E_\star^\lw(B_A(\Omega^n A , n+1)) \ra K_A(\Omega^n A , n+1) \]
with the universal map is an equivalence in $\locpiAlgtTE$: this follows from the long exact sequence in classical $E$-homology induced by a pullback square.
\end{obs}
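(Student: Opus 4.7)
The observation is really two assertions, and I will treat them in turn.

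For the first half, assume $Z\in\ms{M}_n(A/Y)$. That $P_{n-1}^\top(Z)\in\ms{M}_{n-1}(A/Y)$ is \Cref{topological truncations move between stages} applied with $m=n$ and $n$ replaced by $n-1$. To obtain the pullback square, I would first analyze the map $\tau_{n-1}^\top:Z\to P_{n-1}^\top(Z)$: by construction of the natural Postnikov tower, $\pi_{i,\star}^\natural(\fib(\tau_{n-1}^\top))$ vanishes for $i<n$, and since $\pi_{>n,\star}^\natural Z=0$ by the defining condition of an $n$-stage, $\pi_{i,\star}^\natural(\fib(\tau_{n-1}^\top))$ is concentrated in a single degree, with that constituent identified using \Cref{natural and classical E-homology of an n-stage}. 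Applying \Cref{homotopical adaptedness implies things for natural E-homology of simplicial T-algebras}\ref{fiber concentrated in degree n+1 gives E-local pullback} to $\varphi=\tau_{n-1}^\top$ then produces a pullback square in $\locEAlgT$ whose upper-right corner is $\leftloc_{E_\star^\lw}(P_0^\top Z)$ and whose lower-right corner is $\leftloc_{E_\star^\lw}(P_{n+1}^\top(\poztop(\tau_{n-1}^\top)))$. The final step is to recognize these two corners: the first is of type $B_A$ because $P_0^\top Z$ is (by \Cref{thm moduli space of 0-stages is algebraic}, since $Z\in\ms{M}_n(A/Y)$ truncates to a $0$-stage), and the second is of type $B_A(\Omega^n A,n+1)$ by part 2 of the same proposition together with \Cref{natural and classical E-homology of an n-stage} identifying the fiber's natural $E$-homology as $\Omega^n A$.

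For the second half, suppose $W\in\ms{M}_{n-1}(A/Y)$ and that we have formed the pullback square displayed. Condition \ref{E0 is A} of \Cref{define n-stages} for $\widetilde W$ is immediate from the corresponding fact for $W$ (the top and right maps of the pullback don't change $\pi_0 E_\star^\lw$). Condition \ref{no high-diml natural htpy} follows from the fact that $\widetilde W$ is built as a pullback of objects that are individually suitably truncated with respect to natural homotopy (one combines the Postnikov-truncation assumption on $W$ with the representing properties of $B_A$ and $B_A(\Omega^n A,n+1)$ in terms of their natural homotopy from \Cref{natural and classical htpy of topological EM objects}). The decisive condition is \ref{no low-diml classical E-homology}. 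To analyze this, I would apply $E_\star^\lw$ to the pullback square and exploit that the right vertical map factors through the universal map $E_\star^\lw B_A(\Omega^n A,n+1)\to K_A(\Omega^n A,n+1)$, which is an equivalence on the relevant classical $E$-homology by the concentration computation in \Cref{natural and classical E-homology of an n-stage} applied to $B_A(\Omega^n A,n+1)$ (seen as a $0$-stage plus a single cell) together with the universal-map identification inside the proof of the representability of these objects.

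From this one extracts a long exact sequence of classical $E$-homology groups associated to the pullback square, formally a consequence of \Cref{BM lexseq} applied to a fibration presentation of the right vertical map. Because the classical $E$-homology of $\leftloc(B_A)$, $\leftloc(B_A(\Omega^n A,n+1))$, and $\leftloc(W)$ are all known (concentrated in degrees $\{0,2\}$, $\{0,n+1,n+3\}$, and $\{0,n+1\}$ respectively, by \Cref{natural and classical E-homology of an n-stage} and the identification of topological Eilenberg--Mac Lane objects), the long exact sequence reduces the vanishing of $\pi_i E_\star^\lw \widetilde W$ for $1\le i\le n+1$ exactly to the assertion that the induced map $E_\star^\lw W\to K_A(\Omega^n A,n+1)$ in $\locpiAlgtTE$ is an equivalence.

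The main obstacle in executing the plan is that $E_\star^\lw$ is a priori only cocontinuous (by \Cref{infty-categorical consequence of homotopical adaptedness}) and so does not automatically turn the pullback in $\locEAlgT$ into one in $\locpiAlgtTE$. The workaround is to use the universal-map description of topological Eilenberg--Mac Lane objects, which forces $E_\star^\lw$ applied to the square to be equivalent to the corresponding algebraic pullback of $K_A\to K_A(\Omega^n A,n+1)\leftarrow E_\star^\lw W$, at which point the classical long exact sequence is just the Blakers--Massey sequence of \Cref{BM lexseq} in $\locpiAlgtTE$.
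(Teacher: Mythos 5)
Your overall structure is correct — identify the two directions, appeal to \Cref{topological truncations move between stages} and \Cref{homotopical adaptedness implies things for natural E-homology of simplicial T-algebras}\ref{fiber concentrated in degree n+1 gives E-local pullback} for the first half, and for the converse check conditions \ref{E0 is A}–\ref{no low-diml classical E-homology} of \Cref{define n-stages} via a long exact sequence that isolates condition \ref{no low-diml classical E-homology} as the substantive one.  The first half of your argument is essentially what the paper intends.  But the mechanism you propose for producing the long exact sequence in the second half does not work, and this is the crux of the entire observation.

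You correctly flag the problem: $E_\star^\lw\colon\locEAlgT\ra\locpiAlgtTE$ is cocontinuous (\Cref{infty-categorical consequence of homotopical adaptedness}) but there is no reason it should preserve pullbacks.  Your proposed remedy — that the ``universal-map description of topological Eilenberg--Mac Lane objects'' forces $E_\star^\lw$ of the pullback square to become the algebraic pullback of $K_A\ra K_A(\Omega^n A,n+1)\la E_\star^\lw W$, after which \Cref{BM lexseq} applies — does not hold.  The universal map $E_\star^\lw B_A(M,n)\ra K_A(M,n)$ is far from an equivalence (indeed $E_\star^\lw B_A$ already carries a nontrivial $\pi_2\cong\Omega A$ which $K_A$ does not, per \Cref{natural and classical htpy of topological EM objects} — you omit this degree-$2$ contribution when you list the classical $E$-homology of $B_A(\Omega^n A,n+1)$ as lying in $\{0,n+1,n+3\}$; it actually lies in $\{0,2,n+1,n+3\}$).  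So $E_\star^\lw$ of the square is not the algebraic pullback, and $\Cref{BM lexseq}$ is in any case a long exact sequence for \emph{pushouts} in $\locpiAlgtTE$, not pullbacks.  The long exact sequence the paper invokes is not obtained by pushing the square into $\locpiAlgtTE$ at all: it lives on the topological side.  One presents the right vertical map of the pullback as a fibration in $\Alg_T(s\C)_\res$ (all objects there are fibrant, so the strict pullback computes the $\infty$-categorical one), observes that for each $S^\eps\in\GE$ the functor $[S^\eps,\forget_T(-)]^\lw_\C$ carries this fibration to a fibration of simplicial abelian groups, extracts the usual long exact sequence of a fiber sequence of simplicial abelian groups on $\pi_*$, and then passes to the $E$-homology version via a colimit argument over $\alpha\in\J$ using Adams's condition (\Cref{DEalphas detect E-homology}).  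The same long exact sequence compares $\widetilde W$ and $W$ via the common fiber and, together with the known computations of \Cref{natural and classical E-homology of an n-stage} and \Cref{natural and classical htpy of topological EM objects}, reduces condition \ref{no low-diml classical E-homology} for $\widetilde W$ to the asserted equivalence $E_\star^\lw W\xra{\sim}K_A(\Omega^n A,n+1)$.
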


\begin{obs}\label{obstrns live in AQcoh}
We can interpret the conclusion of \Cref{try to build n-stage from (n-1)-stage} as follows.  By \Cref{natural and classical E-homology of an n-stage}, the object $E_\star^\lw W \in \locpiAlgtTE$ has homotopy concentrated in degrees $0$ and $n+1$ and moreover $P_n^\alg (E_\star^\lw W) \simeq A$.  By \Cref{adding M in degree n is classified by a map to KAMn+1}, this object therefore corresponds to a unique pullback square
\[ \begin{tikzcd}
E_\star^\lw W \arrow{r} \arrow{d} & K_A \arrow{d} \\
A \arrow{r}[swap]{\chi} & K_A(\Omega^n A,n+2)
\end{tikzcd} \]
in $\locpiAlgtTE$.

Recall from \Cref{alg EM spectrum} that we have a pullback square
\[ \begin{tikzcd}
K_A(\Omega^n A, n+1) \arrow{r} \arrow{d} & K_A \arrow{d} \\
K_A \arrow{r} & K_A(\Omega^n A,n+2)
\end{tikzcd} \]
in $\locpiAlgtTE$.  Now, we claim that there exists an equivalence $E_\star^\lw W \xra{\sim} K_A(\Omega^n A,n+1)$ in $\locpiAlgtTE$ if and only if $\chi$ represents the zero element $0 \in H^{n+2}_{\tT_E}(A/k;\Omega^n A)$.
\begin{itemize}
\item Indeed, if $[\chi] = 0$, then the existence of an equivalence is manifest.
\item Conversely, if such an equivalence exists, then by \Cref{adding M in degree n is classified by a map to KAMn+1} there exists an equivalence between these two pullback squares, implying that $[\chi]=0$.
\end{itemize}
Thus, the obstructions to a given $(n-1)$-stage lifting to an $n$-stage are given by elements of $H^{n+2}_{\tT_E}(A/k;\Omega^n A)$.  In particular, if this group vanishes then \textit{every} $(n-1)$-stage lifts to an $n$-stage.
\end{obs}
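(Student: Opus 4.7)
The plan is to leverage Proposition~\ref{adding M in degree n is classified by a map to KAMn+1} applied with its ``$Y$'' taken to be $A$ (which is $0$-truncated, hence $n$-truncated for any $n \geq 0$), its ``$n$'' taken to be $n+1$, and its ``$M$'' taken to be $\Omega^n A$. Since $E_\star^\lw W$ is an $(n+1)$-truncated object of $\locpiAlgtTE_{k/}$ with $P_n^\alg(E_\star^\lw W) \simeq A$ and $\pi_{n+1}(E_\star^\lw W) \cong \Omega^n A$ (by the discussion immediately preceding the claim), it defines a point of $\ms{M}_k(A \oplus (\Omega^n A, n+1))$, whose image under the proposition's equivalence is precisely the pullback datum labeled by $\chi$. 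Composing with the equivalence of Corollary~\ref{identify moduli of two alg EM maps as htpy quotient of coh space}, we therefore get an identification
\[ \ms{M}_k(A \oplus (\Omega^n A, n+1)) \xra{\sim} \ms{M}_k(A \loopra K_A(\Omega^n A, n+2) \loopla K_A) \simeq \what{\ms{H}}^{n+2}_{\tT_E}(A/k; \Omega^n A) , \]
under which $E_\star^\lw W$ corresponds to the class of $\chi$. The task is then reduced to showing that $K_A(\Omega^n A, n+1)$ corresponds to the class of the zero map $0$, and that $[\chi] = [0]$ in $\what{\ms{H}}^{n+2}_{\tT_E}(A/k; \Omega^n A)$ if and only if $[\chi] = 0 \in H^{n+2}_{\tT_E}(A/k; \Omega^n A)$.

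For the forward direction, suppose $[\chi] = 0$. Then $\chi$ is nullhomotopic as a morphism in $\locpiAlgtTE_{k/\!/A}$, i.e.\! it is homotopic to the composite $A \simeq K_A \ra K_A(\Omega^n A, n+2)$ through the basepoint. Since pullbacks in $\locpiAlgtTE$ depend only on the homotopy class of the cospan, replacing $\chi$ with this null map yields an equivalent pullback; but by the $\Omega$-spectrum structure of Observation~\ref{alg EM spectrum}, this pullback is precisely $K_A(\Omega^n A, n+1)$. In particular, this establishes that $K_A(\Omega^n A, n+1)$ is classified by (the class of) the zero map under the correspondence above.

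For the backward direction, suppose we are given an equivalence $E_\star^\lw W \xra{\sim} K_A(\Omega^n A, n+1)$. Then $E_\star^\lw W$ and $K_A(\Omega^n A, n+1)$ define the same equivalence class in $\ms{M}_k(A \oplus (\Omega^n A, n+1))$, so by the composite identification above, the classes of $\chi$ and $0$ coincide in $\what{\ms{H}}^{n+2}_{\tT_E}(A/k; \Omega^n A)$. Now $\what{\ms{H}}^{n+2}_{\tT_E}(A/k; \Omega^n A)$ is the homotopy quotient $(\ms{H}^{n+2}_{\tT_E}(A/k; \Omega^n A))_{\Aut_k(A, \Omega^n A)}$, and the zero element is fixed by the $\Aut_k(A, \Omega^n A)$-action (being the canonical basepoint of the coefficient space); hence the orbit of $0$ in $\pi_0 \ms{H}^{n+2}_{\tT_E}(A/k; \Omega^n A) = H^{n+2}_{\tT_E}(A/k; \Omega^n A)$ is a singleton, and $[\chi]$ and $[0]$ agreeing in $\pi_0 \what{\ms{H}}^{n+2}$ forces $[\chi] = 0$ in $H^{n+2}_{\tT_E}(A/k; \Omega^n A)$. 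The one subtle point that will require explicit care is the identification at the end of the first paragraph: namely, checking that under the chain of equivalences, the canonical element $K_A(\Omega^n A, n+1)$ really does correspond to the zero element of $\what{\ms{H}}^{n+2}$, since the correspondence of Proposition~\ref{adding M in degree n is classified by a map to KAMn+1} is defined via the difference construction rather than directly, and one must unwind that the difference construction applied to the trivial truncation of $K_A(\Omega^n A, n+1)$ yields the null cospan.
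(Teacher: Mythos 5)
Your argument is correct and essentially tracks the paper's own (very terse) two-bullet-point proof, with one welcome elaboration: you make explicit the orbit argument that the paper only gestures at. The paper's backward direction says ``there exists an equivalence between these two pullback squares, implying that $[\chi]=0$''; unpacking why equivalence of cospans forces $[\chi]=0$ requires exactly your observation that $\pi_0\what{\ms{H}}^{n+2}_{\tT_E}(A/k;\Omega^n A)$ is the orbit set $H^{n+2}_{\tT_E}(A/k;\Omega^n A)/\Aut_k(A,\Omega^n A)$ and that the basepoint $0$ is a fixed point of the action (since the zero map factors through $K_A$, which is preserved by any automorphism of the cospan). This is the step where one could have slipped, and you handle it correctly.

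The ``subtle point'' you flag at the end is not actually a remaining gap in your argument. You are worried that Proposition~\ref{adding M in degree n is classified by a map to KAMn+1} is stated in terms of the difference construction, while you verified the identification of $K_A(\Omega^n A,n+1)$ with the class of the zero map by computing the \emph{pullback} of the zero cospan (using the $\Omega$-spectrum square of Observation~\ref{alg EM spectrum}). But the proposition asserts that the pullback functor is an inverse equivalence, so these two directions are mutually determined: knowing that the pullback of the zero cospan is equivalent to $K_A(\Omega^n A,n+1)$ is the \emph{same} as knowing that the difference construction applied to $K_A(\Omega^n A,n+1)$ lands in the component of the zero cospan. There is no need to independently unwind the difference construction. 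So your proof is complete as written; you may delete the final caveat.
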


We now provide the key piece of input to the proof of \Cref{pullback square to climb tower}: in effect, we work with $\ms{M}_{n-1}(A/Y)$ one path component at a time.

\begin{notn}
For any $Z \in \ms{M}_{n-1}(A/Y)$, we write $\ms{M}_{n/Z}(A/Y) \subset \ms{M}_n(A/Y)$ for the subspace of those $n$-stages $W \in \ms{M}_n(A/Y)$ such that there exists an equivalence $P_{n-1}(W) \simeq Z$ in $\locEAlgT_{Y/}$.
\end{notn}

\begin{obs}\label{an (n-1)-stage lifts iff its E-homology is equivalent to an alg EM obj}
Note that the space $\ms{M}_{n/Z}(A/Y)$ may well be empty; indeed, by \Cref{obstrns live in AQcoh} it will be empty if and only if $\ms{M}_k(E_\star^\lw Z \loopra K_A(\Omega^n A,n+1))$ is empty.
\end{obs}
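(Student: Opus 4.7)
The proof proceeds by establishing a bi-implication of nonemptiness, combining the pullback criterion of \Cref{try to build n-stage from (n-1)-stage} with the representability of $B_A(\Omega^n A, n+1)$ for cohomology (cf.\ Proposition \ref{determine moduli spaces of topological EM objects and morphisms}).

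For the forward direction, suppose $W \in \ms{M}_{n/Z}(A/Y)$. Then $P_{n-1}^\top(W) \simeq Z$, and by the discussion in \Cref{try to build n-stage from (n-1)-stage} there is a pullback square in $\locEAlgT$ exhibiting $\leftloc_{E_\star^\lw}(W)$ as the pullback of $\leftloc_{E_\star^\lw}(B_A) \to \leftloc_{E_\star^\lw}(B_A(\Omega^n A, n+1))$ along the truncation map from $\leftloc_{E_\star^\lw}(Z)$. The criterion of that observation ensures that the composite $E_\star^\lw Z \to E_\star^\lw B_A(\Omega^n A, n+1) \to K_A(\Omega^n A, n+1)$ in $\locpiAlgtTE$, via the universal map, is an equivalence; in particular, it is a $\loopra$ morphism and so determines a point of $\ms{M}_k(E_\star^\lw Z \loopra K_A(\Omega^n A, n+1))$.

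For the reverse direction, suppose $\psi \in \ms{M}_k(E_\star^\lw Z \loopra K_A(\Omega^n A, n+1))$ is such a morphism. By \Cref{natural and classical E-homology of an n-stage} applied to the $(n-1)$-stage $Z$, both source and target have $\pi_*$ concentrated in degrees $0$ and $n+1$, with values $A$ and $\Omega^n A$ respectively. The $\loopra$ condition then requires $\psi$ to be an isomorphism on $\pi_0$ and $\pi_{n+1}$, forcing it to be an equivalence in $\locpiAlgtTE$. Applying the representability of $B_A(\Omega^n A, n+1)$ (in the comma category $\locEAlgT_{Y/\!/B_A}$), the morphism $\psi$ lifts to a morphism $\varphi \colon Z \to B_A(\Omega^n A, n+1)$ in $\locEAlgT_{Y/\!/B_A}$. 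Form the pullback
\[ \widetilde{W} := Z \underset{B_A(\Omega^n A, n+1)}{\times} B_A \]
in $\locEAlgT_{Y/}$. Since the composite $E_\star^\lw Z \to K_A(\Omega^n A, n+1)$ induced by $\varphi$ is precisely $\psi$ (an equivalence), \Cref{try to build n-stage from (n-1)-stage} guarantees that $\widetilde{W}$ is an $n$-stage. To see $P_{n-1}^\top \widetilde{W} \simeq Z$, observe that the fiber of the map $B_A \to B_A(\Omega^n A, n+1)$ (hence also of $\widetilde{W} \to Z$) has natural homotopy concentrated in degree $n$, so $\widetilde{W} \to Z$ is an iso on $\pi_{<n,\star}^\natural$; combined with $\widetilde{W}$ being $n$-truncated and $Z$ being $(n-1)$-truncated, the claim follows.

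The main obstacle is the reverse direction, specifically the passage from an algebraic equivalence $\psi$ in $\locpiAlgtTE$ to a topological morphism $\varphi$ in $\locEAlgT_{Y/\!/B_A}$: this is exactly the role of the representability of $B_A(M,n)$ developed in \Cref{section.homotopical.topology}, which converts obstruction-theoretic data in the algebraic world into actual morphisms in the topological one. Modulo this input, the rest is bookkeeping with the pullback criterion and the degree-count of natural homotopy of the fiber.
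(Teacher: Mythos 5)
Your proof is correct and takes essentially the same route as the paper: it simply unpacks the citation to Observation~\ref{obstrns live in AQcoh} by tracing through the pullback criterion of Observation~\ref{try to build n-stage from (n-1)-stage} together with the representability of $B_A(\Omega^n A,n+1)$. The one place where you add worthwhile detail beyond what the paper makes explicit is the verification that $P_{n-1}^\top\widetilde{W}\simeq Z$ via the degree-$n$ concentration of the fiber's natural homotopy, which is left implicit in the text.
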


\begin{notn}
For any $Z \in \ms{M}_{n-1}(A/Y)$, we write $Z \xra{\uEeq} B_A(\Omega^n A,n)$ for a morphism in $\locresAlgT_{Y/}$ which classifies an equivalence $E_\star^\lw Z \xra{\sim} K_A(\Omega^n A,n)$ in $\locpiAlgtTE_{k/}$.
\end{notn}

\begin{lem}\label{pullback square extending a given (n-1)-stage to an n-stage}
Suppose that $Z \in \ms{M}_{n-1}(A/Y)$ for some $n \geq 1$.  Then there is a natural pullback square
\[ \begin{tikzcd}[column sep=1.5cm, row sep=1.5cm]
\ms{M}_{n/Z}(A/Y) \arrow{r} \arrow{d}[swap]{P_{n-1}^\top} & \ms{M}_k(E_\star^\lw Z \loopra K_A(\Omega^n A,n+1) \loopla K_A) \arrow{d} \\
\ms{M}_Y(Z) \arrow{r}[swap]{E_\star^\lw} & \ms{M}_k(E_\star^\lw Z)
\end{tikzcd} \]
in $\S$.
\end{lem}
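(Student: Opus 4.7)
The proof will proceed by analyzing fibers of the two vertical maps. I would first verify commutativity: an $n$-stage $W$ with $P_{n-1}^\top(W) \simeq Z$ is, by \Cref{try to build n-stage from (n-1)-stage}, the pullback in $\locEAlgT_{Y/}$ of a cospan $Z \xra{\varphi} B_A(\Omega^n A, n+1) \xla{} B_A$, and applying $E_\star^\lw$ (which preserves this square as a pullback, per the cited observation, using \Cref{homotopical adaptedness implies things for natural E-homology of simplicial T-algebras}) produces the cospan constituting the image of $W$ under the top horizontal map; the two composites around the square both extract the underlying object $E_\star^\lw Z$, so the square commutes.

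Next, I would compute the fibers of the two vertical maps over a chosen basepoint $[Z] \in \ms{M}_Y(Z)$. On the left, this fiber is (essentially by definition of $\ms{M}_{n/Z}(A/Y)$) the space of pairs $(W,\alpha)$ with $W$ an $n$-stage and $\alpha : P_{n-1}^\top(W) \xra{\sim} Z$. On the right, pulling back $\ms{M}_k(E_\star^\lw Z \loopra K_A(\Omega^n A, n+1) \loopla K_A) \to \ms{M}_k(E_\star^\lw Z)$ along the image of the basepoint yields the space of cospans $E_\star^\lw Z \xra{\psi} K_A(\Omega^n A, n+1) \xla{u} K_A$ with source pinned to $E_\star^\lw Z$ and first leg satisfying the $\loopra$ condition.

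The heart of the argument is the identification of these two fibers. In one direction, send $(W,\alpha)$ to the cospan $E_\star^\lw Z \to K_A(\Omega^n A, n+1) \leftarrow K_A$ obtained by applying $E_\star^\lw$ to the pullback square of \Cref{try to build n-stage from (n-1)-stage}. In the other direction, given a cospan $E_\star^\lw Z \xra{\psi} K_A(\Omega^n A, n+1) \xla{u} K_A$, use the representability of the topological Eilenberg--Mac Lane objects (\Cref{determine moduli spaces of topological EM objects and morphisms} together with the hom-space identification defining $B_A(\Omega^n A, n+1)$) to lift $\psi$ uniquely, up to contractible choice, to a morphism $Z \to B_A(\Omega^n A, n+1)$ in $\locEAlgT_{Y/}$; form the pullback of this morphism against the universal map $B_A \to B_A(\Omega^n A, n+1)$, and invoke \Cref{try to build n-stage from (n-1)-stage} together with the $\loopra$ condition on $\psi$ to conclude that the resulting object $W$ lies in $\ms{M}_{n/Z}(A/Y)$. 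These constructions are mutually inverse up to natural equivalence, yielding the required equivalence of fibers.

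The main obstacle is to upgrade the object-level correspondence of \Cref{try to build n-stage from (n-1)-stage} into a genuine equivalence of moduli spaces. One must verify that cospans, pullbacks, and the $E_\star^\lw$-lifts all behave functorially in the automorphisms of the data, not merely well-defined on isomorphism classes; for this, the essential inputs are the enriched representability of the topological Eilenberg--Mac Lane objects and the cocontinuity of $E_\star^\lw : \locEAlgT \to \locpiAlgtTE$ (\Cref{infty-categorical consequence of homotopical adaptedness}), which together guarantee that the pullback squares realizing $n$-stages from cospans vary coherently over the parameter spaces involved.
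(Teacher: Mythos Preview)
Your approach is essentially the same as the paper's --- both rely on the difference construction to pass from $n$-stages to topological cospans, and then on the defining representability of $B_A(\Omega^n A,n+1)$ to pass to algebraic cospans --- but the paper organizes the argument more cleanly by introducing the intermediate moduli space $\ms{M}_Y(Z \xra{\uEeq} B_A(\Omega^n A,n+1) \loopla B_A)$ of \emph{topological} cospans. The difference construction gives a functorial equivalence $\ms{M}_{n/Z}(A/Y) \simeq \ms{M}_Y(Z \xra{\uEeq} B_A(\Omega^n A,n+1) \loopla B_A)$, which dissolves the coherence issue you flag as the ``main obstacle'': once you are working with topological cospans, the map to algebraic cospans is just $E_\star^\lw$ followed by the universal maps, and the remaining pullback check reduces (after enlarging the bottom row by the connected moduli of Eilenberg--Mac Lane morphisms) to the single hom-space equivalence $\hom^{\uEeq}_{\locEAlgT}(Z,B_A(\Omega^n A,n+1)) \xra{\sim} \hom^{\simeq}_{\locpiAlgtTE}(E_\star^\lw Z,K_A(\Omega^n A,n+1))$ that defines $B_A(\Omega^n A,n+1)$.

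One small correction: you write that $E_\star^\lw$ ``preserves this square as a pullback''. It does not --- $E_\star^\lw$ is cocontinuous, not continuous. Fortunately you never actually use this; all you need is to apply $E_\star^\lw$ to the \emph{cospan} (and compose with the universal maps), which is exactly what the paper does. The pullback square in $\locEAlgT$ is what produces the $n$-stage from the cospan, not something whose image under $E_\star^\lw$ you need to control.
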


\begin{proof}
The difference construction provides a map $\ms{M}_{n/Z}(A/Y) \ra \ms{M}_Y(Z \xra{\uEeq} B_A(\Omega^n A,n+1) \loopla B_A)$, which is an equivalence by \Cref{try to build n-stage from (n-1)-stage}.  Thus we obtain a commutative diagram
\[ \begin{tikzcd}[column sep=0.5cm, row sep=1.5cm]
\ms{M}_{n/Z}(A/Y) \arrow{r}{\sim} \arrow{d}[swap]{P_{n-1}^\top} & \ms{M}_Y(Z \xra{\uEeq} B_A(\Omega^n A,n+1) \loopla B_A) \arrow{d} \arrow{r} & \ms{M}_k(E_\star^\lw Z \loopra K_A(\Omega^n A,n+1) \loopla K_A) \arrow{d} \\
\ms{M}_Y(Z) \arrow{r}[swap]{\sim} & \ms{M}_Y(Z) \arrow{r}[swap]{E_\star^\lw} & \ms{M}_k(E_\star^\lw Z)
\end{tikzcd} \]
in $\S$, in which
\begin{itemize}
\item the right square is obtained by applying $E_\star^\lw$ and using the universal characterization of topological Eilenberg--Mac Lane objects,
\item the left square is tautologically a pullback, and
\item our goal is to show that the outer rectangle is a pullback;
\end{itemize}
thus, it suffices to show that the right square is a pullback.

In the right square, both downwards maps are obtained by forgetting certain data: a morphism of type $\vec{B}_A(\Omega^n A,n+1)$ on the left, and a morphism of type $\vec{K}_A(\Omega^n A,n+1)$ on the right.  Thus, it is convenient to use the equivalence $\ms{M}_{A/Y}(\Omega^n A,n+1) \xra{\sim} \ms{M}_{A/k}(\Omega^n A,n+1)$ of \Cref{determine moduli spaces of topological EM objects and morphisms} (between the moduli spaces of such Eilenberg--Mac Lane morphisms) to obtain a larger commutative square
\[ \begin{tikzcd}[column sep=1.5cm, row sep=1.5cm]
\ms{M}_Y(Z \xra{\uEeq} B_A(\Omega^n A,n+1) \loopla B_A) \arrow{r} \arrow{d} & \ms{M}_k(E_\star^\lw Z \loopra K_A(\Omega^n A,n+1) \loopla K_A) \arrow{d} \\
\ms{M}_Y(Z) \times \ms{M}_{A/Y}(\Omega^n A,n+1) \arrow{r} & \ms{M}_k(E_\star^\lw Z) \times \ms{M}_{A/k}(\Omega^n A,n+1)
\end{tikzcd} \]
which it then suffices to show is a pullback.

Now, observe that both spaces on the bottom row are connected (by definition and by Propositions \ref{moduli of alg EM maps} \and \ref{determine moduli spaces of topological EM objects and morphisms}).  So for any basepoint of $\ms{M}_Y(Z) \times \ms{M}_{A/Y}(\Omega^n A,n+1)$, it suffices to check that the induced map on fibers is an equivalence.  Unwinding the definitions, we see that this is the map
\[ \hom^{\uEeq}_{\locEAlgT}(Z,B_A(\Omega^n A,n+1)) \ra \hom^{\simeq}_{\locpiAlgtTE}(E_\star^\lw Z,K_A(\Omega^nA,n+1)) . \]
As $\locEAlgT \subset \locresAlgT$ is a full subcategory, we see that this is by definition an equivalence of subspaces of the equivalence
\[ \hom_{\locresAlgT}(Z,B_A(\Omega^nA,n+1)) \xra{\sim} \hom_{\locpiAlgtTE}(E_\star^\lw Z,K_A(\Omega^nA,n+1)) \]
characterizing the object $B_A(\Omega^nA,n+1) \in \locresAlgT$.
\end{proof}

We can now prove our main decomposition theorem.

\begin{proof}[Proof of \Cref{pullback square to climb tower}]
We begin with the commutative square
\[ \begin{tikzcd}
\ms{M}_k(K_A(\Omega^n A,n+1) \loopla K_A) \arrow{r}{\sim} \arrow{d} & \ms{M}_k(K_A(\Omega^n,n+2) \loopla K_A) \arrow{d} \\
\ms{M}_k(K_A \oplus (\Omega^n A,n+1)) \arrow{r}[swap]{\sim} & \ms{M}_k(K_A \loopra K_A(\Omega^n A,n+2) \loopla K_A)
\end{tikzcd} \]
in $\S$, in which
\begin{itemizesmall}
\item the upper horizontal map is (the inverse of) the equivalence of \Cref{equivce of moduli spaces of alg EM maps},
\item the left vertical map is forgetful,
\item the right vertical map repeats the given morphism,
\item the lower horizontal map is the equivalence of \Cref{adding M in degree n is classified by a map to KAMn+1}.
\end{itemizesmall}
This is tautologically a pullback square.

Now, suppose that $Z \in \ms{M}_{n-1}(A/Y)$.  We claim that there exists a pullback square
\[ \begin{tikzcd}
\ms{M}_{n/Z}(A/Y) \arrow{r} \arrow{d} & \ms{M}_k(K_A(\Omega^n A,n+2) \loopla K_A) \arrow{d} \\
\ms{M}_Y(Z) \arrow{r} & \ms{M}_k(K_A \loopra K_A(\Omega^n A,n+2) \loopla K_A)
\end{tikzcd} \]
in $\S$.  To see this, we separate the argument into two cases, depending on whether or not there exists an equivalence $E_\star^\lw Z \xra{\sim} K_A(\Omega^n A,n+1)$ in $\locpiAlgtTE$.
\begin{itemize}

\item Suppose that no such equivalence exists.  Then $\ms{M}_{n/Z}(A/Y)$ is empty by \Cref{an (n-1)-stage lifts iff its E-homology is equivalent to an alg EM obj}.  In this case, the subspace $\ms{M}_k(E_\star^\lw Z) \subset \ms{M}_k(K_A \oplus (\Omega^n A,n+1))$ is not in the image of the left vertical map of our original tautological pullback square.  These facts imply that the above square is indeed (equally tautologically) a pullback.

\item Suppose that such an equivalence exists.  In this case, we obtain an evident forgetful equivalence
\[ \ms{M}_k(E_\star^\lw Z \loopra K_A(\Omega^n A,n+1) \loopla K_A) \xra{\sim} \ms{M}_k(K_A(\Omega^nA,n+1) \loopla K_A) \]
in $\S$, which reduces the pullback square of \Cref{pullback square extending a given (n-1)-stage to an n-stage} to a pullback square
\[ \begin{tikzcd}
\ms{M}_{n/Z}(A/Y) \arrow{r} \arrow{d} & \ms{M}_k(K_A(\Omega^n A,n+1) \loopla K_A) \arrow{d} \\
\ms{M}_Y(Z) \arrow{r} & \ms{M}_k(K_A(\Omega^n A,n+1)) .
\end{tikzcd} \]
The right vertical arrow of this pullback square includes as a subobject of the left vertical arrow of our original tautological pullback square, yielding the claim.
\end{itemize}

Now, assembling this pullback square over all $Z \in \ms{M}_{n-1}(A/Y)$, we obtain a pullback square
\[ \begin{tikzcd}
\ms{M}_n(A/Y) \arrow{r} \arrow{d} & \ms{M}_k(K_A(\Omega^n,n+2) \loopla K_A) \arrow{d} \\
\ms{M}_{n-1}(A/Y) \arrow{r} & \ms{M}_k(K_A \loopra K_A(\Omega^nA,n+2) \loopla K_A) .
\end{tikzcd} \]
From here, the equivalence
\[ \ms{M}_k(K_A(\Omega^n A,n+2) \loopla K_A) = \ms{M}_{A/k}(\Omega^n A , n+2) \simeq B \Aut_k(A,\Omega^n A) \]
of \Cref{moduli of alg EM maps} and the equivalence
\[ \ms{M}_k(K_A \loopra K_A(\Omega^n A,n+2) \loopla K_A) \simeq \what{\ms{H}}^{n+2}_{\tT_E}(A/k;\Omega^n A) \]
of \Cref{identify moduli of two alg EM maps as htpy quotient of coh space} allow us to rewrite this as a pullback square
\[ \begin{tikzcd}
\ms{M}_n(A/Y) \arrow{r} \arrow{d} & B \Aut_k(A,\Omega^n A) \arrow{d} \\
\ms{M}_{n-1}(A/Y) \arrow{r} & \what{H}^{n+2}_{\tT_E}(A/k;\Omega^n A) ,
\end{tikzcd} \]
which completes the proof.
\end{proof}

\bibliographystyle{amsalpha}
\bibliography{GHOsT}{}

\end{document}